\numberwithin{equation}{section}
\newtheorem{example}{Example}[section]
\newtheorem{assumption}[theorem]{Assumption}
\newtheorem{remark}[theorem]{Remark}
\numberwithin{equation}{section}
\newcommand{\iprod}[2]{\left\langle #1, #2 \right \rangle}
\newcommand{\rev}[1]{{\color{black}{#1}}}
\newcommand{\tran}{\top}
\newcommand{\ba}{\begin{array}}
\newcommand{\ea}{\end{array}}
\newcommand{\bit}{\begin{itemize}}
\newcommand{\eit}{\end{itemize}}
\newcommand{\be}{\begin{equation}}
\newcommand{\ee}{\end{equation}}
\newcommand{\bea}{\begin{eqnarray}}
\newcommand{\eea}{\end{eqnarray}}
\newcommand{\st}{\mathrm{s.t.}}
\newcommand{\argmax}{\mathop{\mathrm{argmax}}}
\newcommand{\mtr}{\mathrm{tr}}
\newcommand{\nn}{\nonumber}
\newcommand{\diag}{\mathsf{diag}}
\newcommand{\off}{\mathsf{Of\;\!\!f}}
\newcommand{\Diag}{\mathsf{Diag}}
\newcommand{\proj}{\Pi}%{\mathsf{Proj}}
\newcommand{\stief}{\mathcal{S}^{n,k}}
\newcommand{\stiefkk}{\mathcal{S}^{k,k}}
\newcommand{\stiefplus}{\mathcal{S}^{n,k}_+}
\newcommand{\stiefplusk}{\mathcal{S}^{n,k}_+}
\newcommand{\oblique}{\mathcal{OB}^{n,k}}
\newcommand{\obliqueplus}{\mathcal{OB}^{n,k}_+}
\newcommand{\supp}{\mathsf{supp}}
\newcommand{\rgrad}{\mathrm{grad}\,}
\newcommand{\rhess}{\mathrm{Hess}\,}
\newcommand{\grad}{\mathrm{grad}}
\newcommand{\LBB}{\mathtt{LBB}}
\newcommand{\R}{\mathsf{R}}
\newcommand{\Rmn}[1]{\uppercase\expandafter{\romannumeral#1}}
\newcommand{\Ccal}{\mathcal{C}}
\newcommand{\Fcal}{\mathcal{F}}
\newcommand{\Kcal}{\mathcal{K}}
\newcommand{\Ncal}{\mathcal{N}}
\newcommand{\Pcal}{\mathcal{P}}
\newcommand{\Scal}{\mathcal{S}}
\newcommand{\Tcal}{\mathcal{T}}
\newcommand{\Xcal}{\mathcal{X}}
\newcommand{\Ycal}{\mathcal{Y}}
\newcommand{\Rbb}{\mathbb{R}}
\newcommand{\Ftt}{\mathtt{F}}
\newcommand{\Tbf}{\mathbf{T}}
\newcommand{\zbf}{\mathbf{z}}
\newcommand{\xbf}{\mathbf{x}}
\newcommand{\ybf}{\mathbf{y}}
\newcommand{\x}{\mathbf{x}}
\newcommand{\dbf}{\mathbf{d}}
\newcommand{\abf}{\mathbf{a}}
\newcommand{\cbf}{\mathbf{c}}
\newcommand{\hbf}{\mathbf{h}}
\newcommand{\Ebf}{\mathbf{E}}
\newcommand{\1}{\mathbf{e}}
\newcommand{\0}{0}
\newcommand{\LNCD}{\mathcal{C}}
\newcommand{\SNCD}{\mathcal{N}}
\newcommand{\TC}{\mathcal{T}}
\newcommand{\LC}{\mathcal{L}}
\newcommand{\sub}{\mathrm{sub}}
\newcommand{\tol}{\mathrm{tol}}
\newcommand{\feas}{\mathrm{feas}}
\newcommand{\sgn}{\mathsf{sgn}}
\newcommand{\dist}{\mathrm{dist}}
\theoremstyle{nonumberplain}
\newtheorem{myproof}{Proof of \cref{theorem:lp:exact}}
\numberwithin{theorem}{section}
\begin{document}

\title{An exact  penalty approach for optimization with nonnegative orthogonality constraints}
\author{Bo Jiang\thanks{School of Mathematical  Sciences,  Key Laboratory  for	NSLSCS  of  Jiangsu  Province,  Nanjing  Normal  University,  CHINA (\email{jiangbo@njnu.edu.cn}). Research supported in part by the Young Elite Scientists Sponsorship Program by CAST (2017QNRC001), the NSFC grants 11971239 and 11671036.} 
\and Xiang Meng\thanks{School of Mathematical Sciences, Peking University, CHINA (\email{1700010614@pku.edu.cn}).}
\and Zaiwen Wen\thanks{Beijing International Center for Mathematical
		Research, Peking University, CHINA (\email{wenzw@pku.edu.cn}).
		Research supported in part by the NSFC grant 11831002 and  Beijing Academy  of Artificial Intelligence.} 
\and Xiaojun Chen\thanks{Department of Applied Mathematics, The Hong Kong Polytechnic University, Hong Kong. (\email{
xiaojun.chen@polyu.edu.hk})}. Research supported in part by
the Hong Kong Research Grant Council PolyU153000/17P. }
%\date{}
\maketitle
\begin{abstract} 
Optimization with nonnegative orthogonality constraints has wide applications in machine learning and data sciences. It is NP-hard due to some combinatorial properties of  the constraints. We first propose an equivalent optimization formulation  with nonnegative and multiple spherical constraints and an additional single  nonlinear constraint. Various constraint qualifications, the first- and second-order optimality conditions of the equivalent formulation are discussed. \rev{By establishing a local error bound of the feasible set,} we design a class of (smooth) exact penalty models via keeping the nonnegative and multiple spherical constraints. The penalty models are exact if the penalty parameter is sufficiently large other than going to infinity.  A practical penalty algorithm with postprocessing is then developed.  It uses a second-order method to approximately solve a series of subproblems with nonnegative and multiple spherical constraints.  \rev{We study the asymptotic convergence of the penalty algorithm and establish that any limit point is a weakly stationary point of the original problem and becomes a stationary point under some additional mild conditions.}  Extensive numerical results on the projection problem, orthogonal nonnegative matrix factorization problems and the K-indicators model show the effectiveness of our proposed approach. 
\end{abstract}
\begin{keywords}
exact penalty, nonnegative orthogonality constraint, second-order method, constraint qualification, optimality condition
%, nonnegative principle  component analysis, orthogonal nonnegative matrix factorization,  clustering 
\end{keywords}
\begin{AMS} 
65K05, 90C30, 90C46, 90C90\end{AMS} 
\pagestyle{myheadings}
\thispagestyle{plain}
\markboth{\sc{optimization with nonnegative orthogonality constraints}}{\sc{B. JIANG, X. MENG, Z. WEN, AND  X. CHEN}}

\section{Introduction}
In this  paper, we consider optimization with nonnegative orthogonality constraints: 
\be\label{equ:prob:orth+}
\min_{X \in \Rbb^{n \times k}} \, f(X) \quad \st \quad   X^{\tran} X = I_k, \ X \geq 0, 
\ee
where $1\leq k \leq n$, $I_k$ is the $k$-by-$k$ identity matrix and $f\colon
\Rbb^{n \times k} \rightarrow \Rbb$ is continuously differentiable. The
feasible set of \eqref{equ:prob:orth+} is denoted as $\stiefplus
\coloneqq  \stief \cap \Rbb_{+}^{n \times k}$, where $\stief \coloneqq \{X \in
\Rbb^{n \times k} : X^{\tran} X = I_k\}$ is the Stiefel manifold.
%which can be rewritten as  $\stiefplus = \stief \cap \Rbb_{+}^{n \times k}$, where   $\stief \coloneqq \{X \in \Rbb^{n \times k} : X^{\tran} X = I_k\}$ is known as the Stiefel manifold.  
 The non-negativity in $\stiefplus$ destroys the smoothness of $\stief $ and  introduces some combinatorial features.  Specifically, a matrix $X \in \stiefplus$ means that each row of $X$ has at most one positive element and each column of $X$ takes the unit norm. 
 Problem \eqref{equ:prob:orth+}
has captured a wide variety of  applications and interests, see \cite{berk2019certifiably, bergmann2019intrinsic, liu2019simple, yang2014optimality, zhang2018sparse} and the references therein.

Due to the combinatorial features, solving  \eqref{equ:prob:orth+} is generally NP-hard. Actually,  problem \eqref{equ:prob:orth+} covers  some classical NP-hard problems, such as the problem of checking copositivity of a symmetric matrix \cite{Urruty2010variational}, the quadratic assignment problem and the more general optimization over permutation matrices \cite{jiang2016lp} as special cases.  Besides, the constraint $X \in \stiefplus$ also appears in the   $k$-means clustering \cite{boutsidis2009unsupervised, carson2017manifold}, the min-cut problem \cite{povh2007copositive}, etc. 
%, and the tensor eigenvalue complementary problem \cite{fan2018tensor}  etc.
Several typical instances of  problem \eqref{equ:prob:orth+} are briefly reviewed as follows.

\subsection{Applications}\label{subsec:applications}
We mainly introduced three classes of problem \eqref{equ:prob:orth+}.  The first one  is the so-called {\it trace minimization with nonnegative orthogonality constraints}, 
 formulated as 
\be\label{equ:prob:npca}
\min_{X \in \stiefplus} \, \mtr (X^{\tran} M X),
\ee
where $M \in \Rbb^{n \times n}$ is symmetric. If $M = -AA^{\tran}$ with  $A\in \Rbb^{n \times r}$ being  some data matrix,  \eqref{equ:prob:npca} is known as {\it nonnegative principal component analysis} \cite{montanari2016non, zass2007nonnegative}.  If $M = D - W$ with $W$ being a similarity matrix corresponding to $n$ objects and $D$ is a diagonal matrix having the same main diagonal as  $W\1$, where $\1$ is the all-one vector, \eqref{equ:prob:npca} is known as the {\it nonnegative Laplacian embedding} \cite{luo2009non}.  If $M = D - W + \mu R$ with some particularly chosen matrix $R$ and nonnegative regularization parameter $\mu$,  \eqref{equ:prob:npca} is known as  the {\it discriminative nonnegative spectral clustering} \cite{yang2012discriminative}.

The second one  is the {\it orthogonal nonnegative matrix factorization (ONMF)} \cite{ding2006orthogonal}.   
Given the data matrix $A\in \Rbb_+^{n \times r}$,  
ONMF solves 
\be\label{equ:prob:onmf}
\min_{X \in \stiefplus, Y \in \Rbb_{+}^{r \times k}} \,  \|A - XY^{\tran} \|_{\Ftt}^2 .
\ee
%\rblue{As to the equivalence of ONMF with a weighted variant of spherical $k$-means, one can refer to \cite{pompili2014two}.}
 Based on the idea of approximating the data matrix $A$ by its nonnegative subspace projection, Yang and Oja \cite{yang2010linear} proposed the {\it orthonormal projective nonnegative matrix factorization (OPNMF)} model as follows: 
\be\label{equ:prob:onmf:XXt}
\min_{X \in \stiefplus} \,  \|A - XX^{\tran}A\|_{\Ftt}^2.
\ee
Models \eqref{equ:prob:onmf} and  \eqref{equ:prob:onmf:XXt} are equivalent since  the optimal solutions $\bar X$ and $\bar Y$ of \eqref{equ:prob:onmf}  satisfy the relation $\bar Y = A^{\tran} \bar X$. Yang and Oja \cite{yang2010linear} also proposed 
a special OPNMF model by replacing the Frobenius norm in
\eqref{equ:prob:onmf:XXt} by the Kullback-Leibler divergence of $A$ and
$XX^{\tran}A$. The orthogonal symmetric non-negative matrix factorization models
were considered in \cite{kuang2012symmetric, paul2016orthogonal}.  

The third one is an  efficient K-indicators model  for data clustering  proposed by Chen et al. \cite{chen2019big}.  Let $U\in\stief$ be the features matrix extracted from the data matrix $A\in \Rbb^{n \times r}$, the K-indicators model in  \cite{chen2019big} reads 
\be\label{equ:prob:k:indicator}
\min_{X \in \stiefplusk,  Y \in \stiefkk}\, \|U Y - X\|_{\Ftt}^2 \quad \st \quad \|X_{i,:}\|_0 = 1, i \in [n],
\ee
where $\|X_{i,:}\|_0$ is the  number of nonzero elements in the $i$-th row of $X$, namely, $X_{i,:}$.

\subsection{Related works}
Optimization on the Stiefel manifold \cite{absil2009optimization,
wen2013feasible} has already been well explored. However, a systematic study on problem \eqref{equ:prob:orth+} is lacked in the literature albeit it captures many applications. The existing works rarely considered the general problem  \eqref{equ:prob:orth+}, and most of them focused on some special formulations of  \eqref{equ:prob:orth+}. 
We  briefly review some main existing methods.   For solving ONMF model
\eqref{equ:prob:onmf}, motivated by the  multiplicative update methods  for
nonnegative matrix factorization, Ding et al. \cite{ding2006orthogonal} and Yoo
and Choi \cite{yoo2008orthogonal} gave  two different multiplicative update
schemes. By establishing the equivalence of ONMF with a weighted variant of
spherical $k$-means, Pompoli et al. \cite{pompili2014two}  proposed an EM-like
algorithm.  Pompoli et al. \cite{pompili2014two}  also designed an augmented
Lagrangian method via penalizing the nonnegative constraints but keeping the
orthogonality constraints. Li et al. \cite{li2015two} and Wang et al.
\cite{wang2019clustering2, wang2019clustering} considered the nonconvex penalty
approach by keeping the nonnegative constraints. Some theoretical properties of
the nonconvex penalty model were investigated in \cite{wang2019clustering} but
the results may not be applied directly to the general problem \eqref{equ:prob:orth+}.   Zhang et al. \cite{zhang19greedy} proposed a greedy orthogonal pivoting algorithm which can promote exact orthogonality.
For solving OPNMF model  \eqref{equ:prob:onmf:XXt},  Yang and Oja \cite{yang2010linear} designed a specific multiplicative update  method.  Pan and Ng \cite{pan2018orthogonal} introduced a convex relaxation model, wherein the relaxed model is solved by the alternating direction method of multipliers.   We remark that the multiplicative update scheme for solving problem \eqref{equ:prob:onmf} or \eqref{equ:prob:onmf:XXt} highly  depends on the specific formulation of the objective function,  so it is not easy to extend this class of methods to  solve the general problem \eqref{equ:prob:orth+}. 
In addition, Wen and Yin  \cite{wen2013feasible} designed an augmented Lagrangian method by penalizing the nonnegative constraints but keeping the orthogonality constraints for solving the quadratic assignment problem. Chen et al. \cite{chen2019big} proposed a semi-convex relaxation model and construct a double-layered alternating projection scheme to solve the K-indicators model \eqref{equ:prob:k:indicator}. 

\subsection{Our contribution}  %In this paper, a general class of exact penalty models is proposed to solve problem \eqref{equ:prob:orth+}.  
\rev{By well exploring  the structure of $\stiefplus$, we first give a new characterization of $\stiefplus$ as 
\be \label{equ:stiefplus:equivalent:general:V} 
\stiefplus  =  \Xcal_V \coloneqq  \obliqueplus \cap \{X\in \Rbb^{n \times k} : \|XV\|_{\Ftt} = 1\}, 
\ee
where $\obliqueplus = \{X \in \Rbb^{n \times k}: \|\xbf_{j}\| = 1, \xbf_j \geq
\0,   j \in [k]\}$, in which $\xbf_j$ denotes the $j$-th column of $X$,
and  the constant matrix $V \in \Rbb^{k\times r}$($1 \leq r \leq k$) satisfies  $\|V\|_{\Ftt} = 1$
and $\min_{i,j \in [k]} [VV^{\tran}]_{ij} > 0$.  
%Throughout this paper, we take
%$V$ as a constant matrix. We further show that $\stiefplus$ and $\Xcal_V$ share the same tangent cones and linearized cones.
%which consists of the nonnegative, multiple spherical constraints and a simple  nonlinear constraint. 
Based on this equivalent characterization, a reformulation 
%n equivalent (in the meaning of global minimizers and stationary points) optimization 
%formulation
of problem \eqref{equ:prob:orth+} is given as  
\be \label{equ:prob:orth+:new}
\min_{X \in \obliqueplus} \, f(X) \quad \ \st \quad  \|XV\|_{\Ftt} = 1.
\ee 
We show that the classical  constraint qualifications (CQs) including cone-continuity property (CCP) and Abadie CQ   (ACQ) only hold  when $\|X\|_0 =n$ while the weakest Guignard CQ (GCQ) always holds. The
first- and second-order optimality conditions are also given for problem
\eqref{equ:prob:orth+:new}.  We then explore the relationship between   problems
\eqref{equ:prob:orth+} and  \eqref{equ:prob:orth+:new} and show that the two
formulations not only share the same minimizers but also the same optimality
conditions.} 

\rev{To motivate the exact penalty approach, we prove that a local  error bound with exponent $1/2$  holds for $\stiefplus$.} 
Therefore, via keeping the simple constraints $\obliqueplus$ and penalizing the constraint $\|XV\|_{\Ftt} = 1$, we propose a class of exact penalty models:
\be \label{equ:prob:orth+:new:exact:penalty}
 \min_{X \in \obliqueplus}\, \left\{P_{\sigma,p,q,\epsilon}(X):=  f(X) + \sigma    \left(\zeta_q(X)+ \epsilon\right)^{p} \right\},
\ee
where $\zeta_q(X) := \|XV\|_{\Ftt}^q - 1 $, $\sigma > 0$ is the penalty parameter and $p, q >0$ and $\epsilon \geq 0$ are the model parameters. An important feature of  \eqref{equ:prob:orth+:new:exact:penalty} is that it allows smooth penalty by choosing appropriate model parameters, such as choosing $p \geq 1$ and $\epsilon = 0$. We show that if the penalty parameter $\sigma$ is chosen to be larger than a positive constant,  the optimal solution of the exact penalty problem (possibly a  postprocessing will be invoked) is also optimal for the original problem. A more general exact penalty model \eqref{prob:exact:penalty:Phi} is also discussed. Then we develop a practical exact penalty algorithm which approximately solves a series of penalty subproblems of the form \eqref{equ:prob:orth+:new:exact:penalty} and performs a postprocessing  procedure to further improve the solution quality.  \rev{We study the asymptotic  convergence of the penalty algorithm and show that any limit point of the sequence generated by the algorithm is a weakly stationary point of \eqref{equ:prob:orth+:new}.  We also provide some mild conditions under which the limit point is a stationary point of \eqref{equ:prob:orth+:new}.} To solve the subproblem \eqref{equ:prob:orth+:new:exact:penalty} efficiently, we develop a second-order algorithm for solving optimization over $\obliqueplus$, which is of independent interest.    
We also discuss how to use the proposed penalty algorithmic framework to solve a two block  model  
\be\label{equ:prob:orth+:general}
\min_{X \in \stiefplus, Y \in \Ycal} \, f(X,Y),
\ee
where $\Ycal$ is some simple closed set such that  the projection to the set $\Ycal$ is easy to compute. 
Finally, numerical results on the projection problem and ONMF on synthetic data, text clustering, hyperspectral unmixing and K-indicators model demonstrate the efficiency of our approach.

\subsection{Organization}
The rest of this paper is organized as follows.  A new characterization of $\stiefplus$ and the equivalent reformulation  of problem
\eqref{equ:prob:orth+}  are  given in  \cref{section:reform}.
We propose the exact penalty  model in  \cref{section:exact:penalty}. A practical penalty algorithm together with its convergence results is presented in  \cref{section:penaly:alg}.  We investigate a second-order method for solving the penalty subproblem, namely, optimization over $\obliqueplus$ in  \cref{section:opt:obliqueplus}.  A variety of numerical results are presented in  \cref{section:num}. Finally, we make some concluding remarks in  \cref{section:concluding}. 

\subsection{Notations}  \label{subsection:notation}
For a positive integer $n$, we denote $[n]\coloneqq \{1, \ldots, n\}$. 
The $j$-th  column (resp. $i$-th row) of a matrix $Z$ with appropriate dimension is denoted by $Z_{:,j}$ (resp. $Z_{i,:}$).  
For simplicity, we also denote $\zbf_j \coloneqq Z_{:,j}$. Let $\Scal$ be a closed set, we use $\proj_{\Scal}(\cdot)$ to denote the projection   operator. 
%For $Z \in \Rbb^{n \times k}$, we denote $\proj_{+}(Z):= \proj_{\Rbb_+^{n \times k}}(Z)$ for simplicity.
%Define  the matrix $\proj_{+}(Z)$  of the same size as $Z$  with $(\proj_{+}(Z))_{ij}  = \max(Z_{ij}, 0)$. 
The  number of nonzero elements of $Z$ is $\|Z\|_0$. 
 The Frobenius norm of  $Z$ is  $\|Z\|_{\Ftt}$ while the 2-norm of a vector $z$ is $\|z\|$.   For $z \in \Rbb^n$, $\Diag(z)\in \Rbb^{n \times n}$ is a diagonal matrix with the main diagonal being $z$.  For $Z \in \Rbb^{n \times n}$, $\diag(Z)\in \Rbb^n$ is the main diagonal of $Z$. For simplicity,  we use  $\Diag(Z)$ to denote $\Diag(\diag(Z))$.  Let $\off(Z) = Z - \Diag(Z)$.  The inner product between  two matrices $A$ and $B$ with the same sizes is $\langle A, B \rangle = \mtr(A^{\tran}B)$.  
%while the  Hadamard product $A \circ B$ is of the same size and $(A \circ B)_{ij} = A_{ij} B_{ij}$. 
The notation   $\0 \leq A \perp B \geq \0$ means that $A  \geq \0$ and $B \geq 0$ component-wisely and $A \circ B = \0$, where $\circ$ means the Hadamard product operation. Similarly, $\min(A,B)$ takes the minimum of matrices $A$ and $B$  component-wisely.
 %Let $\e_i$ be the  unit vector along component $i$ and $\e$ be the all-one vector, their dimensions are  always clear from the context.   
%To obtain the exact recovery results, we need to make a blanket 
%5Again note that $\Xcal_V = \stiefplus$. 

\section{Reformulation of problem \eqref{equ:prob:orth+}} \label{section:reform}

Let $f^*$ and $\Xcal^{*}$ be the optimal value and optimal solution set of
problem \eqref{equ:prob:orth+} or \eqref{equ:prob:orth+:new}, respectively. We
define $\sgn(\stiefplus) := \{H \in \{0,1\}^{n,k}: H = \sgn(X)\ \mbox{with}\  X
\in \stiefplus\}$, where % $\sgn(X) \in \Rbb^{n \times k}$  with
$\sgn(X)_{ij} = |X_{ij}|/X_{ij}$ if $X_{ij} \neq 0$ and  $\sgn(X)_{ij} = 0$ otherwise. 
 The set  $\sgn(\Xcal^{*})$ is defined accordingly. 
For ease of reference, we state a blanket assumption on problem \eqref{equ:prob:orth+} or \eqref{equ:prob:orth+:new}.  
% := \{H \in \{0,1\}^{n,k}: H = \sgn(X)\ \mbox{with}\  X \in \Xcal^*\}.$
% It is not hard to see that the cardinality of  $\sgn(\stiefplus)$ is finite. 
%Note that $\sgn(\rblue{\Xcal^{*}}) \subseteq \sgn(\stiefplus)$.
% We define the relative complement of $\sgn(\rblue{\Xcal^{*}})$ with respect to $\sgn(\stiefplus)$ as 
%$$\sgn(\stiefplus)  \setminus \sgn (\rblue{\Xcal^*}) :=  \{H \in \{0,1\}^{n,k}:  H \in \sgn(\stiefplus)\ \mbox{but}\ H \not \in  \sgn (\rblue{\Xcal^*}) \}.$$
%}
%\rblue{We now} introduce the following assumption on problem \eqref{equ:prob:orth+:new}. 
\begin{assumption}\label{assump:blank}
We assume that $\emptyset \neq \sgn(\stiefplus)  \setminus \sgn (\Xcal^*)  :=  \{H \in \{0,1\}^{n,k}:  H \in \sgn(\stiefplus)\ \mbox{but}\ H \not \in  \sgn (\Xcal^*) \}$, namely, the constant 
%\be\label{equ:varpi}
$\chi_f := \tilde f^* - f^* > 0$ with 
%\ee
%\rblue{$$\tilde f^* = \min\limits_{X \in \sgn(\stiefplus)  \setminus \sgn (\rblue{\Xcal^*}) } \, f(X).$$}
\be%\label{equ:prob:orth+}
\tilde f^* = \min_{X\in \Rbb^{n \times k}} \, f(X)  \quad \st \quad X  \in \stiefplus,  \sgn(X) \in \sgn(\stiefplus) \setminus \sgn(\Xcal^{*}). \nn
\ee
\end{assumption}
If \cref{assump:blank} does not hold, then $\sgn(\stiefplus)  \setminus \sgn (\Xcal^*)  =  \emptyset$, which with $\sgn(\Xcal^{*}) \subseteq \sgn(\stiefplus)$ tells   $\sgn(\stiefplus)  = \sgn (\Xcal^*)$. 
In this case, problem \eqref{equ:prob:orth+} or \eqref{equ:prob:orth+:new} is trivial in the sense that any  $X$ with $\|X\|_0 = k$ and  having a $k$-by-$k$ permutation matrix as its submatrix is  a global minimizer. However, we can verify that  \cref{assump:blank} holds for the test problems   in section \ref{section:num} by randomly choosing some  matrices in $\stiefplus$ with different sign matrices and comparing their function values.

For $X \in \stiefplus$, we define  $\supp(X) := \{(i,j) \in [n] \times [k]: X_{ij} \neq 0\}$ and $\Omega_0(X) = \left\{(i,j) \in [n] \times [k]:  X_{ij} = 0\right\}$.  The set  $\Omega_0(X)$ is splited into two disjoint sets as $\Omega'_0(X) $$ = \left\{(i,j) \in \Omega_0(X): \|X_{i,:}\| > 0\right\}$ and $\Omega''_0(X) =
 \left\{(i,j) \in \Omega_0(X): \|X_{i,:}\| = 0\right\}$. 
 %We  have
%\be \label{equ:property:set1}
%$\supp(X) \cup \Omega_0(X) = [n] \times [k]$ and $\supp(X) \cap \Omega_0(X) = \emptyset.$
%\ee
%Clearly,  we have  $\Omega'_0(X) \neq \emptyset$ and
%\be \label{equ:property:set2}
%\Omega_0(X) = \Omega'_0(X) \cup \Omega''_0(X), \quad \Omega'_0(X) \cap \Omega''_0(X) = \emptyset.
%\ee
%\cref{lemma:key:observation:V} gives a reformulation $\Xcal_V$ of $\stiefplus$, we now investigate some  algebra and geometry properties of  $\Xcal_V$ and $\stiefplus$. 

We first give an equivalent  algebraic characterization of  $\stiefplus$. %and then a reformulation \rblue{\eqref{equ:prob:orth+:new}} of problem \eqref{equ:prob:orth+}. 

\begin{lemma} \label{lemma:key:observation:V}
For any $X \in \obliqueplus$, there holds that 
%\be\label{equ:XV:norm}
$\|XV\|_{\Ftt} \geq 1$, 
%\ee
where the equality holds if and only if $X \in \stiefplus$. Furthermore,  the  characterization \eqref{equ:stiefplus:equivalent:general:V} holds. 
\end{lemma}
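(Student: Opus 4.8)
The plan is to reduce everything to a statement about the Gram matrix $G \coloneqq X^{\tran}X$ paired against the fixed matrix $W \coloneqq VV^{\tran}$. First I would rewrite the quantity of interest as $\|XV\|_{\Ftt}^2 = \mtr(V^{\tran}X^{\tran}XV) = \langle X^{\tran}X,\, VV^{\tran}\rangle = \langle G, W\rangle$, and then record the structural facts that make the bound work. Since every column $\xbf_j$ of an $X \in \obliqueplus$ is a unit vector, $G_{jj} = \|\xbf_j\|^2 = 1$ for all $j \in [k]$; since $X \geq \0$, each entry $G_{ij} = \langle \xbf_i, \xbf_j\rangle$ is nonnegative. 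On the $V$ side, $\mtr(W) = \|V\|_{\Ftt}^2 = 1$, and by hypothesis $W_{ij} = [VV^{\tran}]_{ij} > 0$ for every pair $(i,j)$.

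Next I would split the pairing into diagonal and off-diagonal contributions: $\langle G, W\rangle = \sum_j G_{jj}W_{jj} + \sum_{i\neq j} G_{ij}W_{ij} = \sum_j W_{jj} + \sum_{i\neq j} G_{ij}W_{ij} = 1 + \sum_{i\neq j} G_{ij}W_{ij}$, where the last equality uses $G_{jj}=1$ and $\mtr(W)=1$. Each summand $G_{ij}W_{ij}$ is a product of a nonnegative and a positive number, hence $\geq 0$, so $\|XV\|_{\Ftt}^2 = \langle G, W\rangle \geq 1$, which gives $\|XV\|_{\Ftt} \geq 1$.

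For the equality statement, $\|XV\|_{\Ftt} = 1$ holds iff $\sum_{i\neq j}G_{ij}W_{ij} = 0$; since every term is nonnegative and every $W_{ij} > 0$, this is equivalent to $G_{ij} = 0$ for all $i \neq j$, which combined with $G_{jj}=1$ says exactly $X^{\tran}X = I_k$. Together with $X \geq \0$ (automatic from $X \in \obliqueplus$) this is precisely $X \in \stiefplus$. The converse is immediate: if $X \in \stiefplus$ then $X \in \obliqueplus$ and $X^{\tran}X = I_k$, so $\|XV\|_{\Ftt}^2 = \langle I_k, W\rangle = \mtr(W) = 1$. The set identity \eqref{equ:stiefplus:equivalent:general:V} then follows by chasing inclusions: $\stiefplus \subseteq \obliqueplus$ and $\|XV\|_{\Ftt}=1$ on $\stiefplus$ give $\stiefplus \subseteq \Xcal_V$, while any $X \in \Xcal_V$ lies in $\obliqueplus$ with $\|XV\|_{\Ftt}=1$, hence in $\stiefplus$ by the equality case.

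There is no serious obstacle here; the argument is essentially a one-line inequality after the right reformulation. The only point requiring care is the role of the assumption $\min_{i,j\in[k]}[VV^{\tran}]_{ij} > 0$: it is used exactly in the equality analysis, where strict positivity of all off-diagonal entries of $W$ is what forces every off-diagonal Gram entry to vanish, and hence what makes minimizing $\|XV\|_{\Ftt}$ over $\obliqueplus$ detect orthogonality rather than some weaker condition. I would also note in passing that positive semidefiniteness of $G$ is not needed for this lemma — only its unit diagonal and entrywise nonnegativity, both coming from $X \in \obliqueplus$, enter the proof.
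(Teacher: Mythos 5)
Your proof is correct and follows essentially the same route as the paper: both arguments reduce $\|XV\|_{\Ftt}^2-1$ to the off-diagonal pairing $\sum_{i\neq j}[VV^{\tran}]_{ij}\,\xbf_i^{\tran}\xbf_j$, use nonnegativity of the Gram entries together with strict positivity of $VV^{\tran}$ to get the inequality, and characterize equality by the vanishing of all off-diagonal inner products. The only difference is presentational (you spell out the diagonal/trace bookkeeping and the final set-inclusion chase more explicitly), so there is nothing to flag.
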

\begin{proof} 
%The condition $X \in \obliqueplus$ implies that $X \in \sphereplusplus{n}{k}$, namely, $\|X\|_{\Ftt} = 1$. 
With $\|V\|_{\Ftt} = 1$ and $X \in \obliqueplus$,  we have 
%\be \label{equ:XV:zetaX}
$\|XV\|_{\Ftt}^2 - 1 = \iprod{VV^{\tran}}{X^{\tran}X - I_p}$ $= \sum_{i,j \in [k], i \neq j} [VV^{\tran}]_{ij} (\x_i^{\tran} \x_j),$
which with  $VV^{\tran} > 0$ implies that   $\|XV\|_{\Ftt}^2 - 1  \geq 0$.
 %the inequality dues to $X \geq 0$ and $VV^{\tran} > 0$ and
The  equality  holds 
if and only if $\x_i^{\tran} \x_j = 0$ for $i, j \in [k]$ and $i \neq j$, which with $X \in \obliqueplus$ means that $X \in \stiefplus$.  Hence \eqref{equ:stiefplus:equivalent:general:V} follows directly. The proof is completed. 
\end{proof}

With the equivalent characterization \eqref{equ:stiefplus:equivalent:general:V}
of $\stiefplus$ and \cref{lemma:stiefplus:XcalV},  we reformulate problem  \eqref{equ:prob:orth+} as problem  \eqref{equ:prob:orth+:new}. \rev{Throughout this paper, we mainly focus on the formulation \eqref{equ:prob:orth+:new} since it gives us more insight to design our exact penalty approach.  We are now going to discuss the CQs and first- and second-order optimality conditions \eqref{equ:prob:orth+:new} and investigate the relationship between the two formulations \eqref{equ:prob:orth+} and \eqref{equ:prob:orth+:new}.}
 %\rbrown{Bo will add some discussion on MPCC or Liu \& Boumal's work?}

\rev{
\subsection{Constraint qualifications of problem \eqref{equ:prob:orth+:new}} \label{subsection:cqs}   
In this subsection, we investigate several CQs of problem
\eqref{equ:prob:orth+:new} which are important to establish the  optimality
conditions. We mainly consider,  Guignard CQ (GCQ), which is the weakest CQ,
Abadie CQ   (ACQ) and  the  cone-continuity property (CCP), which is the weakest
strict CQ \cite{andreani2016cone}.  Note that the following implications hold: CCP $\Longrightarrow$ ACQ $\Longrightarrow$ GCQ.

%\rev{Problem \cref{equ:prob:orth+:new} is actually a nonlinear programming 
%\be\label{equ:prob:nlp}
%\min_{x \in \Rbb^n} g(x) \quad \st \quad c_i(x) = 0,  i \in \Ecal, \ c_i(x) \geq 0, i \in \Ical
%\ee
%with $x = \mvec(X):= \begin{bmatrix}\xbf_1^{\tran}\ \cdots \  \xbf_k^{\tran}\end{bmatrix}^{\tran}$, $\Ecal = \{1, \ldots, k + 1\}$ and  $c_j(x) = \xbf_j^{\tran}\xbf_j - 1, j \in [k]$, $c_{k+1}(x) = \|XV\|_{\Ftt} - 1,$
%and  $c_j(x) = x_{j-(k+1)}$, $j \in \Ical = \{k+2, \ldots, nk + (k+1)\}$.}
 %As to the definitions of LICQ, MFCQ, RCPLD, ACQ and GCQ for general nonlinear programming, one can refer to \cref{def:cqs}. 
%\rev{The next lemma characterizes several CQs for problem \eqref{equ:prob:orth+:new}. As to the definitions of CCP, ACQ and GCQ for general nonlinear programming, one can refer to \cref{def:cqs}. 
We first give the expression of the tangent cone $\TC_{\Xcal_V}(X)$ and linearized cone $\LC_{\Xcal_V}(X)$ at $X\in \Xcal_V$. 
Following the definition of linearized cone, we have 
\be  \label{equ:linearizedCone:Xcal}
\LC_{\Xcal_V}(X)  =  \left\{D \in \Rbb^{n \times k}: 
\begin{array}{l}
\x_j^{\tran} \dbf_j = 0 \ \forall   j \in [k], \\
  D_{ij} \geq 0\ \forall (i,j) \in \Omega_0(X), \langle D, XVV^{\tran}\rangle  = 0
%D_{ij} \geq 0, (i,j) \in \Omega_0(X)  
\end{array}
\right\}. 
\ee
With the choice of $V$ and \eqref{equ:stiefplus:equivalent:general:V},  $\langle
D, XVV^{\tran}\rangle$ $= 0$ tells that $\dbf_i^{\tran} \sum_{j \in
[k]}(VV^{\tran})_{ji}\xbf_j = 0$ which further implies that $D_{ij} = 0$ if
$(i,j) \in \Omega_0'(X)$. This together with the definitions of $\Omega_0'(X)$ and $\Omega_0''(X)$ and \eqref{equ:linearizedCone:Xcal} yields  
  \be \label{equ:tangentCone:Xcal:stiefplus}
  \LC_{\Xcal_V}(X)   
:={}   \left\{D\in \Rbb^{n \times k}: \!\!\!\!\!\!\!
\begin{array}{ll} 
& \xbf_j^{\tran} \dbf_j = 0\  \forall j \in [k],D_{ij} = 0\ \forall (i,j) \in \Omega_0'(X), \\ 
 &  D_{ij} \geq 0 \ \forall (i,j) \in \Omega_0''(X) 
 \end{array}
 \!\!
 \right\}.
\ee
%Noting that the tangent cone is independent of the algebra representation of a set, there naturally holds that 
The tangent cone at $X$ is given as $\Tcal_{\Xcal_V}(X)  %\be %\label{equ:TC}
%\Tcal_{\Xcal_V}(X) = 
%\{\0\} \cup \left\{D \in \Rbb^{n \times k}: 
%\begin{array}{l}
% \exists \{X^{l}\} \subset \Xcal_V,  X^{l}  \neq X,  X^{l} \rightarrow X \\
%   \mbox{such that}\ \frac{X^l - X}{\|X^{l} - X\|_{\Ftt}} \rightarrow \frac{D}{\|D\|_{\Ftt}}
%   \end{array}
%   \right\}.
%\ee
 = \{D\in \Rbb^{n \times k}:  \exists \alpha^l > 0,  \alpha^l \rightarrow 0, D^l \rightarrow D \ \mbox{such that} \  X^l:= \bar X + \alpha^l D^l \in \Xcal_V\}$. 
Clearly we have $\Tcal_{\Xcal_V}(X) \subseteq \LC(X)$. For  each $l$ and $i \in [n]$, there is at most one element of $(X^l - X)_{ij}\ \forall (i,j) \in \Omega_0''(X)$ being nonzero. Hence,  any $D \in \Tcal_{\Xcal_V}(X)$ must satisfy $\|D_{i,:}\|_0 \leq 1$ if $X_{i,:}=0$.  On the other hand, for any $D \in \LC_{\Xcal_V}(X)$ with $\|D_{i,:}\|_0 \leq 1$ if $X_{i,:}=0$, choosing $D^l \equiv D$, $\alpha^l = 1/l$ and $X^l$  as $\xbf_j^l = (\xbf_j + \alpha^l \dbf_j)/\|\xbf_j + \alpha^l \dbf_j\|$, it is clear that $X^l \in \Xcal_V$. This means that $D \in \Tcal_{\Xcal_V}(X)$. In summary, we arrive at 
\be \label{equ:tangent:cone}
\TC_{\Xcal_V}(X) =   \LC_{\Xcal_V}(X) \cap \left\{D \in \Rbb^{n\times k}:  \|D_{i,:}\|_0 \leq 1 \ \mathrm{if}\ X_{i,:} = 0\ \forall i \in [n] \right\}. 
\ee

We now discuss the CQs in the following lemma.  

\begin{lemma}\label{lemma:CQ}
Consider a feasible $\bar X\in \Xcal_V$ of \eqref{equ:prob:orth+:new}.   If $k=1$,  then the linear independence constraint qualification (LICQ) holds at $\bar X$;  if $2 \leq k \le n$ and $\|\bar X\|_0 = n$, then CCP holds; if $2 \leq k \le  n$ and $\|\bar X\|_0 < n$, then GCQ holds but ACQ fails to hold.
\end{lemma}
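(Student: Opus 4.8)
The plan is to treat the three regimes of the statement separately, in each case combining the identity $\Xcal_V=\stiefplus$ (\cref{lemma:key:observation:V}) with the explicit descriptions \eqref{equ:linearizedCone:Xcal}--\eqref{equ:tangent:cone} of $\LC_{\Xcal_V}(\bar X)$ and $\TC_{\Xcal_V}(\bar X)$. Recall that the constraints of \eqref{equ:prob:orth+:new} are the $k$ spherical equalities $\|\xbf_j\|^2=1$, the $nk$ sign inequalities $X_{ij}\ge 0$, and the single equality $\|XV\|_{\Ftt}^2=1$, with gradients $2\xbf_je_j^\T$, $-E_{ij}$ and $2XVV^\T$ (here $e_j\in\Rbb^k$ and $E_{ij}\in\Rbb^{n\times k}$ is the $(i,j)$ matrix unit). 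When $k=1$ the matrix $V$ is a row vector with $VV^\T=\|V\|^2=1$, so $\|XV\|_{\Ftt}^2=\|\xbf_1\|^2$ and the nonlinear constraint coincides with the spherical one in $\obliqueplus$; the feasible set is just $\{x:\|x\|^2=1,\ x\ge 0\}$, and at $\bar X=\bar x$ the active gradients are $2\bar x$ together with $\{-e_i:\bar x_i=0\}$. Since $\bar x\in\mathrm{span}\{e_i:\bar x_i\neq 0\}$ is orthogonal to $\mathrm{span}\{e_i:\bar x_i=0\}$ and $\bar x\neq 0$, these are linearly independent, i.e.\ LICQ holds.

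Next suppose $2\le k\le n$ and $\|\bar X\|_0=n$. Then every row of $\bar X$ has exactly one positive entry, say in column $j(i)$, so $\Omega_0''(\bar X)=\emptyset$; moreover for $X\in\stiefplus$ near $\bar X$ the entry $X_{i,j(i)}$ stays positive, hence (at most one positive entry per row) $X_{ij}=0$ for $j\neq j(i)$, i.e.\ $\Omega_0(X)=\Omega_0(\bar X)$ and $\supp(X)=\supp(\bar X)$. The constraint $\|XV\|_{\Ftt}=1$ is therefore redundant near $\bar X$, but it still contributes the gradient $2XVV^\T$ to the gradient cone
\[
K(X)=\Bigl\{\textstyle\sum_j\lambda_j\,2\xbf_je_j^\T+\mu\,2XVV^\T-\sum_{(i,j)\in\Omega_0(\bar X)}\nu_{ij}E_{ij}\ :\ \lambda_j,\mu\in\Rbb,\ \nu_{ij}\ge 0\Bigr\},
\]
and CCP amounts to outer semicontinuity of $K(\cdot)$ at $\bar X$. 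The point is that this extra gradient is absorbed: since $X_{i\ell}\neq 0$ only for $\ell=j(i)$ one gets $(XVV^\T)_{ij}=(VV^\T)_{j(i),j}X_{i,j(i)}$, which equals $(VV^\T)_{jj}X_{ij}$ on $\supp(\bar X)$ and is strictly positive on $\Omega_0(\bar X)$ (using $\min_{ij}(VV^\T)_{ij}>0$); hence $2XVV^\T=\sum_j 2(VV^\T)_{jj}\xbf_je_j^\T+\sum_{(i,j)\in\Omega_0(\bar X)}2(XVV^\T)_{ij}E_{ij}$, the first sum merges into the span of the spherical gradients, and because $\Rbb\,v+(-\Rbb_+^{\Omega_0(\bar X)})$ is the whole coordinate subspace $U:=\mathrm{span}\{E_{ij}:(i,j)\in\Omega_0(\bar X)\}$ for any strictly positive $v$, one obtains $K(X)=W(X)\oplus U$ with $W(X):=\mathrm{span}\{\xbf_je_j^\T:j\in[k]\}$. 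As the $\xbf_je_j^\T$ lie in distinct columns with fixed nonzero supports, $W(X)\oplus U$ has constant dimension $k+|\Omega_0(\bar X)|$ and depends continuously on $X$; thus $K$ is continuous, hence outer semicontinuous, at $\bar X$, which is CCP (the cases of a strictly smaller active set, coming from non-feasible $X\to\bar X$, are routine: either the multipliers stay bounded and the limit lies in $K(\bar X)$ directly, or they blow up and one reaches a contradiction with the linear independence of $\{2\xbf_je_j^\T\}_{j}\cup\{E_{ij}\}_{\Omega_0(\bar X)}$). Equivalently one may verify CPLD: the unique positively linearly dependent subfamily of active gradients at $\bar X$ is $\{2\xbf_je_j^\T\}_{j\in[k]}\cup\{2XVV^\T\}\cup\{-E_{ij}\}_{(i,j)\in\Omega_0(\bar X)}$, and its dependence persists for $X$ near $\bar X$.

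Finally suppose $2\le k\le n$ and $\|\bar X\|_0<n$, so some row $i_0$ of $\bar X$ vanishes and $(i_0,j)\in\Omega_0''(\bar X)$ for all $j$. Taking $D:=E_{i_0,1}+E_{i_0,2}$ (possible since $k\ge 2$), one checks from \eqref{equ:tangentCone:Xcal:stiefplus} that $D\in\LC_{\Xcal_V}(\bar X)$, whereas $\|D_{i_0,:}\|_0=2$ and \eqref{equ:tangent:cone} give $D\notin\TC_{\Xcal_V}(\bar X)$; hence $\TC_{\Xcal_V}(\bar X)\subsetneq\LC_{\Xcal_V}(\bar X)$, so ACQ (and a fortiori CCP) fails. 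To establish GCQ it suffices to prove $\mathrm{conv}\,\TC_{\Xcal_V}(\bar X)=\LC_{\Xcal_V}(\bar X)$, since then $\TC_{\Xcal_V}(\bar X)^\circ=(\mathrm{conv}\,\TC_{\Xcal_V}(\bar X))^\circ=\LC_{\Xcal_V}(\bar X)^\circ$. Given $D\in\LC_{\Xcal_V}(\bar X)$, write $D=D^0+\sum_{i:\bar X_{i,:}=0}\sum_{j:D_{ij}>0}D_{ij}E_{ij}$, where $D^0$ is $D$ with every zero-row of $\bar X$ set to zero. Using $\bar X_{ij}=0$ on those rows and that $\Omega_0'(\bar X)$ avoids them, one verifies directly from \eqref{equ:tangentCone:Xcal:stiefplus} that $D^0$ and each $D_{ij}E_{ij}$ satisfy the defining conditions and have at most one nonzero in every zero-row, i.e.\ lie in $\TC_{\Xcal_V}(\bar X)$. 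Since $\TC_{\Xcal_V}(\bar X)$ is a cone this gives $\LC_{\Xcal_V}(\bar X)\subseteq\mathrm{conv}\,\TC_{\Xcal_V}(\bar X)$; the reverse inclusion is clear, and GCQ follows.

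The main obstacle is the CCP claim in the middle regime. Naively the redundant constraint $\|XV\|_{\Ftt}=1$ already breaks LICQ and MFCQ at $\bar X$, and a continuously varying subspace plus a fixed polyhedral cone need not be outer semicontinuous, so the argument genuinely relies on the two structural features above: $VV^\T$ has a strictly positive diagonal (forcing the redundant gradient into the span of the spherical gradients), and the relevant supports are fixed and disjoint (so that $K(\cdot)$ is in fact a continuously varying subspace rather than a proper cone). The remaining steps are elementary manipulations of the cones \eqref{equ:linearizedCone:Xcal}--\eqref{equ:tangent:cone}.
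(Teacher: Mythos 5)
Your proof is correct and follows essentially the same route as the paper's: the $k=1$ and $\|\bar X\|_0=n$ cases use exactly the paper's observation that the gradient $2XVV^{\tran}$ decomposes into a diagonal part absorbed by the spherical gradients plus a strictly positive part on $\Omega_0(\bar X)$ that, together with the sign-constraint multipliers, turns $\Kcal_{\Xcal_V}(X)$ into the fixed-support subspace \eqref{equ:Kcal:cone}, and the ACQ failure is the same counterexample via $\Omega_0''(\bar X)\neq\emptyset$. The only (minor) divergence is in verifying GCQ: the paper computes the polars $\Tcal_{\Xcal_V}(\bar X)^{\circ}=\LC_{\Xcal_V}(\bar X)^{\circ}$ explicitly, while you show $\mathrm{conv}\,\TC_{\Xcal_V}(\bar X)=\LC_{\Xcal_V}(\bar X)$ by splitting $D$ into its zeroed-row part and single matrix units — an equivalent and equally valid verification of the same structural fact.
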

\begin{proof}
{Case I}. $k = 1$. It is straightforward to check that 
LICQ  holds at $\bar X$.

{Case II}.  $2 \leq k \le n$ and $\|\bar X\|_0  = n$, namely, each row of $\bar X$ has exactly one positive element.  In this case $\Omega'_0(\bar X) = \Omega_0(\bar X)$ and $\Omega''_0(\bar X) = \emptyset$. 
For a sequence $\{X^l\} \subset \Xcal_V$ and $X^l \rightarrow \bar X$, we consider the closed convex cone\footnote{For the definition of this cone, one can refer to equation (2.11) in \cite{andreani2016cone}. },  which is related to CCP, as 
\[
\Kcal_{\Xcal_V}(X^l) =  \left\{X^l \Diag(\Lambda) + \lambda X^lVV^{\tran} - \sum_{(i,j) \in \Omega_0(\bar X)} Z_{ij} \mathbf{E}_{ij}: \Lambda \in \Rbb^k, \lambda \in \Rbb, Z_{ij} \in \Rbb_+ \right\}
\]
where $\Ebf_{ij} \in \Rbb^{n \times k}$ with  $(i,j)$ element being one while
the remaining elements being zeros.  Since $\Xcal_V \owns X^l \rightarrow \bar
X$ and $\Omega_0''(\bar X) = \emptyset$, we have  $\Omega_0(X^l) = \Omega(\bar
X)$ and $\supp(X^l) = \supp(\bar X)$ for sufficiently large $l$. Thus $X^lVV^{\tran}=X^l\Diag(VV^\tran)+\sum_{(i,j) \in \Omega_0(\bar X)} Z_{ij} \mathbf{E}_{ij}$ for some $Z_{ij} \in \Rbb_+$. By some easy calculations, one have 
\be\label{equ:Kcal:cone}
\Kcal_{\Xcal_V}(X^l) = \left\{X^l \Diag(\Lambda)  + \sum_{(i,j) \in \Omega_0(\bar X)} Z_{ij} \mathbf{E}_{ij}: \Lambda \in \Rbb^k, Z_{ij} \in \Rbb \right\}
\ee
for sufficiently large $l$, which with $X^l \rightarrow \bar X$ implies that $\limsup_{X^l \rightarrow \bar X} \Kcal_{\Xcal_V}(X^l) \subset \Kcal_{\Xcal_V}(\bar X)$.  This means that CCP holds in this case.

{Case III}. $2 \leq k \le n$ and $\|\bar X\|_0  <  n$.  In this case, $\Omega''_0(\bar X) \neq \emptyset$.  By  definition, it is  easy to verify that the polar cones of 
$\Tcal(X)$ and $\LC(X)$  coincide, namely, 
\[
\Tcal_{\Xcal_V}(X)^{\circ} = \LC_{\Xcal_V}(X)^{\circ} =  \left\{D \in \Rbb^{n \times k}:   
  \begin{array}{ll}
  D_{ij} =  \lambda_j X_{ij}, \lambda_j \in \Rbb \ \forall (i,j) \in \supp(X),\\ 
  D_{ij} \leq 0\ \forall  (i,j) \in \Omega''_0(X)
  \end{array}
  \right\}.
\]
This means GCQ holds. Recalling $\Omega''_0(\bar X) \neq \emptyset$, we know from  \eqref{equ:tangentCone:Xcal:stiefplus} and \eqref{equ:tangent:cone} that $\Tcal_{\Xcal_V}(X) \subsetneq  \LC_{\Xcal_V}(X)$, which tells that ACQ does not hold. The proof is completed. 
\end{proof}
}

\subsection{Optimality conditions of problem \eqref{equ:prob:orth+:new}} 
\label{subsection:1st2ndKKT}
Denote the Riemannian gradient  \cite{absil2009optimization} with respect to the oblique manifold  $\oblique$ as 
\be\label{equ:rgrad:f}
\rgrad f(X) = 
\nabla f(X) - X  \Diag\big(X^{\tran} \nabla f(X)\big).
\ee

\begin{theorem} [First-order necessary conditions]\label{theorem:prob:new:KKT} 
Suppose that $\bar X \in \Xcal_V$ is a local minimizer of  \eqref{equ:prob:orth+:new}. 
 Then $\bar X$ is a stationary point of \eqref{equ:prob:orth+:new}, namely,  $-\nabla f(\bar X) \in \LC_{\Xcal_V}(\bar X)^{\circ}$, which can be further represented as
\begin{subequations}
\begin{align}
&[\rgrad f(\bar X)]_{ij} = 0 \quad \forall (i,j) \in \supp(\bar X), \label{equ:strongStationary:1}\\
&[\nabla f(\bar X)]_{ij} \geq 0 \quad \forall (i,j)  \in \Omega''_0(\bar X).\label{equ:strongStationary:2}
\end{align}
\end{subequations}
\end{theorem}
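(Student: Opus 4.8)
The plan is to derive the claim from the geometric (primal) first‑order necessary condition at a local minimizer and then to convert it into the explicit form via the constraint qualification and polar‑cone computations already carried out in \cref{lemma:CQ}. Since $\Xcal_V=\stiefplus$ is closed and $\bar X$ is a local minimizer of $f$ over $\Xcal_V$, the standard geometric optimality condition yields $\langle \nabla f(\bar X),D\rangle\ge 0$ for every $D\in\TC_{\Xcal_V}(\bar X)$, i.e.\ $-\nabla f(\bar X)\in\TC_{\Xcal_V}(\bar X)^{\circ}$. By \cref{lemma:CQ}, GCQ holds at $\bar X$ in every case (LICQ, hence GCQ, when $k=1$; and GCQ when $2\le k\le n$ regardless of whether $\|\bar X\|_0=n$), and GCQ is by definition the statement $\TC_{\Xcal_V}(\bar X)^{\circ}=\LC_{\Xcal_V}(\bar X)^{\circ}$. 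Therefore $-\nabla f(\bar X)\in\LC_{\Xcal_V}(\bar X)^{\circ}$, which is exactly the assertion that $\bar X$ is a stationary point of \eqref{equ:prob:orth+:new}.

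It then remains to unpack $\LC_{\Xcal_V}(\bar X)^{\circ}$. From \eqref{equ:tangentCone:Xcal:stiefplus} the cone $\LC_{\Xcal_V}(\bar X)$ decouples column by column: within column $j$ the only constraint coupling the support entries is the single linear equation $\xbf_j^{\tran}\dbf_j=0$, the entries in $\Omega_0'(\bar X)$ are fixed to zero, and the entries in $\Omega_0''(\bar X)$ obey $D_{ij}\ge 0$. Taking polars (polar of a product is the product of polars, the polar of the hyperplane $\xbf_j^{\perp}$ is the line $\Rbb\,\xbf_j$, the polar of the subspace $\{0\}$ is the whole space, and the polar of the constraint $D_{ij}\ge 0$ is $D_{ij}\le 0$) reproduces precisely the formula for $\LC_{\Xcal_V}(\bar X)^{\circ}$ displayed in the proof of \cref{lemma:CQ}, which remains valid when $\Omega_0''(\bar X)=\emptyset$:
\[
\LC_{\Xcal_V}(\bar X)^{\circ}=\Bigl\{D\in\Rbb^{n\times k}:\ D_{ij}=\lambda_j \bar X_{ij},\ \lambda_j\in\Rbb,\ \forall (i,j)\in\supp(\bar X);\quad D_{ij}\le 0\ \forall (i,j)\in\Omega_0''(\bar X)\Bigr\}.
\]
Specializing $D=-\nabla f(\bar X)$, the $\Omega_0''(\bar X)$ part gives $-[\nabla f(\bar X)]_{ij}\le 0$, i.e.\ \eqref{equ:strongStationary:2}.

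Finally I show the support part, $-[\nabla f(\bar X)]_{ij}=\lambda_j\bar X_{ij}$ on $\supp(\bar X)$, is equivalent to \eqref{equ:strongStationary:1}. Write $g:=\nabla f(\bar X)$ and $\gbf_j:=g_{:,j}$. If $g_{ij}=-\lambda_j\bar X_{ij}$ for all $(i,j)\in\supp(\bar X)$, then, using that $\bar X_{ij}=0$ outside the support of $\xbf_j$ together with $\|\xbf_j\|=1$, we get $\xbf_j^{\tran}\gbf_j=\sum_i\bar X_{ij}g_{ij}=-\lambda_j\|\xbf_j\|^2=-\lambda_j$; hence by \eqref{equ:rgrad:f}, $[\rgrad f(\bar X)]_{ij}=g_{ij}-\bar X_{ij}(\xbf_j^{\tran}\gbf_j)=-\lambda_j\bar X_{ij}+\lambda_j\bar X_{ij}=0$ for $(i,j)\in\supp(\bar X)$. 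Conversely, $[\rgrad f(\bar X)]_{ij}=0$ on $\supp(\bar X)$ gives $g_{ij}=\bar X_{ij}(\xbf_j^{\tran}\gbf_j)$, i.e.\ the required representation with $\lambda_j:=-\xbf_j^{\tran}\gbf_j$. This completes the proof. Given \cref{lemma:CQ}, there is no serious obstacle left; the only point that needs care is that ACQ genuinely fails when $\|\bar X\|_0<n$, so one must route the argument through the \emph{weakest} CQ (GCQ) rather than through KKT multipliers, and the remaining bookkeeping is the column‑wise polar‑cone computation and the identity tying the multiplier form to the Riemannian‑gradient form.
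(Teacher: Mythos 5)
Your proof is correct, and the second half takes a genuinely different route from the paper's. The opening step — geometric optimality at a local minimizer plus GCQ from \cref{lemma:CQ} to pass from $\TC_{\Xcal_V}(\bar X)^{\circ}$ to $\LC_{\Xcal_V}(\bar X)^{\circ}$ — is the same as the paper's. From there the paper invokes the existence of KKT multipliers $(\bar\Lambda,\bar\lambda,\bar Z)$ for the Lagrangian \eqref{equ:Lagrangian:fun}, solves \eqref{equ:theorem:prob:new:KKT:c1} for $\bar Z$ via left-multiplication by $\bar X^{\tran}$, and must handle the entries on $\Omega_0'(\bar X)$ by choosing $\bar\lambda\ge\bar\lambda(\bar X)$ as in \eqref{equ:lambda:inequality} to keep $\bar Z\ge 0$ there. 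You instead bypass the multipliers entirely: you compute $\LC_{\Xcal_V}(\bar X)^{\circ}$ directly from the column-wise product structure of \eqref{equ:tangentCone:Xcal:stiefplus} (hyperplane, zero subspace, nonnegative orthant on the three index groups), which reproduces the polar formula already displayed in Case III of \cref{lemma:CQ}, and then translate the multiplier form $-[\nabla f(\bar X)]_{ij}=\lambda_j\bar X_{ij}$ on $\supp(\bar X)$ into $[\rgrad f(\bar X)]_{ij}=0$ via \eqref{equ:rgrad:f}; that equivalence check is correct in both directions. Your route is shorter and cleaner for the first-order statement alone, since the polar leaves the $\Omega_0'(\bar X)$ entries unconstrained and so no threshold on $\bar\lambda$ is ever needed. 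What the paper's longer detour buys is the explicit multiplier bookkeeping — the identity $2\bar\Lambda-\bar\lambda\,\diag(VV^{\tran})=\diag(\bar X^{\tran}\nabla f(\bar X))$, the formula \eqref{equ:Z:ulam} for $\bar Z$, and the bound \eqref{equ:lambda:inequality} — which is reused in the second-order results \cref{theorem:2nd:necessary} and \cref{theorem:2nd:sufficient}, so if you adopted your proof you would need to re-derive those multiplier identities there.
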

\begin{proof}
    \cref{lemma:CQ} tells that GCQ holds at $\bar X$. Thus, $\bar X$ must be a
    stationary point and  $-\nabla f(\bar X) \in \LC_{\Xcal_V}(\bar X)^{\circ}$
    due to \cite[Proposition 3.3.14]{bertsekas1999nonlinear}.  Hence, there
    exists Lagrange multiplier vector $\bar \Lambda \in \Rbb^k$ corresponding to $\|\xbf_j\|^2 = 1, j \in [k]$, Lagrange multiplier  $\bar \lambda \in \Rbb$ corresponding to $\|XV\|_{\Ftt} = 1$ and Lagrange multiplier matrix  $\bar Z \in \Rbb_{+}^{n\times k}$  corresponding to $X \geq \0$  such that  $\0 \leq \bar X \perp \bar Z \geq \0$ and 
$\nabla_X L(\bar X, \bar \Lambda, \bar Z, \bar \lambda)  = \0$, namely, %which with \eqref{equ:Lagrangian:fun} and $\bar X \in \Xcal_V$ implies that 
\be \label{equ:theorem:prob:new:KKT:c1}
\nabla f(\bar X) -  \bar X \Diag\big(2 \bar \Lambda - \bar \lambda \diag(VV^{\tran})\big) - \bar Z + \bar \lambda \bar X \off(VV^{\tran}) =  \0.
\ee
Here,  the Lagrangian function is given as 
\be \label{equ:Lagrangian:fun}
L(X, \bar \Lambda,Z, \bar \lambda) = f(X) - \sum_{j\in [k]} \bar \Lambda_j (\|\xbf_j\|^2 - 1) - \iprod{\bar Z}{X} + \bar \lambda (\|XV\|_{\Ftt} - 1).
\ee
Multiplying $\bar X^{\tran}$ on both sides of \eqref{equ:theorem:prob:new:KKT:c1} and then performing the $\diag(\cdot)$ operator,  with $\bar X^{\tran} \bar  X= I_k$, we have $2 \bar \Lambda - \bar \lambda \diag(VV^{\tran}) = \diag(\bar X^{\tran} \nabla f(\bar X)),$ 
 which again with \eqref{equ:theorem:prob:new:KKT:c1}  and \eqref{equ:rgrad:f} implies that $\bar Z = \rgrad f(\bar X) + \bar \lambda \bar X \off(VV^{\tran})$.  
 Recalling  $\bar X \in \Xcal_V$, it is easy to verify that  $[\bar X \off(VV^{\tran})]_{ij} = 0$  $\ \forall (i,j) \in \supp(\bar X)$, $[\bar X \off(VV^{\tran})]_{ij} > 0\ \forall (i,j) \in \Omega_0'(\bar X)$ and $[\rgrad f(\bar X)]_{ij}$$ =    [\nabla f(\bar X)]_{ij}\  \forall (i,j) \in \Omega'_0(\bar X) \cup \Omega''_0(\bar X)$. 
Hence,  we have 
\be\label{equ:Z:ulam}
\bar Z_{ij} =
\begin{cases}
 [\rgrad f(\bar X)]_{ij} &  (i,j) \in \supp(\bar X),\\ % i \in \{s_{j-1} + 1, \ldots, s_j\}, \\
 [\nabla f(\bar X)]_{ij} + \bar \lambda [\bar X \off(VV^{\tran})]_{ij} & (i,j) \in \Omega'_0(\bar X), \\
  [\nabla f(\bar X)]_{ij} & (i,j)  \in \Omega''_0(\bar X).
 \end{cases}
\ee
For each $(i,j) \in \Omega'_0(\bar X)$, we can always choose    
\be \label{equ:lambda:inequality}
\bar \lambda \geq \bar \lambda(\bar X):= \max_{(i.j) \in \Omega'_0(\bar X)} \frac{-[\nabla  f(\bar X)]_{ij}}{[\bar X \off(VV^{\tran})]_{ij}}
\ee
 such that $\bar Z_{ij} \geq 0$. Thus we arrive at the equivalent formulation 
\eqref{equ:strongStationary:1} and \eqref{equ:strongStationary:2}.
\end{proof}

%\rbrown{Since GCQ holds, $\bar X$ is a stationary   point of \eqref{equ:prob:orth+:new} if and only if  }

\rev{
We borrow the idea from mathematical programs with complementarity constraints, see \cite{scheel2000mathematical} for instance,  to define a   weakly stationary point $\bar X$ of  problem \eqref{equ:prob:orth+:new}.  
\begin{definition} We call $\bar X \in \Xcal_V$  a weakly stationary point of problem \eqref{equ:prob:orth+:new} if 
\eqref{equ:strongStationary:1} holds at $\bar X$. 
% \be\label{equ:weakStationary}
%  [\rgrad f(\bar X)]_{ij} = 0 \ \forall (i,j) \in \supp(\bar X).
%\ee
\end{definition}

Note that a weakly stationary point $\bar X$ has no requirements on the sign of  the Lagrange multiplier $[\nabla f(\bar X)]_{ij}$ with $(i,j) \in \Omega_0''(\bar X)$ and it  is actually a stationary point of problem \eqref{equ:prob:orth+:new} with additional constraints $X_{ij} = 0\ \forall (i,j) \in \Omega_0''(\bar X)$.
In the case when $\Omega_0''(\bar X) = \emptyset$, namely, $\|\bar X\|_0 = n$ or  $\nabla f(\bar X)\geq 0$ always holds, then the weakly stationary point $\bar X$ becomes a stationary point of problem  \eqref{equ:prob:orth+:new}.}

 We now assume that $f$ in  problem \eqref{equ:prob:orth+:new} is twice continuously differentiable.
The set of all sequential null constraint directions at a stationary point $\bar X$ (see Definition  8.3.1 in  \cite{sun2006optimization}) of problem \eqref{equ:prob:orth+:new} is given as 
%$\SNCD_{\Xcal_V}(\bar X, \bar Z) = \TC_{\Xcal_V}(\bar X) \cap \tilde \Dcal(\bar X)$
%with
\be
%\tilde \Dcal_2 (\bar X)
\SNCD_{\Xcal_V}(\bar X, \bar Z) = \left\{\!D \in \Rbb^{n \times k}: \!
\begin{array}{l}
X^l:= \bar X +  \alpha^{l} D^{l} \in \Xcal_V, \alpha^{l} > 0, \alpha^{l} \rightarrow 0, D^{l} \rightarrow D,\\
%\sum_{(i,j) \in [n] \times [k]} [\bar Z]_{ij} (\bar X + \alpha^{(l)} D^{(l)})_{ij}  = 0
X^l_{ij} = 0\  \mbox{if}\ \bar Z_{ij} > 0, X^l_{ij}\geq 0\ \mbox{if}\ \bar Z_{ij} = 0
\end{array}
\!
\right\}\!.
\nn
\ee
Notice that $\SNCD_{\Xcal_V}(\bar X, \bar Z) \subseteq \TC_{\Xcal_V}(\bar X)$, 
with  \eqref{equ:Z:ulam} and   \eqref{equ:tangent:cone},  we have  
\be \label{equ:SNCD:Xcal}
\SNCD_{\Xcal_V}(\bar X, \bar Z) = \TC_{\Xcal_V}(\bar X) \cap \left\{\!D \in \Rbb^{n \times k}: 
D_{ij} = 0 \  \mbox{if}\  [\nabla f(\bar X)]_{ij} > 0\ \forall(i,j) \in  \Omega''_0(\bar X)
\right\}\!.
\ee
%with $
%\Dcal(\bar X) = 
%\left\{D \in \Rbb^{n \times k}: 
%D_{ij} = 0 \  \mbox{if}\  [\nabla f(\bar X)]_{ij} > 0, (i,j) \in  \Omega''_0(\bar X)
%\right\}.$ 
Since $\SNCD_{\Xcal_V}$ is independent of $Z$,  we write for $\SNCD_{\Xcal_V}(\bar X, \bar Z)$ as $\SNCD_{\Xcal_V}(\bar X)$ for short. Similarly, we have the set of all linearized null constraint directions at $\bar X$, also known as the critical  cone, 
$\LNCD_{\Xcal_V}(\bar X)  
=  \LC_{\Xcal_V}(\bar X)  \cap  \{D \in \Rbb^{n \times k}: D_{ij} = 0\ \mbox{if}\ \bar Z_{ij} > 0, (i,j) \in \Omega_0(\bar X)\}$.
Using \eqref{equ:tangentCone:Xcal:stiefplus}, \eqref{equ:Z:ulam}, \eqref{equ:strongStationary:1} and \eqref{equ:strongStationary:2},  we further have 
\be \label{equ:LNCD:Xcal}
\LNCD_{\Xcal_V}(\bar X)  
=    \LC_{\Xcal_V}(\bar X)  \cap \left\{D \in \Rbb^{n \times k}: 
D_{ij} = 0 \  \mbox{if}\  [\nabla f(\bar X)]_{ij} > 0\ \forall (i,j) \in  \Omega''_0(\bar X)
\right\}. 
\ee
Define the Riemannian Hessian \cite{absil2009optimization} with respect to the oblique manifold $\oblique$ as  
\be \label{equ:hess} 
\rhess f(X)[D]  \coloneqq \nabla^2 f(X)[D]  -  D \Diag\big(\bar X^{\tran} \nabla f(X)\big), 
\ee
where $D$ satisfies $\xbf_j^{\tran} \dbf_j = 0$ for  $j \in [k]$.
We are now ready to establish the second-order optimality conditions as follows.
%; see \cite[Theorems 8.3.3 and 8.3.4]{sun2006optimization}.
\begin{theorem}[Second-order necessary conditions]\label{theorem:2nd:necessary}
If $\bar X \in \Xcal_V$ is a local minimizer of problem
\eqref{equ:prob:orth+:new}, then 
\be \label{equ:opt:SNC} 
\iprod{D}{\rhess f(\bar X)[D]} \geq 0, \quad \emph{for all} \ D \in \SNCD_{\Xcal_V}(\bar X).
\ee
\end{theorem}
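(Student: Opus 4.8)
The plan is to derive the second-order necessary condition by converting the abstract ``feasible sequence'' characterization of $\SNCD_{\Xcal_V}(\bar X)$ into a genuine feasible arc along which we can Taylor-expand $f$. Fix $D \in \SNCD_{\Xcal_V}(\bar X)$. By the very definition of $\SNCD_{\Xcal_V}(\bar X, \bar Z)$ recalled above, there are sequences $\alpha^l \downarrow 0$ and $D^l \to D$ with $X^l := \bar X + \alpha^l D^l \in \Xcal_V$ and, moreover, $X^l_{ij} = 0$ whenever $\bar Z_{ij}>0$ and $X^l_{ij}\ge 0$ whenever $\bar Z_{ij}=0$. First I would expand $f(X^l)$ to second order around $\bar X$:
\[
f(X^l) = f(\bar X) + \alpha^l \iprod{\nabla f(\bar X)}{D^l} + \tfrac{(\alpha^l)^2}{2}\iprod{D^l}{\nabla^2 f(\bar X)[D^l]} + o((\alpha^l)^2).
\]
Since $\bar X$ is a local minimizer and $X^l \to \bar X$ is feasible, $f(X^l) \ge f(\bar X)$ for large $l$, so the task reduces to showing the $\alpha^l$ term vanishes fast enough and then identifying the quadratic term with the Riemannian Hessian form.

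Next I would handle the first-order term. Using \eqref{equ:Z:ulam} and the complementarity $\0 \le \bar X \perp \bar Z \ge \0$ together with the sign structure of $X^l$ built into $\SNCD_{\Xcal_V}(\bar X)$, the inner product $\iprod{\nabla f(\bar X)}{D^l}$ splits over $\supp(\bar X)$, $\Omega_0'(\bar X)$, and $\Omega_0''(\bar X)$. On $\supp(\bar X)$ the stationarity identity $[\rgrad f(\bar X)]_{ij}=0$ from \eqref{equ:strongStationary:1} combined with the spherical constraint $\xbf_j^\top \dbf_j = 0$ (which holds for $D$, and approximately for $D^l$) makes that contribution $o(1)$ relative to the quadratic scale; on $\Omega_0'(\bar X)\cup\Omega_0''(\bar X)$ the definition of $\SNCD_{\Xcal_V}(\bar X)$ in \eqref{equ:SNCD:Xcal} forces $D_{ij}=0$ exactly when $[\nabla f(\bar X)]_{ij}>0$, and on the remaining zero-multiplier indices $[\nabla f(\bar X)]_{ij}=\bar Z_{ij}=0$. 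The delicate point is that $D^l$ need not lie in $\SNCD_{\Xcal_V}(\bar X)$ itself, only its limit does, so I would quantify the discrepancy: because $X^l_{ij}=0$ on indices where $\bar Z_{ij}>0$, we get $D^l_{ij}=0$ there exactly, and on the sphere constraints $\|\xbf_j^l\|=1$ gives $2\alpha^l \xbf_j^\top \dbf_j^l + (\alpha^l)^2 \|\dbf_j^l\|^2 = 0$, hence $\xbf_j^\top \dbf_j^l = -\tfrac{\alpha^l}{2}\|\dbf_j^l\|^2 = O(\alpha^l)$. Feeding this into $\iprod{\nabla f(\bar X)}{D^l}$ via the decomposition $[\nabla f(\bar X)]_{ij}=[\rgrad f(\bar X)]_{ij}+[\bar X\Diag(\bar X^\top\nabla f(\bar X))]_{ij}$ on the support shows the first-order term is $O(\alpha^l)$ times a bounded quantity whose leading part is itself $O(\alpha^l)$, i.e. the whole first-order term contributes at order $(\alpha^l)^2$ and must be carried into the Hessian computation rather than discarded.

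Finally I would assemble the quadratic terms. Collecting the genuine $(\alpha^l)^2$ contribution from the sphere curvature, namely $-\sum_j \tfrac{(\alpha^l)^2}{2}\|\dbf_j^l\|^2\, [\bar X^\top\nabla f(\bar X)]_{jj}$ coming from $\xbf_j^\top\dbf_j^l = O(\alpha^l)$ paired against the support part of $\nabla f(\bar X)$, and adding it to $\tfrac{(\alpha^l)^2}{2}\iprod{D^l}{\nabla^2 f(\bar X)[D^l]}$, reproduces exactly $\tfrac{(\alpha^l)^2}{2}\iprod{D^l}{\rhess f(\bar X)[D^l]}$ with $\rhess$ as defined in \eqref{equ:hess}. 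Dividing $f(X^l)-f(\bar X)\ge 0$ by $(\alpha^l)^2$, passing to the limit $l\to\infty$ with $D^l\to D$ and using continuity of $\nabla^2 f$, yields $\iprod{D}{\rhess f(\bar X)[D]}\ge 0$, which is \eqref{equ:opt:SNC}. The main obstacle is the bookkeeping in the previous paragraph: correctly tracking that the first-order term, while not zero along $D^l$, is $O((\alpha^l)^2)$ and exactly cancels/combines with the Euclidean Hessian to form the Riemannian Hessian — this is the analogue of the standard ``second-order Taylor expansion along a curve on a manifold'' argument, complicated here by the nonnegativity constraints and the extra $\|XV\|_{\Ftt}=1$ constraint, whose multiplier $\bar\lambda$ thankfully drops out because $[\bar X\off(VV^\top)]_{ij}=0$ on the support and $D_{ij}=0$ on $\Omega_0'(\bar X)$.
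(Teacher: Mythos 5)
Your proof is essentially correct, but it takes a genuinely different route from the paper. The paper's proof is short: it computes $\nabla^2_{XX}L(\bar X,\bar\Lambda,\bar Z,\bar\lambda)[D]=\rhess f(\bar X)[D]+\bar\lambda\, D\off(VV^{\tran})$, invokes the standard second-order necessary condition of Sun--Yuan (Theorem 8.3.3) to get $\iprod{D}{\rhess f(\bar X)[D]+\bar\lambda D\off(VV^{\tran})}\ge 0$ on $\SNCD_{\Xcal_V}(\bar X)$, and then kills the extra term by observing that any $D\in\SNCD_{\Xcal_V}(\bar X)$ has at most one nonzero per row, so $D^{\tran}D$ is diagonal and $\iprod{D}{D\off(VV^{\tran})}=\mtr(D^{\tran}D\off(VV^{\tran}))=0$ (this is \eqref{equ:opt:SNC:proof:2}). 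You instead re-derive the cited theorem from first principles by Taylor-expanding $f$ along the feasible sequence $X^l=\bar X+\alpha^l D^l$; your bookkeeping is right: $\bar\xbf_j^{\tran}\dbf_j^l=-\tfrac{\alpha^l}{2}\|\dbf_j^l\|^2$ turns the first-order term into exactly the $-D\Diag(\bar X^{\tran}\nabla f(\bar X))$ correction in \eqref{equ:hess}, and the multiplier terms $\iprod{\bar Z}{D^l}$ and $\bar\lambda\iprod{\bar X\off(VV^{\tran})}{D^l}$ vanish. What your approach buys is self-containedness and an explanation of \emph{why} the $\off(VV^{\tran})$ term never appears (it is annihilated already at first order along the sequence, rather than at second order by the diagonality of $D^{\tran}D$); what it costs is the delicate sequence-level bookkeeping. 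One point you should make explicit: for the $\bar\lambda$-term you need $D^l_{ij}=0$ on $\Omega'_0(\bar X)$ for each large $l$, not merely $D_{ij}=0$ in the limit (an $o(1)$ bound there is not enough after dividing by $(\alpha^l)^2$, since $\alpha^l\cdot o(1)$ need not be $O((\alpha^l)^2)$). This exact vanishing does hold, but it requires the extra observation that $X^l\in\stiefplus$ and $X^l\to\bar X$ force $X^l_{ij'}>0$ in the unique column $j'$ with $\bar X_{ij'}>0$, hence $X^l_{ij}=0=\bar X_{ij}$ for all other $j$ in that row; with that sentence added, your argument is complete.
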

\begin{proof} 
%Let $\bar \lambda$ be the Lagrange multiplier corresponding to $\|XV\|_{\Ftt} = 1$. 
The proof of \cref{theorem:prob:new:KKT} tells that 
 %$\bar \lambda \geq \max_{(i.j) \in \Omega'_0(\bar X)} \frac{-[\nabla  f(\bar X)]_{ij}}{[\bar X \off(VV^{\tran})]_{ij}}$ and 
  $2 \bar \Lambda - \bar \lambda \diag(VV^{\tran}) = \diag(\bar X^{\tran} \nabla f(\bar X))$. With   \eqref{equ:Lagrangian:fun}, we have 
$\nabla_{XX}^2 L(\bar X, \bar \Lambda, \bar Z, \bar \lambda)[D] $  $= \rhess f(\bar X)[D] + \bar \lambda D \off(VV^{\tran})$.
By \cite[Theorem 8.3.3]{sun2006optimization} and the fact that $\bar X$ is a local minimizer of problem \eqref{equ:prob:orth+:new}, we have \be \label{equ:opt:SNC:proof:1} 
\iprod{D}{\rhess f(\bar X)[D] + \bar \lambda D \off(VV^{\tran})} \geq 0, \quad \mathrm{for\ all} \ D \in \SNCD_{\Xcal_V}(\bar X).
\ee
For  $D \in \SNCD_{\Xcal_V}(\bar X)$, we know from \eqref{equ:SNCD:Xcal} that $D^{\tran} D$ must be diagonal. Thus,  
%we have
 \be  \label{equ:opt:SNC:proof:2}  
 \iprod{D}{D \off(VV^{\tran})} = \mtr(D^{\tran}D \off(VV^{\tran})) = 0,
 \ee
which with \eqref{equ:opt:SNC:proof:1}  implies \eqref{equ:opt:SNC}. The proof is completed. 
\end{proof} 

\begin{theorem}[Second-order sufficient conditions]\label{theorem:2nd:sufficient}
Suppose that $\bar X \in \Xcal_V$ is a stationary point  of problem \eqref{equ:prob:orth+:new} and that there exists a Lagrange multiplier $\bar \lambda$ associated to $\|XV\|_{\Ftt} = 1$ with $\bar \lambda \geq \bar \lambda(\bar X)$ such that
\be  \label{equ:opt:SCC} %\label{equ:2nd:sufficient:condition1}
\iprod{D}{\rhess f(\bar X)[D] + \bar \lambda D \off(VV^{\tran})} > 0, \quad \emph{for all} \ D \in \LNCD_{\Xcal_V}(\bar X)/\{0\}. 
\ee
Then $\bar X$ is a strict local minimizer of \eqref{equ:prob:orth+:new}. 
\end{theorem}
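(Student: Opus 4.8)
The plan is to argue by contradiction, following the classical template for second-order sufficient conditions of nonlinear programs but adapted to the manifold structure of $\Xcal_V$. Suppose $\bar X$ satisfies the stated first- and second-order conditions but is \emph{not} a strict local minimizer. Then there exists a sequence $\{X^l\}\subset \Xcal_V$ with $X^l \to \bar X$, $X^l \neq \bar X$, and $f(X^l)\le f(\bar X)$. Write $X^l = \bar X + t_l D^l$ with $t_l = \|X^l - \bar X\|_{\Ftt}\to 0^+$ and $\|D^l\|_{\Ftt}=1$; passing to a subsequence, $D^l \to D$ for some $D$ with $\|D\|_{\Ftt}=1$. The first step is to show $D \in \LNCD_{\Xcal_V}(\bar X)$: membership in $\TC_{\Xcal_V}(\bar X)\subseteq\LC_{\Xcal_V}(\bar X)$ is immediate from the construction, and the extra sign/support restrictions in \eqref{equ:LNCD:Xcal} follow because, for $(i,j)\in\Omega_0''(\bar X)$ with $[\nabla f(\bar X)]_{ij}>0$, a first-order Taylor expansion of $f$ at $\bar X$ along $X^l$ combined with $f(X^l)\le f(\bar X)$, the stationarity relation \eqref{equ:theorem:prob:new:KKT:c1}, and the complementarity $\0\le\bar X\perp\bar Z\ge\0$ forces the corresponding component of $D$ to vanish (the term $t_l \bar Z_{ij} D^l_{ij} = t_l[\nabla f(\bar X)]_{ij}D^l_{ij}\ge 0$ cannot be compensated at first order).

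The second step is the second-order Taylor expansion of the Lagrangian. Since $X^l,\bar X\in\Xcal_V$, all equality constraints have equal values at $X^l$ and $\bar X$, and $\iprod{\bar Z}{X^l}\ge 0 = \iprod{\bar Z}{\bar X}$, we get
\[
f(X^l) \;\ge\; f(X^l) - \iprod{\bar Z}{X^l} \;=\; L(X^l,\bar\Lambda,\bar Z,\bar\lambda) \;=\; L(\bar X,\bar\Lambda,\bar Z,\bar\lambda) + \tfrac{t_l^2}{2}\iprod{D^l}{\nabla^2_{XX}L(\bar X,\bar\Lambda,\bar Z,\bar\lambda)[D^l]} + o(t_l^2),
\]
using $\nabla_X L(\bar X,\bar\Lambda,\bar Z,\bar\lambda)=\0$ and $L(\bar X,\cdot)=f(\bar X)$. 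Combining with $f(X^l)\le f(\bar X)$ and dividing by $t_l^2/2$ yields, in the limit, $\iprod{D}{\nabla^2_{XX}L(\bar X,\bar\Lambda,\bar Z,\bar\lambda)[D]}\le 0$. From the proof of \cref{theorem:prob:new:KKT} we have $\nabla^2_{XX}L(\bar X,\bar\Lambda,\bar Z,\bar\lambda)[D] = \rhess f(\bar X)[D] + \bar\lambda D\off(VV^{\tran})$, so this contradicts \eqref{equ:opt:SCC} applied to the nonzero vector $D\in\LNCD_{\Xcal_V}(\bar X)$, provided the Lagrange multiplier realizing \eqref{equ:theorem:prob:new:KKT:c1} can be taken with the given $\bar\lambda\ge\bar\lambda(\bar X)$ — which is exactly the freedom in $\bar\lambda$ established around \eqref{equ:lambda:inequality}. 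This completes the contradiction.

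The main obstacle I anticipate is the first step — rigorously justifying that the limiting direction $D$ lies in the critical cone $\LNCD_{\Xcal_V}(\bar X)$ rather than merely in $\TC_{\Xcal_V}(\bar X)$. The subtlety is handling the indices $(i,j)\in\Omega_0''(\bar X)$ with $[\nabla f(\bar X)]_{ij}>0$: one must carefully use the descent inequality $f(X^l)\le f(\bar X)$ together with $X^l_{ij}\ge 0$ and the first-order expansion to conclude $D_{ij}=0$, and one should check this is consistent with $\|D_{i,:}\|_0\le 1$ for rows $i$ with $\bar X_{i,:}=0$ (so that $D$ indeed sits in $\TC_{\Xcal_V}$). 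A second, more routine technical point is verifying that the remainder terms in the second-order expansion of $L$ are genuinely $o(t_l^2)$ along the feasible curve; this uses $f\in C^2$ and the boundedness of $\{D^l\}$, but care is needed because $X^l$ traces a curved path in $\Xcal_V$ rather than the straight segment $\bar X + t_l D$. A convenient device is to replace $X^l$ by its "straightened" version $\bar X + t_l D^l$ only inside the Lagrangian expansion and absorb the $O(t_l^2)$ curvature correction using that $\nabla_X L(\bar X,\cdot)=\0$ annihilates the first-order part.
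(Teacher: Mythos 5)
Your argument is correct and is essentially the same route the paper takes: the paper disposes of this theorem by citing the standard second-order sufficiency result for nonlinear programs (Sun--Yuan, Theorem 8.3.4), and your proposal is precisely the textbook proof of that result instantiated here (contradiction, normalized sequence, limiting direction shown to lie in the critical cone via the complementarity of $\bar Z$ with the descent inequality, then the second-order expansion of the Lagrangian using $\nabla_X L(\bar X,\bar\Lambda,\bar Z,\bar\lambda)=\0$ and $\nabla^2_{XX}L(\bar X,\bar\Lambda,\bar Z,\bar\lambda)[D]=\rhess f(\bar X)[D]+\bar\lambda D\off(VV^{\tran})$). The only point to keep explicit is that the hypothesis $\bar\lambda\ge\bar\lambda(\bar X)$ is what guarantees $\bar Z\ge\0$ via \eqref{equ:lambda:inequality}, which you correctly flag.
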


\begin{proof}
 It follows directly from, for instance \cite[Theorems 8.3.4]{sun2006optimization}.
\end{proof}

%, we give a remark on the second-order conditions. 
\begin{remark} \label{remark:2nd:opt:condition}
Consider the case when $\Omega''_0(\bar X) = \emptyset$, namely, $\|\bar X\|_0  = n$.  Following from \eqref{equ:SNCD:Xcal}, \eqref{equ:LNCD:Xcal} and $\TC_{\Xcal_V}(\bar X) = \LC_{\Xcal_V}(\bar X)$, %(see Case II in the proof of \cref{lemma:CQ}), 
we have 
$\SNCD_{\Xcal_V}(\bar X) = \LNCD_{\Xcal_V}(\bar X) = 
\LC_{\Xcal_V}(\bar X)$. Recalling \eqref{equ:opt:SNC:proof:2}, we thus know that 
 \eqref{equ:opt:SNC} and \eqref{equ:opt:SCC} become 
$\iprod{D}{\rhess f(\bar X)[D]} \geq 0$ $\forall D \in \LC_{\Xcal_V}(\bar X)$
and 
$\iprod{D}{\rhess f(\bar X)[D]} > 0$ $\forall D \in \LC_{\Xcal_V}(\bar X)/\{0\}$, respectively. 
\end{remark}

\rev{
\subsection{Relationship between problems \eqref{equ:prob:orth+} and \eqref{equ:prob:orth+:new}}
It is clear that formulations \eqref{equ:prob:orth+} and
\eqref{equ:prob:orth+:new} share the same minimizers. Moreover, the two problems share the same stationary points. 
%To end this section, we establish the relationship  between  the two formulations \eqref{equ:prob:orth+} and \eqref{equ:prob:orth+:new}. 
\begin{lemma} \label{lemma:stiefplus:XcalV}
(i) The statements in \cref{lemma:CQ} hold for  problem \eqref{equ:prob:orth+}; (ii) Problems \eqref{equ:prob:orth+} and \eqref{equ:prob:orth+:new} share the same  minimizers and optimality conditions. 
%$\bar X\in \stiefplus$ is a stationary point of  problem \eqref{equ:prob:orth+} if and only if it is a stationary point of  problem \eqref{equ:prob:orth+:new}. 
%
%For any $X\in \stiefplus$, we have\\
%\emph{(i)} the linearized cones $\LC_{\stiefplus}(X) = \LC_{\Xcal_V}(X) = \LC(X)$ with  
% $\stiefplus$ and $\Xcal_V$ have the same linearized cone and  tangent cone at $X$, namely, 
%\emph{(ii)} the tangent cones $\Tcal_{\stiefplus}(X) = \Tcal_{\Xcal_V}(X) = \Tcal(X)$ with 

%where $\Dcal_1(X)=  \left\{D \in \Rbb^{n\times k}:  \|D_{i,:}\|_0 \leq 1 \ \emph{if}\ X_{i,:} = 0\ \forall i \in [n] \right\}$.
\end{lemma}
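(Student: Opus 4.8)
The plan is to lean on the set identity $\stiefplus = \Xcal_V$ of \cref{lemma:key:observation:V}: problems \eqref{equ:prob:orth+} and \eqref{equ:prob:orth+:new} have the same feasible set and the same objective $f$, hence trivially the same feasible points, the same local and global minimizers, and the same tangent cone $\TC_{\stiefplus}(\bar X) = \TC_{\Xcal_V}(\bar X)$ at every feasible $\bar X$. The statement then reduces to a bookkeeping exercise: to show that the description of $\stiefplus$ by $X^{\tran}X = I_k$, $X \geq 0$ produces \emph{the same} linearized cone, the same CCP-type cone, and the same first- and second-order systems as the description $X \in \obliqueplus$, $\|XV\|_{\Ftt}=1$ used throughout \cref{section:reform}.

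For (i), I would first compute the linearized cone of \eqref{equ:prob:orth+} at $\bar X$. Regarding $X^{\tran}X - I_k = 0$ as $k(k+1)/2$ scalar equalities and $X \geq 0$ as the inequalities, this cone is $\{D : \bar\xbf_i^{\tran}\dbf_i = 0\ \forall i,\ \bar\xbf_i^{\tran}\dbf_j + \bar\xbf_j^{\tran}\dbf_i = 0\ \forall i \neq j,\ D_{ij} \geq 0\ \forall (i,j)\in\Omega_0(\bar X)\}$, and I claim it equals $\LC_{\Xcal_V}(\bar X)$ in the form \eqref{equ:tangentCone:Xcal:stiefplus}. The forward inclusion uses the combinatorial structure already exploited in \cref{section:reform}: for $(i,j)\in\Omega_0'(\bar X)$ let $j'$ be the unique column with $\bar X_{ij'}>0$; in the identity $\bar\xbf_j^{\tran}\dbf_{j'} + \bar\xbf_{j'}^{\tran}\dbf_j = 0$ every row contributing to either inner product lies, in the other column, in $\Omega_0(\bar X)$ (each row of $\bar X$ has at most one positive entry), so both inner products are sums of nonnegative terms summing to zero, forcing every term, in particular $\bar X_{ij'}D_{ij}$, to vanish, i.e.\ $D_{ij}=0$. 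The reverse inclusion is immediate: if $D$ satisfies \eqref{equ:tangentCone:Xcal:stiefplus}, the same row-support argument shows each cross term $\bar\xbf_i^{\tran}\dbf_j$ vanishes entrywise, so the off-diagonal equality constraints of $\stiefplus$ hold. Thus $\LC_{\stiefplus}(\bar X) = \LC_{\Xcal_V}(\bar X)$; combined with the coincidence of the tangent cones, the ACQ and GCQ parts of \cref{lemma:CQ} transfer verbatim, and the $k=1$ LICQ part is a one-line check that $2\bar\xbf$ together with the $-\e_i$ for $i$ with $\bar X_i = 0$ are linearly independent (since $\bar\xbf$ has a nonzero coordinate outside the active set).

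The one genuinely new computation is CCP in the case $2\leq k\leq n$, $\|\bar X\|_0 = n$. Here the constraint-gradient cone of \eqref{equ:prob:orth+} is $\Kcal_{\stiefplus}(X) = \{XS - \sum_{(i,j)\in\Omega_0(\bar X)} Z_{ij}\Ebf_{ij} : S\in\Sbb^{k},\ Z_{ij}\in\Rbb_+\}$, where $XS$ (with $S$ ranging over all symmetric matrices) collects the gradients of $x_i^{\tran}x_j = \delta_{ij}$. For $\Xcal_V\owns\{X^l\}\to\bar X$ we have $\supp(X^l)=\supp(\bar X)$ and $\Omega_0(X^l)=\Omega_0(\bar X)=\Omega_0'(\bar X)$ for large $l$; since each row of $X^l$ has exactly one nonzero, for $(i,j)\in\Omega_0(\bar X)$ the entry $(X^lS)_{ij}$ involves only a single off-diagonal entry of $S$, which can be absorbed into the (sign-free, once combined with $-Z_{ij}$) $Z$-term, so $\Kcal_{\stiefplus}(X^l)$ collapses to $\{X^l\Diag(\Lambda) + \sum_{(i,j)\in\Omega_0(\bar X)} Z_{ij}\Ebf_{ij} : \Lambda\in\Rbb^k,\ Z_{ij}\in\Rbb\}$, exactly as in \eqref{equ:Kcal:cone}; passing $X^l\to\bar X$ yields $\limsup_{X^l\to\bar X}\Kcal_{\stiefplus}(X^l)\subseteq\Kcal_{\stiefplus}(\bar X)$, i.e.\ CCP. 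Making this absorption precise — showing that the extra off-diagonal freedom in the symmetric multiplier $S$ does not enlarge the relevant cone once one is in $\stiefplus$, and that the outer-limit inclusion survives — is the step I expect to require the most care.

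For (ii), it remains to match the optimality systems. Writing the KKT conditions of \eqref{equ:prob:orth+} with a symmetric multiplier $\Gamma$ for $X^{\tran}X = I_k$ and $\bar Z\geq 0$ for $X\geq 0$, namely $\nabla f(\bar X) - 2\bar X\Gamma - \bar Z = 0$ and $0\leq\bar X\perp\bar Z\geq 0$, the row-support structure forces $[\bar X\Gamma]_{ij} = \bar X_{ij}\Gamma_{jj}$ on $\supp(\bar X)$ and $[\bar X\Gamma]_{ij}=0$ on $\Omega_0''(\bar X)$; premultiplying by $\bar X^{\tran}$ and taking diagonals gives $2\Gamma_{jj} = [\bar X^{\tran}\nabla f(\bar X)]_{jj}$, whence the KKT system reduces to exactly \eqref{equ:strongStationary:1}–\eqref{equ:strongStationary:2}, the stationarity of \eqref{equ:prob:orth+:new} from \cref{theorem:prob:new:KKT}. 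For second order, the Lagrangian Hessian of \eqref{equ:prob:orth+} is $D\mapsto\nabla^2 f(\bar X)[D] - 2D\Gamma$; on the sequential and linearized null cones — which coincide with those of \eqref{equ:prob:orth+:new} by the $\LC$/$\TC$ identities together with \eqref{equ:SNCD:Xcal} and \eqref{equ:LNCD:Xcal} — every admissible $D$ has $D^{\tran}D$ diagonal, so $\iprod{D}{2D\Gamma} = \iprod{D}{D\Diag(\bar X^{\tran}\nabla f(\bar X))}$ and this Hessian restricts to $\rhess f(\bar X)$ by \eqref{equ:hess}; by the same token the $\off(VV^{\tran})$ term in \cref{theorem:2nd:necessary,theorem:2nd:sufficient} drops out (since $\mtr(D^{\tran}D\,\off(VV^{\tran}))=0$), so the second-order necessary and sufficient conditions of the two formulations are literally the same. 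This establishes (ii).
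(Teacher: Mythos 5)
Your proposal is correct and follows essentially the same route as the paper: both reduce everything to the coincidence of the tangent, linearized, and CCP cones of the two formulations, with the key step being the sign argument that the off-diagonal linearized equalities $\bar\xbf_i^{\tran}\dbf_j + \bar\xbf_j^{\tran}\dbf_i = 0$ split into two nonnegative cross terms (by the disjoint column supports of $\bar X \in \stiefplus$) that must each vanish, forcing $D_{ij}=0$ on $\Omega_0'(\bar X)$ and hence $\LC_{\stiefplus}(\bar X) = \LC_{\Xcal_V}(\bar X)$. You merely fill in the computations the paper dismisses as "easy calculations" or "omitted to save space" — the collapse of the symmetric multiplier $S$ in $\Kcal_{\stiefplus}$ and the explicit multiplier elimination in (ii), where the paper instead appeals directly to the equality of the polar and critical cones.
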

\begin{proof} 
   We first claim that that problems \eqref{equ:prob:orth+} and \eqref{equ:prob:orth+:new} have the same tangent and linearized cones. Obviously, we know 
 $\TC_{\stiefplus}(X)  = \TC_{\Xcal_V}(X)$.
For the linearized cone, we have 
$ %\label{equ:linearizedCone:stiefplus} 
\LC_{\stiefplus}(X) = \{D \in \Rbb^{n \times k}: X^{\tran} D + D^{\tran} X = 0, D_{ij} \geq 0 \ \forall (i,j) \in \Omega_0(X)\}. \nn 
$
The linear equation above tells that $\xbf_l^{\tran} \dbf_j + \dbf_l^{\tran} \xbf_j = 0\ \forall l,j \in [n]$. With  $X \in \stiefplus$ and $D_{ij} \geq 0\ \forall (i,j) \in \Omega_0(X)$, we further know that $\xbf_l^{\tran} \dbf_j \geq 0$. Therefore, we have $\xbf_l^{\tran} \dbf_j =0\ \forall l,j \in [n]$ and thus $\xbf_j^{\tran} \dbf_j = 0\ \forall j \in [n]$ and $D_{ij} = 0\ \forall(i,j) \in \Omega_0'(X)$. This means that $\LC_{\stiefplus}(X) \subseteq \LC_{\Xcal_V}(X)$. On the other hand, it is easy to see that $D \in \LC_{\Xcal_V}(X)$ must imply that $D \in \LC_{\stiefplus}(X)$. Hence, we have $\LC_{\stiefplus}(X) = \LC_{\Xcal_V}(X)$. 
Besides, by some easy calculations,  the cones
$\Kcal_{\stiefplus}(X)$ and $\Kcal_{\Xcal_V}(X)$ coincide, see
\eqref{equ:Kcal:cone} for the definition.  This completes the proof of (i). 

The proof of (ii) can be verified  since  $\TC_{\Xcal_V}(\bar X) =
\TC_{\stiefplus}(\bar X)$  and  $\Ncal_{\stiefplus}(\bar X) =
\Ncal_{\Xcal_V}(\bar X)$ and $\Ccal_{\stiefplus}(\bar X) = \Ccal_{\Xcal_V}(\bar
X)$. The details are omitted to save space. 
\end{proof}

Based on the above lemma, it is safe to  rewrite problem \eqref{equ:prob:orth+}
as the equivalent problem \eqref{equ:prob:orth+:new}. 
%since they not only share the same  minimizers but also the same optimality conditions. 
The reason that we prefer the latter one is that it can better motivate us to
design the exact penalty approach. 
 Simply speaking, we can afford to preserve the simpler
constraints $\obliqueplus$ in our exact penalty algorithm but the nonnegative orthogonality constraint
cannot be kept together in algorithms for \eqref{equ:prob:orth+}.
 Moreover, only one simple constraint $\|XV\|_{\Ftt} = 1$ has to be penalized in
 our approach. This kind of framework is quite different
 from traditional exact penalty approaches applied to (1.1) directly. 
}

%Simply speaking, we preserve the simpler constraints $\obliqueplus$ rather than the orthogonality constraint while penalize the remaining nonlinear constraints in certain way. Another  consideration is that when using  the  Lagrangian type method and  keeping the simple  constraint $X \in \obliqueplus$, it might be more reasonable to consider  \eqref{equ:prob:orth+:new} rather than \eqref{equ:prob:orth+} since we only need to estimate a Lagrange multiplier which satisfies \eqref{equ:lambda:inequality}  other than a symmetric Lagrange multiplier matrix of   size $k \times k$.    

\section{An exact penalty approach} \label{section:exact:penalty}
%We first give an assumption which will be used in this section.  
We now present the exact penalty properties.
 Let
 $X_{\sigma, p,q,\epsilon}$ be  a global minimizer of
 \eqref{equ:prob:orth+:new:exact:penalty} and denote  $X^{\R}_{\sigma, p, q,
 \epsilon}$ as the matrix returned by Procedure \ref{alg:round}  in
 \cref{subsection:rounding} with an input  $X_{\sigma, p,q,\epsilon}$. The
 solution quality can be further improved by solving an  auxiliary problem
 constructed from  $X^{\R}_{\sigma, p, q, \epsilon}$ as 
   \be \label{equ:prob:refinement}
 X^{\diamondsuit}_{\sigma, p, q, \epsilon} =\arg \min_{X \in \obliqueplus} f(X) \quad \st \quad X_{ij}   = 0 \ \mathrm{if}\  (i,j) \not \in\supp(X_{\sigma, p, q, \epsilon}^{\R}).
\ee
 %\rbrown{To present the theorem, we define several constants}. 

Let  $L_f \geq 0$ be the Lipschitz constant of $f$, namely,  %The statement of the theorem requires the definition of a few constants. 
\be \label{equ:f:Lips}
|f(X_1) - f(X_2)|\leq L_f \|X_1 - X_2\|_{\Ftt}, \quad \forall X_1, X_2 \in \obliqueplus.
\ee 
Such $L_f$ exists since the convex hull of $\obliqueplus$ is compact. Let $\kappa_f = \chi_f/L_f$. We define
  \[ 
 \underline{\nu} = 
  \begin{cases} 
      (\sqrt{2k})^{1 - 2p}  &  \mbox{if}\ 0 < p \leq 1/2\ \mbox{and}\ \epsilon = 0,\\[4pt]
  (\kappa_f)^{1 - 2p}& \mbox{if}\ p > 1/2\ \mbox{and}\ \epsilon = 0, \\[4pt]
      \frac{\sqrt{2k}}{(\kappa_f)^{2p} - (\varrho_q \sqrt{\epsilon})^{2p}}  &  \mbox{if}\ p >0\ \mbox{and}\ 0  < \epsilon < \kappa_f^2/\varrho_q^2,
   \end{cases}
 \]
 where the constant $\varrho_q$ is defined later in \cref{lemma:rounding}.

The next theorem shows that if $\sigma$ is chosen sufficiently large, the
optimal sign matrix can be obtained from $X^{\R}_{\sigma, p,q, \epsilon}$, thus  $X^{\diamondsuit}_{\sigma, p, q, \epsilon}$ is also a solution of  \eqref{equ:prob:orth+:new}.
 
 \begin{theorem}\label{theorem:lp:exact}
 Under \cref{assump:blank}, if we choose  
 \be \label{equ:sigma:threshold}
 \sigma >  \underline{\sigma}:=  \varrho_q^{2p}L_f \underline{\nu},
 \ee
then it holds that \emph{(i)} $\sgn(X_{\sigma, p, q, \epsilon}^{\R}) \in \sgn(\Xcal^*)$;  \emph{(ii)} $X^{\diamondsuit}_{\sigma, p, q, \epsilon}$ is a   global  minimizer of problem \eqref{equ:prob:orth+:new}.
   \end{theorem}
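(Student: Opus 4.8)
The plan is to establish the exact penalty property by comparing the penalty objective at a global minimizer $X_{\sigma,p,q,\epsilon}$ with its value at a feasible point of the original problem, using the local error bound of $\stiefplus$ (the exponent-$1/2$ bound mentioned in the introduction) to translate infeasibility of $X_{\sigma,p,q,\epsilon}$ into a lower bound on the penalty term $(\zeta_q(X)+\epsilon)^p$. The key quantity is $\zeta_q(X) = \|XV\|_\Ftt^q - 1$: by \cref{lemma:key:observation:V} this is nonnegative on $\obliqueplus$ and vanishes exactly on $\stiefplus$. The constant $\varrho_q$ from \cref{lemma:rounding} should encode how $\|XV\|_\Ftt^q - 1$ controls the distance of $X$ to $\stiefplus$ (equivalently, how much the rounding Procedure~\ref{alg:round} moves $X$), so that $\dist(X,\stiefplus) \le \varrho_q (\zeta_q(X))^{1/2}$ or a similar inequality holds.

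First I would argue part (i). Suppose for contradiction that $\sgn(X^{\R}_{\sigma,p,q,\epsilon}) \notin \sgn(\Xcal^*)$. Since $X^{\R}_{\sigma,p,q,\epsilon}$ is feasible for \eqref{equ:prob:orth+} (this is guaranteed by the rounding Procedure), its sign pattern lies in $\sgn(\stiefplus)\setminus\sgn(\Xcal^*)$, so by \cref{assump:blank} we get $f(X^{\R}_{\sigma,p,q,\epsilon}) \ge \tilde f^* = f^* + \chi_f$. On the other hand, plugging any $\bar X \in \Xcal^*$ into the penalty objective gives $P_{\sigma,p,q,\epsilon}(\bar X) = f^* + \sigma(0+\epsilon)^p = f^* + \sigma\epsilon^p$ (using $\zeta_q(\bar X)=0$), while minimality of $X_{\sigma,p,q,\epsilon}$ gives $P_{\sigma,p,q,\epsilon}(X_{\sigma,p,q,\epsilon}) \le f^* + \sigma\epsilon^p$. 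I would then bound $f(X^{\R}_{\sigma,p,q,\epsilon}) - f(X_{\sigma,p,q,\epsilon}) \le L_f \|X^{\R}_{\sigma,p,q,\epsilon} - X_{\sigma,p,q,\epsilon}\|_\Ftt$ via \eqref{equ:f:Lips}, and bound that displacement using \cref{lemma:rounding}, which should say $\|X^{\R} - X\|_\Ftt \le \varrho_q \cdot (\text{something like } (\zeta_q(X))^{1/2})$. Combining these with the penalty-value inequality and the definition of $\underline\nu$ (which is exactly the case split on $p \lessgtr 1/2$ and $\epsilon = 0$ versus $\epsilon > 0$) should yield a contradiction once $\sigma > \underline\sigma = \varrho_q^{2p} L_f \underline\nu$. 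The case analysis in $\underline\nu$ arises because the map $t \mapsto (t+\epsilon)^p$ has different concavity/convexity behavior and different lower bounds depending on whether $p \le 1/2$ (where one can use $(t+\epsilon)^p \ge t^{p} + \epsilon^p$ fails, but a reverse-type or direct estimate on $[0,\sqrt{2k}]$-type range applies since $\zeta_q$ is bounded on $\obliqueplus$) versus $p > 1/2$, and whether $\epsilon = 0$ forces using the crude bound $\zeta_q \le \sqrt{2k}$ or $\epsilon > 0$ lets one factor $(t+\epsilon)^p - \epsilon^p$.

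For part (ii), once (i) is established, $\sgn(X^{\R}_{\sigma,p,q,\epsilon}) \in \sgn(\Xcal^*)$, so $\supp(X^{\R}_{\sigma,p,q,\epsilon}) = \supp(\bar X)$ for some optimal $\bar X$. Then problem \eqref{equ:prob:refinement} is a minimization of $f$ over $\obliqueplus$ with the support fixed to that of an optimal solution; since any feasible point of this restricted problem with that support pattern is automatically in $\stiefplus$ (columns are unit-norm and orthogonality is forced by disjoint supports) hence feasible for \eqref{equ:prob:orth+:new}, and $\bar X$ itself is feasible for \eqref{equ:prob:refinement}, the minimizer $X^{\diamondsuit}_{\sigma,p,q,\epsilon}$ satisfies $f(X^{\diamondsuit}_{\sigma,p,q,\epsilon}) \le f(\bar X) = f^*$; combined with feasibility for \eqref{equ:prob:orth+:new} this forces equality, so $X^{\diamondsuit}_{\sigma,p,q,\epsilon}$ is a global minimizer. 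The main obstacle I anticipate is handling part (i)'s case analysis cleanly — in particular, getting the right lower bound on the penalty term $(\zeta_q(X_{\sigma,p,q,\epsilon})+\epsilon)^p$ in terms of $\|X^{\R}-X_{\sigma,p,q,\epsilon}\|_\Ftt$ uniformly across all three regimes of $\underline\nu$, which requires knowing the precise statement of \cref{lemma:rounding} (the definition of $\varrho_q$ and whether the bound involves $(\zeta_q)^{1/2}$ directly); the algebra relating $\sigma > \varrho_q^{2p} L_f \underline\nu$ to a strict decrease contradiction is where all the constants must line up, and sign or exponent slips there would be easy to make.
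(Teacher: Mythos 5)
Your proposal is correct and follows essentially the same route as the paper: the paper proves this theorem by comparing the penalized objective at $X_{\sigma,p,q,\epsilon}$ against both $X^*$ and the rounded point $X^{\R}_{\sigma,p,q,\epsilon}$, invoking \cref{assump:blank}, the Lipschitz bound \eqref{equ:f:Lips}, and the rounding error bound of \cref{lemma:rounding}, then deriving a contradiction with $\sigma>\underline{\sigma}$ via exactly the three-case analysis on $(p,\epsilon)$ that you describe, with part (ii) following from the support-fixing argument you give. The only cosmetic difference is that the paper packages the argument through a general $(\Psi,Q)$ penalty framework (\cref{lemma:genera:exact:penalty} and \cref{thm:exact:penalty:new}) and then specializes to $\Psi(z)=(z/\varrho_q)^{2p}$, $Q(X)=\varrho_q\sqrt{\zeta_q(X)+\epsilon}$, whereas you carry out the specialized computation directly.
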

   
We next investigate the error bound for $\stiefplus$   in
section \ref{subsection:rounding}, then give the proof of
\cref{theorem:lp:exact} for a class of  general exact
penalty model in section \ref{subsection:exact:penaly:theory}. %The proof of  \cref{theorem:lp:exact} is also provided in section \ref{subsection:exact:penaly:theory}.
%Considering that the constant $\underline{\nu}$ is usually difficult to be estimated, a practical exact penalty algorithm with dynamically increasing $\sigma$ is given in section \ref{section:penaly:alg}.

%\rbrown{A lemma related to local minimizers?}

%By the classical , we know that 
%\[
% \rbrown{\sigma \geq   \max_{(i.j) \in \Omega'_0(\bar X)} \frac{-[\nabla  f(\bar X)]_{ij}}{[\bar X \off(VV^{\tran})]_{ij}}}% = \ulam(\bar X),
%\]

\subsection{Error bound for $\stiefplus$} \label{subsection:rounding}
\rev{It is well known that the error bound plays a key role in establishing the exact penalty results, see \cite{luo1996mathematical} for more discussion. By \cite[Theorem 16.7]{luo2000error}, we know that there exist positive scalars $\rho$ and $\gamma$ such that  
$\dist(X, \stiefplus) = \|\proj_{\stiefplus}(X) - X\|_{\Ftt} \leq \rho
(\zeta_2(X))^{\gamma}, \forall X \in \obliqueplus$. However, the exponent
$\gamma$ is not immediately  clear for our case. We next show that the exponent
is $\gamma = 1/2$.}
%  We now study the the error bound for $\stiefplus$, namely, to  find a real-valued function that majorizing the distance function $\dist(X, \stiefplus) = \|\proj_{\stiefplus}(X) - X\|_{\Ftt}$. 
Our key step is based on rounding Procedure \ref{alg:round}.
%For $X \in \obliqueplus$,  computing \rblue{$\dist(X, \rblue{\stiefplus}) = \|\proj_{\stiefplus}(X) - X\|_{\Ftt}$}  is generally hard. However,  when $X$ is near to \rblue{$\stiefplus$}, namely, 
%\rblue{$\zeta_q(X) := \|XV\|_{\Ftt}^q - 1$}
 %is  relatively small, we can construct a matrix $X^{\R} \in \stiefplus$  near to $\proj_{\stiefplus}(X)$, \rblue{which gives a useful estimation of  $\dist(X, \rblue{\stiefplus})$. This estimation plays a key role in establishing our exact penalty results in section \ref{subsection:exact:penaly:theory}.}    
 The basic idea for rounding is  simply keeping one largest element in each row
 and setting the remaining elements to be zeros, and then doing normalization such that each column takes the unit norm.  
 %The complete way for generating $X^{\R}$ is presented in Procedure \ref{alg:round}. 

 \begin{algorithm}[!htbp]   
 \floatname{algorithm}{Procedure}
 \SetKwFor{mypost}{Do postprocessing}{}{endw}
 \SetKwFor{mywhile}{For}{do}{}
\caption{A  procedure for rounding $X \in \obliqueplus$ to be $X^{\R} \in \stiefplus$.}\label{alg:round}
\textsf{Initialization}:   Set   $H \in \Rbb^{n \times k} $ as a zero matrix. \\
\nlset{S1} For $i \in [n]$,  set 
%\be \label{equ:round:Z} 
 $H_{ij^*} = 1\ \mbox{with}\  j^*\ \mbox{is the smallest index in the set} \argmax\nolimits_{j \in [k]}  X_{ij}.
$%\ee
\\
\nlset{S2} Set  the $j$-th column of $X^{\R}$ as $\xbf^{\R}_j = \frac{\xbf_j \circ \hbf_j}{\|\xbf_j \circ \hbf_j\|},\ \ j \in [k].$\\
%\be\label{equ:round:XR}
\nlset{S3} Reset $X^{\R} = I_{n,k}$ if $X^{\R} \not \in \stiefplus$.
%
%\ee
%\If{$X^{\R} \not \in \stiefplus$}{
%Set   $X^{\R} = I_{n,k}$. 
%}
\end{algorithm}

%We now estimate the quality  of $X^{\R}$ below. 
\begin{lemma} \label{lemma:rounding}
For any  $X \in \obliqueplus$, 
we have $X^{\R} \in \stiefplus$ and 
\be \label{equ:bound:round}
\dist(X, \stiefplus) \leq \|X^{\R} - X\|_{\Ftt}  \leq \varrho_q  \sqrt{\zeta_q(X)}, 
\ee where $\varrho_q =  \left({2k}\tilde \varrho_q/\underline \omega\right)^{\frac12}$ with $\underline \omega = {\min_{i,j \in [k]} [VV^{\tran}]_{ij}}$. Here, $\tilde \varrho_q$ is 1 if $q \geq 2$, and is $\frac{\sqrt{k} + 1}{q}$ if $1\leq q < 2$,  and is $\frac{2\sqrt{k}(\sqrt{k} + 1)}{q(q+1)}$ if $0 < q \leq 1$.
%Furthermore, we have 
%\be \label{equ:dist:XR:projXR}
%\dist\big(X, \proj_{\stiefplus}(X)\big) \leq 2 \varrho_q \sqrt{\zeta_q(X)}.
%\ee
\end{lemma}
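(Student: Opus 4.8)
The plan is to establish the three assertions of the lemma in sequence: first that $X^{\R}\in\stiefplus$, then the trivial first inequality $\dist(X,\stiefplus)\le\|X^{\R}-X\|_{\Ftt}$, and finally the substantive bound $\|X^{\R}-X\|_{\Ftt}\le\varrho_q\sqrt{\zeta_q(X)}$. The membership $X^{\R}\in\stiefplus$ is immediate from the construction: after step~S1 the matrix $H$ has exactly one $1$ in each row, so $X\circ H$ has at most one nonzero entry per row and after the column normalization in step~S2 (when it is well-defined) every column has unit norm, giving $X^{\R}\in\stiefplus$; and if step~S2 produces an undefined or infeasible matrix (some $\xbf_j\circ\hbf_j=\0$), step~S3 forces $X^{\R}=I_{n,k}\in\stiefplus$. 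The first inequality in \eqref{equ:bound:round} is just the definition of the distance to $\stiefplus$ together with $X^{\R}\in\stiefplus$.

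The core work is the second inequality. First I would reduce to the case $\zeta_2(X)$ small: since $\zeta_q(X)$ and $\zeta_2(X)$ both vanish exactly on $\stiefplus$ and are comparable up to constants depending on $q,k$ (using $1\le\|XV\|_{\Ftt}^2\le 1+\text{const}$ for $X\in\obliqueplus$ by \cref{lemma:key:observation:V}, and the elementary inequality relating $t^{q/2}-1$ to $t-1$ on a bounded interval, which is where the three regimes of $\tilde\varrho_q$ come from — $t^{q/2}-1\ge c_q(t-1)$ on $[1,T]$ with $c_q$ behaving like $1$, $q$, or $q(q+1)$ as $q\ge 2$, $1\le q<2$, $0<q\le 1$), it suffices to bound $\|X^{\R}-X\|_{\Ftt}^2$ by a constant times $\zeta_2(X)=\|XV\|_{\Ftt}^2-1$. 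By \cref{lemma:key:observation:V}, $\zeta_2(X)=\sum_{i\ne j}[VV^{\tran}]_{ij}(\xbf_i^{\tran}\xbf_j)$, and since every column of $X\in\obliqueplus$ is a unit nonnegative vector, each cross term $\xbf_i^{\tran}\xbf_j\ge 0$; hence $\zeta_2(X)\ge\underline\omega\sum_{i\ne j}\xbf_i^{\tran}\xbf_j$, which controls the total ``overlap'' between columns. I would then show that $\|X^{\R}-X\|_{\Ftt}^2\le 2k\sum_{i\ne j}\xbf_i^{\tran}\xbf_j$: this is the row-by-row estimate, arguing that when the mass of each row of $X$ is (nearly) concentrated in its argmax coordinate, moving to $X^{\R}$ costs, per row, at most a controlled multiple of the off-argmax mass, and the off-argmax mass summed over rows is exactly $\sum_{i}\sum_{j\ne j^*(i)}X_{ij}^2$, which in turn is bounded by the column overlaps $\sum_{i\ne j}\xbf_i^{\tran}\xbf_j$ up to the factor $2k$ coming from renormalization of the columns and from the fact that a row's mass may be split among several non-argmax columns that each overlap the argmax column. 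Combining these gives $\|X^{\R}-X\|_{\Ftt}^2\le(2k/\underline\omega)\zeta_2(X)\le(2k\tilde\varrho_q/\underline\omega)\zeta_q(X)=\varrho_q^2\zeta_q(X)$, which is the claim.

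The main obstacle I anticipate is the row-by-row estimate: one must carefully bound $\|\xbf^{\R}_j-\xbf_j\|$ after the normalization in step~S2, controlling simultaneously (a) the entries zeroed out by $H$, (b) the change in the surviving argmax entries due to renormalization, and (c) the possibility that the denominator $\|\xbf_j\circ\hbf_j\|$ is small — which is exactly the regime where step~S3 takes over, and one should check that in that degenerate regime $\zeta_q(X)$ is itself bounded below by a constant, so the inequality holds trivially (perhaps after enlarging $\varrho_q$, or by noting the stated $\varrho_q$ already accommodates this). Getting the constant to come out as exactly $2k$ rather than a larger polynomial in $k$ will require being somewhat economical in these estimates; a convexity/Cauchy–Schwarz argument relating $\sum_i(\text{off-argmax mass of row }i)$ to $\sum_{i\ne j}\xbf_i^{\tran}\xbf_j$ is the natural tool, since each unit of off-argmax mass in row $i$ sitting in column $j$ contributes at least that much (times the argmax entry, which after the small-$\zeta$ reduction is close to $1$) to $\xbf_{j^*(i)}^{\tran}\xbf_j$.
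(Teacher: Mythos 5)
Your proposal is correct and follows essentially the same route as the paper's proof: the case split between $\zeta_2(X)$ bounded away from zero (where the trivial bound $\|X^{\R}-X\|_{\Ftt}\le\sqrt{2k}$ suffices and the rounding is automatically well-defined otherwise), the column-overlap lower bound $\zeta_2(X)\ge\underline\omega\sum_{i\neq j}\xbf_i^{\tran}\xbf_j$, the estimate of the zeroed-out mass by $X_{ij}^2\le X_{ij}\max_{l\neq j}X_{il}$ plus the renormalization cost, and the final comparison $\zeta_2(X)\le\tilde\varrho_q\zeta_q(X)$ in three regimes of $q$. The only cosmetic difference is that the paper handles the renormalization via the identity $(1-\|\xbf_j\circ\hbf_j\|)^2\le 1-\|\xbf_j\circ\hbf_j\|^2=\|\xbf_j\circ(\1-\hbf_j)\|^2$, which avoids any need for the argmax entries to be close to $1$.
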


\begin{proof}
We first focus on $q = 2$. Recalling  $\|V\|_{\Ftt} = 1$ and $\underline \omega > 0$, we have  
\be \label{lemma:dist:error:bound:theta} 
\zeta_2(X)  = \sum_{j \in [k]} \xbf_j^{\tran} \Bigg(\sum_{l \in [k]\setminus\{j\}} (VV^{\tran})_{jl} \xbf_l\Bigg) \geq \underline \omega  \sum_{j \in [k]} \xbf_j^{\tran} \Bigg( \sum_{l \in [k]\setminus\{j\}}  \xbf_l \Bigg).
\ee 

The proof of \eqref{equ:bound:round} is split to two  cases.

Case I. $\zeta_2(X) \geq \underline \omega$.  Since $X^{\R} \in \stiefplus$ and $X \in \obliqueplus$, we  obtain $\| X^{\R}\|_{\Ftt}^2 = \|X\|_{\Ftt}^2 = 2k$ and thus 
$\|X -  X^{\R}\|_{\Ftt}^2  \leq 2k.$  Hence, there holds 
%$\be\label{lemma:dist:error:bound:c0} 
 $\|X -  X^{\R}\|_{\Ftt}   \leq  \sqrt{2k} \leq \varrho  \sqrt{\zeta_2(X)}$.
%\ee

Case II.  $\zeta_2(X) < \underline \omega$.  
First, we prove that $X^{\R}$ generated by S2  lies in $\stiefplus$. 
Clearly, it follows from S1 that  each row of $H$ has at most one element being $1$. 
We now claim that each column of $H$ has at least one element being 1. Otherwise, without loss of generality,  we assume 
%that all the elements of the first column of $H$ are zeros, namely, 
$\hbf_{1} = 0$.  This together with  S1  implies that  $X_{i1}\leq  \max_{l \in [k]\setminus\{1\}}  X_{il}$, $\forall  i \in [n]$, which with  \eqref{lemma:dist:error:bound:theta} tells that 
$\zeta_2(X) \geq \underline \omega \sum_{i \in[n]} X_{i1}\max_{l \in [k]\setminus\{j\}}  X_{il} \geq \underline \omega \sum_{i \in[n]} X_{i1}^{2} =  \underline \omega\|\xbf_{1}\|^2 = \underline \omega$. 
This gives a contradiction to $\zeta_2(X) <  \underline \omega$. 
  In summary, we know that 
  %\[%\label{equ:proof:lemma:rounding:z}
 $ \|\hbf_j\|_0 \geq 1, \forall j \in [k]\  \mbox{and}\  \hbf_i^{\tran} \hbf_j = 0, \forall i, j\in [k]\  \mbox{and}\  i \neq j%\nn
 $ %\]
  and
  \be \label{equ:proof:lemma:rounding:x:z}
  \xbf_j \circ \hbf_j \neq 0, \quad (\xbf_j \circ \hbf_j)^{\tran} (\xbf_j \circ (\1 - \hbf_j)) = 0, \quad  \forall j \in [k].
  \ee
  Therefore, using the construction  of $X^{\R}$ in S2, we must have $X^{\R} \in \stiefplus$. 
  Using S2, \eqref{equ:proof:lemma:rounding:x:z}, and the decomposition $\xbf_j
  = \xbf_j \circ \hbf_j + \xbf_j \circ (\1 - \hbf_j)$, we  obtain  
%\be\label{lemma:dist:error:bound:xbf:xbftilde}
$\|\xbf_j - \xbf^{\R}_j\|^2 \leq 2 \|\xbf_j \circ (\1 - \hbf_j)\|^2$. 
With S1, we have 
%\be\label{lemma:dist:error:bound:xbf:zj}
\[ \|\xbf_j \circ (\1 - \hbf_j)\|^2  =  \sum_{i \in  [n],  H_{ij} = 0} X_{ij}^2 \leq  \sum_{i \in  [n]} X_{ij} \max_{l \in [k]\setminus\{j\}} X_{il} \leq \xbf_j^{\tran} \sum_{l \in [k]\setminus\{j\}} \xbf_l,
\]
which with \eqref{lemma:dist:error:bound:theta}  implies 
$ \|X  - X^{\R}\|_{\Ftt}^2 =  \sum_{j\in [k]} \|\xbf_j - \xbf_j^{\R}\|^2 \leq  2\sum_{j \in [k]} \|\xbf_j \circ (\1 - \hbf_j)\|^2 \leq {2k\zeta_2(X)}/{\underline \omega} \leq \varrho^2  \zeta_2(X)$.
 % from \eqref{lemma:dist:error:bound:xbf:zj:2} that 
%\be\label{lemma:dist:error:bound:c1} 
% \|X  - X^{\R}\|_{\Ftt}^2 =  \sum_{j\in [k]} \|\xbf_j - \xbf_j^{\R}\|^2 \leq   \frac{2}{\omega}  \zeta_2(X) \leq \varrho^2  \zeta_2(X).
 %\ee
Combining the above two cases gives  \eqref{equ:bound:round} for $q = 2$. 
%\eqref{lemma:dist:error:bound:c0}  and \eqref{lemma:dist:error:bound:c1}, 

It is ready to prove \eqref{equ:bound:round} for general $q$.   For $X \in \obliqueplus$,  there holds that  $1 \leq \|XV\|_{\Ftt} \leq \|X\|_2 \|V\|_{\Ftt} \leq \sqrt{k}$.  We consider three cases. 
Case I.  $q \in [2, +\infty)$. It is easy to have $\zeta_q(X) \geq \zeta_2(X)$.  
Case II.  $q \in [1, 2)$.  
  We first have $\zeta_1(X) = \frac{\zeta_2(X)}{\|XV\|_{\Ftt} + 1} \geq \frac{\zeta_2(X)}{\sqrt{k} + 1}$.  Then we have 
$\zeta_q(X) = (1 + \zeta_1(X))^q - 1 \geq  q \zeta_1(X) \geq \frac{q}{\sqrt{k} + 1} \zeta_2(X)$,
where the first inequality uses the fact that $(1 + a)^q - 1 > q a$ for  $a \in (0, +\infty)$ and $q \in [1,2)$.
Case III. $q \in (0,1)$.  Since $\|XV\|_{\Ftt} = 1 +  \zeta_1(X)  \geq 1 + \frac{\zeta_1(X) }{\sqrt{k}}$, we have 
\[
\zeta_q(X) \geq \left(1 + \frac{\zeta_1(X) }{\sqrt{k}}\right)^q - 1 \geq \frac{q(q+1)}{2\sqrt{k}} \zeta_1(X) \geq \frac{q(q+1)}{2\sqrt{k}(\sqrt{k} + 1)} \zeta_2(X), 
\]
where the second inequality uses the fact that $(1 + a)^q - 1 \geq \frac{q(q+1)}{2} a$ for $a \in (0,1)$, $q \in (0,1)$. Combining the above three cases, we have 
%\rev{
%\be \label{equ:zetaX:zetaqX} 
$\zeta_2(X) \leq \tilde \varrho_q \zeta_q(X)$,
%\ee
which with  \eqref{equ:bound:round} for $q = 2$   implies that  \eqref{equ:bound:round} holds for general $q$. 
%The proof is completed. 
 %The remaining \eqref{equ:dist:XR:projXR}  follows directly from \eqref{equ:bound:round}.  
\end{proof}

%\begin{remark}
\rev{
We remark that the order $1/2$  in the local error bound 
\eqref{equ:bound:round} is the best.
\begin{example}
 Take $q = 2$
and $V = {1}/{\sqrt{2}} \begin{bmatrix}1 &  1\end{bmatrix}^{\tran}$.  Let $0< \epsilon \ll 1$. Consider $X
= \begin{bmatrix} \sqrt{1 - \epsilon^2 - 2 \epsilon} & \epsilon \\ \epsilon &
\sqrt{1 - \epsilon^2 - \epsilon} \\ \sqrt{2 \epsilon} &
\sqrt{\epsilon}\end{bmatrix}$. We have $\proj_{\stiefplus}(X) =   \begin{bmatrix} \sqrt{1
- \epsilon^2 - 2\epsilon}/{\sqrt{1 - \epsilon^2}}  &  0  \\ 0  & 1 \\ {\sqrt{2
\epsilon}}/{\sqrt{1 - \epsilon^2}} & 0 \end{bmatrix}$ and $\dist(X, \stiefplus) =  \|\proj_{\stiefplus}(X)  - X\|_{\Ftt}  \approx \sqrt{\epsilon}$ while $\zeta_2(X) =  x_1^{\tran} x_2  \approx 4 \epsilon$. 
\end{example}
}
%\end{remark}

%=\left(\frac{2\epsilon^2}{1 + \sqrt{1 - \epsilon^2}} + \frac{2(\epsilon^2 + \epsilon)}{1 + \sqrt{1 - \epsilon^2 - \epsilon}}\right)^{1/2}
% \epsilon (\sqrt{1 - \epsilon^2 - 2\epsilon} + \sqrt{1 - \epsilon^2 - \epsilon}) + 2 \epsilon

\subsection{A general exact penalty model} \label{subsection:exact:penaly:theory}
%\ \rblue{Observing the constraint $\obliqueplus$ is simpler than $\|XV\|_{\Ftt} = 1$ in problem \eqref{equ:prob:orth+:new}, in this subsection, motived by \cite{chen2016penalty}, we consider a general partial penalty model which keeps the simple constraint $\obliqueplus$  while penalize the hard constraint  $\|XV\|_{\Ftt} = 1$. The proposed partial penalty model reads as}
%\ \rblue{By \cref{lemma:rounding} and \cite[Lemma 3.1]{chen2016penalty}, we know  that the global minimizer of 
%\[ \label{prob:exact:penalty:Phi:nonsmooth}
%\min_{X \in \obliqueplus} f(X) + \sigma \sqrt{\zeta_q(X)}  %\nn
%\]
%is still a global minimizer of  problem \eqref{equ:prob:orth+:new}  if the penalty parameter $\sigma > \varrho_{q} L_f$. However, the penalty term in the above model is nonsmooth. 
%

Let $0\leq Q_0 < \chi_f/L_f$ be a constant and  $\Psi: [Q_0,
+\infty) \rightarrow \Rbb_+$  be  strictly increasing. Choose 
$Q: \obliqueplus \rightarrow \Rbb_+$ such that 
\begin{subequations}
\begin{align}
& Q(X) \geq  \varrho_q \sqrt{\zeta_q(X)} \quad \forall X \in \obliqueplus, \label{equ:QX:1}\\
 & Q(X) \equiv Q_0 \quad \forall X \in \stiefplus, \quad Q(X) \geq Q_0\quad \forall X \in \obliqueplus. \label{equ:QX:2}
\end{align}
\end{subequations}

Note that \eqref{equ:QX:1} and \eqref{equ:bound:round} imply that 
\be
Q(X) \geq \|X^{\R} - X\|_{\Ftt} \geq  \dist(X, \stiefplus),\quad \forall X \in \obliqueplus. \label{equ:QX:0}  
\ee
Our general penalty model, including \eqref{equ:prob:orth+:new:exact:penalty} as a special case,  is given as
\be \label{prob:exact:penalty:Phi}
\min_{X \in \obliqueplus} f(X) + \sigma \Psi(Q(X)).
\ee
%Consider the partial penalty model \rblue{that keeps the simple constraints $\obliqueplus$ in \eqref{equ:prob:orth+:new}}
Let $X_{\sigma, \Psi}$ be a global minimizer of \eqref{prob:exact:penalty:Phi}, 
 and $X^{\R}_{\sigma, \Psi}$ be the matrix returned by Procedure
\ref{alg:round} with an input  $X_{\sigma, \Psi}$.

\begin{lemma}\label{lemma:genera:exact:penalty}
For the penalty model \eqref{prob:exact:penalty:Phi}, 
%Suppose \cref{assumption:Phi} holds and $\sigma > 0$. 
it holds 
\be\label{equ:lemma:genera:exact:penalty:00}
f(X^*) \leq f(X^{\R}_{\sigma, \Psi}) \leq f(X^*)  + L_f \Upsilon_{\sigma, Q_0, \Psi},
\ee
where $X^*$ is a global minimizer of problem \eqref{equ:prob:orth+:new} and
\be\label{equ:upsilon}
\Upsilon_{\sigma, Q_0, \Psi}:= \max_{z \in \Rbb}\,\,  z  \ 
 \st \   \Psi(z) \leq \Psi(Q_0) + \frac{L_f}{\sigma} z, 0\leq z \leq \Psi^{-1}\big(\Psi(Q_0)  + {\sqrt{2k}L_f}/{\sigma}\big). 
\ee
\end{lemma}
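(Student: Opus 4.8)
The plan is to bound $f(X^{\R}_{\sigma,\Psi})$ from above and below. The lower bound $f(X^*) \le f(X^{\R}_{\sigma,\Psi})$ is immediate because $X^{\R}_{\sigma,\Psi} \in \stiefplus$ (by Procedure \ref{alg:round} and \cref{lemma:rounding}) is feasible for \eqref{equ:prob:orth+:new}, and $X^*$ is a global minimizer. The real work is the upper bound. First I would exploit that $X_{\sigma,\Psi}$ is a global minimizer of the penalty problem \eqref{prob:exact:penalty:Phi}, so comparing its penalty value with that of any feasible $X^* \in \stiefplus$ (for which $Q(X^*) = Q_0$ by \eqref{equ:QX:2}) gives
\be
f(X_{\sigma,\Psi}) + \sigma \Psi(Q(X_{\sigma,\Psi})) \le f(X^*) + \sigma \Psi(Q_0). \nn
\ee
Rearranging and using the Lipschitz bound \eqref{equ:f:Lips} together with $\|X_{\sigma,\Psi} - X^*\|_{\Ftt} \le \sqrt{2k}$ (both matrices have Frobenius norm $\sqrt{2k}$ on $\obliqueplus \supseteq \stiefplus$), one gets $\sigma\big(\Psi(Q(X_{\sigma,\Psi})) - \Psi(Q_0)\big) \le L_f \sqrt{2k}$, hence $Q(X_{\sigma,\Psi}) \le \Psi^{-1}\big(\Psi(Q_0) + \sqrt{2k}L_f/\sigma\big)$, which is the second constraint defining $\Upsilon_{\sigma,Q_0,\Psi}$.

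Next I would relate $f(X^{\R}_{\sigma,\Psi})$ to $f(X_{\sigma,\Psi})$ via Lipschitz continuity and the error bound: $|f(X^{\R}_{\sigma,\Psi}) - f(X_{\sigma,\Psi})| \le L_f \|X^{\R}_{\sigma,\Psi} - X_{\sigma,\Psi}\|_{\Ftt} \le L_f \varrho_q \sqrt{\zeta_q(X_{\sigma,\Psi})} \le L_f\, Q(X_{\sigma,\Psi})$ by \eqref{equ:bound:round} and \eqref{equ:QX:1}. Combining with the displayed inequality from the minimality of $X_{\sigma,\Psi}$, and again $f(X^*) \le f(X_{\sigma,\Psi})$ after bounding... actually more carefully: from minimality, $f(X_{\sigma,\Psi}) \le f(X^*) + \sigma(\Psi(Q_0) - \Psi(Q(X_{\sigma,\Psi})))$, and then
\be
f(X^{\R}_{\sigma,\Psi}) \le f(X_{\sigma,\Psi}) + L_f Q(X_{\sigma,\Psi}) \le f(X^*) + \sigma \Psi(Q_0) - \sigma\Psi(Q(X_{\sigma,\Psi})) + L_f Q(X_{\sigma,\Psi}). \nn
\ee
To make the right-hand side at most $f(X^*) + L_f \Upsilon_{\sigma,Q_0,\Psi}$, I would set $z = Q(X_{\sigma,\Psi})$ and check that $z$ satisfies both constraints in \eqref{equ:upsilon}: the box constraint was shown above, and the inequality $\Psi(z) \le \Psi(Q_0) + (L_f/\sigma) z$ is exactly the rearrangement of $\sigma\Psi(Q_0) - \sigma\Psi(z) + L_f z \ge 0$, i.e. it is what we need so that the surplus term is nonnegative and bounded; more precisely, if $z$ is feasible for \eqref{equ:upsilon} then $\sigma\Psi(Q_0) - \sigma\Psi(z) + L_f z \le L_f z \le L_f \Upsilon_{\sigma,Q_0,\Psi}$. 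So I need to verify $z = Q(X_{\sigma,\Psi})$ satisfies $\Psi(z) \le \Psi(Q_0) + (L_f/\sigma)z$; this follows because $\sigma(\Psi(z) - \Psi(Q_0)) \le f(X^*) - f(X_{\sigma,\Psi}) + \sigma(\Psi(Q_0) - \Psi(Q_0))$... let me re-derive: from minimality $\sigma(\Psi(z)-\Psi(Q_0)) \le f(X^*) - f(X_{\sigma,\Psi})$, and $f(X^*) - f(X_{\sigma,\Psi}) \le L_f\|X^* - X_{\sigma,\Psi}\|_{\Ftt}$; but I want $L_f z$ on the right, so I would instead use that $X^{\R}_{\sigma,\Psi}$ is feasible, $f(X^*) \le f(X^{\R}_{\sigma,\Psi}) \le f(X_{\sigma,\Psi}) + L_f z$, giving $f(X^*) - f(X_{\sigma,\Psi}) \le L_f z$ and thus $\sigma(\Psi(z) - \Psi(Q_0)) \le L_f z$, which is precisely the first constraint in \eqref{equ:upsilon}.

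Putting these together yields \eqref{equ:lemma:genera:exact:penalty:00}. The main obstacle — really a bookkeeping subtlety rather than a deep difficulty — is ordering the inequalities so that the quantity $z = Q(X_{\sigma,\Psi})$ is shown to be feasible for the optimization problem \eqref{equ:upsilon} \emph{using only} the feasibility of the rounded point and the minimality of $X_{\sigma,\Psi}$, without circularity; in particular one must be careful that the bound $f(X^*) - f(X_{\sigma,\Psi}) \le L_f z$ is obtained through the rounded point $X^{\R}_{\sigma,\Psi}$ and the error bound \eqref{equ:bound:round}, and that the box constraint $z \le \Psi^{-1}(\Psi(Q_0) + \sqrt{2k}L_f/\sigma)$ is obtained through the cruder estimate $\|X^* - X_{\sigma,\Psi}\|_{\Ftt} \le \sqrt{2k}$. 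Once $z$ is feasible, $z \le \Upsilon_{\sigma,Q_0,\Psi}$ by definition, and the chain $f(X^{\R}_{\sigma,\Psi}) \le f(X_{\sigma,\Psi}) + L_f z \le f(X^*) + \sigma\Psi(Q_0) - \sigma\Psi(z) + L_f z \le f(X^*) + L_f z \le f(X^*) + L_f\Upsilon_{\sigma,Q_0,\Psi}$ closes the argument, where the step $\sigma\Psi(Q_0) - \sigma\Psi(z) \le 0$ uses that $\Psi$ is strictly increasing and $z = Q(X_{\sigma,\Psi}) \ge Q_0$ by \eqref{equ:QX:2}.
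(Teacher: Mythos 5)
Your proposal is correct and follows essentially the same route as the paper's proof: both use the global minimality of $X_{\sigma,\Psi}$ against $X^*$ and against the rounded point, the Lipschitz estimate together with $Q(X)\geq \|X^{\R}-X\|_{\Ftt}$ to show that $z=Q(X_{\sigma,\Psi})$ satisfies both constraints in \eqref{equ:upsilon}, and then conclude $f(X^{\R}_{\sigma,\Psi})\leq f(X_{\sigma,\Psi})+L_f z\leq f(X^*)+L_f\Upsilon_{\sigma,Q_0,\Psi}$. The only cosmetic difference is that you derive the box constraint by comparing with $X^*$ and bounding $\|X^*-X_{\sigma,\Psi}\|_{\Ftt}\leq\sqrt{2k}$, whereas the paper plugs $X^{\R}_{\sigma,\Psi}$ into the optimality inequality and bounds $\|X^{\R}_{\sigma,\Psi}-X_{\sigma,\Psi}\|_{\Ftt}\leq\sqrt{2k}$; both yield the same estimate.
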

\begin{proof}
    Using  the Lipschitz continuity of $f$ in \eqref{equ:f:Lips}, we have 
\be\label{equ:genera:exact:penalty:a00}
f(X^{\R}_{\sigma, \Psi}) \leq f(X_{\sigma, \Psi})  + L_f \|X^{\R}_{\sigma, \Psi} - X_{\sigma, \Psi}\|_{\Ftt} \leq f(X_{\sigma, \Psi}) + L_f Q(X_{\sigma, \Psi}),
\ee
where the second inequality is due to \eqref{equ:QX:0}. 
By the optimality of $X_{\sigma, \Psi}$, we obtain 
\be\label{equ:genera:exact:penalty:b00}
f(X_{\sigma, \Psi}) + \sigma \Psi\left(Q(X_{\sigma, \Psi})\right) \leq f(X) + \sigma \Psi\left(Q(X)\right) = f(X) + \sigma \Psi(Q_0) \quad \forall X \in \Xcal_V. 
\ee
Taking $X = X^*$ in \eqref{equ:genera:exact:penalty:b00} and using
the strictly increasing property of $\Psi$, we have 
$f(X_{\sigma, \Psi}) \leq f(X^*)$. Hence, we know from \eqref{equ:genera:exact:penalty:a00} that 
\be \label{equ:genera:exact:penalty:f00}
f(X^*) \leq f(X^{\R}_{\sigma, \Psi})  \leq f(X^*) + L_f Q(X_{\sigma, \Psi}).
\ee

The remaining is to estimate $Q(X_{\sigma, \Psi})$. Taking $X$ to be
$X^{\R}_{\sigma, \Psi}$ in \eqref{equ:genera:exact:penalty:b00}, we get 
\be\label{equ:genera:exact:penalty:c00} 
\Psi\left(Q(X_{\sigma, \Psi})\right) \leq \Psi(Q_0) + \frac{f(X^{\R}_{\sigma, \Psi}) - f(X_{\sigma, \Psi}) }{\sigma} \leq \Psi(Q_0) + \frac{L_f \|X^{\R}_{\sigma, \Psi} - X_{\sigma, \Psi}\|_{\Ftt}}{\sigma}, 
\ee
where the second inequality is due to \eqref{equ:f:Lips}. Since $X_{\sigma,
\Psi} \in \obliqueplus$, it is easy to see that $\|X^{\R}_{\sigma, \Psi} -
X_{\sigma, \Psi}\|_{\Ftt} \leq  \sqrt{2k}$.  Thus, we have from \eqref{equ:genera:exact:penalty:c00} that 
$\Psi\left(Q(X_{\sigma, \Psi})\right) \leq \Psi(Q_0)  +
{\sqrt{2k}L_f}/{\sigma}$. Since $\Psi$ is  strictly increasing, we obtain 
\be \label{equ:genera:exact:penalty:c02} 
Q(X_{\sigma, \Psi}) \leq \Psi^{-1}\left(\Psi(Q_0)  + {\sqrt{2k}L_f}/{\sigma}\right).
\ee
On the other hand,   recalling \eqref{equ:QX:0}, we have from \eqref{equ:genera:exact:penalty:c00}  that 
$\Psi\left(Q(X_{\sigma, \Psi})\right) \leq    \Psi(Q_0)  +  \frac{L_f}{\sigma}  Q(X_{\sigma, \Psi})$,
which together with \eqref{equ:genera:exact:penalty:c02} and  \eqref{equ:genera:exact:penalty:f00} establishes 
\eqref{equ:lemma:genera:exact:penalty:00}.  %The proof is completed. 
\end{proof}

  Let $X^{\diamondsuit}_{\sigma, \Psi}$ be a global minimizer of the
  problem  \eqref{equ:prob:refinement} with $X^{\R}_{\sigma, p, q, \epsilon}=X_{\sigma, \Psi}^{\R}$. We now have  the following  exact penalty
  property.
\begin{theorem}\label{thm:exact:penalty:new}
Suppose \cref{assump:blank}  holds  and 
 $\sigma > 0$ is chosen such that 
\be \label{equ:exact:penaly:Upsilon:condition}
 \Upsilon_{\sigma, Q_0, \Psi} < \kappa_f. 
\ee
Then it holds that \emph{(i)} $\sgn(X^{\R}_{\sigma, \Psi}) \in \sgn(\Xcal^{*})$; \emph{(ii)} $X^{\diamondsuit}_{\sigma, \Psi}$ is a global minimizer of problem \eqref{equ:prob:orth+:new}, namely, $f(X^{\diamondsuit}_{\sigma, \Psi}) = f(X^*)$.
\end{theorem}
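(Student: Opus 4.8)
The plan is to leverage \cref{lemma:genera:exact:penalty} together with \cref{assump:blank} to force the sign matrix of $X^{\R}_{\sigma,\Psi}$ into $\sgn(\Xcal^*)$, and then to conclude that the refinement step \eqref{equ:prob:refinement} recovers a global minimizer. First I would observe that \cref{lemma:genera:exact:penalty} gives $f(X^{\R}_{\sigma,\Psi}) \le f(X^*) + L_f \Upsilon_{\sigma,Q_0,\Psi}$, and by hypothesis \eqref{equ:exact:penaly:Upsilon:condition} this is strictly less than $f(X^*) + L_f \kappa_f = f(X^*) + \chi_f = f^* + \chi_f = \tilde f^*$, where the last equality is the definition of $\tilde f^*$ in \cref{assump:blank} and $f^* = f(X^*)$. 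So $f(X^{\R}_{\sigma,\Psi}) < \tilde f^*$.

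Next I would argue by contradiction for part (i): since $X^{\R}_{\sigma,\Psi} \in \stiefplus$ (by \cref{lemma:rounding}), its sign matrix lies in $\sgn(\stiefplus)$. If $\sgn(X^{\R}_{\sigma,\Psi}) \notin \sgn(\Xcal^*)$, then $\sgn(X^{\R}_{\sigma,\Psi}) \in \sgn(\stiefplus)\setminus\sgn(\Xcal^*)$, so $X^{\R}_{\sigma,\Psi}$ is feasible for the minimization problem defining $\tilde f^*$, whence $f(X^{\R}_{\sigma,\Psi}) \ge \tilde f^*$. This contradicts the strict inequality just established, proving (i).

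For part (ii), I would note that once $\sgn(X^{\R}_{\sigma,\Psi}) \in \sgn(\Xcal^*)$, there is some $\hat X \in \Xcal^*$ with $\sgn(\hat X) = \sgn(X^{\R}_{\sigma,\Psi})$, hence $\supp(\hat X) = \supp(X^{\R}_{\sigma,\Psi})$. Then $\hat X$ is feasible for the auxiliary problem \eqref{equ:prob:refinement} (its nonzero pattern is contained in $\supp(X^{\R}_{\sigma,\Psi})$ and it lies in $\obliqueplus$), so $f(X^{\diamondsuit}_{\sigma,\Psi}) \le f(\hat X) = f^* = f(X^*)$. Conversely, the feasible set of \eqref{equ:prob:refinement} is a subset of $\obliqueplus$; moreover any $X$ feasible for \eqref{equ:prob:refinement} has at most one nonzero per row (since $\supp(X^{\R}_{\sigma,\Psi})$ has that property, being the support of an element of $\stiefplus$) and unit-norm columns, hence $X \in \stiefplus$, so $X$ is feasible for \eqref{equ:prob:orth+:new} and $f(X) \ge f^*$. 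Therefore $f(X^{\diamondsuit}_{\sigma,\Psi}) = f(X^*)$, and being feasible for \eqref{equ:prob:orth+:new} it is a global minimizer.

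The main obstacle is the bookkeeping in the chain $f(X^{\R}_{\sigma,\Psi}) < \tilde f^*$: one must be careful that \cref{lemma:genera:exact:penalty} applies (which requires $Q$ to satisfy \eqref{equ:QX:1}--\eqref{equ:QX:2} and $0 \le Q_0 < \chi_f/L_f$, i.e.\ $L_f Q_0 < \chi_f$, matching $\kappa_f = \chi_f/L_f$), and that the optimization defining $\Upsilon_{\sigma,Q_0,\Psi}$ in \eqref{equ:upsilon} is feasible and finite so that the bound is meaningful. One should also handle the trivial edge case $L_f = 0$ (then $f$ is constant and everything is immediate). The rest — verifying feasibility of $\hat X$ in the refinement problem and that the refinement feasible set sits inside $\stiefplus$ — is routine given the support-equality observation.
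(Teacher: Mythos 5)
Your proposal is correct and follows essentially the same route as the paper's proof: part (i) is the same contradiction argument combining \cref{lemma:genera:exact:penalty} with \cref{assump:blank} and $\kappa_f=\chi_f/L_f$, and part (ii) is the same conclusion from the definition of \eqref{equ:prob:refinement}. The only difference is that you write out the details of (ii) (existence of $\hat X\in\Xcal^*$ with matching support, and that the refinement feasible set sits inside $\stiefplus$) which the paper leaves implicit.
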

\begin{proof}

%Clearly,  $Q_0 \leq \Upsilon_{\sigma, Q_0, \Psi}$ follows directly from \eqref{equ:upsilon}. 
We first claim that  $\sgn(X^{\R}_{\sigma, \Psi}) \in \sgn(\Xcal^{*})$. Otherwise, 
it follows from \cref{assump:blank}   that $f(X^{\R}_{\sigma, \Psi}) \geq f(X^*) + \chi_f$.  
By using \eqref{equ:lemma:genera:exact:penalty:00} and $\kappa_f = \chi_f/L_f$,
we thus have $\Upsilon_{\sigma, Q_0, \Psi} \geq \kappa_f$,  which makes a
contradiction to  
\eqref{equ:exact:penaly:Upsilon:condition}.   Using  $\sgn(X^{\R}_{\sigma, \Psi}) \in \sgn(\Xcal^{*})$ and the definition of   $X^{\diamondsuit}_{\sigma, \Psi}$, see problem \eqref{equ:prob:refinement} with $X^{\R}_{\sigma, p, q, \epsilon}$  being $X_{\sigma, \Psi}^{\R}$, we know that $X^{\diamondsuit}_{\sigma, \Psi}$ is a global minimizer of problem \eqref{equ:prob:orth+:new}. The proof is completed. 
\end{proof}

%With the fact that $\Psi$ is strictly increasing over $[Q_0, +\infty)$, it is not hard to see that $\Psi(z) = \Psi(Q_0) + \frac{L_f}{\sigma}z$ has at most one solution, denoted by $z^*$, on  $[Q_0, +\infty)$.  

It follows from \eqref{equ:upsilon} that $\Upsilon_{\sigma, Q_0, \Psi} \leq \Psi^{-1}\big(\Psi(Q_0)  + {\sqrt{2k}L_f}/{\sigma}\big)$. To make  \eqref{equ:exact:penaly:Upsilon:condition} hold, we can choose 
   %\label{equ:Q:sigma:choices
 $0 \leq Q_0 < \kappa_f$ and  $\sigma >  \sqrt{2k} L_f \big(\Psi(\kappa_f) - \Psi(Q_0)\big)^{-1}$.
%\rblue{}  
For  some particular $\Psi(\cdot)$, we next show that  this lower bound   can be improved.

%\rblue{Now it is ready to prove \cref{theorem:lp:exact}.} 

\begin{myproof}
Let us choose $Q(X) = \varrho_q  \sqrt{\zeta_q(X) + \epsilon}$ and $\Psi(z) = \left({z}/{\varrho_q}\right)^{2p}$ with 
%\[
%\Psi(z) = \left({z}/{\varrho_q}\right)^{2p}\!,\  p \in (0, +\infty); \quad Q(X) = \varrho_q  \sqrt{\zeta_q(X) + \epsilon},\quad Q_0 =\varrho_q \sqrt{\epsilon}
%\]
 $0\leq \epsilon <  \kappa_f^2/\varrho_q^2$ and $Q_0 =\varrho_q \sqrt{\epsilon}$. By \cref{thm:exact:penalty:new}, we only need to prove $\Upsilon_{\sigma, Q_0, \Psi} < \kappa_f$ if $\sigma > \varrho_q^{2p}L_f \underline{\nu}$. We consider three cases. 

Case I.  $\epsilon = 0$ and $0< p \leq 1/2$.  Since  $\sigma > \varrho_q^{2p}L_f \underline{\nu} =(\sqrt{2k})^{1 - 2p} \varrho_q^{2p}L_f$, we have from $\Psi(z) \leq \Psi(Q_0) + \frac{L_f}{\sigma} z$ that $z =  0$ or $z > \sqrt{2k}$  and have from $0\leq z \leq \Psi^{-1}(\Psi(Q_0)  + {\sqrt{2k}L_f}/{\sigma})$ that $0 \leq z < \sqrt{2k}$. By definition \eqref{equ:upsilon}, we have $\Upsilon_{\sigma, Q_0, \Psi} = 0$. 

Case II. $\epsilon = 0$ and $p > 1/2$.  Using the definition of $\chi_f$ in \cref{assump:blank}, \eqref{equ:f:Lips} and $\|X - Y\| \leq \sqrt{2k}$ for $X, Y \in \obliqueplus$, we have  $\chi_f \leq \sqrt{2k}L_f$, i.e., $\kappa_f \leq \sqrt{2k}$.  By    $\sigma > \varrho_q^{2p}L_f \underline{\nu} =(\kappa_f)^{1 - 2p} \varrho_q^{2p}L_f$,  it is easy to obtain from \eqref{equ:upsilon} that $\Upsilon_{\sigma, Q_0, \Psi} < \kappa_f$. 

Case III.  $0< \epsilon < \kappa_f^2/\varrho_q^2$, $p > 0$. In this case, we have $\varrho_q^{2p}L_f \underline{\nu} = \sqrt{2k} L_f \big(\Psi(\kappa_f) - \Psi(Q_0)\big)^{-1}$. Thus,  we have $\Upsilon_{\sigma, Q_0, \Psi} < \kappa_f$ by \eqref{equ:upsilon}. 
%The penalty model \eqref{prob:exact:penalty:Phi} becomes
%According to \cref{thm:exact:penalty:new},  by some tedious calculations, we have the following result. To save space, we omit the \rblue{tedious details}.   
\end{myproof}

\begin{remark}
 The threshold $\underline{\sigma}$ of the  exact penalty parameter  %which guarantees exact penalty always highly 
 depends on  the parameter $\kappa_f = \chi_f/L_f$, except that for model
 \eqref{equ:prob:orth+:new:exact:penalty} with $p \in (0,1/2]$ and $\epsilon =
 0$, $\underline{\sigma}$ is independent of $\kappa_f$ but the corresponding
 penalty term is nonsmooth.  Usually, estimating $L_f$ is possible for the
 instances such as the orthogonal nonnegative matrix factorization models 
 \eqref{equ:prob:onmf} and \eqref{equ:prob:onmf:XXt}.  However, computing
 $\chi_f$ is hard since it needs to know $f^*$ in advance and solves an
 optimization problem.  In fact, calculating  a threshold of the exact penalty
 parameter is always not easy, see \cite{bertsekas1996constrained, luo1996exact,
 friedlander2008exact, chen2016penalty} for some convex and nonconvex examples.
 %On the other hand, \cref{theorem:lp:exact} tells that  a smaller $\kappa_f$ implies a large $\underline{\sigma}$ except the case when $p \in (0, 1/2]$ and $\epsilon = 0$. To some extent, this means that \eqref{equ:prob:orth+:new} with smaller $\kappa_f$ may be harder than that with larger $\kappa_f$.  
 In practice, we simply solve approximately a series of problems
 \eqref{equ:prob:orth+:new:exact:penalty}  with  an increasing   $\sigma$; see
 section \ref{section:penaly:alg} for a detailed description.
\end{remark}

A few more remarks on the exact penalty model \eqref{equ:prob:orth+:new:exact:penalty}  are listed in order.  First,  to make the objective function in problem \eqref{equ:prob:orth+:new:exact:penalty} smooth, we need to choose $\epsilon \in (0, \kappa_f^2/\varrho_q^2)$ for $p \in (0,1)$. As for $p \in [1, +\infty)$, we can simply choose $\epsilon = 0$.  Second,  by directly using the results in \cite[Lemma 3.1]{chen2016penalty}, we can show that a global  minimizer of  
\eqref{equ:prob:orth+:new:exact:penalty} with $p = 1/2$ and $\epsilon = 0$ is
also a global minimizer of \eqref{equ:prob:orth+} under the condition that
$\sigma > \varrho_q L_f$.  However, the results therein do not apply to the
general $\Psi(\cdot)$ and $Q_0$. By contrast, our results in
\cref{thm:exact:penalty:new} or \cref{theorem:lp:exact} allow more flexible
choices of $\Psi(\cdot)$ and $Q_0$ or $p$ and $\epsilon$. Third, the multiple
spherical constraints   in  model \eqref{equ:prob:orth+:new:exact:penalty} are
not only important to establish the exact penalty property but also make model
\eqref{equ:prob:orth+:new:exact:penalty}  working over a compact set.  It should
be mentioned that for a variant of  ONMF problem \eqref{equ:prob:onmf},
\cite{wang2019clustering} proposed an exact penalty model without keeping the
multiple spherical constraints. However, their results only hold on this special
formulation  \eqref{equ:prob:onmf} rather than the general problem \eqref{equ:prob:orth+}.  Besides, Gao  et al. \cite{gao2018parallelizable} used a customized augmented Lagrangian type method  to solve optimization with orthogonality constraints  but without the nonnegative constraints. The multiple spherical constraints are also kept therein to make their method more robust.  However, their method  can not be directly used to  solve problem \eqref{equ:prob:orth+} or \eqref{equ:prob:orth+:new} since the nonnegative constraints, which were not considered therein, make the problem totally different.

\section{A practical exact penalty algorithm} \label{section:penaly:alg}
 %\rblue{Comment on the regularization parameter, cite several papers to show that this number is always not easy to comptute}
%\ \rblue{We now  focus on the case  $\epsilon = 0$, $p \geq 1$ or $0 < \epsilon
%< \kappa_f^2/\varrho_q^2$, $0 < p < 1$ for a smooth objective function in \eqref{equ:prob:orth+:new:exact:penalty}.} 
%\rblue{The penalty parameter $\sigma$ is dynamically increased in Algorithm \ref{alg:feasiblePenalty}}. 
We now  focus on the exact penalty model \eqref{equ:prob:orth+:new:exact:penalty}. A practical exact penalty method, named as EP4Orth+, is  presented in  \cref{alg:feasiblePenalty}. We adopt the way in \cite{chen2016penalty} to choose a feasible initial point $X^{\feas}$. In each iteration, the penalty parameter $\sigma$ is dynamically increased and we find an approximate stationary point $X^t$
%\footnote{\rblue{For the definition of the approximate stationary points of \eqref{equ:prob:orth+:new:exact:penalty}, one can refer to  \cref{def:prob:orth:eKKT}}.} of \eqref{equ:prob:orth+:new:exact:penalty} 
satisfying the approximate first-order optimality condition \eqref{equ:palg:kkt:condition}  and the sufficient descent condition \eqref{equ:palg:decrease}. Such $X^t$ can be found in a finite number of iterations; see  section \ref{section:retr:based} for more discussion.
%We can use the nonconvex gradient projection method \eqref{equ:gp:h}  or the second-order method developed in section \ref{section:retr:based}. 
In practice, we utilize the  nonconvex projection gradient method \eqref{equ:gp:h} if $\zeta(X^{t,0}) = \|X^{t,0}V\|_{\Ftt}^2 - 1 > \bar{\zeta}$,
otherwise switch to the  second-order method, namely, \cref{alg:obliqueplus:alg:2nd}, developed in section
\ref{section:retr:based}. % Here, $\bar{\zeta} \geq 0$ is a given switch parameter.
 To obtain an exact orthogonal nonnegative matrix and improve the solution quality, we perform a postprocessing procedure at the end of the algorithm. 
%Based on \rblue{\cref{theorem:lp:exact}},    we only need to solve  subproblem \eqref{equ:prob:orth+:new:exact:penalty} with \rblue{$\sigma > \underline{\sigma}$} theoretically.  However,  \rblue{the threshold $\underline{\sigma}$ is not easy to estimate since it is highly related to $\chi_f$, which is always very hard to compute. On the other hand, even} solving \eqref{equ:prob:orth+:new:exact:penalty} \rblue{globally} for fixed $\sigma$ is still hard.    Considering that we mainly aim to find \rblue{an} orthogonal nonnegative matrix of high quality,  we propose the following practical algorithm, outlined in \cref{alg:feasiblePenalty}.  

 \begin{algorithm}[!htbp]  %\label
 \SetKwFor{mypost}{Postprocessing}{}{endw}
 \SetKwFor{mywhile}{For}{do}{}
\caption{EP4Orth+: A practical exact penalty method for solving \eqref{equ:prob:orth+:new}}\label{alg:feasiblePenalty}
\textsf{Initialization:} Choose $X^0 \in \obliqueplus$, $X^{\feas} \in \stiefplus$, $\sigma_0 > 0$,   $p,q \in (0, +\infty)$, $ \tol^{\feas}, \varepsilon_{0}^{\grad}, \varepsilon_{\min}^{\grad} \in [0,1)$, a positive integer $t_{\max}$. Choose $\gamma_2 > 1$, set $\epsilon_0 >0$, $\gamma_1 \in (0,\gamma_2^{-1/p})$  if $p \in (0,1)$ and $\epsilon_0 = 0$, $\gamma_1 = 1$ if $p \ge 1$. Let $\eta \in (0,\gamma_2^{-1} \gamma_1^{1-p})$.   Set $X^{0,0} = X^{0}$.\\
\mywhile{$t = 0,1,2, \ldots, t_{\max}$}{
If $P_{\sigma_{t},p,q,\epsilon_t}(X^{t,0}) > P_{\sigma_{t},p,q,\epsilon_t}(X^{\feas})$, set $X^{t,0} = X^{\feas}$.

Starting from $X^{t, 0}$, we  find an approximate stationary point $X^{t}$ of  \eqref{equ:prob:orth+:new:exact:penalty}  with $\sigma = \sigma_t$ and $\epsilon = \epsilon_t$ such that %\eqref{equ:palg:kkt:condition} and \eqref{equ:palg:decrease} hold. 
\begin{align} \label{equ:palg:kkt:condition}
&\|\min(X^{t}, \rgrad P_{\sigma_t, p, q,\epsilon_t}(X^{t}))\|_{\Ftt} \leq \varepsilon_{t}^{\grad},\\
 %\ee
 %and the descent condition 
% \be
 &P_{\sigma_{t},p,q,\epsilon_t}(X^{t}) \leq  P_{\sigma_{t},p,q,\epsilon_t}(X^{t, 0}). \label{equ:palg:decrease}
 \end{align}
%\rev{Set $X^{t,\R} = (X^t)^{\R}$ using Procedure \ref{alg:round}.}

 \If{$\|X^tV\|_{\Ftt}^2 - 1 \leq \tol^{\feas}$}{
\textbf{break}}
 Set $
 \epsilon_{t+1} = \gamma_1 \epsilon_t$, $\sigma_{t + 1} = \gamma_2 \sigma_t$,  $\varepsilon_{t+1}^{\grad} = \max\{\eta  \varepsilon_{t}^{\grad}, \varepsilon_{\min}^{\grad}\}$ and $X^{t + 1,0} = X^t$.
}
\tcc{Postprocessing}
Set $X^{\R} = (X^t)^{\R}$ using Procedure \ref{alg:round} and solve \eqref{equ:prob:refinement}  approximately with $X^{\R}_{\sigma, p, q, \epsilon}=X^{\R}$ to get $X^{\diamondsuit}$ such that $f(X^{\diamondsuit}) \leq f(X^{\R})$.
\end{algorithm}
It should be mentioned that  the postprocessing procedure is always easy to perform.
Consider a separable function $f(X) = \sum_{j = 1}^k
f_j(\xbf_j)$, where $f_j: \Rbb^n \rightarrow \Rbb$. Let  $\xbf_j^{\R}$ be the $j$-th column of $X^{\R}$.
The corresponding problem
\eqref{equ:prob:refinement} is split to the form of 
 \be %\label{equ:prob:refinement:ksubproblem}
  \min_{\xbf_j \in \Rbb^n}\  f_j(\xbf_j) \quad \st \quad  \xbf_j^{\tran} \xbf_j  = 1,  \xbf_j \geq 0, (\xbf_j)_i = 0 \ \mathrm{if}\  i \not \in\supp(\xbf_j^{\R}).\nn
\ee
$(\mathrm{i})$  If $f(X)$ is $-\langle C, X\rangle$ as in the   
K-indicators model \eqref{equ:prob:k:indicator} with fixed $Y$, the $j$-th  column of the
global minimizer $X^{\Diamond}$ is $\xbf_j^{\Diamond} = \proj_{\obliqueplus}(\cbf_j \circ \hbf_j^{\R})$, where $\hbf_j^{\R}$ is the $j$-th column of $\sgn(X^{\R})$. 
$(\mathrm{ii})$  Consider $f(X) = - \mtr (X^{\tran} M X)$ with $M = M^{\tran}
\geq 0$ in the ONMF models
\eqref{equ:prob:onmf} and \eqref{equ:prob:onmf:XXt}.  Then 
%The $j$-th column of the
%global minimizer $X^{\Diamond}$ is 
$(\xbf_j^{\Diamond})_i = 0$ if $i \not \in
\supp(\xbf_j^{\R})$ and $(\xbf_j^{\Diamond})_{\supp(\xbf_j^{\R})}$ is the
dominant eigenvector of  $M_j$,  which  is a principal submatrix of $M$ whose
rows and columns indices are both $\supp(\xbf_j^{\R})$.  Since $M_j \geq 0$,
there always exists a nonnegative dominant eigenvector due to Perron-Frobenius Theorem, see \cite[Theorem 1.1]{chang2008perron}. 
When $f$ is a  general smooth function, we can use the  nonconvex gradient projection method to get an approximate stationary point $X^{\Diamond}$ with $f(X^{\Diamond}) \leq f(
X^{\R})$. 
%obtained by keeping all the rows and columns   $M[\supp(\xbf_j^{\R}), \supp(\xbf_j^{\R})]$ 
%a  principal submatrix of $M$ obtained by 
%\end{enumerate}
%\end{remark}

Next, we study the asymptotic convergence of \cref{alg:feasiblePenalty} without postprocessing.

\rev{
\begin{theorem}\label{thm:convergence}
Let $\{X^t\}$ be the sequence generated by \cref{alg:feasiblePenalty} with $t_{\max}=\infty$, $\tol^{\feas} = -1$ and $\varepsilon^{\grad}_{\min}=0$. If $X^{\infty}$ is a limit point  of $\{X^t\}$, then $X^{\infty}$ is   a weakly stationary point of problem \eqref{equ:prob:orth+:new}.
\end{theorem}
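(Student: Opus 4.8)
The plan is to pass to a subsequence $X^{t}\to X^{\infty}$ (I keep the index $t$) and argue in three steps: feasibility of $X^{\infty}$, a bound on the penalty ``multiplier'', and passage to the limit in the approximate first-order condition \eqref{equ:palg:kkt:condition}. For Step~1, combining the reset step in \cref{alg:feasiblePenalty} with $X^{\feas}\in\stiefplus$ (so $\zeta_q(X^{\feas})=0$) and the descent condition \eqref{equ:palg:decrease} gives $f(X^{t})+\sigma_t(\zeta_q(X^{t})+\epsilon_t)^{p}\le f(X^{\feas})+\sigma_t\epsilon_t^{p}$, hence $\sigma_t\big[(\zeta_q(X^{t})+\epsilon_t)^{p}-\epsilon_t^{p}\big]$ is bounded since $f$ is bounded on the compact set $\obliqueplus$. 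For $p\ge1$ this reads $\sigma_t\zeta_q(X^{t})^{p}$ bounded (then $\epsilon_t\equiv0$); for $p\in(0,1)$ one uses that $\{\epsilon_t\}$ is bounded (as $\gamma_1<1$) together with the elementary estimate $(a+b)^{p}-b^{p}\ge c_p\,a$ on a bounded range. Either way $\zeta_q(X^{t})\le C/\sigma_t\to0$, so $\zeta_q(X^{\infty})=0$ and $X^{\infty}\in\stiefplus=\Xcal_V$ by \cref{lemma:key:observation:V}.

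\emph{Step~2 (the multiplier).} Write $\Phi(X):=XVV^{\tran}-X\Diag(X^{\tran}XVV^{\tran})$, the Riemannian gradient of $\tfrac12\|XV\|_{\Ftt}^{2}$ over $\oblique$, so that $\rgrad P_{\sigma_t,p,q,\epsilon_t}(X^{t})=\rgrad f(X^{t})+\rho_t\,\Phi(X^{t})$ with $\rho_t:=\sigma_t\,pq\,(\zeta_q(X^{t})+\epsilon_t)^{p-1}\|X^{t}V\|_{\Ftt}^{q-2}\ge0$. The crucial fact, which I would prove from the parameter choices together with Step~1, is $\rho_t\,\varepsilon_t^{\grad}\to0$: bounding $(\zeta_q(X^{t})+\epsilon_t)^{p-1}$ by $\zeta_q(X^{t})^{p-1}$ when $p\ge1$ and by $\epsilon_t^{p-1}$ when $p\in(0,1)$, then substituting $\sigma_t=\gamma_2^{t}\sigma_0$, $\epsilon_t=\gamma_1^{t}\epsilon_0$, $\varepsilon_t^{\grad}=\eta^{t}\varepsilon_0^{\grad}$, the governing geometric ratio is $<1$ precisely because $\eta<\gamma_2^{-1}\gamma_1^{1-p}$ (with $\gamma_1=1$ when $p\ge1$). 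This is exactly the role of the constraints on $\gamma_1$ and $\eta$ in \cref{alg:feasiblePenalty}.

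\emph{Step~3 (weak stationarity).} From \eqref{equ:palg:kkt:condition}, componentwise $|\min(X^{t}_{ij},[\rgrad P_{\sigma_t,p,q,\epsilon_t}(X^{t})]_{ij})|\le\varepsilon_t^{\grad}$, and in particular $[\rgrad P_{\sigma_t,p,q,\epsilon_t}(X^{t})]_{ij}\ge-\varepsilon_t^{\grad}$ for all $(i,j)$. Fix $(i,j)\in\supp(X^{\infty})$; then $X^{t}_{ij}$ is bounded away from $0$ along the subsequence, forcing $|[\rgrad P_{\sigma_t,p,q,\epsilon_t}(X^{t})]_{ij}|\le\varepsilon_t^{\grad}\to0$, so $[\rgrad f(X^{\infty})]_{ij}=-\lim_t\rho_t\Phi(X^{t})_{ij}$ and it remains to show $\rho_t\Phi(X^{t})_{ij}\to0$. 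If $\{\rho_t\}$ has a bounded subsequence, restrict to it and to a limit $\rho^{\ast}$; since $X^{\infty}\in\stiefplus$ gives $\Phi(X^{\infty})=X^{\infty}\off(VV^{\tran})$ and $[X^{\infty}\off(VV^{\tran})]_{ij}=0$ for $(i,j)\in\supp(X^{\infty})$ (that row of $X^{\infty}$ has a single nonzero entry), we obtain $[\rgrad f(X^{\infty})]_{ij}=0$. Otherwise $\rho_t\to\infty$: for each $l\ne j$ one checks $\Phi(X^{t})_{il}\to X^{\infty}_{ij}(VV^{\tran})_{jl}>0$, whence $[\rgrad P_{\sigma_t,p,q,\epsilon_t}(X^{t})]_{il}\to+\infty$; the $\min$-condition together with $[\rgrad P_{\sigma_t,p,q,\epsilon_t}(X^{t})]_{il}\ge-\varepsilon_t^{\grad}$ then forces $0\le X^{t}_{il}\le\varepsilon_t^{\grad}$, and likewise $0\le X^{t}_{ml}\le\varepsilon_t^{\grad}$ for every row $m$ with $(m,j_m)\in\supp(X^{\infty})$ and every $l\ne j_m$. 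Plugging these into $\Phi(X^{t})_{ij}=\sum_{l\ne j}(VV^{\tran})_{lj}\big(X^{t}_{il}-X^{t}_{ij}\iprod{X^{t}_{:,j}}{X^{t}_{:,l}}\big)$ and expanding $\iprod{X^{t}_{:,j}}{X^{t}_{:,l}}=\sum_m X^{t}_{mj}X^{t}_{ml}$, all terms are $O(\varepsilon_t^{\grad})$ after multiplication by $\rho_t$, except the contribution of rows $m$ with $X^{\infty}_{m,:}=0$; for such a row, applying the $\min$-condition at a largest entry $j_m$ of $X^{t}_{m,:}$ and using $\rho_t\to\infty$ yields the dichotomy ``$X^{t}_{ml}\le\varepsilon_t^{\grad}$ for all $l\ne j_m$'' or ``$\max_{l}X^{t}_{ml}=O(1/\rho_t)$'', and in either case its contribution to $\rho_t\Phi(X^{t})_{ij}$ is $O(\rho_t\varepsilon_t^{\grad})+O(1/\rho_t)\to0$. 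Hence $\rho_t\Phi(X^{t})_{ij}\to0$, so $[\rgrad f(X^{\infty})]_{ij}=0$ for every $(i,j)\in\supp(X^{\infty})$; that is, $X^{\infty}$ is weakly stationary.

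\emph{Main obstacle.} Everything up to the case $\rho_t\to\infty$ is routine penalty-method bookkeeping. The genuine difficulty is that this case is unavoidable---ACQ fails at $X^{\infty}$ whenever $\|X^{\infty}\|_0<n$ by \cref{lemma:CQ}, so the multiplier must be allowed to blow up---and one still has to show that its contribution to the \emph{support} entries of $\rgrad f(X^{\infty})$ vanishes. This rests on the finely tuned relation $\rho_t\varepsilon_t^{\grad}\to0$ and on a careful row-by-row control of the cross terms $X^{t}_{ij}\iprod{X^{t}_{:,j}}{X^{t}_{:,l}}$, in particular of the contributions coming from rows of $X^{t}$ that collapse to zero in the limit (the $\Omega_0''$-type rows); getting that dichotomy right is where the bulk of the work lies.
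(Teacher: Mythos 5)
Your proof follows the same route as the paper's: feasibility of $X^{\infty}$ from the descent condition, a case split on whether the multiplier $\rho_t=\sigma_t pq(\zeta_q(X^t)+\epsilon_t)^{p-1}\|X^tV\|_{\Ftt}^{q-2}$ (the paper's $\sigma_t c_t$) stays bounded, and, in the unbounded case, using the approximate first-order condition to drive the $\Omega_0'(X^{\infty})$-entries below $\varepsilon_t^{\grad}$ and then killing the cross terms $X^t_{mj}X^t_{ml}$. Steps 1 and 2 and the bounded-$\rho_t$ branch are correct and match the paper (the paper likewise asserts $\sigma_{t}c_{t}\varepsilon_{t}^{\grad}\to0$ from the parameter choices).

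The gap is in your dichotomy for the collapsing rows $X^{\infty}_{m,:}=0$. The $\min$-condition at the largest entry $(m,j_m)$ gives either $X^t_{mj_m}\le\varepsilon_t^{\grad}$ (whence the whole row is $\le\varepsilon_t^{\grad}$) or $|[\rgrad P_{\sigma_t,p,q,\epsilon_t}(X^t)]_{mj_m}|\le\varepsilon_t^{\grad}$. In the second alternative the available lower bound $\Phi(X^t)_{mj_m}\ge\underline\omega\max_{l\ne j_m}X^t_{ml}-X^t_{mj_m}\zeta_2(X^t)$ yields only $\max_{l\ne j_m}X^t_{ml}=O(1/\rho_t)$ — and even that requires the boundedness of $\rho_t\zeta_2(X^t)$, which you never verify (it follows from the Step-1 bound on $\sigma_t(\zeta_q(X^t)+\epsilon_t)^p$ together with $\zeta_2\le\tilde\varrho_q\zeta_q$, as in the paper). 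It does \emph{not} yield your claimed $\max_{l}X^t_{ml}=O(1/\rho_t)$: the maximal entry $X^t_{mj_m}$ is not controlled by $1/\rho_t$ at all. With only the weaker bound, the product estimate gives $\rho_t X^t_{mj}X^t_{ml}\le\rho_t\max_{l'\ne j_m}X^t_{ml'}=O(1)$, which is bounded but not vanishing, so the step as written fails to show the contribution tends to zero. The repair is cheap and is exactly the paper's device of pairing a bounded quantity with a vanishing factor: since $X^t\to X^{\infty}$ and $X^{\infty}_{m,:}=0$, you have $X^t_{mj_m}=o(1)$ for free, hence $\rho_t X^t_{mj}X^t_{ml}\le X^t_{mj_m}\cdot\rho_t\max_{l'\ne j_m}X^t_{ml'}=o(1)\cdot O(1)\to0$; the paper packages this as $\underline\omega\,\sigma_{t_l}c_{t_l}X^{t_l}_{ij_1}X^{t_l}_{ij_2}\le X^{t_l}_{ij_1}\,[\sigma_{t_l}c_{t_l}X^{t_l}\off(VV^{\tran})]_{ij_1}$ with the bracketed term bounded. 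With that one correction your argument closes.
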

\begin{proof}
Note that $t_{\max}=\infty$ and $\tol^{\feas} = -1$, the algorithm does not stop within a finite number of iterations. Since  $\{X^t\}$ is bounded, without loss of generality, throughout the proof we assume that $\{X^t\}$ converges to $X^{\infty}$.  By \eqref{equ:palg:decrease} and $P_{\sigma_{t},p,q,\epsilon_t}(X^{t, 0})
 \leq  P_{\sigma_{t},p,q,\epsilon_t}(X^{\feas})$,   we obtain
\be\label{equ:theorem:kkt:alg:00}
f(X^t) + \sigma_t (\zeta_q(X^t) + \epsilon_t)^p \leq f(X^{\feas}) + \sigma_t \epsilon_t^p. 
\ee
If $p \geq 1$, it holds $\epsilon_t = 0$ and thus $\zeta_p(X^t) \rightarrow 0$.
If $p \in (0,1)$,  using   $(a + b)^p - a^p \geq  (1 - 2^{-p}) b^p$ for $b > a >
0$ with $a=\epsilon_t$ and $b = \zeta_q(X)$, we also have  $\zeta_p(X^t)
\rightarrow 0$ from \eqref{equ:theorem:kkt:alg:00}.   It follows from the proof
of \cref{lemma:rounding} that $\zeta_2(X^t) \leq \tilde \varrho_q \zeta_q(X^t)$ and thus  $\zeta_2(X^t) \rightarrow 0$. 

%Recalling the proof for \cref{theorem:kkt:alg}, let $t \rightarrow \infty$,  we know that $\zeta_2(X^t) \rightarrow 0$, which means that $X^{\infty}$ is feasible to  problem \eqref{equ:prob:orth+:new}.

For simplicity of notation, we  below omit the $p$ and $q$ in the subscripts of $P_{\sigma_t, p, q, \epsilon_t}$. Denote $c_t :=  pq(\zeta_q(X^t)+\epsilon_t)^{p-1}\|XV\|_{\Ftt}^{q-2}$.
 Some easy calculations yield
\be \label{equ:rgrad:P}
\rgrad P_{\sigma_t, \epsilon_t}(X^t) = \rgrad f(X^t) + \sigma_t  \rgrad (\zeta_q(X^t) + \epsilon_t)^p
\ee 
 with 
\be\label{equ:thm:rgrad:zeta:epsilon}
\rgrad (\zeta_q(X^t)+ \epsilon_t)^p = c_t X^t \left(\off(VV^{\tran}) -  \Diag\left( \left((X^t)^{\tran}X^t - I_k\right)VV^{\tran}\right) \right).  
\ee
Denote $\bar \omega := \max_{i,j \in [k]} [VV^{\tran}]_{ij}$.  For each $X^t$, we have  
%we have the following assertions:
%\begin{subequations}
%\be \label{equ:thm:bound1}
\begin{align}
&\underline \omega \max_{l \in [k]\setminus\{j\}} X_{il}^t \leq [X^t \off(VV^{\tran})]_{ij} \leq (k-1)\bar \omega  \max_{l \in [k]\setminus\{j\}} X_{il}^t \quad \forall (i,j)\in [n] \times [k], \label{equ:thm:bound1}  \\
%\ee
%\be\label{equ:thm:bound2}
&0 \leq [X^t \Diag\left( \left((X^t)^{\tran}X^t - I_k\right)VV^{\tran}\right)]_{ij}\le X^t_{ij}\zeta_2(X^t)  \quad \forall (i,j)\in [n] \times [k].
\label{equ:thm:bound2}
\end{align}
%\end{subequations}

We consider the following two  cases. 

Case I. The sequence $\{\sigma_t c_t\}$ is unbounded.
% Without loss of generality, we may assume that $\lim_{t\to\infty} \sigma_t c_{p,q, \epsilon_t}^t=\infty$. 
Since $\{X^t\}$ converges to $X^{\infty}$,  there exists sufficiently large integer $T_1$ such that for every $t>T_1$, there holds that 
\be\label{equ:thm:Xbound1}
\zeta_2(X^t)\le \frac{1}{4}\underline \omega X^{\infty}_{\min}\ \text{ and }\ X^t_{ij} \geq \frac12 X^{\infty}_{\min}\  \forall (i,j) \in \supp(X^{\infty}),
\ee
where $X^{\infty}_{\min} := \min_{(i,j) \in \supp(X^{\infty})} X^{\infty}$.
For any $(i,j) \in \Omega'_0(X^{\infty})$,  noting that $X^{\infty} \in  \stiefplus$, with \eqref{equ:thm:Xbound1}, for $t > T_1$, we have that
$\max_{l \in [k]\setminus\{j\}} X_{il}^t\geq X_{\min}^{\infty}/2$. With the first assertion in \eqref{equ:thm:bound1},   for $t > T_1$ and $(i,j) \in \Omega'_0(X^{\infty})$, there holds that 
\be\label{equ:thm:Xbound3}
\left[X^t\off(VV^{\tran})\right]_{ij}\ge \frac12 \underline \omega X^{\infty}_{\min}.
\ee
With \eqref{equ:thm:bound2} and the first assertion in \eqref{equ:thm:Xbound1},  and noting $X_{ij}^t \leq 1$, we derive  for $t > T_1$ 
 and $(i,j) \in \Omega'_0(X^{\infty})$ that 
% bound the term $[X^t \Diag\left( \left((X^t)^{\tran}X^t - I_k\right)VV^{\tran}\right)]_{ij}$ by
\be\label{equ:thm:Xbound2}
[X^t \Diag\left( \left((X^t)^{\tran}X^t - I_k\right)VV^{\tran}\right)]_{ij}\le \frac14 \underline\omega X^{\infty}_{\min}.
%\frac{2 \zeta^2_2(X^t)}{X_{\min}^{\infty}\omega}\le
\ee
%where the first inequality holds by invoking \eqref{equ:XV:zetaX}, the second inequality dues to the first assertion in \eqref{equ:thm:Xbound1}.
%In addition, by \eqref{equ:thm:Xbound1}, we immediately have  
Combining  \eqref{equ:thm:Xbound3}, \eqref{equ:thm:Xbound2} and \eqref{equ:thm:rgrad:zeta:epsilon}, for $t > T$, we have 
\be\label{equ:thm:Xbound4}
\sigma_t  [\rgrad (\zeta_q(X^t) + \epsilon_t)^p]_{ij}  \ge \frac14\underline \omega X^{\infty}_{\min} \sigma_t c_t \quad   \forall (i,j) \in \Omega'_0(X^{\infty}),
 %\left[X^t \left(\off(VV^{\tran}) -  \Diag\left( \left((X^t)^{\tran}X^t - I_k\right)VV^{\tran}\right) \right)\right]_{ij}\ge\frac14\omega X^{\infty}_{\min}.
\ee
which with the unboundness of $\{\sigma_t c_{\epsilon_t}(X^t)\}$ implies
$\limsup\limits_{t\to\infty}\sigma_t  [\rgrad (\zeta_q(X^t) +
\epsilon_t)^p]_{ij} = \infty,\ \forall(i,j) \in \Omega'_0(X^{\infty}).$
Since $\rgrad f(X^t)$ is bounded and due to  \eqref{equ:rgrad:P}, we finally have
%\be
$\limsup_{t\to\infty}\ [\rgrad P_{\sigma_t,\epsilon_t}(X^t)]_{ij}=\infty\  \forall(i,j) \in \Omega'_0(X^{\infty}),$
%\ee
which with  \eqref{equ:palg:kkt:condition} implies that there exists sufficiently large integer $T_2$ and subindices $\{t_l\}$ with $t_l > T_2$ such that 
\[X^{t_l}_{ij} \leq \varepsilon_{t_l}^{\grad}\quad \forall (i,j) \in  \Omega'_0(X^{\infty}).
\]
Using \eqref{equ:thm:rgrad:zeta:epsilon}, \eqref{equ:thm:bound1} and \eqref{equ:thm:bound2}, we obtain that for any $(i,j) \in \supp(X^\infty)$,
\be\label{equ:thm:boundcase1}
 -\sigma_{t_l} c_{t_l} \zeta_2(X^{t_l})\leq [\sigma_{t_l}\rgrad (\zeta_q(X^{t_l})+ \epsilon_{t_l})^p]_{ij}\leq (k-1) \bar \omega\sigma_{t_l} c_{t_l} \varepsilon_{t_l}^{\grad}.
\ee
With the choice of $\eta$, it is easy to verify that $\sigma_{t_l} c_{t_l}  \varepsilon_{t_l}^{\grad} \rightarrow 0$.
Since 
\[
\zeta_2(X^t)=\mtr(((X^t)^\top (X^t)-I_k)VV^\top)\le k^2n\bar\omega(\varepsilon^{\grad}_t+ \max_{\begin{subarray}{c} (i,j_1),(i,j_2)\in\Omega''_0(X^\infty) \\ j_1\neq j_2\end{subarray} } X^t_{ij_1}X^t_{ij_2}),
\]
we can show the leftmost term of \eqref{equ:thm:boundcase1} tends to $0$ by proving that for any $i\in[n]$, $j_1,j_2\in[k]$ with $j_1\neq j_2$ such that $\|X^\infty_{i,:}\|=0$, there holds that 
$\lim_{l\to\infty} \sigma_{t_l}c_{t_l} X^{t_l}_{ij_1}X^{t_l}_{ij_2}=0.$
Obviously, we can focus on the case in which $\min\{X^{t_l}_{ij_1},X^{t_l}_{ij_2}\}> \varepsilon^{\grad}_{t_l}$. The approximate optimality conditions \eqref{equ:palg:kkt:condition} together with \eqref{equ:rgrad:P} and \eqref{equ:thm:rgrad:zeta:epsilon} gives
\[
[\sigma_{t_l}c_{t_l}  X^{t_l} \off(VV^{\tran})]_{ij_1}\le \varepsilon^{\grad}_{t_l}+\big|[\rgrad f(X^{t_l})]_{ij_1}\big|+\sigma_{t_l}c_{t_l} \zeta_2(X^{t_l})X^{t_l}_{ij_1}.
\]
We have from \eqref{equ:theorem:kkt:alg:00} and the choice of $\gamma_1$ that $\sigma_{t_l}pq(\zeta_q(X^{t_l})+\epsilon_{t_l})^{p}\le \sqrt{2k}L_f+\sigma_t\epsilon_t^p$ is bounded. It follows from the proof of Lemma \ref{lemma:rounding} that $\sigma_{t_l}c_{t_l}\zeta_2(X^{t_l})\le \sigma_{t_l}pq(\zeta_q(X^{t_l})+\epsilon_{t_l})^{p}\|X^{t_l}V\|_{\Ftt}^{q-2}\tilde\varrho_q$
is also bounded. Thus  
\[
\lim_{l\to\infty}\underline\omega\sigma_{t_l}c_{t_l} X^{t_l}_{ij_1}X^{t_l}_{ij_2}\le \lim_{l\to\infty}  X^{t_l}_{ij_1}[\sigma_{t_l}c_{t_l} X^{t_l} \off(VV^{\tran})]_{ij_1}=0.
\]
Together with \eqref{equ:thm:boundcase1}, the previous equation implies 
\be \label{equ:thm:liminf:case1}
\lim_{l \to \infty}\ [\sigma_{t_l}\rgrad (\zeta_q(X^{t_l})+ \epsilon_{t_l})^p]_{ij} = 0\quad  \forall (i,j) \in \supp(X^\infty).
\ee
On the other hand, it follows from \eqref{equ:palg:kkt:condition} that  
\be\label{equ:thm:limsup:case1}
\lim_{l\to\infty}[\rgrad P_{\sigma_{t_l},\epsilon_{t_l}}(X^{t_l})]_{ij}=0 \quad  \forall (i,j) \in \supp(X^\infty).
\ee
Combining \eqref{equ:thm:liminf:case1} and \eqref{equ:thm:limsup:case1}, we have from \eqref{equ:rgrad:P} that  $\lim\limits_{l\to\infty}[\rgrad f(X^{t_l})]_{ij} = 0\ \forall (i,j) \in \supp(X^\infty)$.  Considering that $X^{t_l} \to X^{\infty}$, we arrive at the conclusion in this case.

Case II. The sequence $\{\sigma_t c_t\}$ is bounded by a constant, say, $\bar N$. Similar to Case I, for any  $\mu\in(0,1)$, there exists  sufficiently large integer $T_3$ such that for every $t>T_3$ and $(i,j) \in \supp(X^{\infty})$, there holds that
\be  \label{equ:thm:zeta2:bound:case2}
\zeta_2(X^t)\le  \mu \min\{\underline \omega X^{\infty}_{\min},1\}
%\quad  \forall (i,j) \in \supp(X^{\infty}), 
 \text{ and }\ X^t_{ij} \geq \frac12 X^{\infty}_{\min}\ge \varepsilon^{\grad}_t\quad  \forall (i,j) \in \supp(X^{\infty}). 
 \ee
 The second assertion above together with \eqref{equ:palg:kkt:condition} tells that for $t > T_3$ there holds that 
 \be  \label{equ:thm:case2:00}
 |[\rgrad P_{\sigma_t,\epsilon_t}(X^t)]_{ij}| \le  \varepsilon^{\grad}_t\quad  \forall (i,j) \in \supp(X^{\infty}). 
 \ee
Similar to Case I, for $(i,j) \in \Omega'_0(X^{\infty})$ and $t > T_3$, we have 
$\max_{l \in [k]\setminus\{j\}} X_{il}^t\geq X_{\min}^{\infty}/2$ and thus  
\be \label{equ:thm:Xbound5}
X^t_{ij} \leq \frac{2}{X_{\min}^{\infty}} \max_{l \in [k]\setminus\{j\}} X_{il}^t X_{ij}^t\leq \frac{2}{X_{\min}^{\infty}\underline \omega} [VV^{\tran}]_{j'j}(\xbf_{j'}^t)^{\tran} \xbf_j^t\leq \frac{2 \zeta_2(X^t)}{X_{\min}^{\infty}\underline \omega},
\ee
where $j'=\argmax_{l \in [k]\setminus\{j\}}X_{il}^t$ and the last inequality uses the fact that  $\zeta_2(X^t) = \sum_{i,j \in [k], i \neq j}[VV^{\tran}]_{ij} ((\xbf_i^t)^{\tran} \xbf_j^t)$ which appears in the proof of  \cref{lemma:key:observation:V}.   By the first assertion in \eqref{equ:thm:zeta2:bound:case2} and \eqref{equ:thm:Xbound5}, we have  for $t > T_3$ that $X_{ij}^t \le  2 \mu\,\forall (i,j) \in \Omega'_0(X^{\infty})$.
Noting that $X^{\infty} \in \stiefplus$, this together with \eqref{equ:thm:bound1}  implies that 
\be\label{equ:thm:Xbound1:case2}
\left[X^t\off(VV^{\tran})\right]_{ij} \le 2(k-1)\bar \omega \mu \quad \forall (i,j) \in \supp(X^{\infty}).
\ee
Again using  \eqref{equ:thm:bound2} and noting that $X_{ij}^t \leq 1$ and \eqref{equ:thm:zeta2:bound:case2},  we have 
\be\label{equ:thm:Xbound2:case2}
[X^t \Diag\left( \left((X^t)^{\tran}X^t - I_k\right)VV^{\tran}\right)]_{ij}\le  \mu   \quad \forall (i,j) \in \supp(X^{\infty}).
%\frac{2 \zeta^2_2(X^t)}{X_{\min}^{\infty}\omega}\le
\ee
Combining \eqref{equ:thm:Xbound1:case2} and \eqref{equ:thm:Xbound2:case2}, we have from \eqref{equ:thm:rgrad:zeta:epsilon} that for $t > T_3$ there holds that 
\be
\left|[\sigma_t  \rgrad (\zeta_q(X^t) + \epsilon_t)^p]_{ij}\right|
\leq \bar N  \left(2(k-1)\bar \omega + 1\right) \mu \quad \forall (i,j) \in \supp(X^{\infty}).
\ee
% In addition, \eqref{equ:thm:Xbound5} still holds for any $(i,j)\in \Omega'_0(X^{\infty})$. For any $t>T$  we have
%\be
%\begin{aligned}
%&\left|[\sigma_t  \rgrad (\zeta_q(X^t) + \epsilon_t)^p]_{ij}\right|\\
%&\qquad=\sigma_tc_{p,q, \epsilon_t}^t \left|\left[X^t \left(\off(VV^{\tran}) -  \Diag\left( \left((X^t)^{\tran}X^t - I_k\right)VV^{\tran}\right) \right)\right]_{ij}\right|\\
%&\qquad\le \bar N \left(\left[X^t \off(VV^{\tran})\right]_{ij}+\left[X^t\Diag\left( \left((X^t)^{\tran}X^t - I_k\right)VV^{\tran} \right)\right]_{ij}\right)\\
%&\qquad\le \bar N \left(\frac{2 \zeta_2(X^t)}{X_{\min}^{\infty}\omega}k\bar\omega+\zeta_2(X^t)\right)\\
%&\qquad\le\bar N\eta
%\end{aligned}
%\ee
%for any $(i,j)\in \supp(X^\infty)$. 
Consequently, by \eqref{equ:rgrad:P} and \eqref{equ:thm:case2:00}, for $t > T_3$,  we have  for each $(i,j)\in \supp(X^\infty)$ that 
%\be
%\begin{aligned}
$\left|[\rgrad f(X^t)]_{ij}\right|
%&= \left|[\rgrad P_{\sigma_t, p, q}(X^t)]_{ij}-[\sigma_t  \rgrad (\zeta_q(X^t) + \epsilon_t)^p]_{ij}\right|\\
%&\le \left|[\sigma_t  \rgrad (\zeta_q(X^t) + \epsilon_t)^p]_{ij}\right|\\
\le \bar N\left(2(k-1)\bar \omega + 1\right)\mu+ \varepsilon^{\grad}_t$.
% \nn
%\end{aligned}
%\ee
 Due to the arbitrariness of $\mu$, we conclude from $X^t\to X^{\infty}$ that $|[\rgrad f(X^\infty)]_{ij}|\le \lim_{t\to\infty} \varepsilon^{\grad}_t=0\,\forall  (i,j)\in \supp(X^\infty) $. The proof is completed.
 \end{proof}

%Let $\bar \omega:= \max_{i,j \in [k]} [VV^{\tran}]_{ij}$. 

%\[
% \left[X^t\off(VV^{\tran})\right]_{ij} \leq \bar \omega   X_{ij}^t \leq \frac{\bar \omega}{2X_{\min}^{\infty}} X_{ii}^tX_{ij}^t
%\]
%
%\[
%c_{p,q}(X^t) [X^t\off(VV^{\tran})]_{ij} \leq \bar \omega c_{p,q}(X^t) \sum_{j \in [k], j \neq i} X_{ij}^{\infty} \sum_{j \in [k], j \neq i}
%\] 
%
%$\sigma_t \left(X^t\off(VV^{\tran})\right)_{ij} = \sum_{j \neq i}X_{ij}$ 
%\[
%c_{p,q} \rightarrow 0
%\]
%\[
%\rgrad  = \sigma_t p (\zeta_q(X^t)^{p-1})
%\]

%\subsection{Convergence of \cref{alg:feasiblePenalty}: stationary point} \label{subsection:strong}

\begin{example}
For  problem \eqref{equ:prob:orth+:new} with $f(X) = \langle C, X\rangle$ and  $C\in \Rbb^{3 \times 2}$ with $C_{11} = C_{12} = -1$ and $C_{21} = C_{22} = C_{31} = C_{32} =  0$.
%$C = \begin{bmatrix} -1 & -1 \\ 0 & 0 \\ 0 & 0 \end{bmatrix}$.  
Consider \cref{alg:feasiblePenalty} with $V = 1/\sqrt{k} \begin{bmatrix} 1 & 1 & 1\end{bmatrix}^{\tran}$, $p = 1$, $\epsilon_0 = 0$, $\tol^{\feas} = 0$ and  $\tol^{\sub}_t = 0$ for all $t \geq 0$.  
By some easy calculations, we see that $X^t$ with $X^t_{11} = X^t_{12} =  {1}/{\sigma_t}$, $X_{22} = X_{31} = 0$ and $X_{21} = X_{32} = \sqrt{1 - 1/\sigma_t^2}$ 
%= $\begin{bmatrix}  {1}/{\sigma_t} & {1}/{\sigma_t} \\ \sqrt{1 - 1/\sigma_t^2} & 0 \\ 0 & \sqrt{1 - 1/\sigma_t^2}\end{bmatrix}$
 is a stationary point of  \eqref{equ:prob:orth+:new:exact:penalty} with $\sigma
 = \sigma_t$ and it also satisfies \eqref{equ:palg:decrease}. Unfortunately, the
 limit point $X^{\infty}$ of $\{X^t\}$ is not a stationary point of problem
 \eqref{equ:prob:orth+:new} under the above settings. This counter-example tells
 that  $X^{\infty}$ is a stationary pointx only if some additional conditions
 are satisfied. %The next three propositions provide such conditions.
 On the other hand, we observe numerically that the point $\bar X$
 found by our algorithm satisfies $\|\bar X\|_0=n$, namely, $\Omega''_0(\bar X)=\emptyset$. This implies that all the weakly stationary points encountered in numerical experiments are themselves stationary points.
\end{example}

\begin{theorem}\label{thm:convergence2}
    Under the same conditions as Theorem \ref{thm:convergence}, %We assume $\{X^t\}$ convergence to $X^\infty$. 
if additionally there holds that 
\be\label{equ:thm:Xcond2}
\lim_{t\to\infty} \sigma_{t}(\zeta_q(X^{t}) + \epsilon_{t})^{p-1}X^{t}_{ij}=0\quad \forall (i,j)\in\Omega''_0(X^\infty), 
\ee
then $X^{\infty}$ is  
a stationary point of problem \eqref{equ:prob:orth+:new}. 
%Without loss of generality, we assume that $\{X^t\}$ converges to $X^{\infty}$. Then, $X^{\infty}$ is a stationary point of problem \eqref{equ:prob:orth+:new} if additionally the following equation holds.
%\be\label{equ:thm:Xcond2}
%\liminf_{t\to\infty} \sigma_{t}(\zeta_q(X^{t}) + \epsilon_{t})^{p-1}X^{t}_{ij}=0,\,\,\forall (i,j)\in\Omega''_0(X^\infty).
%\ee
%(\blue{ Xiang: shall we put the theorem below \eqref{equ:palg:decrease}?})
\end{theorem}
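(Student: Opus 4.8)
The plan is to leverage \cref{thm:convergence}: since the hypotheses are unchanged, we may argue exactly as in that proof that, after passing to a subsequence if necessary, $X^t\to X^\infty$, that $\zeta_2(X^t)\to 0$ (hence $X^\infty\in\stiefplus$ by \cref{lemma:key:observation:V}), that $\|X^tV\|_{\Ftt}\to 1$, and that \eqref{equ:strongStationary:1} already holds at $X^\infty$, i.e. $[\rgrad f(X^\infty)]_{ij}=0$ for all $(i,j)\in\supp(X^\infty)$. In view of \cref{theorem:prob:new:KKT} it therefore remains only to verify \eqref{equ:strongStationary:2}, namely $[\nabla f(X^\infty)]_{ij}\ge 0$ for every $(i,j)\in\Omega''_0(X^\infty)$. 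Note also that, under the stated choice of parameters, $\eta\in(0,1)$ and $\varepsilon^{\grad}_{\min}=0$, so $\varepsilon^{\grad}_t\to 0$.

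The core step is to show that the penalty contribution to the Riemannian gradient vanishes in the limit along the zero rows. Fix $(i,j)\in\Omega''_0(X^\infty)$; then $\|X^\infty_{i,:}\|=0$, so in fact $(i,l)\in\Omega''_0(X^\infty)$ for \emph{every} $l\in[k]$. From $c_t=pq(\zeta_q(X^t)+\epsilon_t)^{p-1}\|X^tV\|_{\Ftt}^{q-2}$ and $\|X^tV\|_{\Ftt}\to 1$ (with $1\le\|X^tV\|_{\Ftt}\le\sqrt k$), the factor $pq\|X^tV\|_{\Ftt}^{q-2}$ stays bounded, so the additional hypothesis \eqref{equ:thm:Xcond2} is equivalent to $\sigma_t c_t X^t_{il}\to 0$ for all $l\in[k]$. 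Substituting this into the formula \eqref{equ:thm:rgrad:zeta:epsilon} for $\rgrad(\zeta_q(X^t)+\epsilon_t)^p$ and bounding the two pieces of its $(i,j)$ entry by \eqref{equ:thm:bound1} (which gives $0\le[X^t\off(VV^{\tran})]_{ij}\le(k-1)\bar\omega\max_{l\neq j}X^t_{il}$) and \eqref{equ:thm:bound2} (which gives $0\le[X^t\Diag(((X^t)^{\tran}X^t-I_k)VV^{\tran})]_{ij}\le X^t_{ij}\zeta_2(X^t)$ with $\zeta_2(X^t)\to 0$), one obtains $\sigma_t[\rgrad(\zeta_q(X^t)+\epsilon_t)^p]_{ij}\to 0$.

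Finally, the approximate optimality condition \eqref{equ:palg:kkt:condition} forces every entry of $\min(X^t,\rgrad P_{\sigma_t,p,q,\epsilon_t}(X^t))$ to be at least $-\varepsilon^{\grad}_t$, and since the minimum of two numbers dominates each of them, $[\rgrad P_{\sigma_t,\epsilon_t}(X^t)]_{ij}\ge-\varepsilon^{\grad}_t$. Using the splitting \eqref{equ:rgrad:P}, the continuity of $\nabla f$, the core step above, the identity $[\rgrad f(X^\infty)]_{ij}=[\nabla f(X^\infty)]_{ij}$ (valid because $X^\infty_{ij}=0$), and $\varepsilon^{\grad}_t\to 0$, letting $t\to\infty$ yields $[\nabla f(X^\infty)]_{ij}\ge 0$. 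Combined with \eqref{equ:strongStationary:1} inherited from \cref{thm:convergence}, this is exactly the first-order stationarity of \eqref{equ:prob:orth+:new}, which completes the proof. The only delicate point is the core step: the extra hypothesis must be invoked not just at the single index $(i,j)$ but over the entire zero row $\{(i,l):l\in[k]\}$, since only then does it control the off-diagonal term $[X^t\off(VV^{\tran})]_{ij}=\sum_{l\neq j}[VV^{\tran}]_{lj}X^t_{il}$; once this is noticed, the remaining estimates are routine.
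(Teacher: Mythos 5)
Your proposal is correct and follows essentially the same route as the paper's proof: reduce via \cref{thm:convergence} to checking the sign condition on $\Omega''_0(X^\infty)$, observe that the whole row $(i,l)$, $l\in[k]$, lies in $\Omega''_0(X^\infty)$ so that hypothesis \eqref{equ:thm:Xcond2} controls $[X^t\off(VV^{\tran})]_{ij}$ through \eqref{equ:thm:bound1} and \eqref{equ:thm:rgrad:zeta:epsilon}, and then pass to the limit in the entrywise lower bound $[\rgrad P_{\sigma_t,p,q,\epsilon_t}(X^t)]_{ij}\ge-\varepsilon^{\grad}_t$ coming from \eqref{equ:palg:kkt:condition}. (The only nit: the phrase ``the minimum of two numbers dominates each of them'' should read that each of the two numbers dominates the minimum; the inequality you actually use, $[\rgrad P]_{ij}\ge\min(X^t_{ij},[\rgrad P]_{ij})$, is the correct one.)
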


\begin{proof}
By Theorem \ref{thm:convergence}, it remains to verify the correctness of the
equation $[\rgrad f(X^\infty)]_{ij}\ge 0\ \forall (i,j) \in \Omega''(X^\infty)$.
For any $(i,j)\in\Omega''_0(X^\infty)$,  it is easy to see that
$(i,l)\in\Omega''_0(X^\infty)$ for each $l \in [k]$. Together with  \eqref{equ:thm:rgrad:zeta:epsilon} and \eqref{equ:thm:bound1}, we have
\be
\sigma_t[\rgrad (\zeta_q(X^t) + \epsilon_t)^p]_{ij}
\le  pq(k-1)\bar\omega\|X^tV\|_{\Ftt}^{q-2} \left(\!\sigma_t (\zeta_q(X^t)+\epsilon_t)^{p-1}\max_{(i,j)\in\Omega''_0(X^\infty)}X^t_{ij}\right)\!,
\nn
\ee
which with  \eqref{equ:thm:Xcond2} and $\lim_{t\to\infty} \|X^tV\|_{\Ftt}=1$ gives $
\lim_{t\to\infty}\sigma_t[\rgrad (\zeta_q(X^t) + \epsilon_t)^p]_{ij}\le0\ \forall (i,j)\in\Omega''_0(X^\infty).$
By \eqref{equ:palg:kkt:condition}, we have $[\rgrad P_{\sigma_t, p,
q,\epsilon_t}(X^t)]_{ij}\ge - \varepsilon^{\grad}_t,\ \forall (i,j) \in [n]
\times [k]$.  Consequently, it follows from \eqref{equ:rgrad:P} that  for any $(i,j)\in\Omega''_0(X^\infty)$ there holds  
$
[\rgrad f(X^\infty)]_{ij} = \lim_{t\to\infty}[\rgrad f(X^t)]_{ij} \geq \lim_{t\to\infty}[\rgrad P_{\sigma_t, p, q,\epsilon_t}(X^t)]_{ij}\ge0. \nn 
$
\end{proof}

Futhermore, \cref{thm:convergence} gives the following corollary.
\begin{corollary}
Consider the same conditions as in Theorem \ref{thm:convergence}. If additionally $\|X^{\infty}\|_0 = n$, then $X^\infty$ is a stationary point of problem \eqref{equ:prob:orth+:new}. 
\end{corollary}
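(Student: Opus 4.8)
The plan is to deduce the corollary directly from \cref{thm:convergence}, by showing that the extra hypothesis $\|X^{\infty}\|_0 = n$ removes the only gap between weak stationarity and stationarity, namely the sign condition \eqref{equ:strongStationary:2}. First I would invoke \cref{thm:convergence} to conclude that the limit point $X^{\infty}$ is a weakly stationary point of \eqref{equ:prob:orth+:new}, so that \eqref{equ:strongStationary:1} holds, i.e. $[\rgrad f(X^{\infty})]_{ij} = 0$ for every $(i,j) \in \supp(X^{\infty})$.

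Next I would argue that $\|X^{\infty}\|_0 = n$ forces $\Omega''_0(X^{\infty}) = \emptyset$. Since $X^{\infty} \in \stiefplus$, each of the $n$ rows of $X^{\infty}$ has at most one positive entry; as there are exactly $n$ nonzero entries in total, each row must contain exactly one positive entry, so no row of $X^{\infty}$ vanishes. By the definition $\Omega''_0(X^{\infty}) = \{(i,j) : X^{\infty}_{ij} = 0,\ \|X^{\infty}_{i,:}\| = 0\}$, this set is therefore empty.

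Finally, recalling that a stationary point of \eqref{equ:prob:orth+:new} is characterized (\cref{theorem:prob:new:KKT}) by \eqref{equ:strongStationary:1} together with \eqref{equ:strongStationary:2}, and that the latter is a statement only about indices in $\Omega''_0(X^{\infty})$, which is now empty, the weak stationarity already established is in fact stationarity. (Equivalently, one may observe that the additional hypothesis \eqref{equ:thm:Xcond2} of \cref{thm:convergence2} is vacuously satisfied when $\Omega''_0(X^{\infty}) = \emptyset$, and apply that theorem.) I expect no real technical obstacle here; the only step requiring a line of care is the counting argument showing $\Omega''_0(X^{\infty}) = \emptyset$, which is immediate from the combinatorial structure of $\stiefplus$.
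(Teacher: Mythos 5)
Your proof is correct and follows essentially the same route the paper intends: \cref{thm:convergence} gives weak stationarity, and the observation (already made in the paper right after the definition of weak stationarity) that $\|X^{\infty}\|_0=n$ forces $\Omega''_0(X^{\infty})=\emptyset$ renders condition \eqref{equ:strongStationary:2} vacuous, upgrading weak stationarity to stationarity. The counting argument you supply for $\Omega''_0(X^{\infty})=\emptyset$ is exactly the combinatorial structure of $\stiefplus$ the paper relies on, so there is nothing to add.
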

\begin{remark}
    Here, we  present  a different understanding  of the above corollary. Lemma
    \ref{lemma:CQ} tells that CCP holds  at $X^{\infty}$ if $\|X^{\infty}\|_0 =
    n$. By the approximate optimality conditions \eqref{equ:palg:kkt:condition}
    and the choice of the penalty term,   $X^\infty$ is  an approximate KKT point (see \cite{andreani2016cone} for its definition). Recalling that CCP is a strict CQ,  a CQ which  guarantees an  approximate KKT point as a KKT point (see \cite{andreani2016cone} for details), we thus know that $X^{\infty}$ mush be a stationary point.
\end{remark}
%\begin{proof}
%The corollary follows  Here we display a different proof utilizing properties of CQs stated in  
%The optimality conditions \eqref{equ:palg:kkt:condition} together with the choice of penalty term implies that exists $\{\alpha^t\in\Rbb\}_{t=1}^\infty$ and $\{\beta_{ij}^t\in\Rbb_+\}_{t=1}^\infty$ such that
%\[
%\lim_{t\to\infty} \rgrad f(X^t)+ \alpha^t X^t\left(VV^{\tran} -  \Diag\left( (X^t)^\tran X^t VV^{\tran}\right) \right)+\sum_{(i,j)\in\Omega_0(X^\infty)}\beta_{ij}\e_{(i,j)}=0
%\]
%The limit point $X^\infty$ is thus an approximate KKT point. Lemma \ref{lemma:CQ} shows that CCP holds at $X^\infty$. Since CCP is a strict CQ, we conclude that $X^\infty$ is also a KKT point. 
%\end{proof}

Theorem \ref{thm:convergence2} implies that the limit point $X^\infty$ is stationary as long as $X^t_{ij}$ decays to $0$ sufficiently fast on $(i,j)\in\Omega''_0(X^\infty)$, while the following theorem improves the convergence result by requiring a better solution of the subproblem.

\begin{theorem}\label{thm:convergence:second}
Let $X^t$ be the sequence generated by \cref{alg:feasiblePenalty} with $t_{\max}=\infty$, $\tol^{\feas} = 0$, $p\le 1$ and $\varepsilon_{0}^{\grad}=\varepsilon_{\min}^{\grad}=0$. If $X^t$ satisfies the WSOC conditions (refer to Definition \ref{def:prob:orth:WSOC}) of the  subproblem  \eqref{equ:prob:orth+:new:exact:penalty}, then the algorithm stops at some $\tilde t$ iteration and $X^{\tilde t}$ is a stationary point of problem \eqref{equ:prob:orth+:new}.
% and satisfies the second order necessary conditions if additionally $p=1$ and $q=2$. 
\end{theorem}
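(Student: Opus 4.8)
The plan is to split the statement into two parts: \textbf{(I)} the algorithm terminates after finitely many outer iterations, and \textbf{(II)} its terminal iterate is a stationary point of \eqref{equ:prob:orth+:new}. First note that with $\varepsilon_0^{\grad}=\varepsilon_{\min}^{\grad}=0$ the condition \eqref{equ:palg:kkt:condition} becomes the \emph{exact} stationarity $\min\big(X^t,\rgrad P_{\sigma_t,p,q,\epsilon_t}(X^t)\big)=\0$, and since $\tol^{\feas}=0$ the loop exits at the first index $\tilde t$ with $\|X^{\tilde t}V\|_{\Ftt}^2-1\le 0$, which by \cref{lemma:key:observation:V} is the same as $X^{\tilde t}\in\stiefplus$. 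So (I) amounts to showing that some iterate is feasible, and (II) to showing that a feasible, exactly first-order stationary point of the penalty subproblem is stationary for the original problem.

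I would dispatch (II) first, since it is short. If $X^{\tilde t}\in\stiefplus$, then by \eqref{equ:thm:rgrad:zeta:epsilon} the penalty part of the Riemannian gradient at $X^{\tilde t}$ is $\sigma_{\tilde t}c_{\tilde t}\,X^{\tilde t}\off(VV^{\tran})$ with $c_{\tilde t}$ a finite positive scalar ($c_{\tilde t}=q$ if $p=1$; $c_{\tilde t}=pq\,\epsilon_{\tilde t}^{\,p-1}$ if $p\in(0,1)$, where then $\epsilon_{\tilde t}>0$), and $[X^{\tilde t}\off(VV^{\tran})]_{ij}$ vanishes on $\supp(X^{\tilde t})\cup\Omega_0''(X^{\tilde t})$ (using $X^{\tilde t}\in\stiefplus$ for the support part and that the $i$-th row is zero for the other part) while being nonnegative everywhere. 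Plugging $\rgrad P_{\sigma_{\tilde t},p,q,\epsilon_{\tilde t}}(X^{\tilde t})=\rgrad f(X^{\tilde t})+\sigma_{\tilde t}c_{\tilde t}\,X^{\tilde t}\off(VV^{\tran})$ into $\min\big(X^{\tilde t},\rgrad P_{\sigma_{\tilde t},p,q,\epsilon_{\tilde t}}(X^{\tilde t})\big)=\0$ then yields $[\rgrad f(X^{\tilde t})]_{ij}=0$ on $\supp(X^{\tilde t})$ and, since both the penalty-gradient entry and the oblique correction vanish on $\Omega_0''(X^{\tilde t})$, $[\nabla f(X^{\tilde t})]_{ij}=[\rgrad f(X^{\tilde t})]_{ij}\ge 0$ there; these are exactly \eqref{equ:strongStationary:1}--\eqref{equ:strongStationary:2}, so $X^{\tilde t}$ is stationary for \eqref{equ:prob:orth+:new} by \cref{theorem:prob:new:KKT}.

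For (I) I would argue by contradiction: if the algorithm never stops, then $\zeta_2(X^t)>0$ for every $t$, $\sigma_t\to\infty$, $\epsilon_t\to 0$, and, repeating the opening of the proof of \cref{thm:convergence} (using \eqref{equ:theorem:kkt:alg:00}, the inequality $(a+b)^p-a^p\ge(1-2^{-p})b^p$ for $b>a>0$, and $\zeta_2(X^t)\le\tilde\varrho_q\zeta_q(X^t)$ from the proof of \cref{lemma:rounding}), one gets $\zeta_2(X^t)\to 0$, so after passing to a subsequence I may fix a limit point $X^{\infty}\in\stiefplus$. Since the columns of $X^t$ are nonnegative, $0<\zeta_2(X^t)=\sum_{i\neq j}[VV^{\tran}]_{ij}(\xbf_i^t)^{\tran}\xbf_j^t$ is a sum of nonnegative terms, so (after a further subsequence) there are a fixed row $i$ and fixed columns $j_1\neq j_2$ with $X^t_{ij_1}X^t_{ij_2}>0$; I split on the $i$-th row of $X^{\infty}$. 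If it is not identically zero, say $X^{\infty}_{ij'}>0$ at its unique positive column $j'$ and WLOG $j_2\neq j'$, then $(i,j_2)\in\supp(X^t)$ while $[X^t\off(VV^{\tran})]_{ij_2}\to X^{\infty}_{ij'}[VV^{\tran}]_{j'j_2}>0$; the exact first-order condition at $(i,j_2)$ reads $[\rgrad f(X^t)]_{ij_2}+\sigma_t c_t\big([X^t\off(VV^{\tran})]_{ij_2}-X^t_{ij_2}[((X^t)^{\tran}X^t-I_k)VV^{\tran}]_{j_2j_2}\big)=0$, whose second summand is $\sigma_t c_t$ times a quantity tending to $X^{\infty}_{ij'}[VV^{\tran}]_{j'j_2}>0$, while $[\rgrad f(X^t)]_{ij_2}$ stays bounded; since $\sigma_t c_t\to\infty$ ($c_t$ is bounded away from $0$ for $p=1$ and $c_t\to\infty$ for $p<1$), this is impossible, and no second-order information is needed in this branch. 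In the complementary branch the $i$-th row of $X^{\infty}$ is zero, so $X^t_{ij_1},X^t_{ij_2}\to0$ and $(i,j_1),(i,j_2)\in\Omega_0''(X^{\infty})$; here I invoke WSOC. Choosing $i'\in\supp(\xbf^{\infty}_{j_1})$ and $i''\in\supp(\xbf^{\infty}_{j_2})$ (these exist, are distinct from $i$, and are distinct from each other because $\xbf^{\infty}_{j_1}$ and $\xbf^{\infty}_{j_2}$ have disjoint supports in $\stiefplus$), I build the tangent direction $D^t$ whose only nonzero columns are $\dbf^t_{j_1}=a_t\big(\e_i-\tfrac{X^t_{ij_1}}{X^t_{i'j_1}}\e_{i'}\big)$ and $\dbf^t_{j_2}=-b_t\big(\e_i-\tfrac{X^t_{ij_2}}{X^t_{i''j_2}}\e_{i''}\big)$, with $a_t,b_t>0$ chosen so that $\|\dbf^t_{j_1}\|=\|\dbf^t_{j_2}\|=1/\sqrt2$. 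By construction $(\xbf^t_{j_1})^{\tran}\dbf^t_{j_1}=(\xbf^t_{j_2})^{\tran}\dbf^t_{j_2}=0$ and $D^t$ is supported on $\supp(X^t)$, so $D^t$ lies in the subproblem's WSOC critical cone (there $\rgrad P_{\sigma_t,p,q,\epsilon_t}(X^t)$ vanishes by exact stationarity, hence so does the linear term). Writing $P=f+\sigma\,\phi(\zeta_q)$ with $\phi(s)=(s+\epsilon_t)^p$, we have $\langle D^t,\rhess P_{\sigma_t,p,q,\epsilon_t}(X^t)[D^t]\rangle=\langle D^t,\rhess f(X^t)[D^t]\rangle+\sigma_t\phi''(\zeta_q(X^t))\langle\nabla\zeta_q(X^t),D^t\rangle^2+\sigma_t\phi'(\zeta_q(X^t))\langle D^t,\rhess\zeta_q(X^t)[D^t]\rangle$; the first term is bounded, the second is $\le0$ since $p\le1$, and the computation I would carry out shows $a_t,b_t\to1/\sqrt2$, $\dbf^t_{j_1}\to\tfrac1{\sqrt2}\e_i$, $\dbf^t_{j_2}\to-\tfrac1{\sqrt2}\e_i$ (because $X^t_{ij_1},X^t_{ij_2}\to0$ while $X^t_{i'j_1},X^t_{i''j_2}$ stay bounded below), whence, using the oblique Hessian formula \eqref{equ:hess} for $\zeta_q$ and $\|X^tV\|_{\Ftt}\to1$, $\langle D^t,\rhess\zeta_q(X^t)[D^t]\rangle\to -q\,[VV^{\tran}]_{j_1j_2}<0$. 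Since $p\le1$ forces $\sigma_t\phi'(\zeta_q(X^t))\to+\infty$ (for $p=1$ it equals $\sigma_t\to\infty$; for $p<1$ the factor $(\zeta_q(X^t)+\epsilon_t)^{p-1}$ also diverges), the last term tends to $-\infty$, so $\langle D^t,\rhess P_{\sigma_t,p,q,\epsilon_t}(X^t)[D^t]\rangle\to-\infty<0$ for large $t$, contradicting WSOC. Either way we reach a contradiction, so the algorithm stops at some $\tilde t$, and (II) concludes.

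The main obstacle is the second branch of (I): producing, for every sufficiently large $t$, a direction that is at once (a) tangent to all $k$ spherical constraints at $X^t$, (b) supported on entries where $X^t>0$ so that it belongs to the WSOC critical cone of the subproblem (which is what lets me discard both the linear term and the activity constraints coming from $X\ge0$), and (c) a certificate of negative curvature whose Rayleigh quotient against $\rhess\zeta_q(X^t)$ stays bounded away from $0$ — concretely the two-column, ``anti-correlated'' direction above, whose curvature limit $-q[VV^{\tran}]_{j_1j_2}$ must be extracted carefully through the oblique Hessian correction in \eqref{equ:hess}. A subsidiary point to get right is the clean split between the two branches: an overlapping row that \emph{survives} in the limit lands us in the first, WSOC-free branch, and it is precisely the overlapping rows that collapse into $\Omega_0''(X^{\infty})$ that require the second-order argument — essentially the same quantitative tension between $\sigma_t\to\infty$ and $\zeta_2(X^t)\to0$ that condition \eqref{equ:thm:Xcond2} of \cref{thm:convergence2} had to postulate is here supplied automatically by WSOC.
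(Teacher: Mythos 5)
Your proposal is correct and follows essentially the same route as the paper's proof: argue by contradiction, use the exact first-order conditions to force any "overlapping" row of $X^t$ into a row that vanishes at the limit point (your Branch 1 is the paper's appeal to Case I of \cref{thm:convergence} with $\varepsilon^{\grad}_t=0$), and then contradict WSOC with a two-column anti-correlated tangent direction supported on $\supp(X^t)$ whose penalty-Hessian Rayleigh quotient is driven to $-\infty$ by $\sigma_t c_t\to\infty$ while the $(p-1)$-term is discarded by sign since $p\le 1$. Your part (II), spelling out why exact subproblem stationarity at a feasible point yields \eqref{equ:strongStationary:1}--\eqref{equ:strongStationary:2}, is a correct elaboration of a step the paper states in one sentence.
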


\begin{proof}
We prove it by contradiction. Suppose that  $X^t\notin \stiefplus$ for every $t$.
Without loss of generality, we assume $X^t \rightarrow X^{\infty}$. Since $p\le
1$, the sequence $\{\sigma_t c_t\}$ tends to infinity. Following Case I in the
proof of Theorem \ref{thm:convergence}, we have $X^t_{ij}=0, \,\forall (i,j)\in\Omega'_0(X^\infty)$ for large enough $t$ since $X^t$ satisfies conditions \eqref{equ:palg:kkt:condition} with $\varepsilon_t^\grad=0$.
 
 Since $X^t\notin \stiefplus$ for any $t$, there must exist $i_1\in[n]$,
 $j_1,j_2\in[k]$ and a subsequence $\{t_l\}$ such that $j_1\neq j_2$,
 $\|X^\infty_{i_1,:}\|=0$, $X^{t_l}_{i_1,j_1},X^{t_l}_{i_1,j_2}\neq 0$ and
 $\supp(X^\infty)\subset \supp(X^{t_l})$ for all $l$.   Here, $k_1,k_2$ are
 chosen such that $(k_1,j_1),(k_2,j_2)\in\supp(X^\infty)$. Since $X^{t_l}$
 satisfies the WSOC conditions of    \eqref{equ:prob:orth+:new:exact:penalty},
 we have $\langle D, \rhess P_{\sigma_{t_l}, p,
 q,\epsilon_{t_l}}(X^{t_l})[D]\rangle  \geq 0, \ \forall D \in \tilde \Ccal_{\obliqueplus} (X^{t_l})$.  Choosing 
$D^{t_l}$ with $D^{t_l}_{i_1j_1}=1,D^{t_l}_{i_1j_2}=-1, D^{t_l}_{k_1j_1}=-X^{t_l}_{i_1j_1}/X^{t_l}_{k_1j_1}$ and $D^{t_l}_{k_2j_2}=X^{t_l}_{i_1j_2}/X^{t_l}_{k_2j_2}$ and the remaining  elements of $D^{t_l}$ being zeros. One can check via direct calculation that $D^{t_l}\in \tilde \Ccal_{\obliqueplus} ( X^{t_l})$ and 
\begin{subequations}
\begin{align}
&\iprod{D^{t_l}}{D^{t_l}\off (VV^\tran)}\le -\underline \omega,\label{eq:thm:Dprop2}\\ 
&\big|\iprod{ D^{t_l}}{X^{t_l}VV^\tran}\big|\le \bar\omega \max_{(i,j)\in\Omega''_0(X^\infty)} X^{t_l}_{ij}.\label{eq:thm:Dprop1}
\end{align}
\end{subequations}
Substituting $D^{t_l}$ into the WSOC conditions yields
\be\label{eq:thm:secondcond}
\iprod{D^{t_l}}{\rhess f(X^{t_l})[D^{t_l}]}+\sigma_{t_l}\iprod{D^{t_l}}{\rhess (\zeta_q(X^{t_l})+\epsilon_{t_l})^p)[D^{t_l}]}\ge 0,
\ee
where \begin{equation}\label{eq:thm:hess} 
\begin{aligned}
& \iprod{D^{t_l}}{\rhess (\zeta_q(X^{t_l})+\epsilon)^p)[D^{t_l}]}\\
={}& c_{t_l}\Big(a |\langle D^{t_l},X^{t_l} VV^\tran\rangle|^2+  
\big\langle D^{t_l},D^{t_l}\big(\off(VV^{\tran}) -  \Diag\big(((X^{t_l})^{\tran}X^{t_l} - I_k)VV^{\tran}\big)\big)\big\rangle\Big)
\end{aligned}
\end{equation}
with $a =  \frac{q-2}{\|X^{t_l}V\|_{\Ftt}^{2}}+\frac{(p-1)\|X^{t_l}\|_{\Ftt}^{q-2}}{\zeta_q(X^{t_l})+\epsilon_{t_l}}$. 
Since $p\le 1$, we can drop the term with respect to $p-1$ when deriving a upper bound for the right-hand side of \eqref{eq:thm:hess}. A closer check then reveals that the term $\langle D^{t_l},D^{t_l}\off(VV^{\tran})\rangle$ dominates the equation since others tend to $0$, according to $X^{t_l}\to X^\infty\in \stiefplus$,  \eqref{eq:thm:Dprop2} and  \eqref{eq:thm:Dprop1}. Hence, we obtain
\[
\lim_{l\to\infty} \sigma_{t_l}\iprod{D^{t_l}}{\rhess (\zeta_q(X^{t_l})+\epsilon_{t_l})^p)[D^{t_l}]} \le \sigma_{t_l}c_{t_l}\langle D^{t_l},D^{t_l}\off(VV^{\tran})\rangle =-\infty.
\]
Together with \eqref{eq:thm:secondcond} and the fact that $\iprod{D^{t_l}}{\rhess f(X^{t_l})[D^{t_l}]}$ is bounded, we reach a contradiction. Therefore, the algorithm stops at some $\tilde t$ iteration. Since \eqref{equ:palg:kkt:condition} holds at $X^{\tilde t}$ with $\varepsilon_t^{\rgrad} = 0$ we know that $X^{\tilde t}$ must be a stationary point of probelm \eqref{equ:prob:orth+:new}.
% The proof is completed.
\end{proof}

\begin{remark}
Notice that $\SNCD_{\Xcal_V}(X)\subset \LNCD_{\obliqueplus}(X)$. When $p=1$ and $q=2$, we have  
$\rhess f(X)[D]=\rhess P_{\sigma,p,q,\epsilon}(X)[D]\ \forall D\in \SNCD_{\Xcal_V}(X).$
Thus,  when $p=1,\,q=2$, $X^{\tilde t}$ in Theorem \ref{thm:convergence:second} satisfies the second order necessary conditions \eqref{equ:opt:SNC} of problem \eqref{equ:prob:orth+:new} as long as WSOC conditions therein replaced by the second order necessary conditions.
\end{remark}

Moreover, our exact penalty approach can be applied to the general problem \eqref{equ:prob:orth+:general}. The  subproblem  \eqref{equ:prob:orth+:new:exact:penalty} becomes 
 \be \label{equ:prob:orth+:new:exact:penalty:general}
 \min_{X \in \obliqueplus, Y \in \Ycal}\, \left\{P_{\sigma,p,q,\epsilon}(X, Y):=  f(X, Y) + \sigma    \left(\|XV\|_{\Ftt}^q - 1 + \epsilon\right)^{p} \right\}.
\ee
The corresponding algorithm is almost the same as \cref{alg:feasiblePenalty} except that  approximate stationary point conditions \eqref{equ:palg:kkt:condition}  becomes  
$\big\|\min(X^{t}, \rgrad_X P_{\sigma_t, p, q,\epsilon_t}(X^{t}, Y^{t}))\big\|_{\Ftt}$ $\leq \varepsilon_{t}^{\grad}$ and $\dist\big(Y^t, \proj_{\Ycal}(Y^t - \nabla_Y P_{\sigma_t, p, q,\epsilon_t}(X^{t}, Y^{t})) \big)  \leq \varepsilon_{t}^{\grad}$
%\begin{align} 
%&\big\|\min(X^{t}, \rgrad_X \rblue{P}_{\sigma_t, p, q,\epsilon_t}(X^{t}, Y^{t}))\big\|_{\Ftt} \leq \varepsilon_{t}^{\grad}, \label{equ:alm:kkt:condition:X} \\ &\dist\big(Y^t, \proj_{\Ycal}(Y^t - \nabla_Y \rblue{P}_{\sigma_t, p, q,\epsilon_t}(X^{t}, Y^{t})) \big)  \leq \varepsilon_{t}^{\grad},\label{equ:alm:kkt:condition:Y}
%\end{align}
 and the descent condition \cref{equ:palg:decrease} is replaced by  $P_{\sigma_{t},p,q,\epsilon_t}(X^{t}, Y^{t}) \leq  P_{\sigma_{t},p,q,\epsilon_t}(X^{t, 0}, Y^{t, 0})$.
To obtain a point satisfying the above conditions,  we can employ
the proximal alternating linearized minimization (PALM) method in
\cite{bolte2014proximal}.
%\begin{align}
%&Y^{l+1} = \proj_{\Ycal} \big(Y^l  - \beta \nabla_{\rblue{Y}} \rblue{P}_{\sigma_t, p, q,\epsilon_t}(\rblue{X^{l}}, Y^{l})\big),  
%\nn %\label{equ:kind:PALM:Y:0}
%\\
%&X^{l + 1} = \proj_{\obliqueplus}\big(X^l - \alpha \nabla_X \rblue{P}_{\sigma_t, p, q,\epsilon_t}(X^{l}, \rblue{Y^{l+1}}) \big).  \nn %\label{equ:kind:PALM:X:0}
%\end{align}
%Note that \cite{wang2019clustering} also used PALM to solve their relaxation
%model for ONMF. 
One can also use the proximal alternating minimization scheme \cite{attouch2010proximal}, wherein the $X$-subproblem can be approximately solved by the second-order method \cref{alg:obliqueplus:alg:2nd}. In this case, 
%we define $(X,Y)$ as a weakly stationary point if $[\mathrm{grad}_X\, f(X^\infty,Y^\infty)]_{ij} = 0\ \forall (i,j) \in \supp(X^\infty)$, $\mathrm{grad}_Y\, f(X^{\infty}, Y^{\infty})=0$. Then, 
we can extend the convergence results  to this general model by following almost the same proof.
}

\section{Optimization over $\obliqueplus$}\label{section:opt:obliqueplus}The penalty subproblem \eqref{equ:prob:orth+:new:exact:penalty} with
    suitable choices of parameters  is a special instance of optimization over   $\obliqueplus$, namely,  
\be\label{equ:prob:obliqueplus}
\min_{X \in \obliqueplus}\,h(X), 
\ee
where $h \colon \Rbb^{n \times k} \rightarrow \Rbb$ is  continuously
differentiable. % Note that solving \eqref{equ:prob:obliqueplus} is NP-hard, since it contains checking copositivity of a symmetric matrix, which is NP-hard shown by \cite{murty1985some}, as a special instance.
We next first investigate  the optimality conditions for  problem
\eqref{equ:prob:obliqueplus}  and then present a second-order method. 

\subsection{Optimality conditions}
Note that   LICQ holds  for  \eqref{equ:prob:obliqueplus}. Similar to the discussion in section \ref{subsection:1st2ndKKT}, we establish the optimality conditions for  \eqref{equ:prob:obliqueplus}. To save the space, we omit some details here. 
%Let us define the Lagrangian function for problem \eqref{equ:prob:obliqueplus} as
%\be \label{equ:Lagrangian:fun:oblique}
%\widehat L(X, \Lambda, Z) = h(X) - \sum_{j\in [k]} \Lambda_j (\|\xbf_j\|^2 - 1) - \langle Z,X\rangle,
%\ee
%where $\Lambda \in \Rbb^k$ is the Lagrange multiplier  corresponding to $\|\xbf_j\|^2 = 1, j \in [k]$ and  $Z\in \Rbb_{+}^{n \times k}$ is the Lagrange multiplier   corresponding to $X \geq \0$.  

\begin{theorem}[First-order necessary conditions] \label{lemma:1st:opt:obplus}
Let $\bar X\in \obliqueplus$ be a local minimizer of problem \eqref{equ:prob:obliqueplus}, then $\bar X$ is a stationary point, namely, there holds that 
\be\label{equ:prob:orth+:KKT}
0 \leq \bar X \perp \rgrad h(\bar X) \geq \0,%,\   \rgrad h(\bar X) \circ \bar X = \0,
\ee
which is further equivalent to 
%\be\label{equ:prob:orth+:KKT2}
$\min(\bar X, \rgrad h(\bar X)) = \0$ or $\proj_{\LC_{\obliqueplus}(\bar X)} (-\rgrad h(X)) = 0$, where $\LC_{\obliqueplus}(\bar X) = \{D \in \Rbb^{n \times k} : \bar \xbf_j^{\tran} \dbf_j = 0,\ D_{ij}\geq  0 \ \emph{if}\ \bar X_{ij} = 0\}$.
%\ee
\end{theorem}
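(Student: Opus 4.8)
The plan is to invoke the standard KKT machinery, following the same pattern as the proof of \cref{theorem:prob:new:KKT}. First I would verify the claim that LICQ holds at every feasible $\bar X \in \obliqueplus$. The active constraints at $\bar X$ are the $k$ spherical equalities $\|\xbf_j\|^2 = 1$, whose gradients are supported on disjoint column blocks and equal $2\bar\xbf_j$ on block $j$, together with the active bound constraints $\{(i,j) : \bar X_{ij} = 0\}$, with gradients $\Ebf_{ij}$. Linear independence then reduces, column by column, to $2\bar\xbf_j \notin \mathrm{span}\{\e_i : \bar X_{ij} = 0\}$, which is immediate since $\|\bar\xbf_j\| = 1$ forces $\bar\xbf_j$ to have a positive entry outside the index set $\{i : \bar X_{ij} = 0\}$; across columns the blocks are disjoint, so full LICQ follows.

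Second, with LICQ in hand, a local minimizer $\bar X$ satisfies the KKT conditions for the Lagrangian $L(X,\Lambda,Z) = h(X) - \sum_{j\in[k]}\Lambda_j(\|\xbf_j\|^2-1) - \langle Z, X\rangle$: there exist $\bar\Lambda\in\Rbb^k$ and $\bar Z\in\Rbb_+^{n\times k}$ with $\0 \le \bar X \perp \bar Z \ge \0$ and
\[
\nabla h(\bar X) - \bar X\,\Diag(2\bar\Lambda) - \bar Z = \0.
\]
I would then eliminate $\bar\Lambda$ exactly as in \cref{theorem:prob:new:KKT}: left-multiplying by $\bar X^{\tran}$, applying $\diag(\cdot)$, and using $\bar X^{\tran}\bar X = I_k$ together with $\diag(\bar X^{\tran}\bar Z) = \0$ (a consequence of the complementarity $\bar X\circ\bar Z = \0$) gives $2\bar\Lambda = \diag(\bar X^{\tran}\nabla h(\bar X))$. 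Substituting back and recalling definition \eqref{equ:rgrad:f} yields $\bar Z = \nabla h(\bar X) - \bar X\,\Diag(\bar X^{\tran}\nabla h(\bar X)) = \rgrad h(\bar X)$, so the complementarity system becomes precisely \eqref{equ:prob:orth+:KKT}.

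Third, I would establish the two equivalent reformulations. The equivalence with $\min(\bar X, \rgrad h(\bar X)) = \0$ is the standard componentwise rewriting of a complementarity pair. For the projection form, I would use that $\LC_{\obliqueplus}(\bar X)$ is a closed convex cone and that $\proj_{\Kcal}(v) = \0 \iff v\in\Kcal^{\circ}$ for any such cone, so it suffices to prove $\eqref{equ:prob:orth+:KKT} \iff \langle\rgrad h(\bar X), D\rangle \ge 0$ for all $D\in\LC_{\obliqueplus}(\bar X)$. The forward direction splits the inner product over $\supp(\bar X)$, where $[\rgrad h(\bar X)]_{ij} = 0$, and its complement, where $[\rgrad h(\bar X)]_{ij}\ge 0$ and $D_{ij}\ge 0$; the converse follows by testing with tangent directions supported in a single column on $\supp(\bar X)$ and with $D = \Ebf_{ij}$ when $\bar X_{ij} = 0$ (which is admissible since then $\bar\xbf_j^{\tran}(\Ebf_{ij})_{:,j} = \bar X_{ij} = 0$).

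I do not expect a genuine obstacle here: the argument is routine and parallels \cref{theorem:prob:new:KKT}, with the simplification that on $\obliqueplus$ every column has unit norm, so the $\Omega_0''$-type complications arising for problem \eqref{equ:prob:orth+:new} do not appear. The only step needing mild care is the bookkeeping in the projection equivalence — identifying $\LC_{\obliqueplus}(\bar X)^{\circ}$ explicitly and choosing admissible test directions.
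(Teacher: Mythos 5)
Your proposal is correct and follows exactly the route the paper intends: the paper itself only remarks that LICQ holds for \eqref{equ:prob:obliqueplus} and that the argument mirrors the one in \cref{theorem:prob:new:KKT}, omitting the details you have filled in. One small inaccuracy: you justify the multiplier elimination by ``$\bar X^{\tran}\bar X = I_k$,'' which is false on $\obliqueplus$ (only $\diag(\bar X^{\tran}\bar X)=\mathbf{1}$ holds there); fortunately your computation only uses the diagonal after applying $\diag(\cdot)$, so the step survives, but the justification should be stated as $\|\bar\xbf_j\|^2=1$ rather than full orthonormality.
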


\begin{theorem}[Second-order  necessary and sufficient conditions] \label{lemma:2nd:opt:obplus}
If  $\bar X$ is a local minimizer of problem \eqref{equ:prob:obliqueplus} then 
%\be\label{eq:prob:ob:secondorder}
$\iprod{D}{\rhess h(\bar X)[D]} \geq 0, \ \forall D \in \LNCD_{\obliqueplus} (\bar X)$,
%\ee
where $\rhess h(\bar X)[D]$ is obtained by specializing \eqref{equ:hess} to $h(\bar X)$ and the critical cone 
\[
\LNCD_{\obliqueplus} (\bar X) =  \LC_{\obliqueplus}(\bar X) \cap  \left\{ D \in \Rbb^{n \times k}:  
D_{ij} = 0 \ \emph{if} \ [\nabla f(\bar X)]_{ij} > 0\ \mathrm{and}\ \bar X_{ij} = 0
% \  \forall (i,j) \not \in \supp(\bar X)
 \right\}.
\]
If $\bar X$ is a stationary point of  \eqref{equ:prob:obliqueplus} and 
$\iprod{D}{\rhess h(\bar X)[D]}$ $> 0, \ \forall  D \in \LNCD_{\obliqueplus} (\bar X)/\{\0\}$, then $\bar X$ is a strict local minimizer of \eqref{equ:prob:orth+:new}. 
%with $\LC_{\obliqueplus} (\bar X) = \{D \in \Rbb^n: \xbf_j^{\tran} \dbf_j = 0, j \in [k], D_{ij} \geq 0\ \mbox{if}\ X_{ij} = 0\}$.
\end{theorem}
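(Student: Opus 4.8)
The plan is to follow the development in \cref{subsection:1st2ndKKT}, which is considerably simpler here since problem \eqref{equ:prob:obliqueplus} carries no $\|XV\|_{\Ftt}=1$ constraint. First I would describe $\obliqueplus$ through the smooth data $g_j(X):=\|\xbf_j\|^2-1=0$, $j\in[k]$, and $-X_{ij}\le 0$, $(i,j)\in[n]\times[k]$, and record that LICQ holds at every $\bar X\in\obliqueplus$: $\nabla g_j(\bar X)$ is the matrix whose $j$-th column equals $2\bar\xbf_j$ and whose other columns vanish, the active inequality gradients are $-\Ebf_{ij}$ with $\bar X_{ij}=0$, and since $\bar\xbf_j\neq\0$ a vanishing linear combination of these matrices forces, column by column, all coefficients to be zero (the supports $\supp(\bar\xbf_j)$ and $\{i:\bar X_{ij}=0\}$ are disjoint). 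Hence ACQ holds and $\TC_{\obliqueplus}(\bar X)=\LC_{\obliqueplus}(\bar X)$; in contrast with $\stiefplus$, there is no $\|D_{i,:}\|_0\le 1$ restriction because the columns decouple. Moreover, using the explicit curve $\xbf_j^l=(\bar\xbf_j+\alpha^l\dbf_j)/\|\bar\xbf_j+\alpha^l\dbf_j\|$ with $\alpha^l\downarrow 0$, the same construction shows that the set of sequential null constraint directions coincides with the critical cone, i.e. $\SNCD_{\obliqueplus}(\bar X)=\LNCD_{\obliqueplus}(\bar X)$.

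Next I would write the Lagrangian $L(X,\Lambda,Z)=h(X)-\sum_{j\in[k]}\Lambda_j(\|\xbf_j\|^2-1)-\iprod{Z}{X}$ with $Z\ge\0$. From the first-order necessary conditions of \cref{lemma:1st:opt:obplus} one has $\nabla h(\bar X)-2\bar X\Diag(\Lambda)-Z=\0$ together with $\0\le\bar X\perp Z\ge\0$; multiplying by $\bar X^{\tran}$ and applying $\diag(\cdot)$ gives $2\Lambda=\diag(\bar X^{\tran}\nabla h(\bar X))$, so that
\[
\nabla^2_{XX}L(\bar X,\Lambda,Z)[D]=\nabla^2 h(\bar X)[D]-D\Diag(\bar X^{\tran}\nabla h(\bar X))=\rhess h(\bar X)[D],
\]
which is precisely \eqref{equ:hess} with $f$ replaced by $h$. (For the sufficiency part, the given stationary $\bar X$ furnishes $\Lambda$ and $Z=\rgrad h(\bar X)$ through the same identities.)

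Finally I would invoke the classical second-order NLP results. For necessity, \cite[Theorem 8.3.3]{sun2006optimization} yields $\iprod{D}{\nabla^2_{XX}L(\bar X,\Lambda,Z)[D]}\ge 0$ for every $D\in\SNCD_{\obliqueplus}(\bar X)$, and by the cone identification above this is $\iprod{D}{\rhess h(\bar X)[D]}\ge 0$ for all $D\in\LNCD_{\obliqueplus}(\bar X)$. For sufficiency, \cite[Theorem 8.3.4]{sun2006optimization} --- stationarity plus positive definiteness of $\nabla^2_{XX}L$ on the critical cone --- gives that $\bar X$ is a strict local minimizer of \eqref{equ:prob:obliqueplus}. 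Unlike \cref{theorem:2nd:necessary}, there is no off-diagonal $VV^{\tran}$ term to eliminate via the ``$D^{\tran}D$ is diagonal'' argument, so the only genuinely technical point --- and the one I expect to be the main obstacle to write carefully --- is the cone identification $\SNCD_{\obliqueplus}(\bar X)=\LNCD_{\obliqueplus}(\bar X)$, which is exactly what allows the second-order necessary inequality to hold over the full critical cone rather than only over sequential directions.
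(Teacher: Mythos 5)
Your proposal is correct and follows exactly the route the paper intends: the paper omits the proof of this theorem, stating only that it is ``similar to the discussion in section 2.2'' (where \cref{theorem:2nd:necessary} and \cref{theorem:2nd:sufficient} are derived from LICQ, the Lagrangian Hessian identity $\nabla^2_{XX}L=\rhess h$, and \cite[Theorems 8.3.3 and 8.3.4]{sun2006optimization}), and your write-up supplies precisely those omitted details, including the correct observation that the cone identification $\SNCD_{\obliqueplus}(\bar X)=\LNCD_{\obliqueplus}(\bar X)$ is what replaces the $\off(VV^{\tran})$ cancellation used in the $\Xcal_V$ case.
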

%\begin{definition}\label{def:prob:orth:eKKT}
%\ee
%\end{definition}
\begin{definition} \label{def:prob:orth:WSOC}
We say $X \in \obliqueplus$ satisfies the weak second-order optimality conditions (WSOC) if $\min\left(\bar X, \rgrad h(\bar X)\right) = \0$ and 
$\iprod{D}{\rhess h(\bar X)[D]} \geq 0$ for any $D \in \tilde \Ccal_{\obliqueplus}(\bar X) := \{D \in \Rbb^n: \bar \xbf_j^{\tran} \dbf_j = 0, j \in [k], D_{ij} = 0\ \mbox{if}\ \bar X_{ij} = 0\}$. 
\end{definition}

%\begin{theorem}[Second-order sufficient conditions]

%\end{theorem}

%\rblue{
%\subsection{A first-order gradient projection method}
%\be%\label{equ:gp:h}
%X^{l+1} \in \proj_{\obliqueplus} (X^l - \alpha^l \nabla h(X^l)),\quad  \alpha^l > 0,
%\ee
%}
%\subsection{A second-order  method}

\subsection{Methods for optimization over $\obliqueplus$}
\label{section:retr:based}
We first
    introduce a first-order  nonconvex gradient projection method:
\be\label{equ:gp:h}
X^{l+1} \in \proj_{\obliqueplus} (X^l - \alpha^l \nabla h(X^l)),\quad  \alpha^l > 0,
\ee
where the stepsize $\alpha^l$ can be determined by either monotone or
non-monotone linesearch, see  \cite{zhang2004nonmonotone} and the references
therein.  Note that $\proj_{\obliqueplus}(\cdot)$ is explicitly available and can be computed in $O(nk)$ flops, one can refer \cite{bauschke2017projecting, xu2017globally, zhang2018sparse} for instance. Since $\obliqueplus$ is compact, by Theorem 3.1 in
\cite{yang2017proximal}, any limit point of the  sequence
$\{X^l\}$ generated by  \eqref{equ:gp:h} is a stationary point of
\eqref{equ:prob:obliqueplus}.  If further $h$ is a KL function, by Theorem 5.3
in \cite{attouch2013convergence}, the sequence $\{X^l\}$ converges to a
stationary point of \eqref{equ:prob:obliqueplus} under some conditions.

We next adopt the adaptive quadratically regularized Newton method
\cite{hu2019structured, hu2018adaptive} to solve \eqref{equ:prob:obliqueplus}
in order to accelerate the convergence. At the $l$-th iteration, we
construct a quadratically regularized subproblem as
\be \label{prob:quadreg:subprob}
\min_{X \in \obliqueplus} m_l(X),
\ee
where $m_l(X) \coloneqq \langle \nabla h(X^l), X - X^l \rangle + \frac12
\langle  X - X^l, \nabla^2 h(X^l)[X - X^l] \rangle + \frac{\tau_l}{2}
\|X - X^l\|_{\Ftt}^2$.   It holds $\rgrad m_l(X^l) = \rgrad h(X^l)$ and $\rhess m_l(X^l)
[D] = \rhess h(X^l) [D] + \tau_l D$.  Since $\obliqueplus$ is compact, there exist positive constants
$\kappa_g$ and $\kappa_H$ such that $\|\nabla h(X)\|_{\Ftt} \leq \kappa_g$,
$\|\nabla ^2 h(X)\|\leq \kappa_H, \ \forall X \in \obliqueplus$. They imply that
$\|\rhess m_l(X^l) \| \leq \kappa_H + \kappa_g + \tau_l$.
We compute an approximate solution $Y^l \in \obliqueplus$ of the subproblem \eqref{prob:quadreg:subprob} satisfying 
\be \label{eqi:trial:point:condition}
m_l(Y^l) \leq -\frac{a}{\kappa_H + \kappa_g + \tau_l} \|\proj_{\Tbf(X^l)} (-\rgrad h(X^l))\|_{\Ftt}^2,
\ee
where $\Tbf(X) = \{D \in \Rbb^{n \times k} :  \xbf_j^{\tran} \dbf_j = 0,\ \xbf_j+\dbf_j\ge 0\}$ and $a$ is a positive constant.  Then, we calculate the ratio $\rho_l$
between the predicted reduction and the actual reduction to determine whether the trial point $Y^l$ is accepted or not. The complete algorithm is presented in \cref{alg:obliqueplus:alg:2nd}.
\begin{algorithm}[!htbp] 
 \SetKwFor{mypost}{Phase II: postprocessing}{}{endw}
 \SetKwFor{mywhile}{Phase I: while}{do}{endw}
\caption{An adaptive quadratically regularized Newton method for \eqref{equ:prob:obliqueplus}} \label{alg:obliqueplus:alg:2nd}
\textsf{Initialization:} Choose $X^0 \in \obliqueplus$, a tolerance $\epsilon > 0$ and an initial regularization parameter $\tau^0 > 0$. Choose $0< \eta_1 \leq \eta_2< 1$, $0 < \beta_0 < 1 < \beta_1 < \beta_2$. Set $l:=0$. 
\\
\While{$\|\bar X - \proj_+\big(\bar X - \rgrad h(\bar X)\big)\|_{\Ftt}> \epsilon$}{
Solve the subproblem \eqref{prob:quadreg:subprob} to obtain a trial point $Y^l$ satisfying \eqref{eqi:trial:point:condition}.\\
Calculate $\rho_l = \big(h(Y^l) - h(X^l)\big)/m_l(Y^l)$.\\
Set $X^{l + 1} \coloneqq Y^l$ if $\rho_l \geq \eta_1$ and set $X^{l + 1} \coloneqq X^l$ otherwise. \\
%\[
%X^{l + 1}  = \begin{cases} \tilde Z^l, & \mbox{if}\ \rho_l \geq \eta_1,\\ 
%X^l, & \mbox{otherwise}. 
%\end{cases}
%\]
Choose  $\tau_{l+1} \in  (0, \beta_0\tau_l]$ if $\rho_l \geq \eta_2$; choose $\tau_{l + 1} \in [\beta_0 \tau_l, \beta_1 \tau_l]$ if $\eta_1 \leq \rho_l \leq \eta_2$; 
choose $\tau_{l+1} \in [\beta_1 \tau_l, \beta_2 \tau_l]$ otherwise.\\
%Update the regularization parameter as
%\[
%\tau_{l + 1}  = \begin{cases} (0, \beta_0\tau_l], & \mbox{if}\ \rho_l \geq \eta_2,\\
%[\beta_0 \tau_l, \beta_1 \tau_l], & \mbox{if}\ \eta_1 \leq \rho_l \leq \eta_2,\\
%[\beta_1 \tau_l, \beta_2 \tau_l], & \mbox{otherwise}. 
%\end{cases}
%\]
Set $l := l + 1$.}
\end{algorithm}
%Since $\obliqueplus$ is compact,
By following the proof of \cite[Theorem 4]{hu2019structured}, we can establish $\|\proj_{\Tbf(X^l)} \left(-\rgrad h(X^l)\right)\|_{\Ftt} =0$  for some $l>0$ or 
\[\lim_{l \rightarrow \infty} \|\proj_{\Tbf(X^l)} \left(-\rgrad h(X^l)\right)\|_{\Ftt} = \lim_{l \rightarrow \infty} \|  X^l - \proj_{\Rbb_{+}^{n\times k}}\big(X^l - \rgrad h(X^l)\big)\| =  0.
\] 
%To establish the fast local convergence, an inexact condition similar to condition (3.2) in \cite{hu2019structured} can be given as $\|\proj_{\Tbf(X^l)} (-\rgrad m(Y^l))\|_{\Ftt} \leq \theta_l \|\proj_{\Tbf(X^l)} (-\rgrad h(X^l))\|_{\Ftt}$, where $\theta_l = \min\{1, \|\proj_{\Tbf(X^l)} (-\rgrad h(X^l))\|_{\Ftt}^c\}$ with $c > 0$.} 

 We now show that  the inexact condition \eqref{eqi:trial:point:condition}  is well defined. 
 Let  $c_1$ be a positive constant. 
For a direction  $D^l \in \Tbf(X^l)$ satisfying 
\be \label{equ:D:condition}
\iprod{\rgrad h(X^l)}{D^l} \leq -c_1 \|\proj_{\Tbf(X^l)} (-\rgrad h(X^l))\|_{\Ftt} \|D^l\|_{\Ftt},
\ee
 we compute 
\be\label{equ:Yl:new:proj}
 \widetilde{Y}^l = \proj_{\obliqueplus}\left(X^l +  \alpha_l D^l\right)\quad \mbox{with}\quad  \alpha_l  = \frac{2c_1(1 - c_2)\|\proj_{\Tbf(X^l)} (-\rgrad h(X^l))\|_{\Ftt}}{(\kappa_g + \kappa_H + \tau_l) \|D\|_{\Ftt}}, 
\ee
where $c_2 \in (0,1)$ is a constant. 
 For any $D \in \Tbf(X)$, it is easy to verify that $\iprod{\nabla h(X)}{D} = \iprod{\rgrad h(X)}{D}$ and  
\be  \label{equ:retraction:property}
\|\proj_{\obliqueplus}(X + D)  - X \|_{\Ftt} \leq \|D\|_{\Ftt},  \quad  \|\proj_{\obliqueplus}(X + D)  - X - D \|_{\Ftt} \leq \frac12\|D\|_{\Ftt}^2.
 \ee
 It follows from the property \eqref{equ:retraction:property} and the arguments in
 \cite[Lemma 2.10]{boumal2019global} that any $Y^l$ with $m_l(Y^l) \leq m_l(\widetilde{Y}^l)$  satisfies \eqref{eqi:trial:point:condition} with $a =
 2c_1^2c_2(1 - c_2)$. For sake of saving space, we
 omit the tedious details here.  %In practice,  instead of using the fixed stepsize,   we implement the Armijo backtracking line search starting from the unit stepsize to find $\alpha_l$ in \eqref{equ:Yl:new:proj}  such that  $Y^l$ obtained by \eqref{equ:Yl:new:proj} satisfies \eqref{eqi:trial:point:condition}.  Note that the line search procedure is well-defined since \eqref{equ:D:condition} holds. 

We  give two particular choices of $D^l$ satisfying \eqref{equ:D:condition}.
The first one is a single projected gradient  step  $D^l = \proj_{\Tbf(X^l)} (-\rgrad h(X^l))$. Then
\eqref{equ:D:condition} holds with $c_1 = 1$. The second choice  $D^l$ is
from the Newton subproblem in \cite{hu2018adaptive}: 
\be\label{equ:subproblem:oblique+:qp}
%\begin{array}{cl}
\min\limits_{D\in \Tbf(X^l)}   \iprod{\rgrad m_l(X^l)}{D} + \frac12 \iprod{D}{\rhess m_l(X^l)[D]}.
\ee
Setting $D = Z - X^l$ reformulates problem \eqref{equ:subproblem:oblique+:qp} as 
\be\label{equ:subproblem:oblique+:qp:2}
\min_{Z \in \Delta(X^l)} \iprod{\rgrad m_l(X^l)}{Z- X^l} + \frac12 \iprod{Z- X^l}{\rhess m_l(X^l)[Z - X^l]},
\ee
%where $s_l(Z) =  \iprod{\rgrad f(X^l)}{Z - X^l} + \frac12 \iprod{Z - X^l}{\Bcal^l[Z - X^l]} + \frac{\tau_l}{2} \|Z - X^l\|_{\Ftt}^2$
 where $\Delta(X^l) := \{Z \in \Rbb^{n \times k}: (\xbf^l)_j^{\tran} \zbf_j = 1, \zbf_j\geq  0, j \in [k]\}$. 
 The first-order optimality condition is the following nonsmooth equation: 
\be \label{equ:nonsmoothFZ}
\Fcal(Z) :=  Z - \proj_{\Delta(X^l)} \big(Z - \alpha \big(\rgrad m_l(X^l) + \rhess m_l(X^l) [Z - X^l]\big)\big) = 0,
\ee
where $\alpha > 0$ is a constant. Denote $C :=  Z - \alpha \big(\rgrad m_l(X^l) + \rhess m_l(X^l) [Z - X^l] \big)$ for simplicity.
%One can use the gradient projection method to approximately solve \eqref{equ:subproblem:oblique+:qp:2} since computing the  projection to the set $\Delta(X^l)$ only needs $O(nk)$ flops \cite{perez2017filtered}. Here
Thanks to   \cite{li2018on}, we can efficiently compute the the HS generalized Jacobian $\Pcal_{C}(\cdot)$ of $\proj_{\Delta(X)}(\cdot)$  efficiently. 
% The following  derivations follows from Proposition 3  in \cite{li2018on}. 
 %Let $\tilde C := \proj_{\Delta(X^l)}(C)$. 
 Define a linear operator $\Xi: \Rbb^{n \times k} \rightarrow \Rbb^{n \times k}$ by $[\Xi(H)]_{ij}  = 0$ if $[\proj_{\Delta(X^l)}(C)]_{ij} = 0$ and $[\Xi(H)]_{ij}  = H_{ij}$ otherwise for any $H\in \Rbb^{n \times k}$. 
We simply denote $\Xi[\hbf_j] = (\Xi(H))_{j,:},\ \forall j \in [k]$. Following
Proposition 3 in \cite{li2018on} yields the HS-Jacobian of $\Pi_{\Delta(X)}(\cdot)$ at $C$ as $
\Pcal_{C}(H) = \Xi(H) - \Xi(X) M,$
where $M$ is diagonal with 
$M_{jj} = {\xbf_j^{\tran} \Xi[\hbf}_j]/{\xbf_j^{\tran} \Xi[\xbf_j]}$, $\forall j \in [k].$
Hence, we have the HS-Jacobian of $\Fcal$  at $Z$ as 
$\partial \Fcal(H) = H - \Pcal_{C}\left(H - \alpha\, \rhess m_l(X^l)[H]\right), \forall H \in \Rbb^{n \times k}$.
 We then apply the semi-smooth Newton method in
    \cite{xiao2018regularized,milzarek2019stochastic}    
    to generate a sequence $\{Z^{j}\}$ to solve \eqref{equ:subproblem:oblique+:qp}. 
    It  satisfies  $\lim_{j \rightarrow \infty}
\|\Fcal(Z^j)\| = 0$ by virtue of Theorem 3.10 in \cite{milzarek2019stochastic}
under some reasonable assumptions.   If $\tau_l > \kappa_H$ and $\alpha \in (0,
\frac{2}{\kappa_H + \kappa_g + \tau_l})$, we can show that the limit point of
$\{D^j := Z^j - X^l\}$ satisfies \eqref{equ:D:condition} with $c_1 =
\frac{\tau_l - \kappa_H}{\tau_l + \kappa_H + 1}$. However, since $\rhess m_l(X^l)$ may not be positive
definite in other cases, it is still not clear whether \eqref{equ:D:condition} holds or not although the numerical performance is well.

\section{Numerical experiments} \label{section:num}
In this section, we present a variety of numerical results to evaluate the performance of our proposed method. All  experiment are performed in  Windows 10 on an Intel Core 4 Quad CPU at 2.30 GHZ with 8 GB of RAM.  All codes are written in MATLAB R2018b.   The matrix $V$ is simply taken as $V = \1/\sqrt{k}$, and the choice of parameters for \cref{alg:feasiblePenalty} are set as follows: $p=1$, $q=2$, $\epsilon_0 = 0$, $\gamma_1 =0$,  $\varepsilon_{\min}^{\grad} = 10^{-7}$, $t_{\max} = 300$;  the choices of $\gamma_2$, $\sigma_0$, $\eta$, $\tol^{\mathrm{feas}}$ and $X^0$ are given in each subsection. In our implementation, instead of using \eqref{equ:palg:kkt:condition}, we use the stopping condition when the distance between  two consecutive iterations is small, namely, $\|X^{l+1} - X^l\|_{\Ftt} \leq \varepsilon_{l}^{\grad}$.

\subsection{Computing projection onto $\stiefplus$}\label{subsection:num:projection}
Given a matrix $C \in \Rbb^{n \times k}$, we consider to compute its projection  onto $\stiefplus$,  which is formulated as 
\be\label{equ:prob:stiefelplus:proj}
 \min_{X \in \stiefplus} \, \|X - C\|_{\Ftt}^2.
\ee
The exact penalty model \eqref{equ:prob:orth+:new:exact:penalty} with $p = 1$, $q = 2$, and $\epsilon = 0$ is  equivalent to 
\be\label{equ:penalty:model:projection}
\min_{X \in \stiefplusk}\, \left\{P_{\sigma}(X): =  - \frac{1}{\sigma}\langle C,   X\rangle  + \frac{1}{2} \|XV\|_{\Ftt}^2\right\}.
\ee 
 The Lipschitz constants of $\nabla P_{\sigma}(X)$ is  $1$ since $VV^{\tran} \preceq I_k$. Thus we know from Theorem 5.3 in \cite{attouch2013convergence} that the sequence $\{X^l\}$ generated by  the nonconvex gradient projection scheme 
$X^{l + 1}\ \in\ \proj_{\obliqueplus}\left(X^l - \alpha \left(X^l VV^{\tran} -  C/\sigma \right)\right)$ with $\alpha \in (0,1) \label{equ:kind:PALM:projection}
$
converges to a stationary point of \eqref{equ:penalty:model:projection}.  
In our  tests,  we simply choose $\alpha = 0.99$ without invoking \cref{alg:obliqueplus:alg:2nd} to solve  \eqref{equ:penalty:model:projection}, namely, the switch parameter $\bar{\zeta} \equiv 0$.

It is always difficult to seek the projection globally for a general matrix $C$. Thanks to \cref{prop:proj}, we can  construct  a family of matrices with unique and known projection. For a given $B \in \stiefplus$,  the MATLAB codes for generating $C$ is given as 

\begin{verse}
\texttt{X = (B>0).*(1+rand(n,k)); Xstar = X./sqrt(sum(X.*X));\\
d = 0.5+3*rand(k,1); L = xi*((d*d').\^\,0.5).*rand(k,k);\\
L(sub2ind([k,k],1:k,1:k))=d; C=Xtar*L;\\
}
\end{verse}
The parameter $\xi \in [0,1]$ controls the magnitude of noise level. Larger $\xi$ makes it more difficult to find the ground truth $X^* = \proj_{\stiefplus}(C)$. Let $X^{\diamondsuit}$ be the solution generated by \cref{alg:feasiblePenalty}. Note that the postprocessing problem \eqref{equ:prob:refinement} has closed form solution. Define  $\mathrm{gap} =  {\|X^{\diamondsuit}-C\|_{\Ftt}}/{\|X^*-C\|_{\Ftt}} - 1$ as a measure of the solution quality.  
For each  $\xi$, $n$ and $k$, we run 50 times of our algorithms, and the initial
point is generated by rounding $C$ through Procedure \ref{alg:round}. We choose
$\gamma_2 = 5$, $\tol^{\mathrm{feas}}=10^{-8}$, $\sigma_0=10^{-2}$, $\eta=0.8$.
The averaged results are reported in \cref{table:proj}, wherein the ``suc''
means the total number of instances for which the gap is zero.  From this table,
we can see that for small $\xi$,  \cref{alg:feasiblePenalty}  can solve all 50 instances to a zero gap, while for large $\xi$ it can only solve some instances to a zero gap. However, for all cases,  \cref{alg:feasiblePenalty}  always returns an orthogonal nonnegative matrix with satisfactory  quality. 

\begin{table}[!htbp]
\centering
\caption{Numerical results on computing projection onto $\stiefplus$, ``time'' is in seconds.}
%\small
	\setlength{\tabcolsep}{3pt}
\begin{tabular}{|c||rrrr|rrrr|rrrr|}
\hline
    & \multicolumn{4}{c|}{$n=2000,k=10$} & \multicolumn{4}{c|}{$n=2000,k=50$} & \multicolumn{4}{c|}{$n=2000,k=100$} \\ \hline
 $\xi$ & suc     & gap & time  & nproj &  
suc     & gap & time  & nproj &
suc     & gap & time  & nproj  \\ \hline
0.50&   50 & 0.0e0 & 0.01 & 20.5 &   50 & 0.0e0 & 0.04 & 38.3 &   50 & 0.0e0 & 0.32 & 53.9 \\ 
0.70&   50 & 0.0e0 & 0.01 & 22.9 &   50 & 0.0e0 & 0.05 & 50.9 &   50 & 0.0e0 & 0.43 & 76.5 \\ 
%0.80&   50 & 0.0e0 & 0.01 & 24.5 &   50 & 0.0e0 & 0.06 & 62.7 &   50 & 0.0e0 & 0.52 & 95.7 \\ 
0.90&   50 & 0.0e0 & 0.01 & 28.7 &   50 & 0.0e0 & 0.07 & 82.1 &   50 & 0.0e0 & 0.66 & 134.6 \\ 
0.95&   49 & 7.2e-5 & 0.01 & 31.9 &  46 & 2.1e-4 & 0.09 & 112.2 &   49 & 6.6e-7 & 0.87 & 184.8 \\ 
0.98&   43 & 8.9e-4 & 0.01 & 33.8 &  22 & 5.0e-4 & 0.11 & 156.3 &   19 & 8.0e-4 & 1.23 & 268.2 \\ 
1.00&   37 & 1.2e-3 & 0.01 & 38.1 &   0 & 2.6e-3 & 0.12 & 170.3 &    0 & 2.6e-3 & 1.43 & 317.5 \\ 
\hline
\hline 
   & \multicolumn{4}{c|}{$n=2000,k=200$} & \multicolumn{4}{c|}{$n=2000,k=300$} & \multicolumn{4}{c|}{$n=2000,k=400$} \\ \hline
 $\xi$ & suc     & gap & time  & nproj &  
 suc     & gap & time  & nproj &
suc     & gap & time  & nproj  \\ \hline
0.50&   50 & 0.0e0 & 0.77 & 73.7  &   50 & 0.0e0 & 1.34 & 89.9 &   50 & 0.0e0 & 1.96 & 99.8 \\ 
0.70&   50 & 0.0e0 & 1.13 & 113.5 &   50 & 0.0e0 & 2.01 & 137.9 &   50 & 0.0e0 & 2.99 & 157.8 \\ 
%0.80&   50 & 0.0e0 & 1.38 & 144.7 &   50 & 0.0e0 & 2.62 & 186.7 &   50 & 0.0e0 & 3.79 & 211.7 \\ 
0.90&   50 & 0.0e0 & 1.6 & 207.6  &   50 & 0.0e0 & 3.43 & 276.0 &   50 & 0.0e0 & 5.39 & 328.7 \\ 
0.95&   50 & 0.0e0 & 2.42 & 295.1 &   50 & 0.0e0 & 5.13 & 424.9 &   50 & 0.0e0 &7.74 & 483.0 \\ 
0.98&   23 & 4.5e-4 & 3.93 & 489.2 &  20 & 2.5e-4 & 8.60 & 718.6 &   24 & 1.7e-4 & 15.42 & 962.2 \\ 
1.00&    0 & 1.9e-3 & 5.07 & 636.9 &   0 & 1.8e-3 &11.31 & 951.3 &    0 & 1.6e-3 & 20.86 & 1324.0 \\ 
\hline
\end{tabular}
\label{table:proj}
\end{table}

\subsection{Orthogonal nonnegative matrix factorization} \label{section:onmf}

We compare our proposed method with uni-orthogonal NMF (U-onmf) \cite{ding2006orthogonal}, orthonormal projective
nonnegative matrix factorization (OPNMF) \cite{yang2010linear}, orthogonal nonnegatively penalized matrix factorization (ONP-MF) \cite{pompili2014two} and EM-like algorithm for ONMF (EM-onmf) \cite{pompili2014two}.   In addition to the above methods, we also compare our method with K-means, which is 
considered as a benchmark in clustering problems. We implement U-onmf by ourselves since the original code is not available. We adopt the implementation of OPNMF from \url{https://github.com/asotiras/brainparts}. 
The codes of ONP-MF and EM-onmf can be downloaded from \url{https://github.com/filippo-p/onmf}. As to K-means, we call the MATLAB function \texttt{kmeans} directly. Note that our proposed method and OPNMF solve the equivalent formulation \eqref{equ:prob:onmf:XXt} while the remaining methods solve directly \eqref{equ:prob:onmf}. Considering that the objective function in \eqref{equ:prob:onmf:XXt}  is quartic, to make the subproblem \eqref{equ:prob:orth+:new:exact:penalty} easier to solve, one can  consider the Gauss-Newton technique as 
\[
\|A - XX^{\tran}A \|_{\Ftt}^2 = \|A - X\tilde X^{\tran}A - \tilde XS^{\tran} A - SS^{\tran} A\|_{\Ftt}^2 \approx \|A - X\tilde X^{\tran}A - \tilde XS^{\tran} A \|_{\Ftt}^2,
\]
where $S = X - \tilde X$.   By neglecting the term $\tilde XS^{\tran} A$, we obtain 
a partial Gauss-Newton approximation, namely, $\|A - XX^{\tran} A \|_{\Ftt}^2 \approx \|A -   X \tilde X^{\tran}A\|_{\Ftt}^2$.  Moreover, if $X \in \stiefplus$,  we know that $\|A - XX^{\tran} A \|_{\Ftt}^2 = \|A - X(X^{\tran}X)^{-1}X^{\tran} A \|_{\Ftt}^2$.  Hence, to make the approximation robust, we consider $\|A - XX^{\tran} A \|_{\Ftt}^2 \approx \|A -   X (\tilde X^{\tran} \tilde X)^{-1}\tilde X^{\tran}A\|_{\Ftt}^2$. The subproblem \eqref{equ:prob:orth+:new:exact:penalty} at $t$-th iteration with $p = 1$, $q = 2$, and $\epsilon = 0$ becomes 
 \be \label{equ:prob:orth+:new:exact:penalty:onmf:pGN}
 \min_{X \in \obliqueplus}\,   \|A - X (Y^{t})^{\tran}\|_{\Ftt}^2 + \sigma_t \|XV\|_{\Ftt}^2\nn 
\ee
with $Y^{t} = \proj_+\big(A^{\tran} \tilde X^{t}((\tilde X^{t})^{\tran} \tilde X^{t})^{-1}\big).$

In some datasets, the matrix  $A$ maybe degenerated, namely,  there exists a row
(column) of $A$ with all zero entries. This  causes a division by zero error
when running the U-onmf method. Thus we will first remove such degenerate rows
and columns of $A$. For K-means and EN-onmf, the initial points are chosen
randomly. The other methods adopt the SVD-based initializations
\cite{Boutsidis2008SVD}.  In practice, the time cost of generating initial
points is relatively low compared to that of  the rest parts. We set
$\sigma_0=10^{-3}$, $\eta=0.98$,  and  choose $\tol^{\feas}=0.3$, $\bar{\zeta} =
0.6$ for hyperspectral datasets and $\tol^{\feas}=10^{-8}$, $\bar{\zeta} = 5$
for other datasets. The main parameters of \cref{alg:obliqueplus:alg:2nd} are
chosen as $\eta_1 = 0.01$, $\eta_2 = 0.9$, $\beta_0 = 0.98$, $\beta_1 = 1$, and
$\beta_2 = 1.3$. In sections \ref{subsection:onmf:1} and
    \ref{subsection:onmf:2}, we set $\gamma_2 = 1.05$ if $\|X^tV\|_{\Ftt}^2>2$
    and $\gamma_2 = 1.03$, otherwise.  In section \ref{subsection:onmf:3}, we set
    $\gamma_2= 1.1 \times 1.05$  if $\|X^tV\|_{\Ftt}^2>2$ and $\gamma_2 = 1.1
    \times 1.03$, otherwise.
 We adopt the Barzilai-Borwein stepsize \cite{barzilai1988two} and use the
nonmonotone line search \cite{zhang2004nonmonotone} in the gradient projection
iteration \cref{equ:gp:h}.  Define $S^{l-1} = X^l - X^{l-1}$ and $Z^{l-1} =
\nabla h(X^l) - \nabla h(X^{l-1})$.  We compute $\alpha^l = \max\{10^{-10},
\min\{\alpha_{\LBB}^l, 10^{10}\}\}$ with $\alpha_{\LBB}^{l} = \frac{\langle
S^{l-1}, S^{l-1} \rangle}{|\langle S^{l-1}, Z^{l-1}\rangle|}$.  For
\cref{alg:obliqueplus:alg:2nd}, we use the ASSN in \cite{xiao2018regularized} to
approximately solve \eqref{equ:subproblem:oblique+:qp}. Our numerical results
show that the point returned by
ASSN almost always satisfies \eqref{equ:D:condition}. Otherwise, $\tau_l$ is
increased until \eqref{equ:D:condition} is satisfied. Since we aim to show in sections \ref{subsection:onmf:1} and \ref{subsection:onmf:2} that our algorithm can generate a solution with high quality and  small feasibility violation, we remove the postprocessing in \cref{alg:feasiblePenalty} to give a fair comparison therein.
%  in sections \ref{subsection:onmf:1} and \ref{subsection:onmf:2}.

\subsubsection{Synthetic data} \label{subsection:onmf:1}
Our main aim in this part is to compare the performance of solving the ONMF problem itself,  so EN-onmf and K-means will be excluded in the comparison since they can only provide the results of clustering other than a meaningful orthogonal nonnegative matrix factorization.

Given a random generated matrix $B \in \stiefplus$, a positive integer $r$ and  a real number  $\xi$ which controls the magnitude of noise, we construct the matrix $A$ by  the following MATLAB codes: 
\begin{verse}
\texttt{C = rand(k,r); D = rand(n,r); A = B*C; \\ 
A = A/norm(A,'fro'); A = A + xi/norm(D,'fro')*D; 
}
\end{verse}
Let $\hat X$ be the solution generated by algorithms,  we calculate the feasibility violation as $\mathrm{feasi} \coloneqq||\hat X^{\tran}\hat X-I_k||_{\Ftt}+||\min(\hat X,0)||_{\Ftt}$.   Performing rounding Procedure \ref{alg:round} on $\hat X$ to obtain a feasible   $\hat X^{\R}$, we  take $\mathrm{resi} \coloneqq||A-\hat X^{\R} (\hat X^{\R})^{\tran}A||_{\Ftt}$  to measure the quality of the solution. The results are presented in \cref{tab:table3}, where $n=1000,r=3000,k=10$.  From this table, we can see  that the orthogonality and nonnegativity of the solutions given by our method are well kept, while the solutions generated by U-onmf, ONP-MF and OPNMF have relatively large violation.  Besides,  the solution quality of our proposed method is also  better than that of other methods. In summary,  our proposed method outperforms the other methods for the synthetic datasets.
\begin{table}[!htbp]
\centering
%\small
%\begin{small}
\caption{ ONMF results on synthetic data with different noise magnitude. %, ``time'' is in seconds.
}
	\setlength{\tabcolsep}{2pt}
\label{tab:table3}
\begin{tabular}{{|c||ccc|ccc|ccc|}}
\hline
 & \multicolumn{3}{c|}{$\xi=0$}  & \multicolumn{3}{c|}{$\xi=0.01$} & \multicolumn{3}{c|}{$\xi=0.1$} \\ 
 \cline{2-10}
method          & feasi & resi &time  & feasi & resi  & time & feasi & resi  & time \\ \hline
EP4Orth+      & 4.3e-16       & \textbf{3.8e-16}  &0.9      & 8.8e-16   & \textbf{5.4e-3} &3   & 9.2e-16 & \textbf{5.4e-2} &4   \\ 
OPNMF           & 2.6e-2        & 1.6e-15 &0.9       & 2.1e-2         & 5.8e-3 &2     & 5.4e-2        &5.9e-2   &3     \\ 
U-onmf          & 5.7e-2       &4.9e-16  &9        & 5.7e-2          & 5.6e-3  &9    & 7.7e-2        & 5.8e-2   &9  \\ 
ONP-MF          & 3.1e-3     &  5.0e-1    &18     &  3.2e-3       & 5.0e-1     &20     & 3.1e-3       &4.8e-1     &15  \\ \hline
 & \multicolumn{3}{c|}{$\xi=1$}   & \multicolumn{3}{c|}{$\xi=10$}   & \multicolumn{3}{c|}{$\xi=100$} \\ 
 \cline{2-10}
method          & feasi   & resi &time  & feasi & resi  & time & feasi & resi  & time\\ \hline
EP4Orth+     & 1.2e-15       & \textbf{5.1e-1}&5  &8.9e-16 & \textbf{5.0}&5  & 7.2e-16   & \textbf{49.7}  &5 \\ 
OPNMF         & 3.9e-1         & 5.9e-1  &14   & 7.4e-1        & 5.5  &67       & 7.1e-1        & 53.1  &85   \\ 
U-onmf          & 3.6e-1          & 5.8e-1 &23    & 1.2e0       & 5.6&49         & 1.2e0         & 54.4  &50  \\ 
ONP-MF         &  3.2e-3       & 7.1e-1   &19   & 3.2e-3       &5.2 &27        &  3.2e-3       & 50.3  &32       \\ 
\hline 
\end{tabular}
%\end{small}
\end{table}
\subsubsection{Text and image clustering}\label{subsection:onmf:2} 
We evaluate algorithms on text and image datasets adopted from \cite{cai2008modeling}, they are available at \url{http://www.cad.zju.edu.cn/home/dengcai/Data/data.html}. Since the original text dataset is too huge and disproportionate, we extract some subsets from original data to make it suitable for testing clustering algorithms. The details of modification are provided as follows.

\begin{itemize}[leftmargin=18pt]
\item Reuters-t10(-t20): For the 10 (20) classes with the largest number of texts in the dataset Reusters, we collect 5 percent of texts from the 1st class with the most texts, 10 percent from the 2nd, and all the texts from 3rd-10th (3rd-20th) classes.
\item TDT2-l10(-l20): We use all texts in the 10 (20) classes with the smallest number of texts in the dataset TDT2.
\item TDT2-t10(-t20): We take 20 percent of texts of 10 (20) classes with the largest number of texts in the dataset TDT2.
\item NewsG-t5: We take 50 percent of texts of 5 classes with the largest number of texts in the dataset Newsgroup.
\end{itemize}

For text datasets, every article is assigned with a vector, which reflects the frequency of each word in the article. While for image datasets, a vector represents the gray level of each pixel in a picture. The data matrix $A$ is comprised of these vectors. Any solution $X^*\in\stiefplus$ of ONMF indicates a partition (clustering result) of the dataset. The scale of each dataset is given in \cref{tab:datasets}, in which  ``data'' denotes the number of rows of data matrix $A$ and ``features'' stands for the number of columns.
\begin{table}[!htbp]
\centering
%\small
\caption{%\small
Description of each dataset. In the table, ``d'', ``f'', ``c'' mean ``data'',
``features'' and ``clusters'', respectively. Moreover, ``R'' and ``T'' in the column
``Name'' denotes ``Reuters'' and ``TDT2'' for short, respectively.  }\label{tab:datasets}
	\setlength{\tabcolsep}{3.0pt}
\begin{tabular}{|c|c|c|c||c|c|c|c||c|c|c|c| }
\hline
Name        & d  & f & c & Name        & d & f   & c  & Name        & d & f  & c    \\ \hline
R-t10 & 1897 & 12444  & 10 &  R-t20 & 2402 & 13568 & 20    & T-l10    &  653 & 13684    & 10     \\ 
T-t10    & 1477 & 22181    & 10     &T-l20    & 1938 & 20845  & 20      &T-t20    & 1721 & 23674   & 20       \\ 
NewsG-t5    & 2344 & 14475  & 5     & MNIST       & 4000 & 784    & 10    &Yale       &  165 & 1024    & 5        \\ \hline
\end{tabular}
\end{table}

We consider three criteria to compare the performance of clustering results: purity,  entropy and NMI. We denote $k$ as the number of clusters, and $n$ the total number of data points. Suppose that $\Ccal=\bigcup_{i=1}^k\Ccal_i$ and $\Ccal'=\bigcup_{j=1}^k\Ccal'_j$ are clustering results given by ground truth and  certain test algorithm. Let $n_i=|\Ccal_i|$, $n'_j=|\Ccal'_j|$ and $n_{ij}=|\Ccal_i\cap \Ccal'_j|$. 
The purity  \cite{ding2006orthogonal} is computed as $\text{purity}\coloneqq \sum_{i=1}^k{\max_j\{n_{ji}\}}/{n}$. 
Purity gives a measure of the predominance of the largest category per cluster, better clustering results leads to larger purity.  The entropy \cite{zhao2004empirical} and normalized mutual information (NMI) \cite{xu2003document} are computed as  $\text{entropy}\coloneqq-\frac{1}{n\log_2k}\sum_{j=1}^k\sum_{i=1}^kn_{ij}\log_2\frac{n_{ij}}{n_j'}$ and $\text{NMI}\coloneqq \frac{1}{\max(H(\Ccal),H(\Ccal'))}\sum_{i=1}^k\sum_{j =1}^k\frac{n_{ij}}{n}\log_2\frac{n n_{ij}}{n_in'_j}$,  
where $H(\Ccal)=-\sum_{i=1}^k\frac{n_i}{n}\log_2\frac{n_i}{n}$ and $H(\Ccal')$ was defined similarly. A better clustering result  has smaller entropy and larger NMI. 
 Note that we will not calculate ``feasi'' for K-means and EN-onmf, as they only generate the clustering results instead of solutions of ONMF problem.
For random algorithm, their results are averaged over 10 runs.

In \cref{tab:textclustering}, we report text and images clustering results. We can observe from this  table that our proposed method performs very well.  Specifically, the clustering results given by our proposed method has the highest purity and NMI in most of cases (being close for the rest dataset). As to the speed, our method is faster than U-onmf and ONP-MF for most of cases, and it is especially efficient on text dataset. Besides, the feasibility violation of the solution returned by our method is very small, while those returned by the other methods are always very large. On the other hand, K-means is the fastest among all algorithms and performs well on image datasets MNIST and Yale, but it results poorly when applying to text dataset; EM-onmf and OPNMF are efficient but their performance is  slightly worse than ours.

\begin{table}[!htbp]
\centering
%\small
\caption{%\small 
Text clustering results on real datasets. In the table,  ``c1'', ``c2'' and ``c3'' stand for ``purity'' (\%), ''NMI'' (\%) and ``entropy'' (\%), respectively; ``t'' means the time in seconds. Results marked in bold mean better performance in the corresponding index. }
	\setlength{\tabcolsep}{2pt}
\begin{tabular}{{|c||rrrrr|rrrrr|rrrr|}}
\hline
     & \multicolumn{5}{c|}{EP4Orth+} & \multicolumn{5}{c|}{U-onmf}   & \multicolumn{4}{c|}{K-means}  \\ 
     \cline{2-15}
datasets     & c1 & c2   & c3  & feasi& t  &c1 & c2 & c3 & feasi& t & c1 & c2  & c3 & t   \\ \hline 
Ret-t10  &\textbf{73.1}& \textbf{60.7}& \textbf{37.9}& 2e-15&9&72.7&59.2&39.4&0.6&55&36.9&22.2&75.1&4 \\ 
Ret-t20  &\textbf{65.5} & 56.3 & 38.4&2e-15&25&60.6& 52.7 &41.7 &0.9&149& 33.9& 17.4&79.8 &4 \\ 
TDT2-l10 &\textbf{84.5} & \textbf{79.9} & \textbf{20.1}& 9e-16& 4& 81.8 & 76.0 &24.0  &0.4&7 & 35.2 & 26.2 & 71.3&0.8 \\ 
TDT2-t10 &\textbf{85.7} & 70.0 &20.8 &2e-15&9& 80.9 & 65.7 & 22.8&0.5&115&41.1 & 17.8 &70.5 &4 \\ 
TDT2-l20 &83.1 & \textbf{84.2} & \textbf{15.5} &1e-15&17&81.9 & 82.0 & 17.7 &0.4&60&  23.8& 17.6 &80.7 &6 \\ 
TDT2-t20 &\textbf{82.3} & \textbf{69.6} & \textbf{18.1}&1e-15&18& 79.3 & 64.3 &21.2 &0.7&299& 39.1 & 18.6&65.8 &7 \\ 
NewsG-t5 &41.5 & \textbf{22.8} &\textbf{77.1} &2e-15&7& 39.3 & 14.9 &85.0 &0.2&18& 21.1 & 0.4&99.5 &2 \\ 
MNIST    & \textbf{60.1} &\textbf{48.9} &\textbf{51.0} &1e-15&26&50.0& 41.9 & 58.0  &1.0&39& 55.4 & 45.2& 54.7&0.9 \\ 
Yale     &\textbf{44.8} & \textbf{47.9} &\textbf{52.1}  &6e-16&2&  43.7 & 45.9 &54.0&1.2&2&40.8  & 44.1& 55.9&0.1 \\ 
\hline
\hline 

   & \multicolumn{5}{c|}{OPNMF} & \multicolumn{5}{c|}{ONP-MF}   & \multicolumn{4}{c|}{EM-onmf}  \\ 
     \cline{2-15}
datasets  &   c1 &   c2 &   c3 &feasi& t &   c1 &   c2 &  c3  &feasi& t &   c1 &  c2  &   c3 & t   \\ \hline 
Ret-t10   & 72.0 & 58.7 & 39.9 &1.1  &15 & 66.9 & 52.8 & 45.6 &3e-3 &82 & 71.3 & 58.6 & 39.9 &17 \\ 
Ret-t20   & 62.9 & 54.6 & 40.0 &1.8  &24 & 62.0 & 53.5 & 41.6 &4e-3 &386& 64.1 & \textbf{57.4} & \textbf{37.8} &30 \\ 
TDT2-l10  & 82.4 & 77.3 & 22.6 &1.1  &1  & 81.3 & 75.5 & 24.4 &3e-3 &77 & 78.0 & 78.5 & 21.4 & 4 \\ 
TDT2-t10  & 82.2 & 64.3 & 24.4 &0.9 &10 & 82.9 & 65.3 & 23.8 &3e-3 &133& 85.0 & \textbf{71.3} & \textbf{20.1} &21 \\ 
TDT2-l20  & \textbf{83.4} & 82.5 & 17.2 &1.5  &6  & 82.6 & 83.1 & 16.5 &4e-3 &450& 80.4 & 82.0 & 17.7 &27 \\ 
TDT2-t20  & 79.1 & 62.5 & 21.4 &1.1  &14 & 81.1 & 65.0 & 20.4 &4e-3 &542& 80.8 & 67.2 & 19.3 &25 \\ 
NewsG-t5  & 37.1 & 13.1 & 86.7 &0.4&11 & \textbf{42.9} & 22.6 & 77.2 &2e-3 &44 & 35.7 & 15.4 & 84.5 &14 \\ 
NMIST     & 55.1 & 44.1 & 55.9 &1.3  &218& 57.4 & 46.1 & 53.8 &5e-2 &61 & 56.3 & 47.8 & 52.2 & 4 \\ 
Yale      & 43.7 & 45.4 & 54.6 &1.4  &4  & 40.0 & 43.6 & 56.6 &1e-2 &10 & 38.1 & 41.7 & 58.3 &0.1\\  \hline\end{tabular}
\label{tab:textclustering}
\end{table}

\subsubsection{Hyperspectral unmixing} \label{subsection:onmf:3} 
A set of images taken on the same object at different wave lengths is called a hyperspectral image. At a given wavelength, images are generated by surveying reflectance on each single pixel. Hyperspectral unmixing plays an essential role in hyperspectral image analysis \cite{Bioucasdias2012Hyperspectral,Keshava2002Spectral}. It assumes that each pixel spectrum $\abf\in\Rbb_+^{r}$ is a composite of $k$ spectral bases $\{\ybf_i\}_{i=1}^k\in\Rbb_+^{r}$. Each spectral base is  denoted as an endmember, which represents the pure spectrum. For example, a spectral base could be the spectrum of ``rock", ``tree" etc.

Linear mixture model \cite{Keshava2002Spectral} approximates the pixel spectrum $\abf$  by a linear combination of endmembers as $
\abf= Y\xbf+\mathbf{r}$,
where $\xbf\in\Rbb_+^{k}$ is called the abundance vector corresponding to pixel $\abf$, $\mathbf{r}\in\Rbb^{r}$ is a residual term and $Y=[\ybf_1,\dots,\ybf_k ]\in\Rbb_+^{r\times k}$ is the endmember matrix. When ONMF is applied to hyperspectral unmixing, we assume that both endmember and abundances remain unknown. In addition, each pixel only corresponds to one material. That is to say, $\xbf$ only has one non-zero element. For all the pixels combined together, the ONMF formulation of hyperspectral image unmixing becomes \eqref{equ:prob:onmf},
where $A=[\abf_1,\dots,\abf_n]^{\tran}\in\Rbb_+^{n\times r}$ is a hyperspectral image matrix with row vectors correspond to its pixels and $X\in \stiefplus$ is the abundance matrix with $X_{i,:}$ representing the $i$-th abundance vector for $i\in[n]$.

We test algorithms on three hyperspectral image datasets, Samson, Jasper Ridge and Urban \cite{zhu2014spectral}. 
 They are widely used datasets in the hyperspectral unmixing study and can be downloaded at \url{http://www.escience.cn/people/feiyunZHU/Dataset\_GT.html}.  
Since the sizes of the first two images are huge, we choose a region in each image. This process is common in the context of hyperspectral unmixing. For Samson, a region which contains $95\times 95$ pixels is chosen,  starting from the $(252,332)$-th pixel in original image. We choose a subimage of Jasper Ridge with $100\times 100$ pixels, whose first pixel corresponds to the $(105,269)$-th pixel in the original image. The size of refined Samson is $156\times 95\times 95$, which contains three endmembers: water, tree and rock. The size of refined Jasper Ridge is  $198\times 100\times 100$, and its endmembers include water, tree, dirt and road. Urban is the largest hyperspectral data with $307\times 307$ pixels observed at $162$ wavelengths, and there are four endmembers:  asphal, grass, tree and roof. \cref{num:city} gives an illustration of these datasets.

\begin{figure}[!htbp]
\centering
\subfigure[Samson]{
\begin{minipage}[b]{0.22\linewidth}
\includegraphics[height=0.85\linewidth]{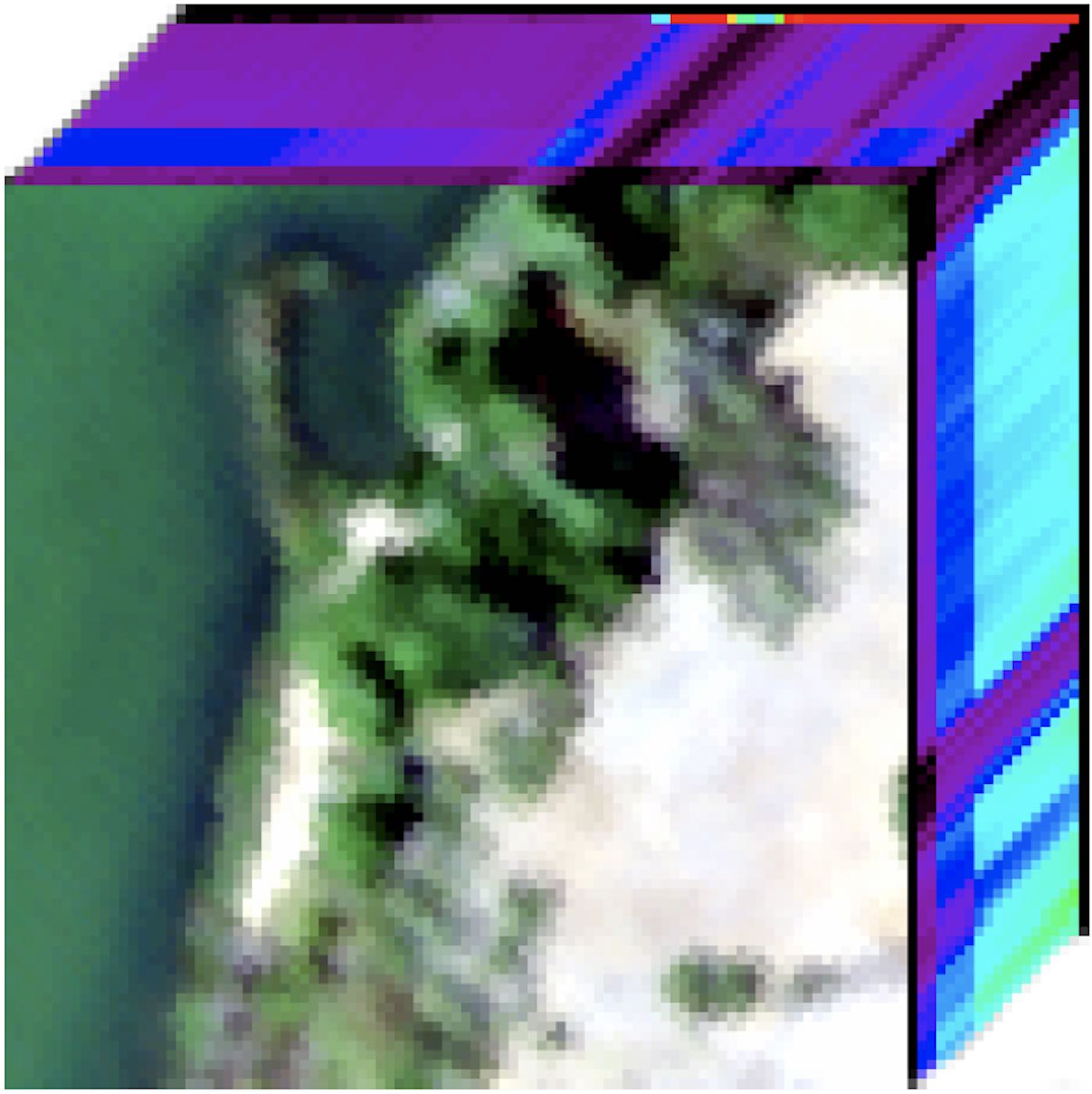}
\end{minipage}
}
\hspace{0.5cm}
\subfigure[Jasper Ridge]{
\begin{minipage}[b]{0.22\linewidth}
\includegraphics[height=0.85\linewidth]{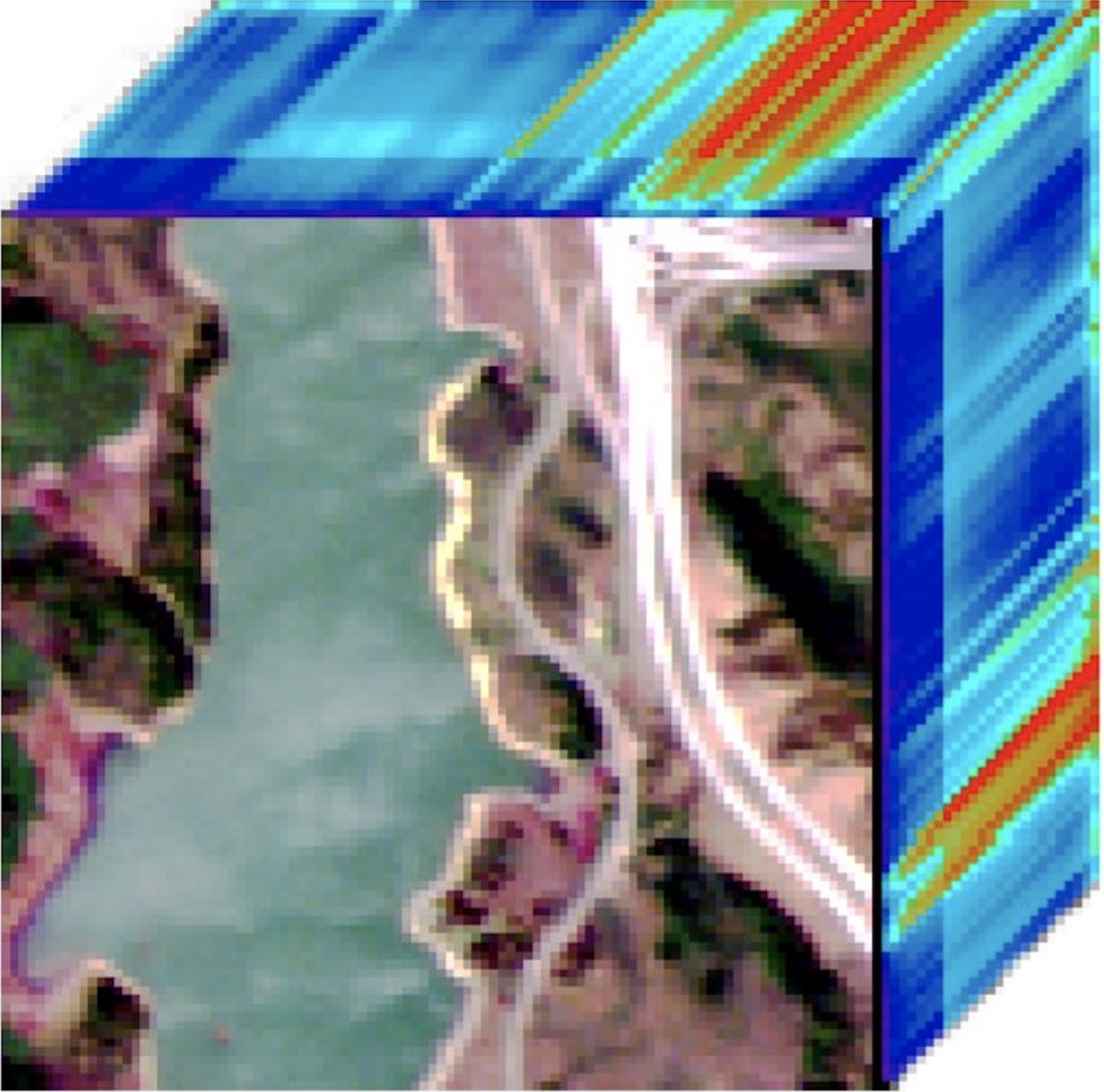}
\end{minipage}
}
\hspace{0.5cm}
\subfigure[Urban]{
\begin{minipage}[b]{0.22\linewidth}
\includegraphics[height=0.85\linewidth]{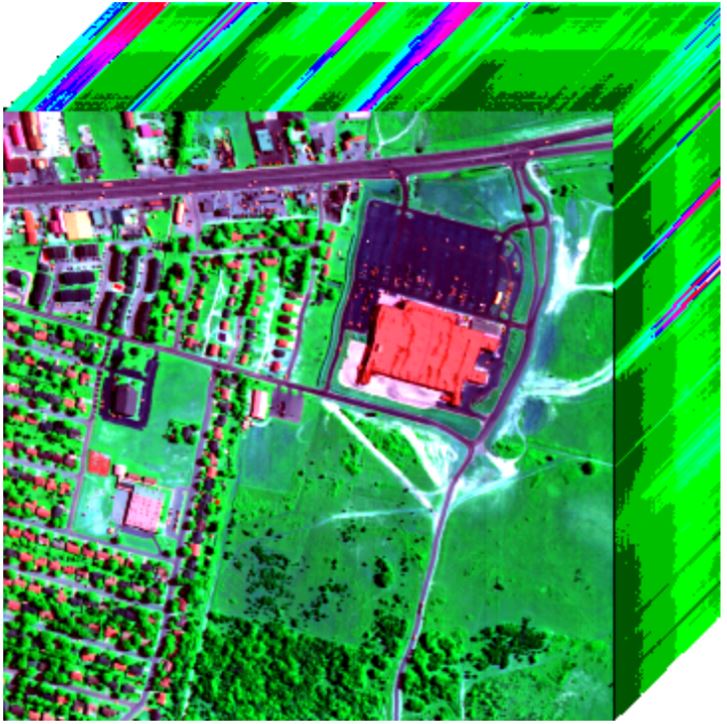}
\end{minipage}
}
\caption{%\small
Three real hyperspectral images}\label{num:city}
\end{figure}

Since the groundtruth of abundance matrix $X$ does not satisfy the orthogonality constraints, the criteria utilized in the preceding subsection are not appropriate to measure the quality of hyperspectral unmixing. Here we consider spectral angle distance (SAD) (see for instance \cite{zhu2014spectral}) to evaluate the performance of  algorithms. SAD uses an angle distance between groundtruth and estimated endmembers to measure the accuracy of endmember estimation. It is defined as
$
\text{SAD}\coloneqq\frac{1}{k}\sum_{i=1}^k\arccos\left(\frac{y_i^{\tran}\hat{y_i}}{\|y_i\| \|\hat{y_i}\|}\right),
$
where $\hat{y_i}$ and $y_i$ are estimation of $i$-th endmember and its corresponding groundtruth. Smaller SAD corresponds to better performance. Since other algorithms cannot generate a solution of problem \eqref{equ:prob:onmf} with small feasibility violation,  in order to keep a fair comparison, we perform the rounding procedure and postprocessing on the solution generated by each method. Note that the postprocessing problem \eqref{equ:prob:refinement} is easy to solve, it mainly needs to find the maximum singular value and corresponding singular vector for $k$ small scale matrices.

The unmixing results of Samson, Jasper Ridge and Urban are illustrated in \cref{num:samson}, \cref{num:Japser} and \cref{num:Urban}, respectively. For  Samson image, our method and ONP-MF are able to separate three  endmembers, while the rest methods mix them together. For Jasper Ridge image, none of the methods can identify the road endmember, while our method and K-means can split water from other endmembers completely. All of algorithms perform relatively well on Urban dataset except for K-means, being able to separate four endmembers.

%\captionsetup[subfigure]{font=Large,labelfont=Large}

\begin{figure}[!htbp]
%\vspace{-0.5cm} 
%\subfigbottomskip=-2cm
%\subfigbottomskip=-2cm
%\subfigcapskip=-13pt
%\subcapfont 
\centering
\subfigure[\scriptsize \!\!ground truth]{
\begin{minipage}[b]{0.137\linewidth}
\includegraphics[width=\linewidth]{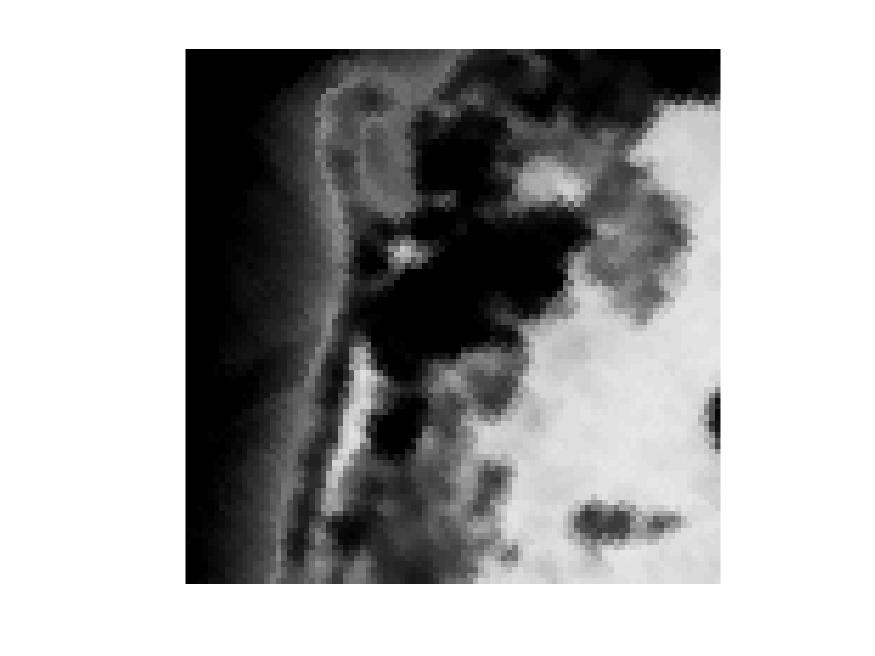}
\includegraphics[width=\linewidth]{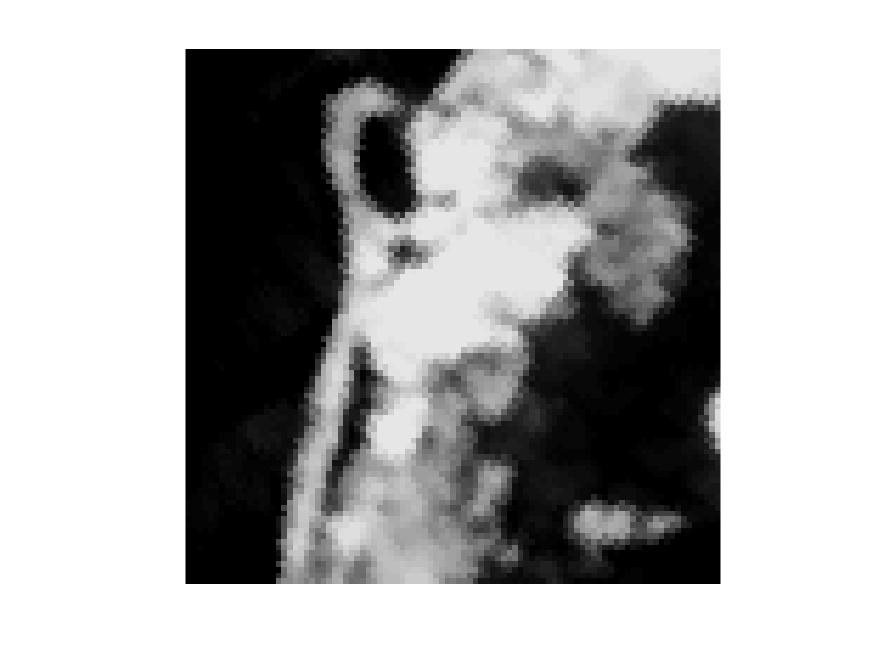}
\includegraphics[width=\linewidth]{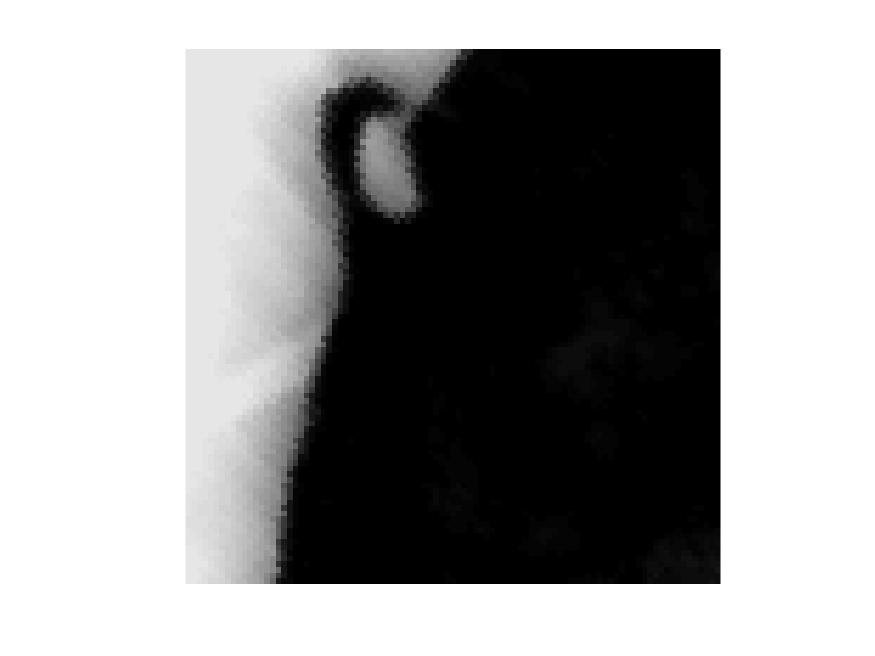}
\end{minipage}
}%\hspace{-0.33cm}
\subfigure[\scriptsize \!EP4Orth+]{
\begin{minipage}[b]{0.137\linewidth}
\includegraphics[width=\linewidth]{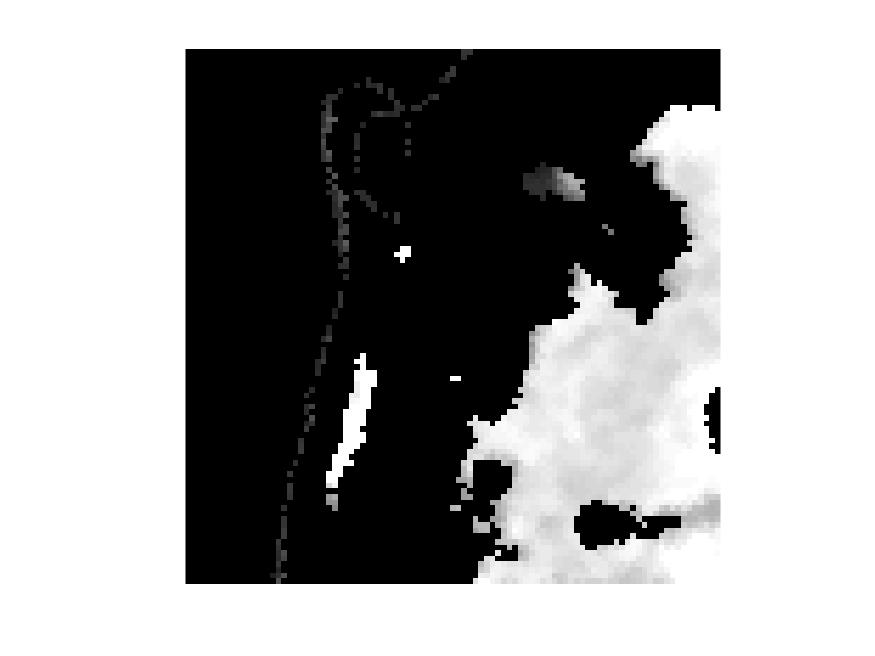}
\includegraphics[width=\linewidth]{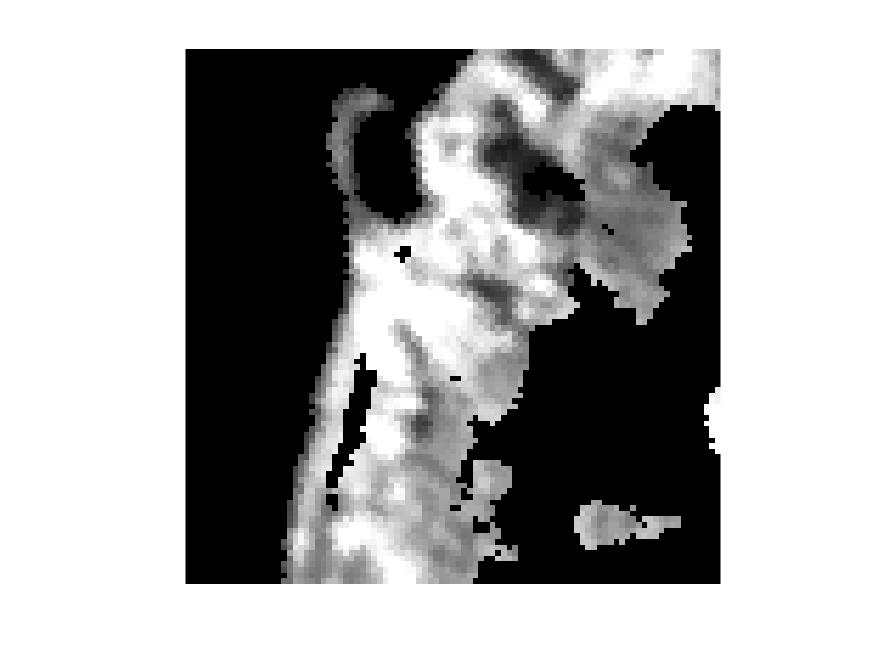}
\includegraphics[width=\linewidth]{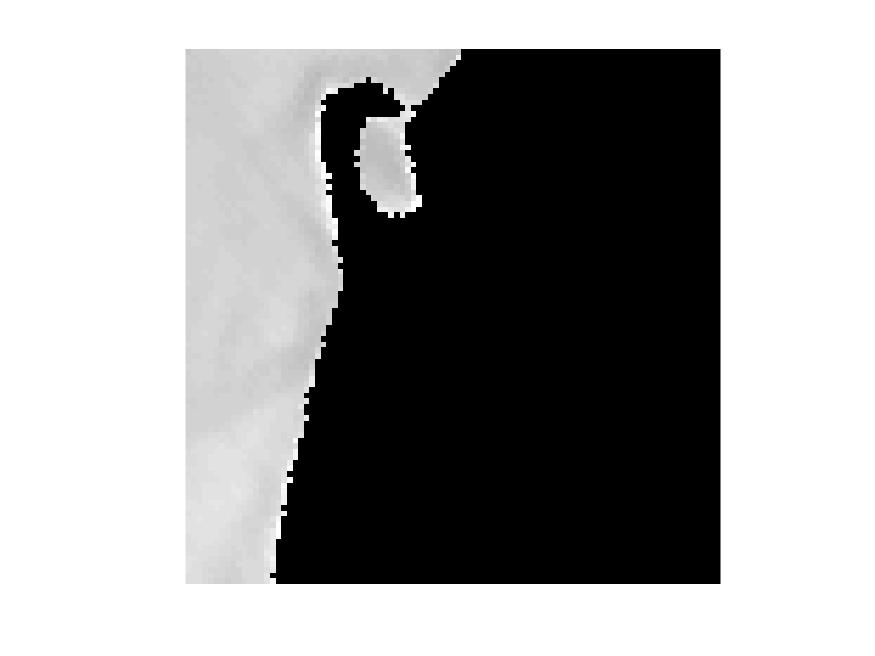}
\end{minipage}
}\hspace{-0.33cm}
\subfigure[U-onmf]{
\begin{minipage}[b]{0.137\linewidth}
\includegraphics[width=\linewidth]{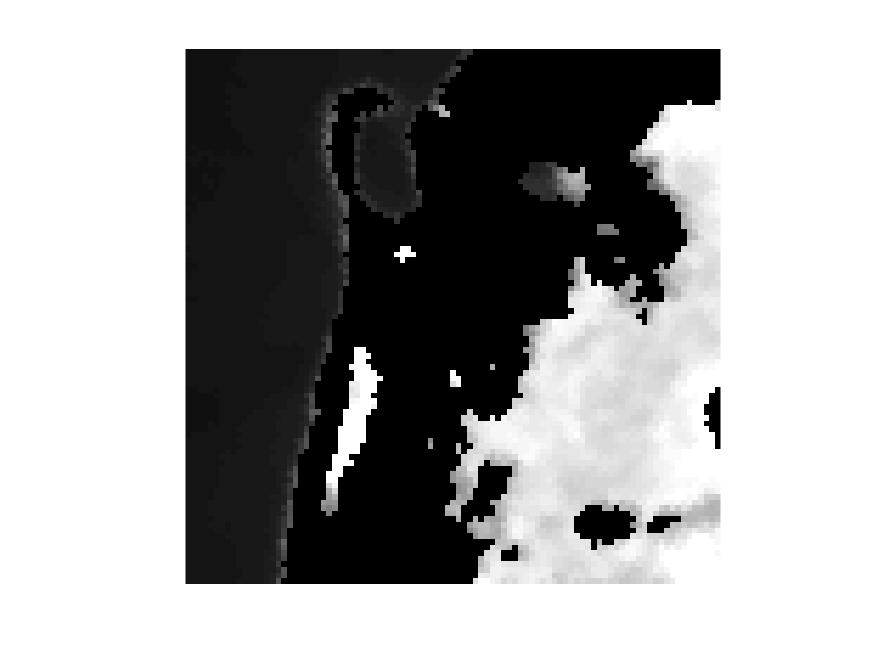}
\includegraphics[width=\linewidth]{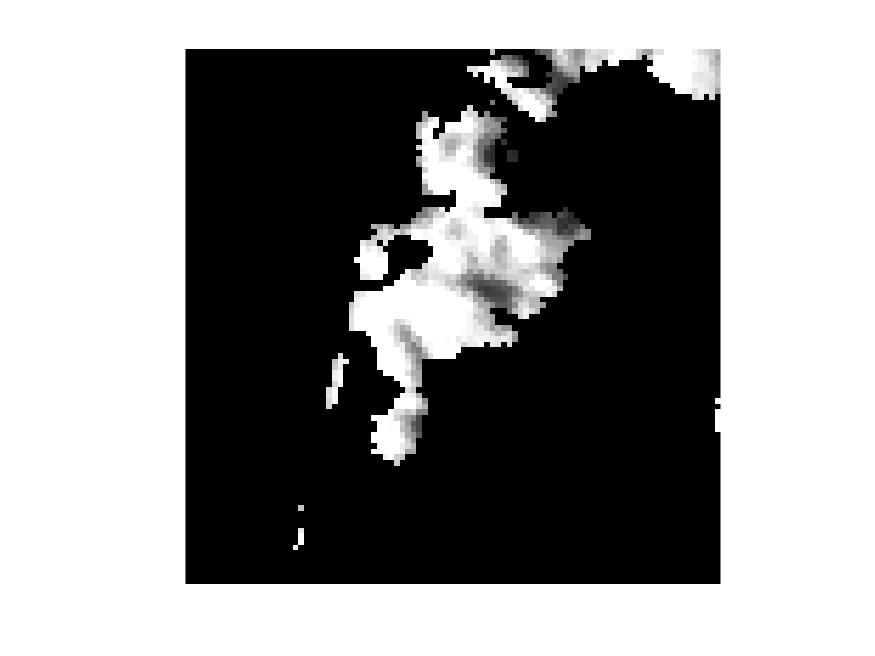}
\includegraphics[width=\linewidth]{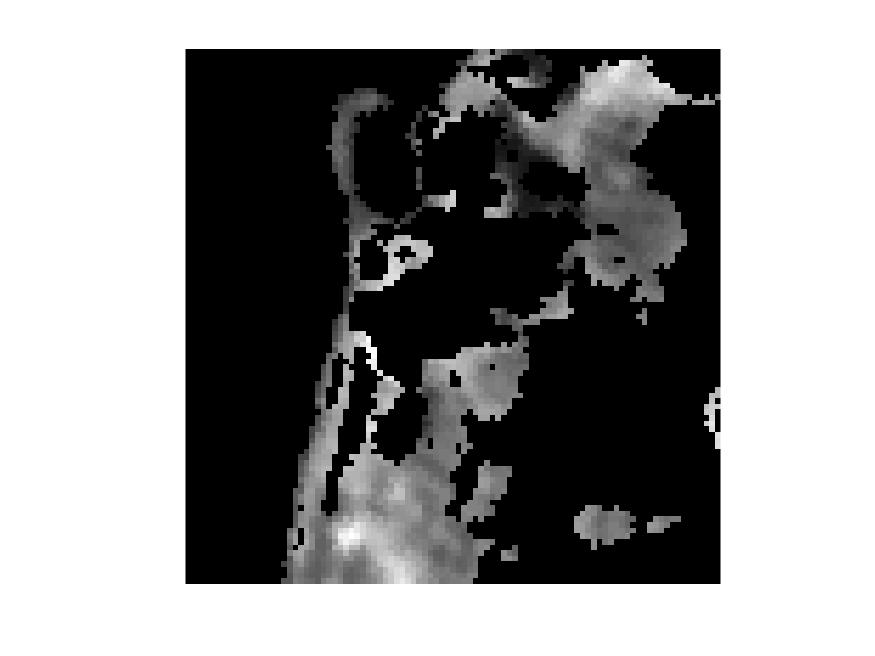}
\end{minipage}
}\hspace{-0.33cm}
\subfigure[OPNMF]{
\begin{minipage}[b]{0.137\linewidth}
\includegraphics[width=\linewidth]{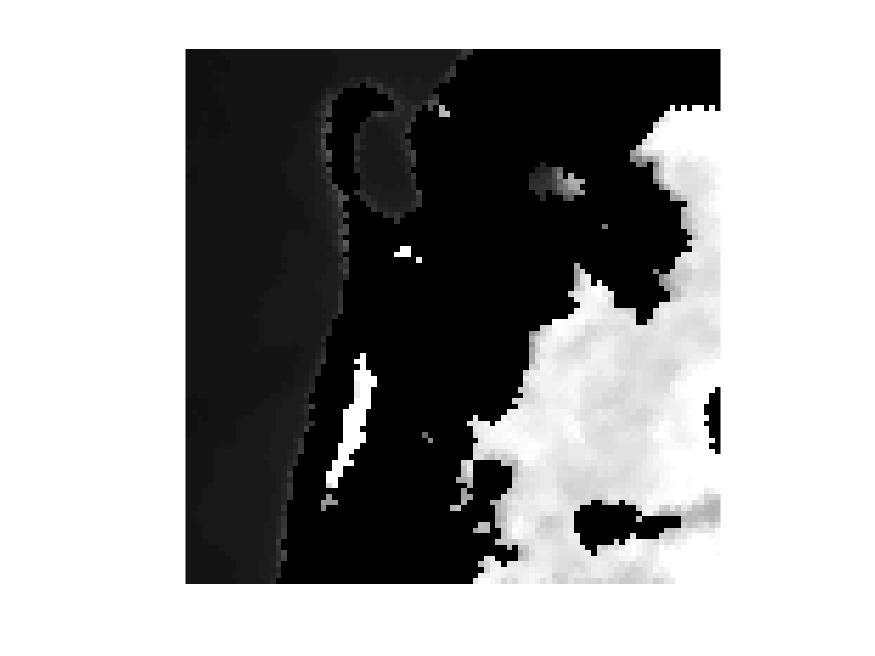}
\includegraphics[width=\linewidth]{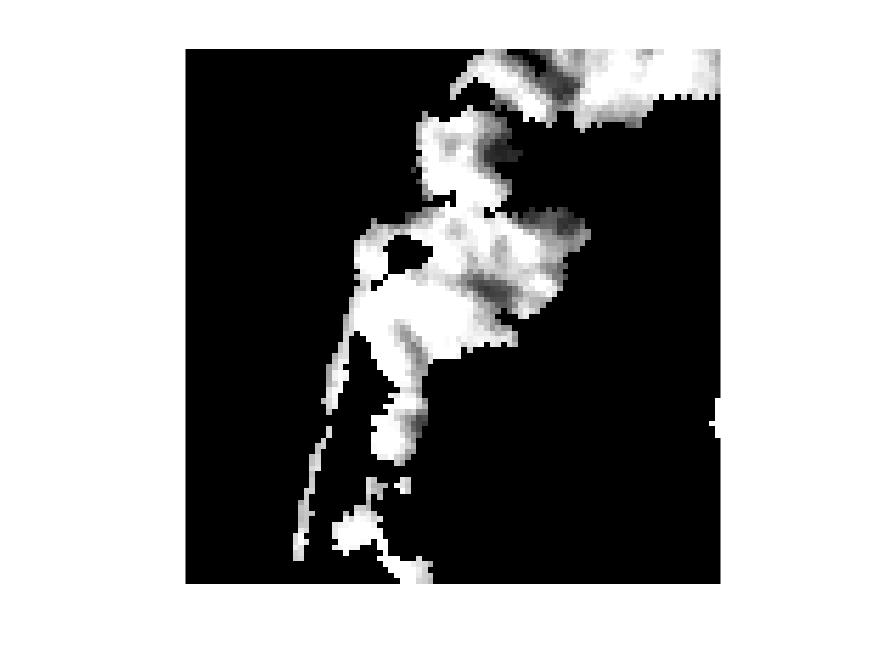}
\includegraphics[width=\linewidth]{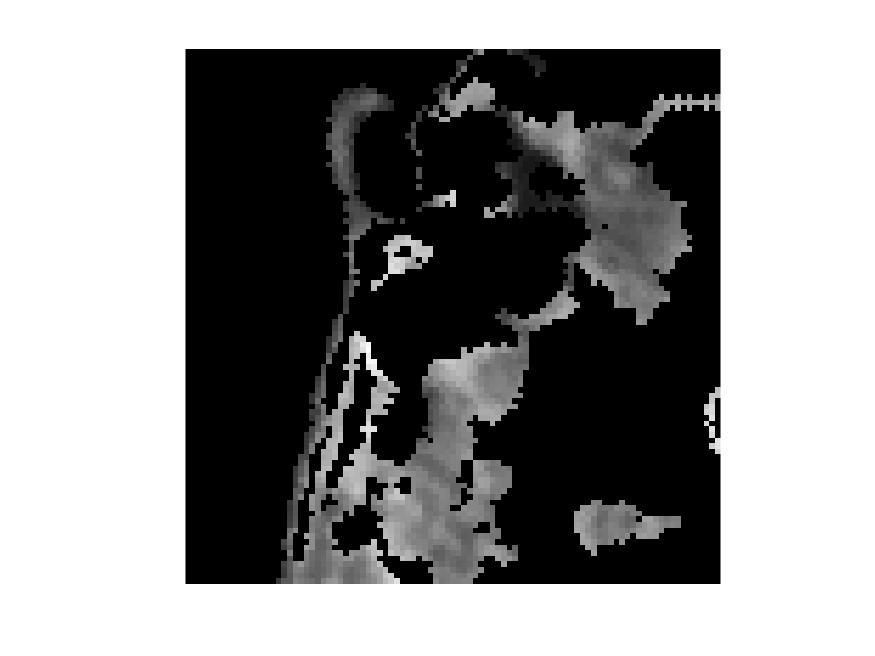}
\end{minipage}
}\hspace{-0.33cm}
\subfigure[K-means]{
\begin{minipage}[b]{0.137\linewidth}
\includegraphics[width=\linewidth]{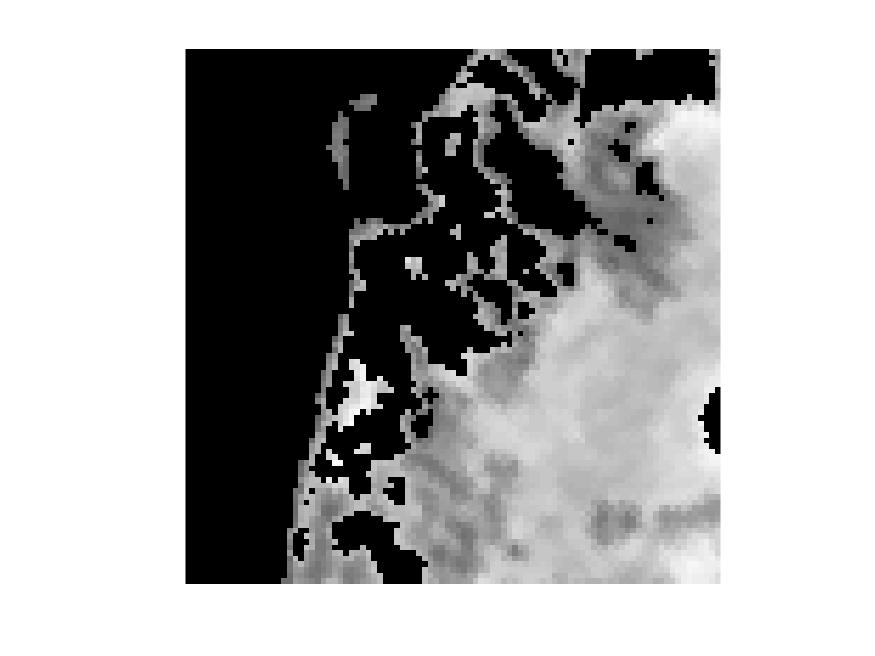}
\includegraphics[width=\linewidth]{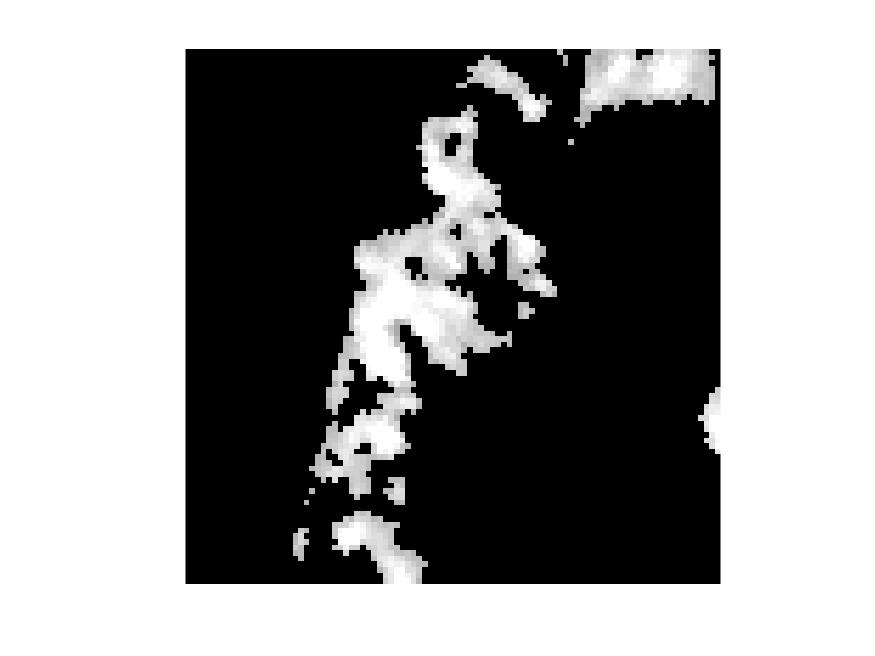}
\includegraphics[width=\linewidth]{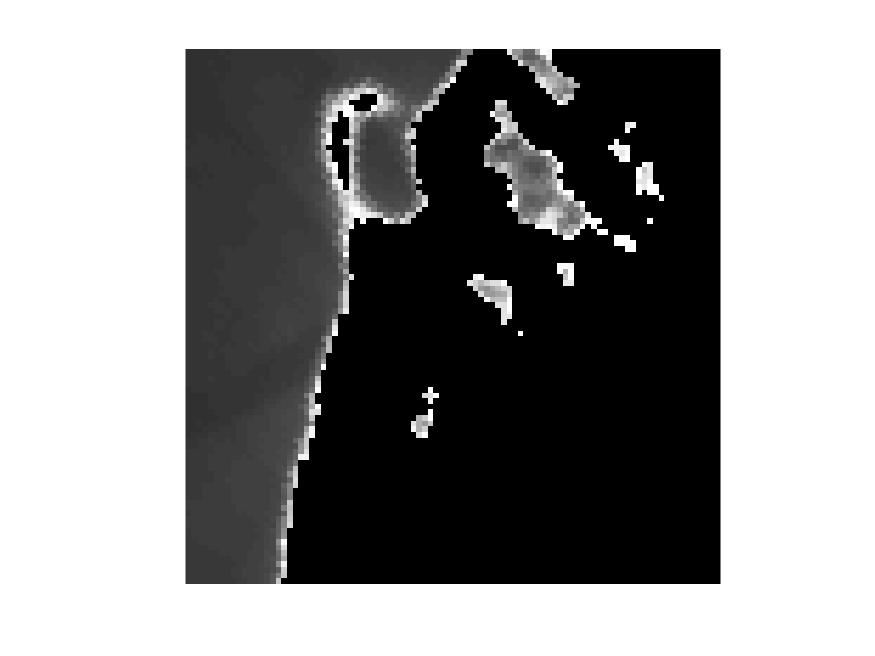}
\end{minipage}
}\hspace{-0.33cm}
\subfigure[ONP-MF]{
\begin{minipage}[b]{0.137\linewidth}
\includegraphics[width=\linewidth]{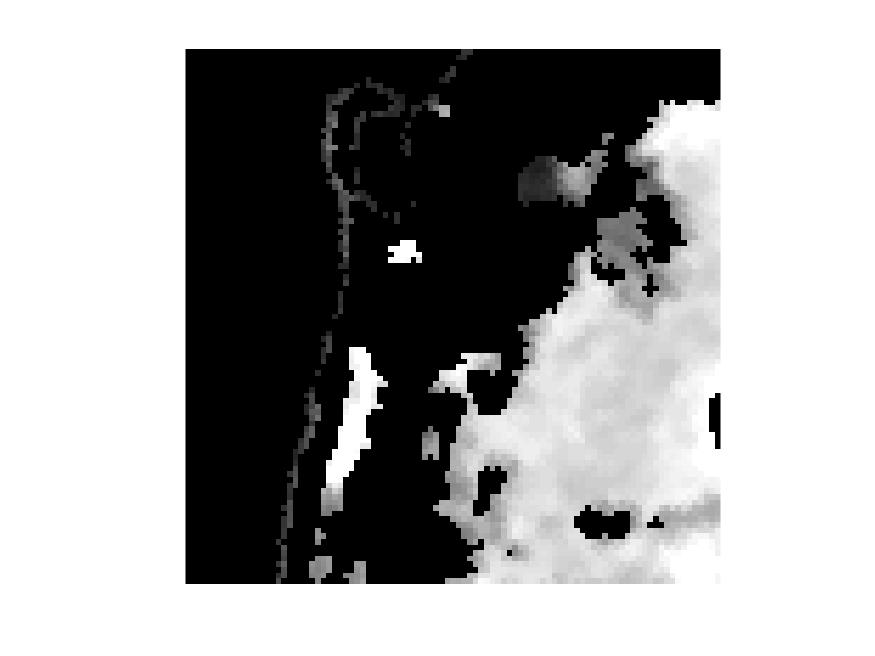}
\includegraphics[width=\linewidth]{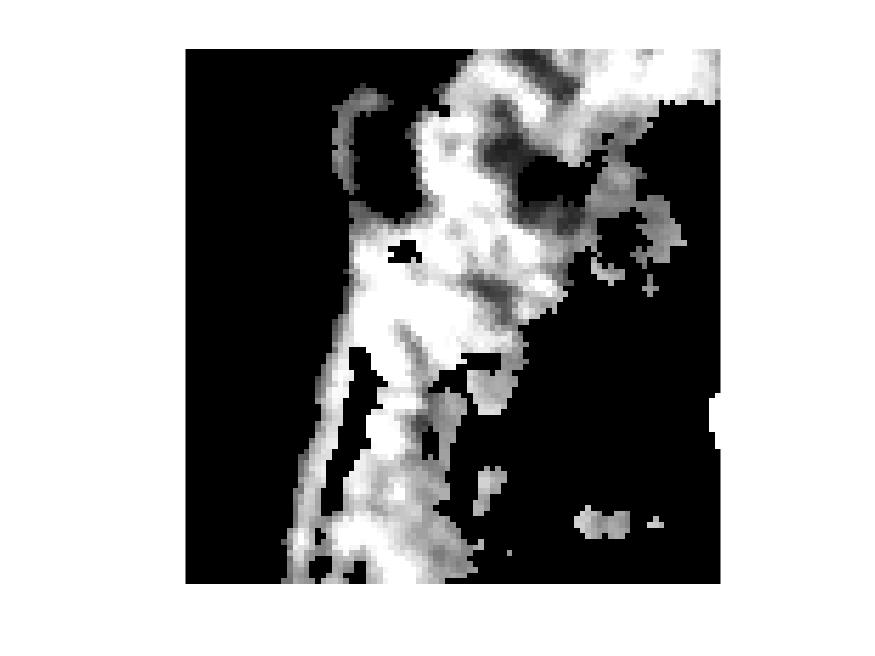}
\includegraphics[width=\linewidth]{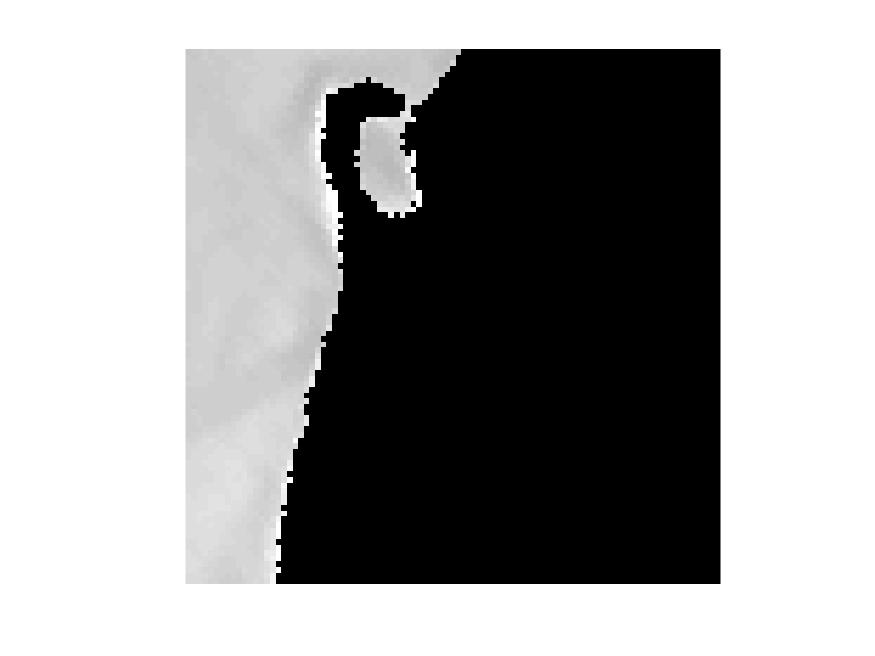}
\end{minipage}
}\hspace{-0.33cm}
\subfigure[EM-onmf]{
\begin{minipage}[b]{0.137\linewidth}
\includegraphics[width=\linewidth]{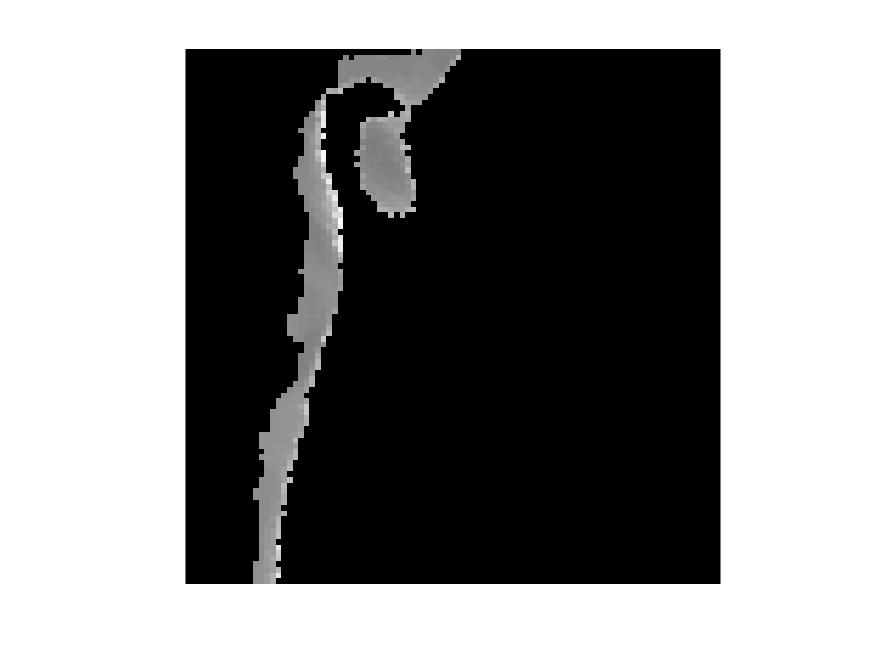}
\includegraphics[width=\linewidth]{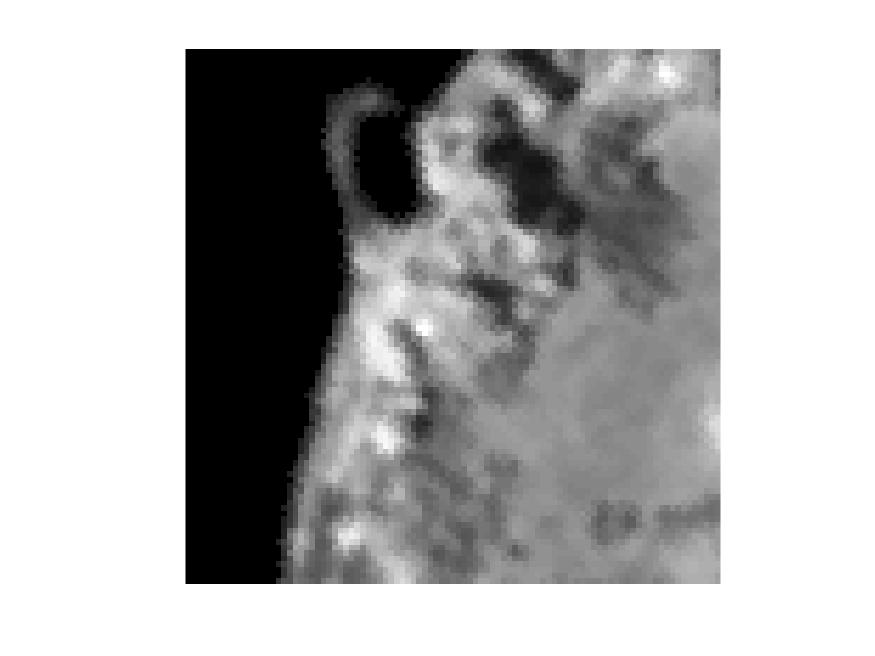}
\includegraphics[width=\linewidth]{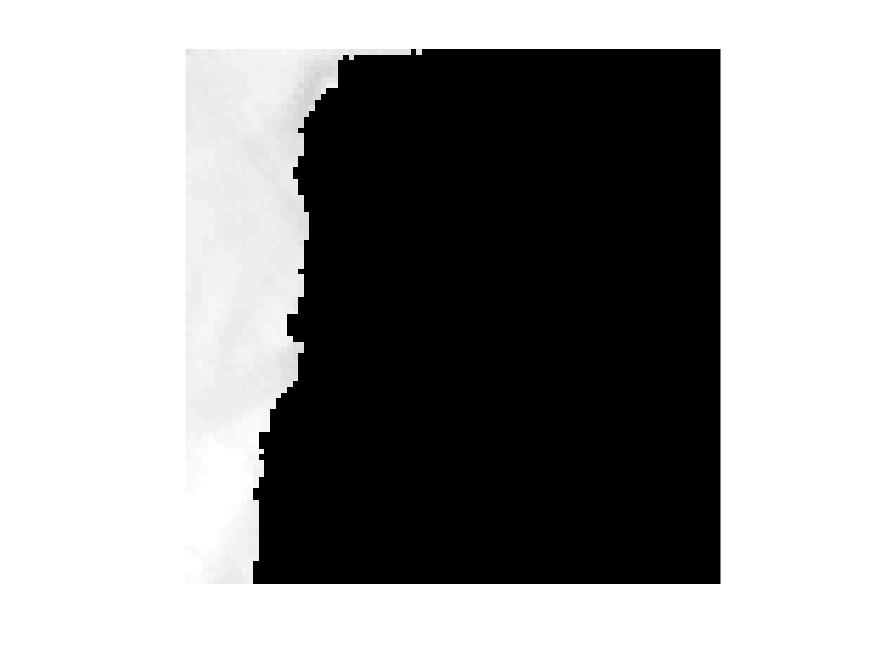}
\end{minipage}
}
\caption{%\small 
Unmixing results of Samson, from top to bottom: rock, tree, water.}
\label{num:samson}
\end{figure}

\begin{figure}[!htbp]
%\vspace{-0.5cm} 
%\subfigbottomskip=-2cm
%\subfigbottomskip=-2cm
%\subfigcapskip=-13pt
\centering
\subfigure[\scriptsize \!\!ground truth]{
\begin{minipage}[b]{0.137\linewidth}
\includegraphics[width=\textwidth]{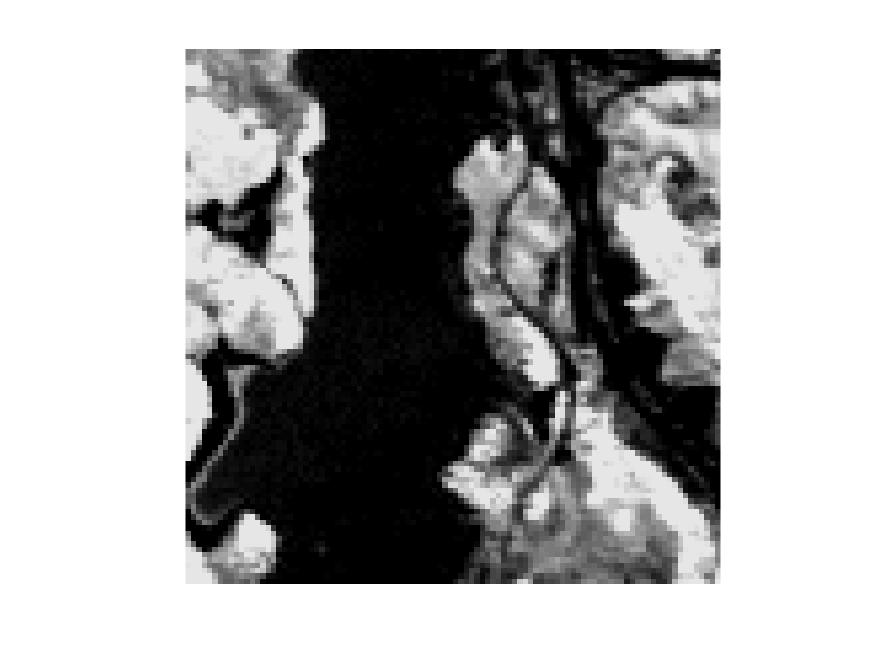}
\includegraphics[width=\textwidth]{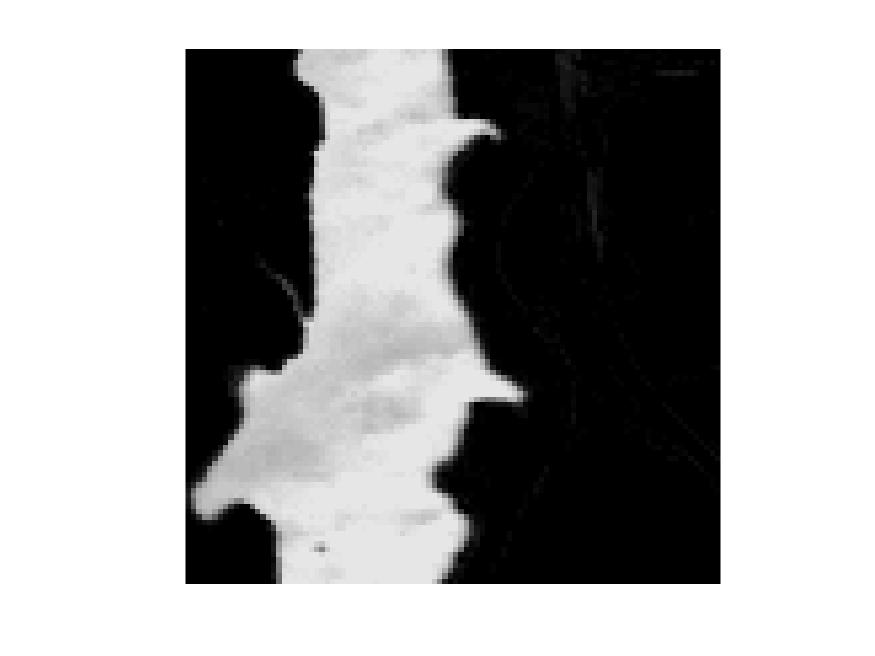}
\includegraphics[width=\textwidth]{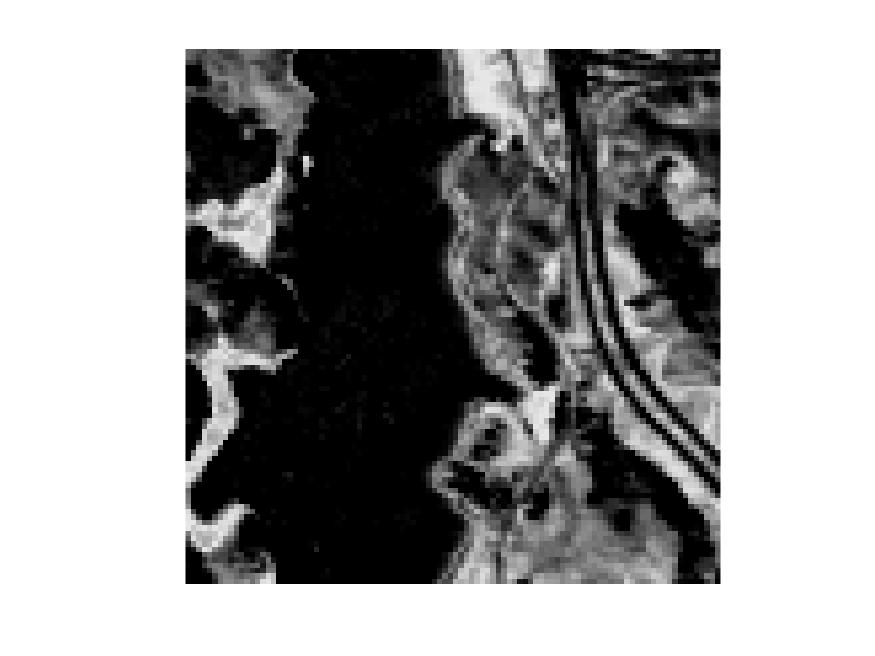}
\includegraphics[width=\textwidth]{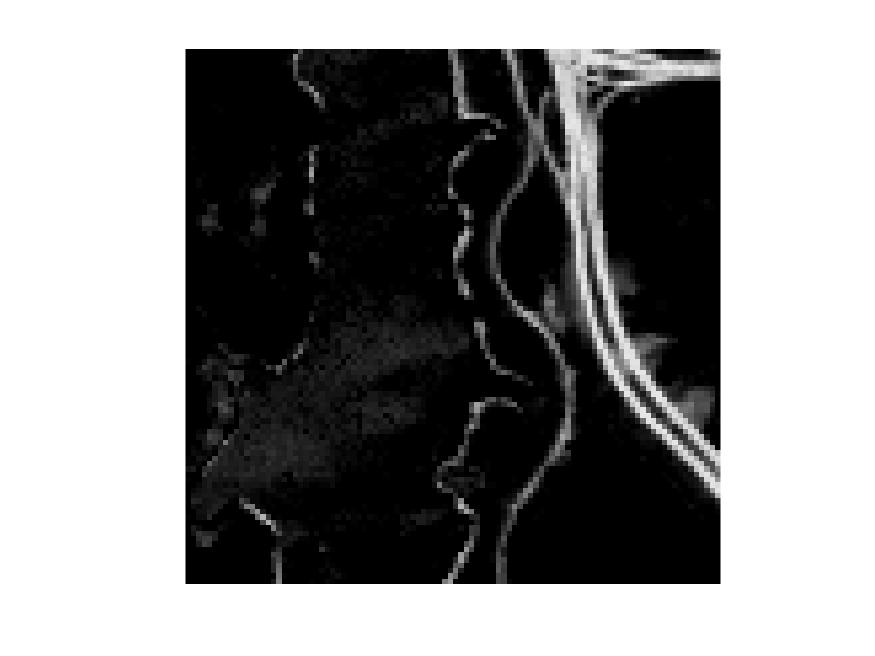}
\end{minipage}
}%\hspace{-0.3cm}
\subfigure[\scriptsize \!EP4Orth+]{
\begin{minipage}[b]{0.137\linewidth}
\includegraphics[width=\textwidth]{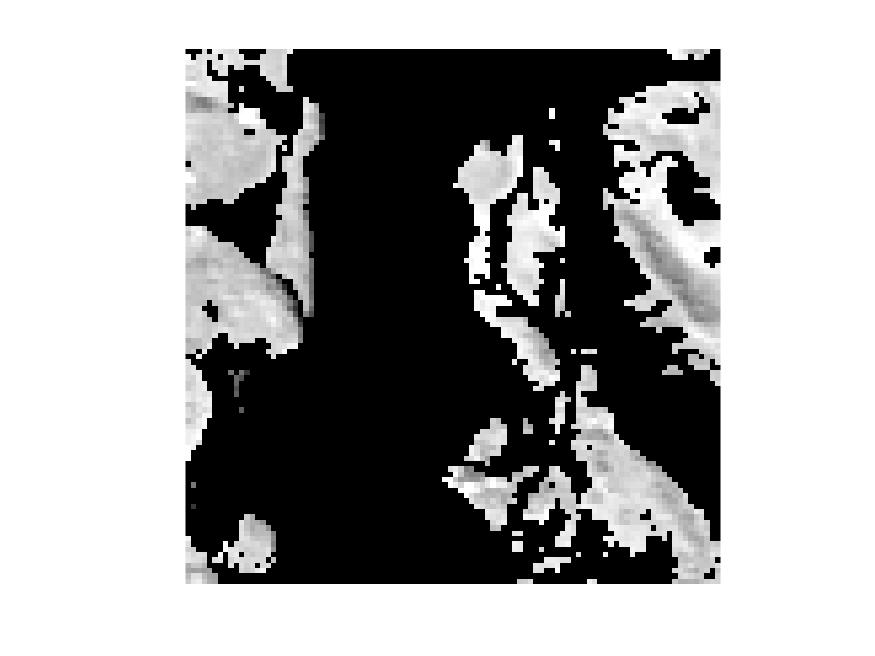}
\includegraphics[width=\textwidth]{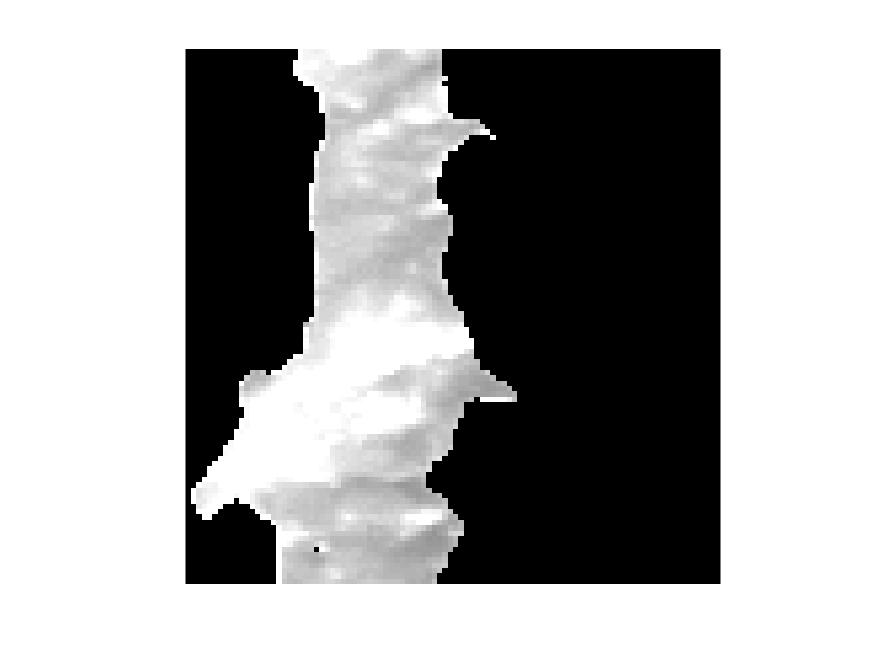}
\includegraphics[width=\textwidth]{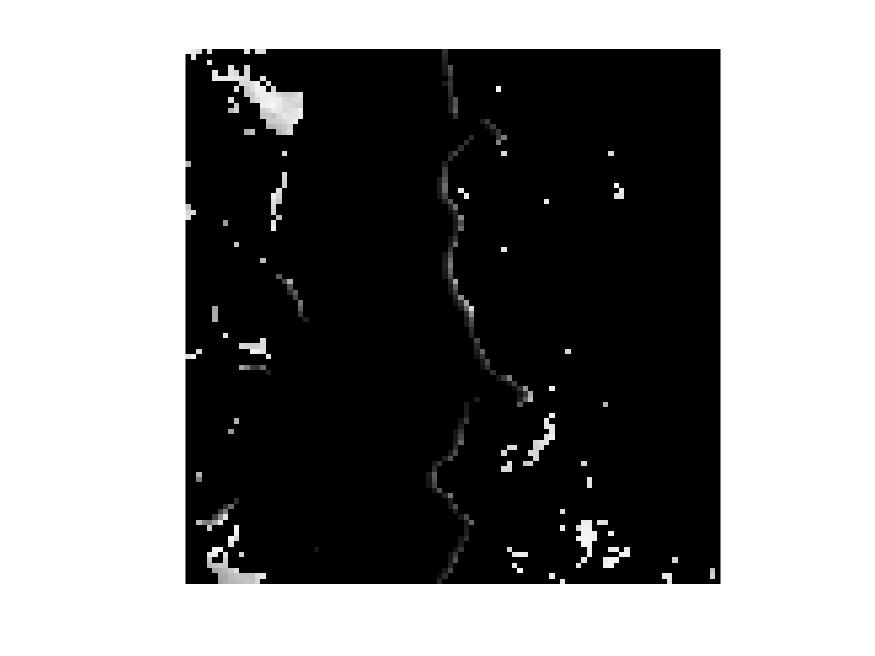}
\includegraphics[width=\textwidth]{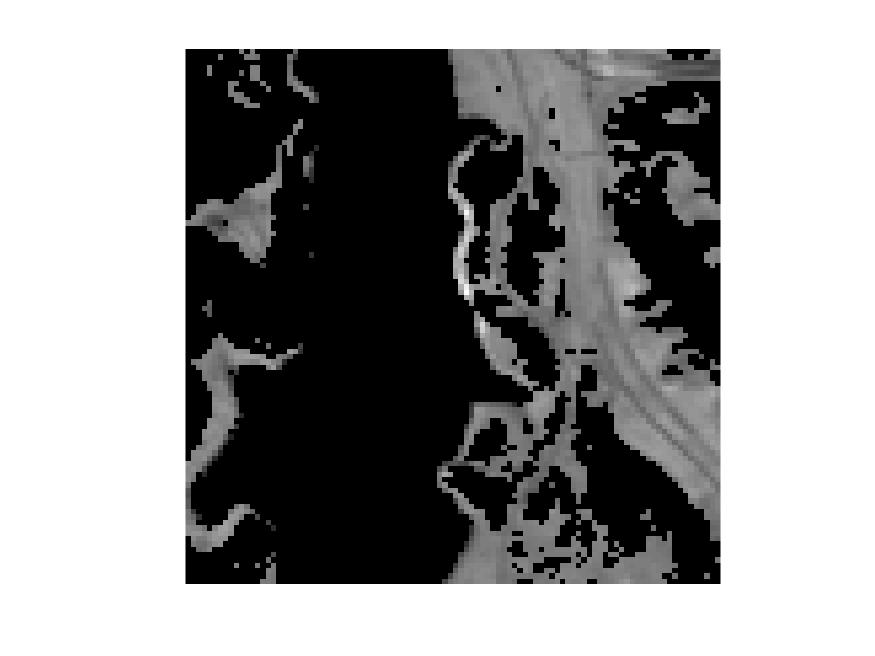}
\end{minipage}
}\hspace{-0.33cm}
\subfigure[U-onmf]{
\begin{minipage}[b]{0.137\linewidth}
\includegraphics[width=\textwidth]{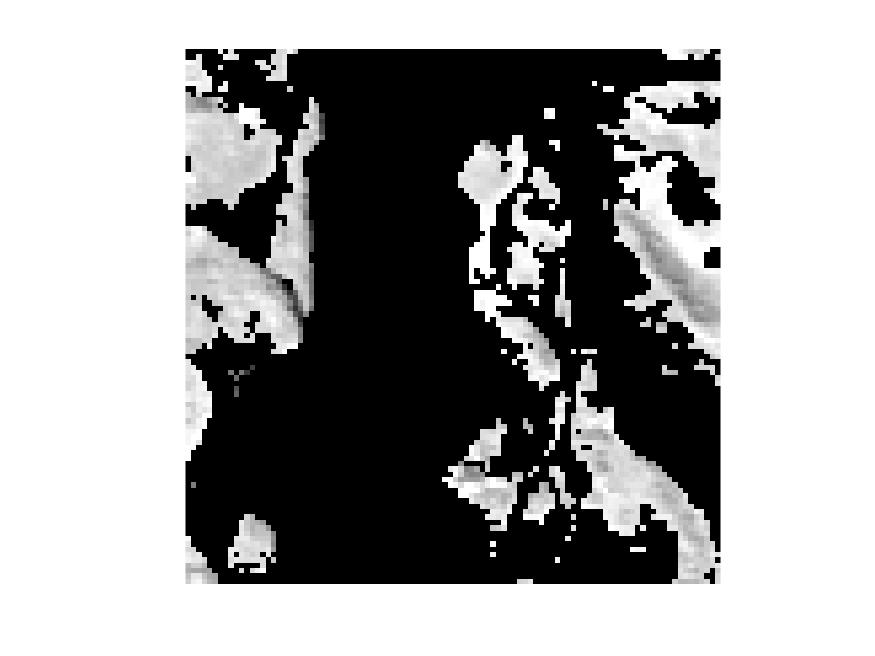}
\includegraphics[width=\textwidth]{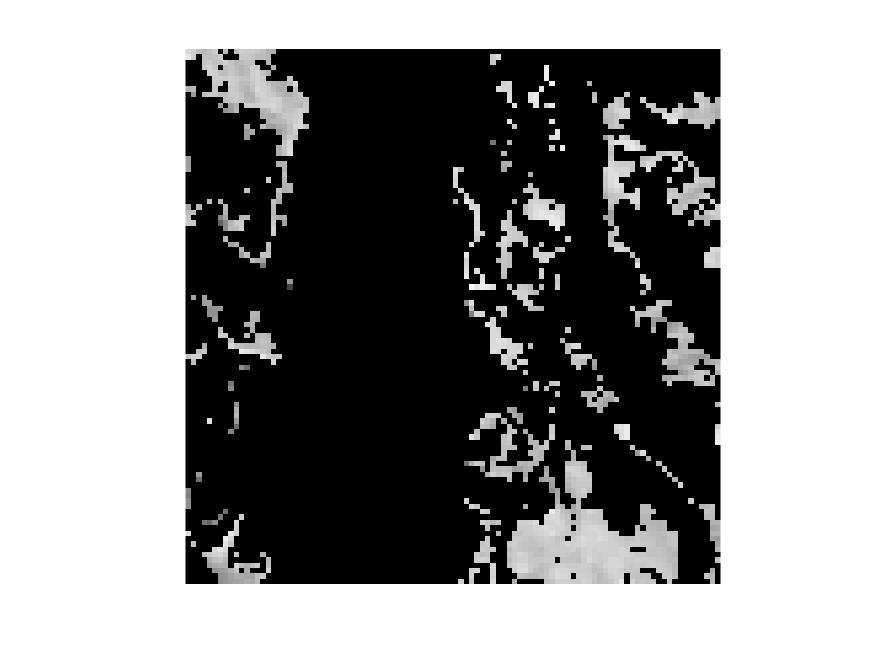}
\includegraphics[width=\textwidth]{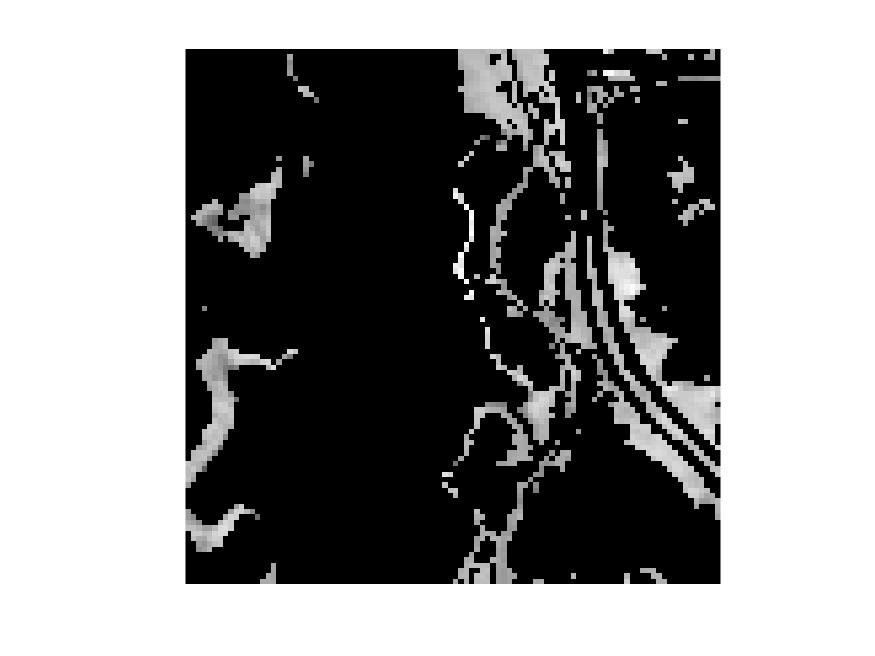}
\includegraphics[width=\textwidth]{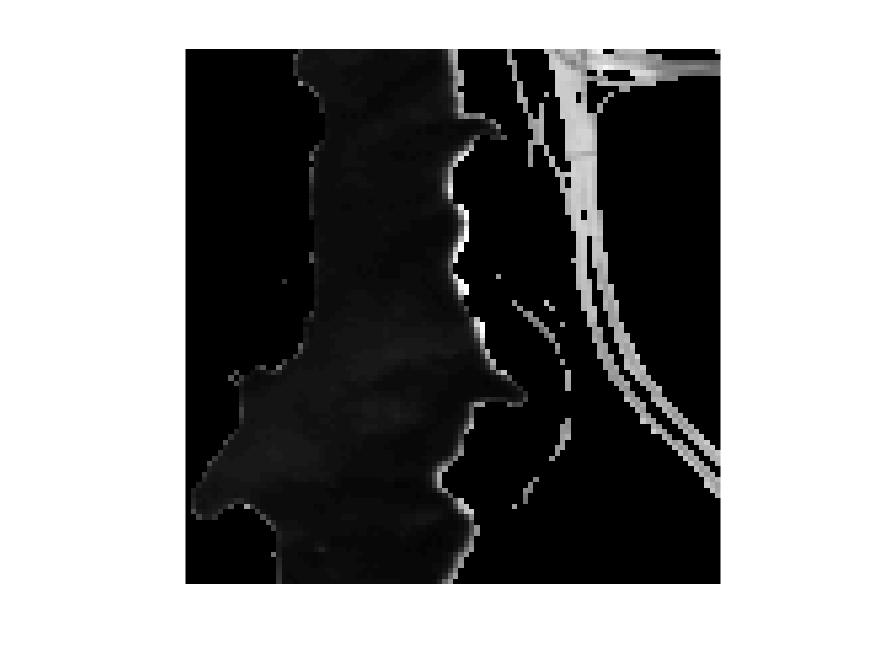}
\end{minipage}
}\hspace{-0.33cm}
\subfigure[OPNMF]{
\begin{minipage}[b]{0.137\linewidth}
\includegraphics[width=\textwidth]{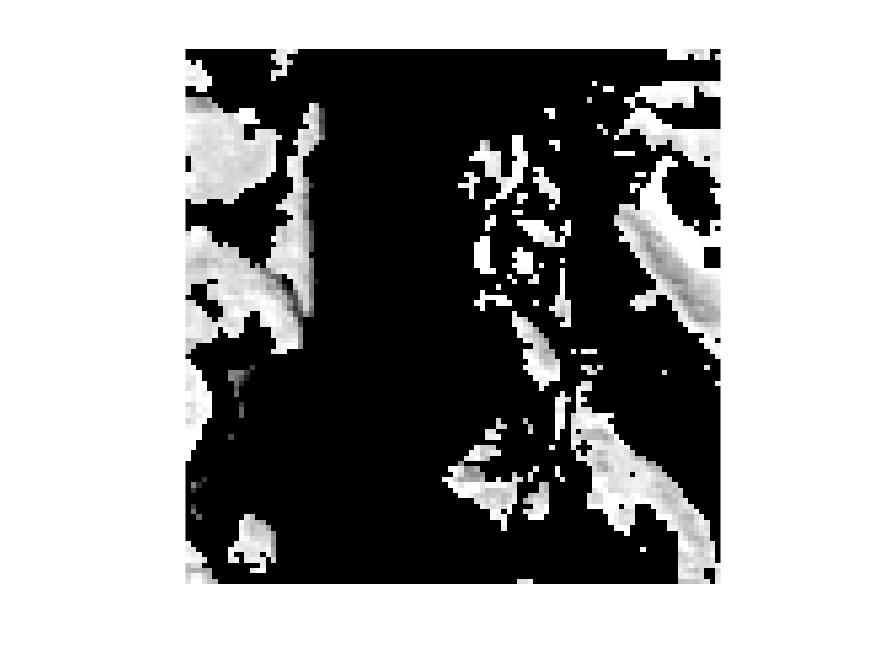}
\includegraphics[width=\textwidth]{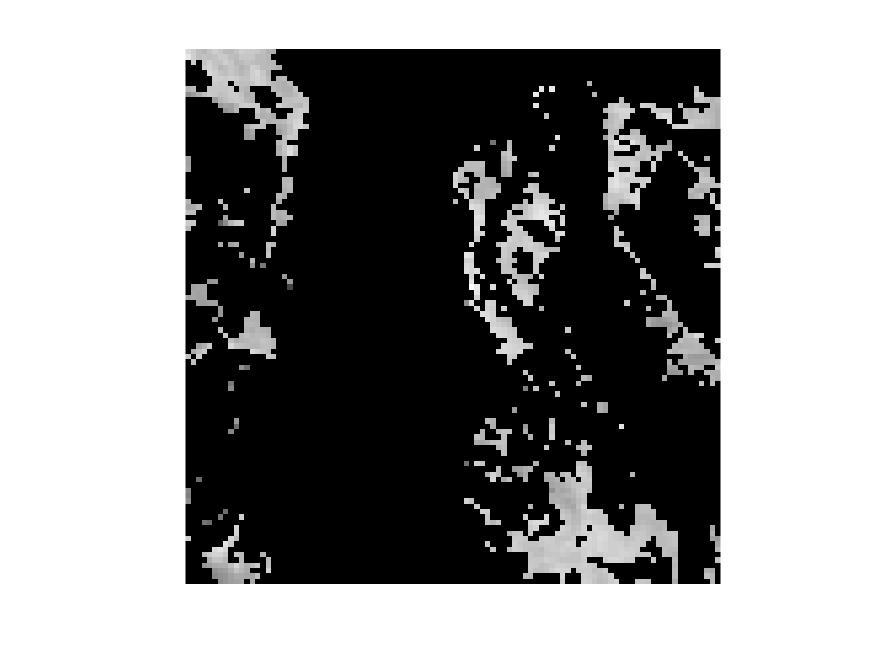}
\includegraphics[width=\textwidth]{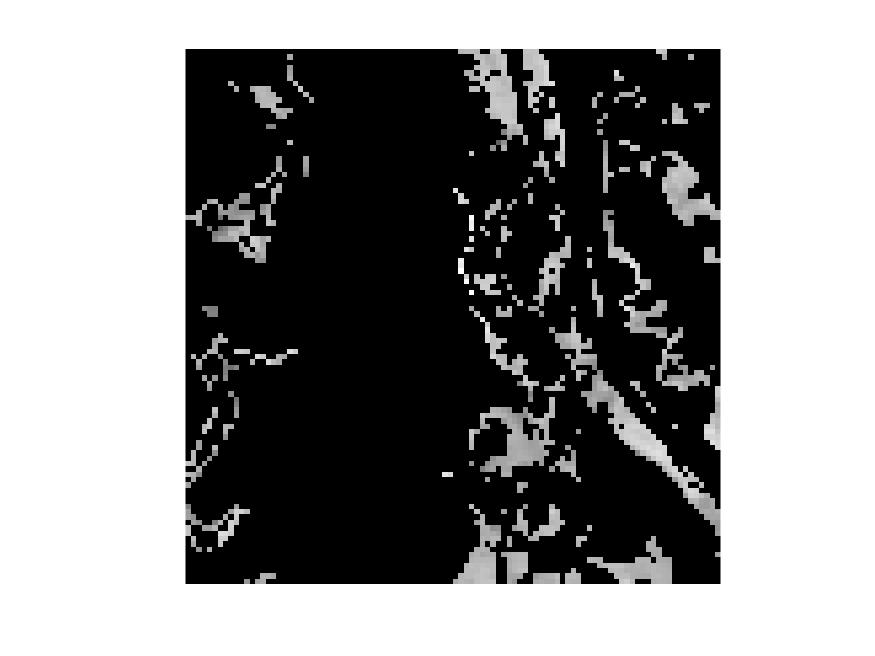}
\includegraphics[width=\textwidth]{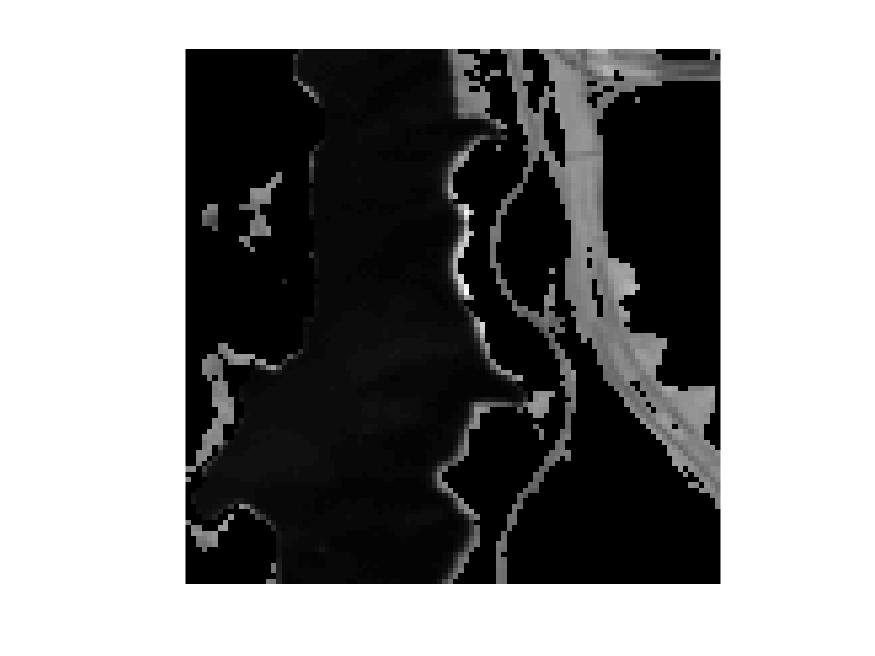}
\end{minipage}
}\hspace{-0.33cm}
\subfigure[K-means]{
\begin{minipage}[b]{0.137\linewidth}
\includegraphics[width=\textwidth]{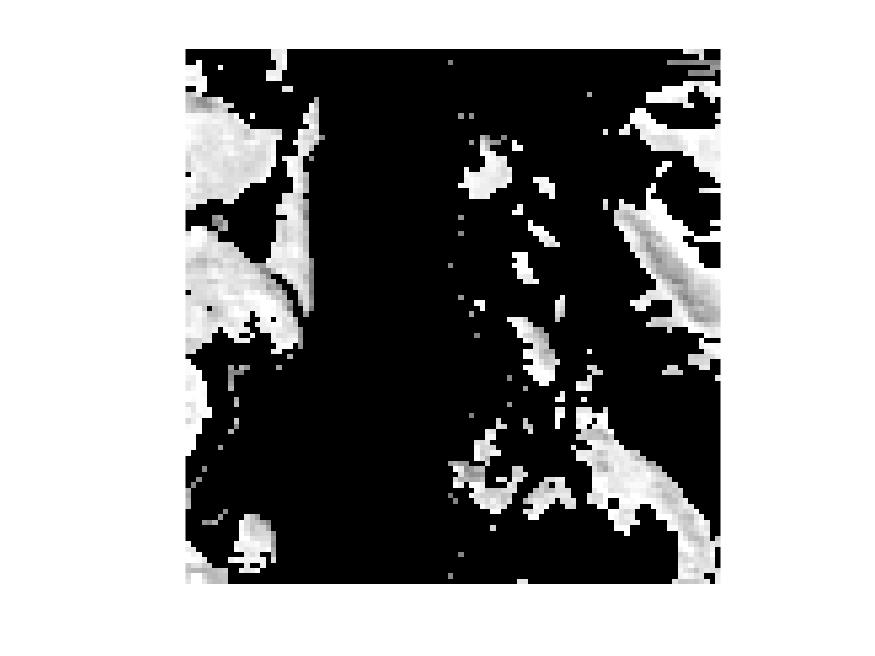}
\includegraphics[width=\textwidth]{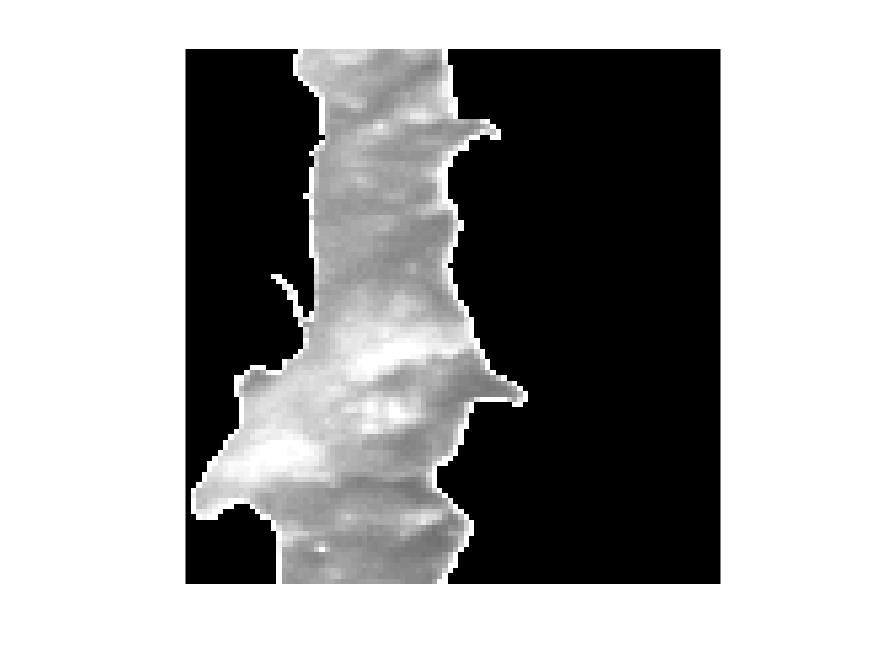}
\includegraphics[width=\textwidth]{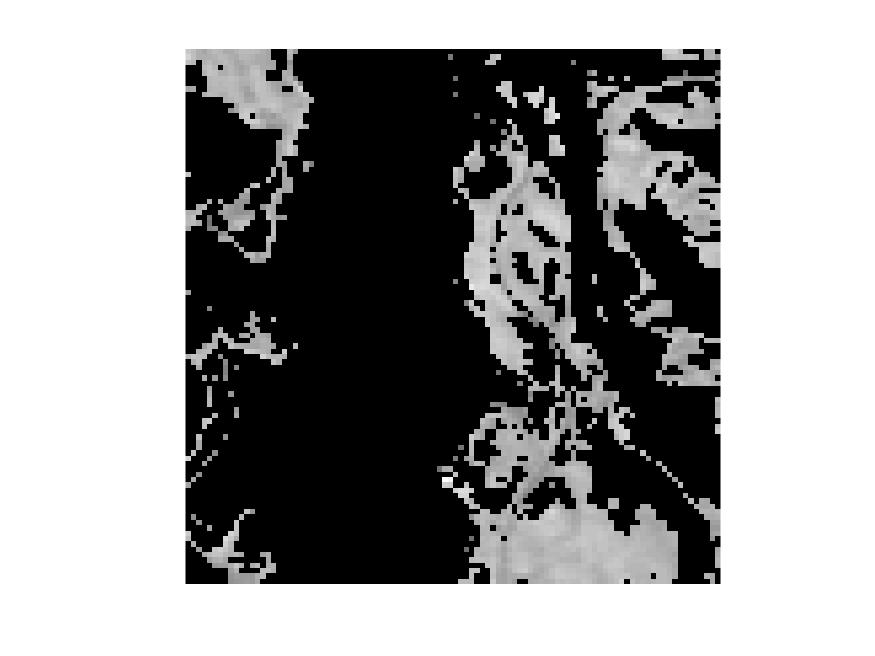}
\includegraphics[width=\textwidth]{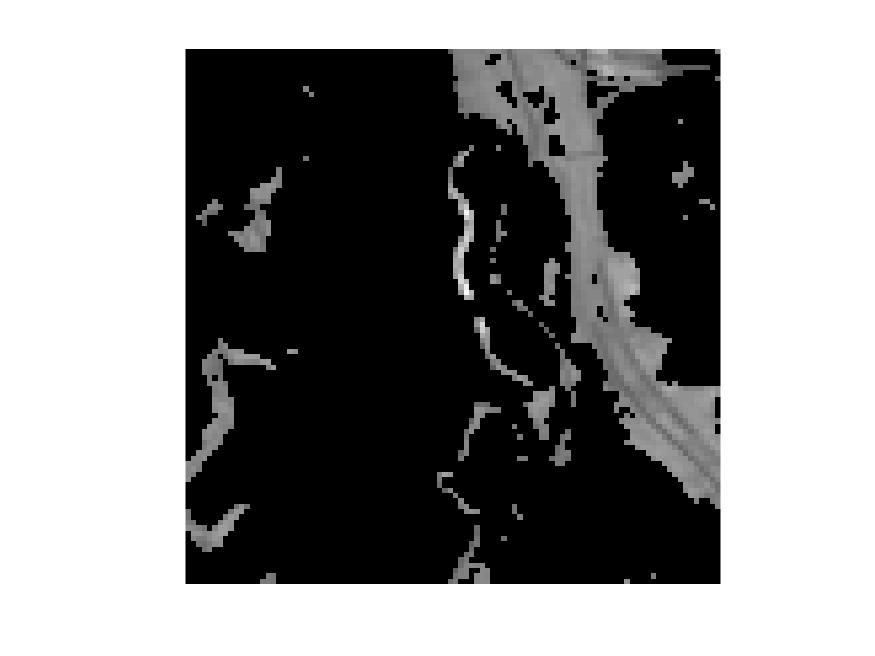}
\end{minipage}
}\hspace{-0.33cm}
\subfigure[ONP-MF]{
\begin{minipage}[b]{0.137\linewidth}
\includegraphics[width=\textwidth]{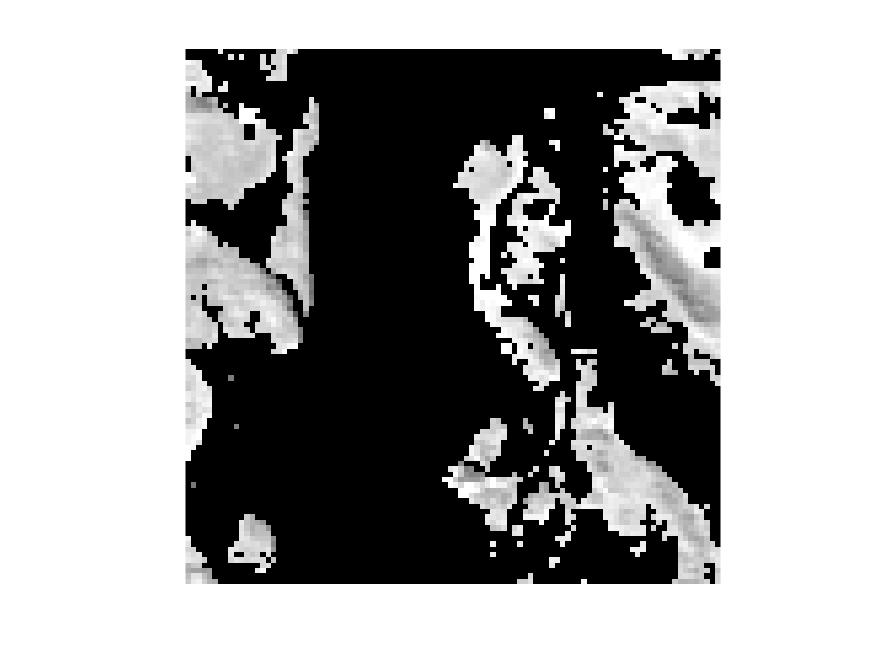}
\includegraphics[width=\textwidth]{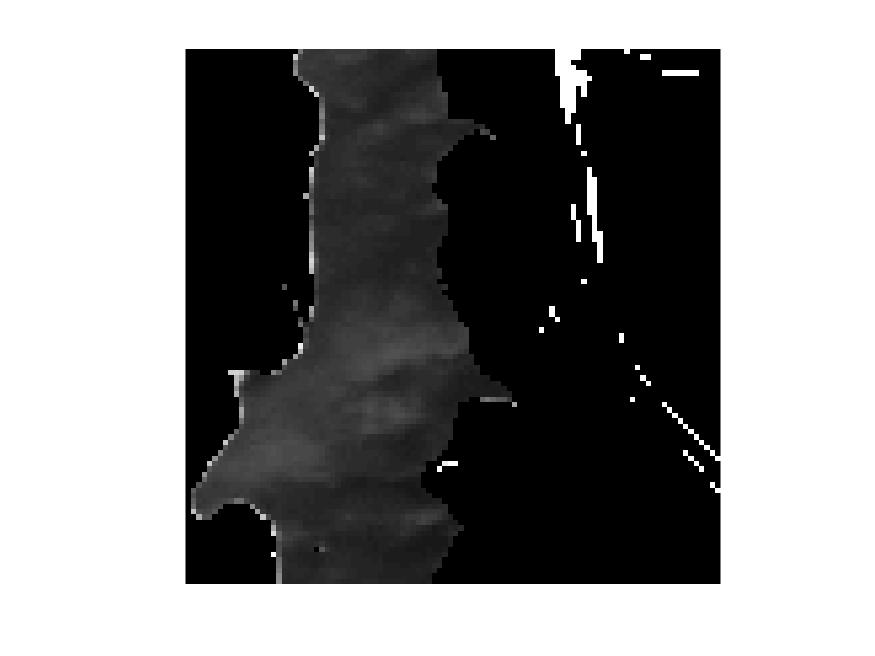}
\includegraphics[width=\textwidth]{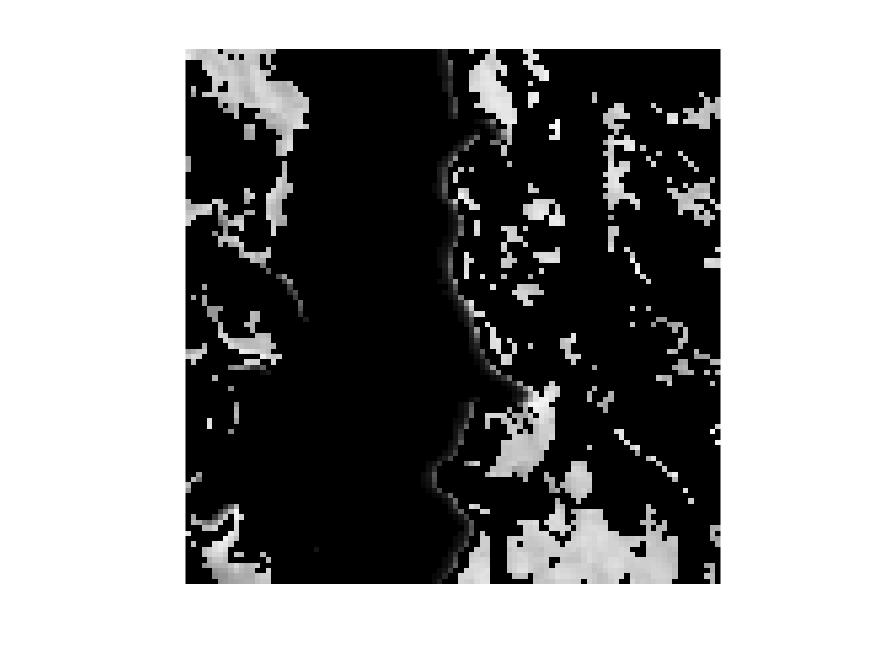}
\includegraphics[width=\textwidth]{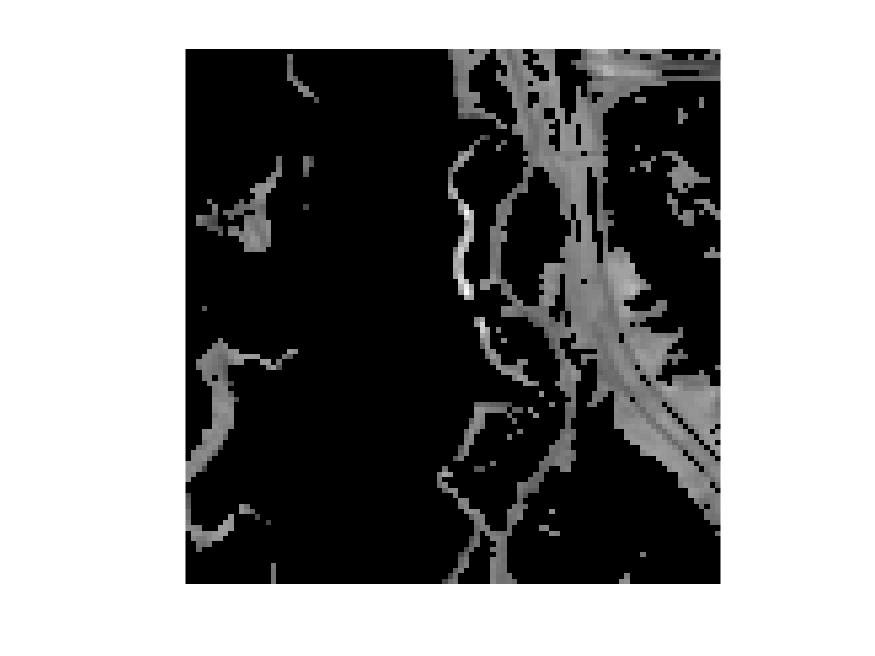}
\end{minipage}
}\hspace{-0.33cm}
\subfigure[EM-onmf]{
\begin{minipage}[b]{0.137\linewidth}
\includegraphics[width=\textwidth]{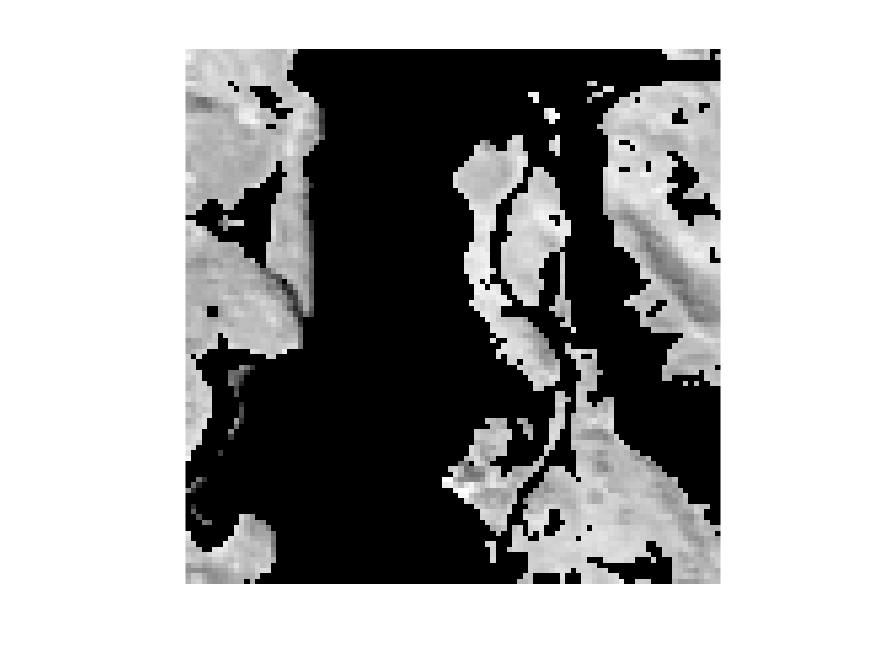}
\includegraphics[width=\textwidth]{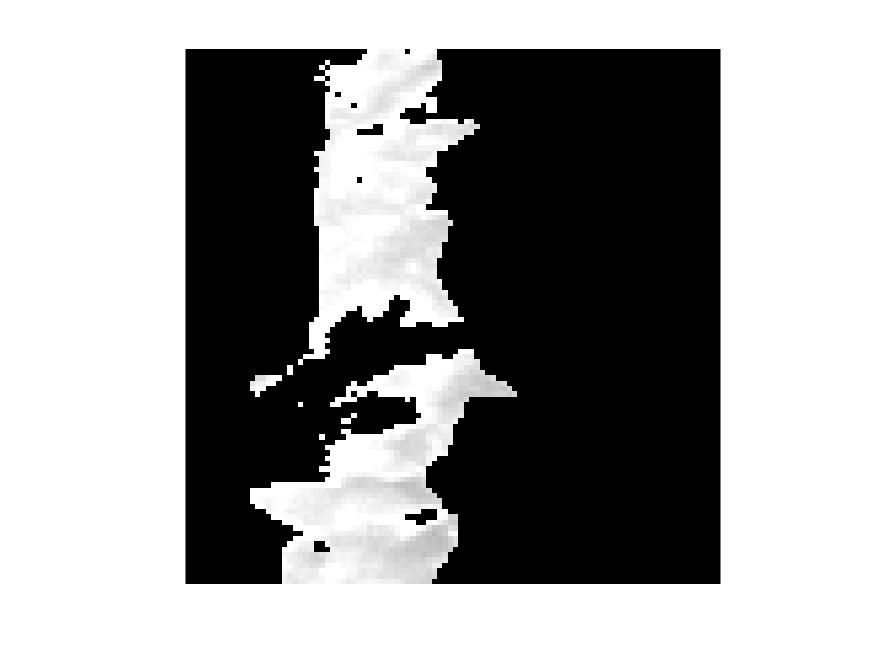}
\includegraphics[width=\textwidth]{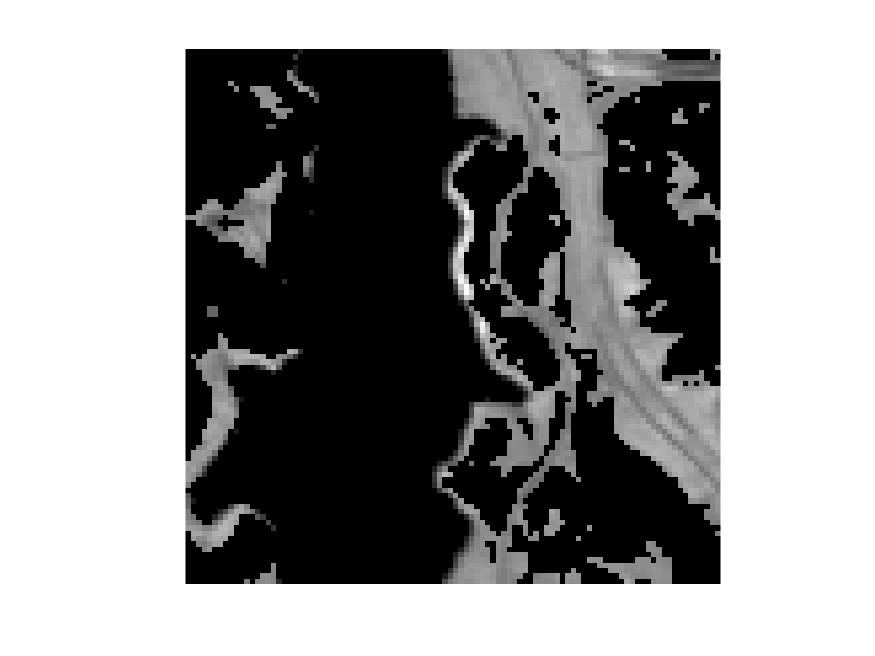}
\includegraphics[width=\textwidth]{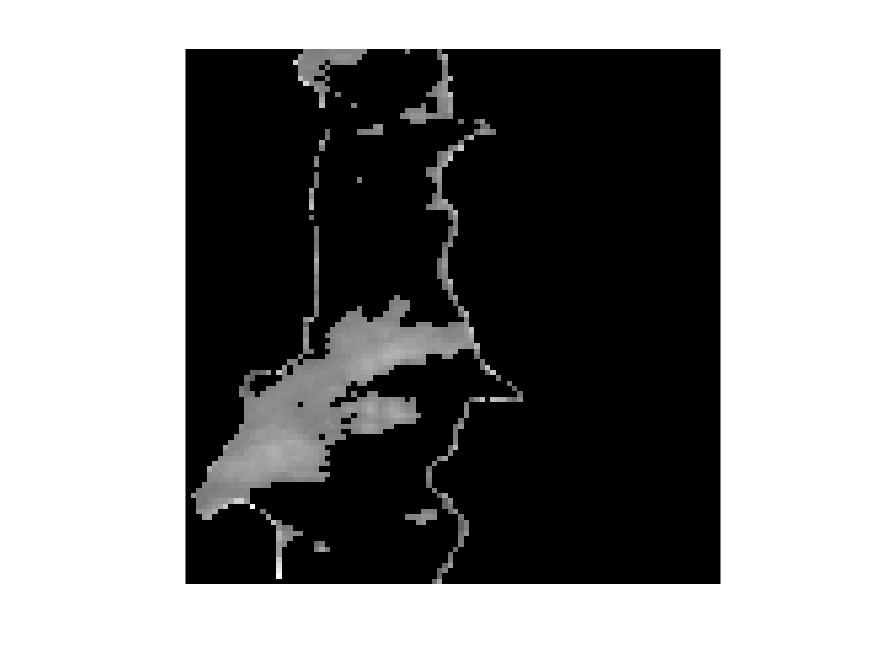}
\end{minipage}
}\hspace{-0.33cm}
\caption{%\small
Unmixing results of Jasper Ridge, from  top to bottom: tree, water, dirt, road}
\label{num:Japser}
\end{figure}

\begin{figure}[!htbp]
\centering
\subfigure[\scriptsize \!\!ground truth]{
\begin{minipage}[b]{0.137\linewidth}
\includegraphics[width=\textwidth]{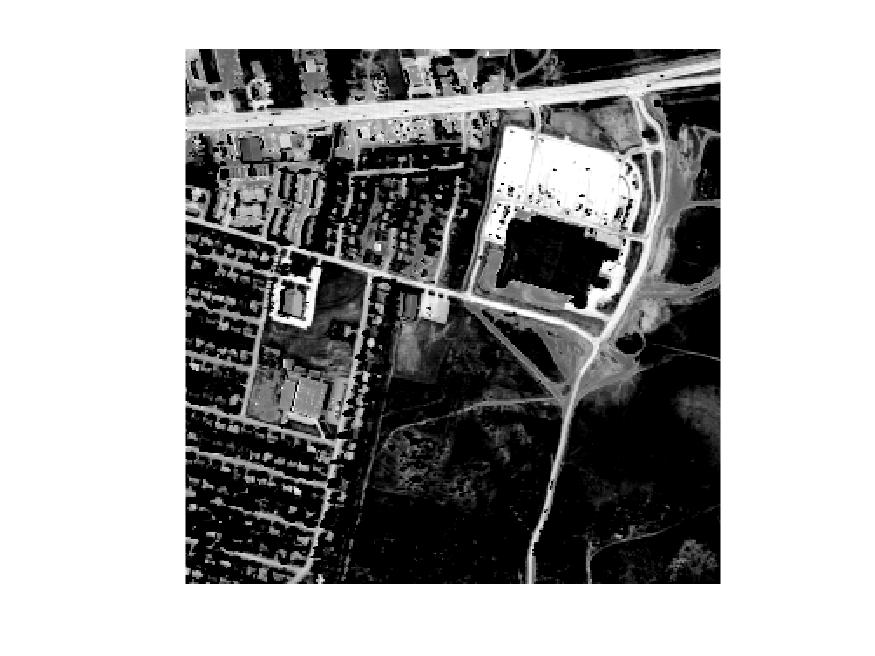}
\includegraphics[width=\textwidth]{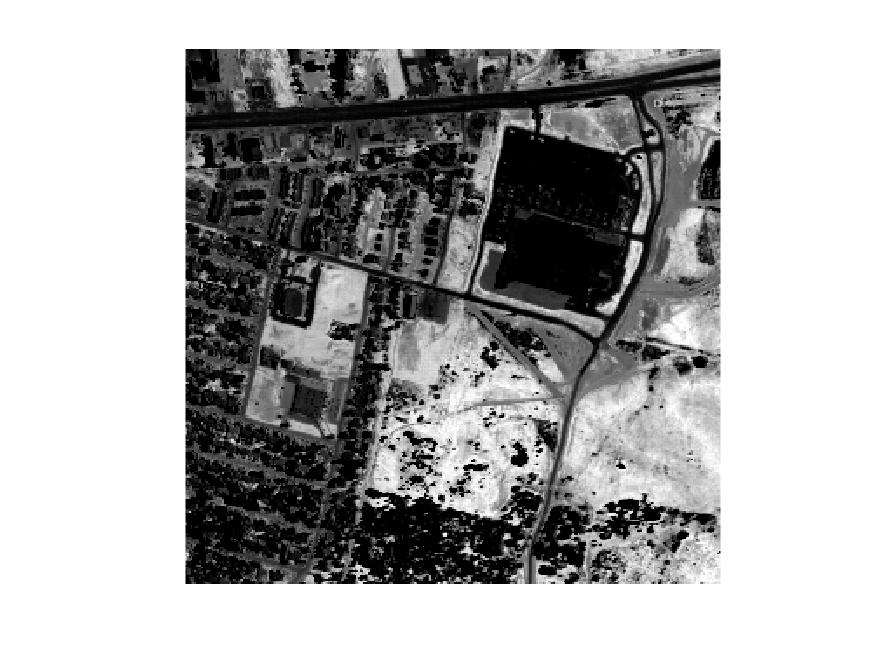}
\includegraphics[width=\textwidth]{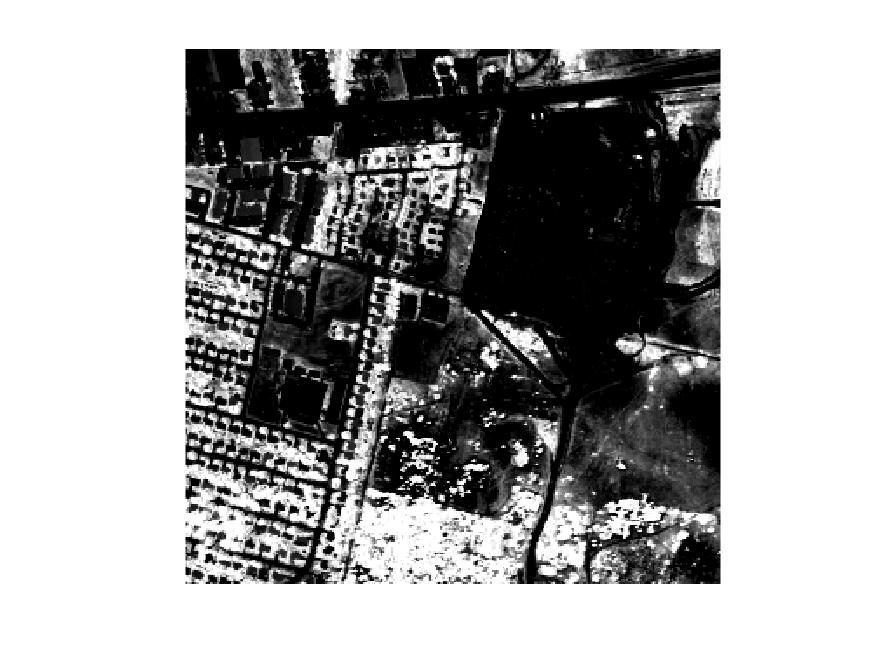}
\includegraphics[width=\textwidth]{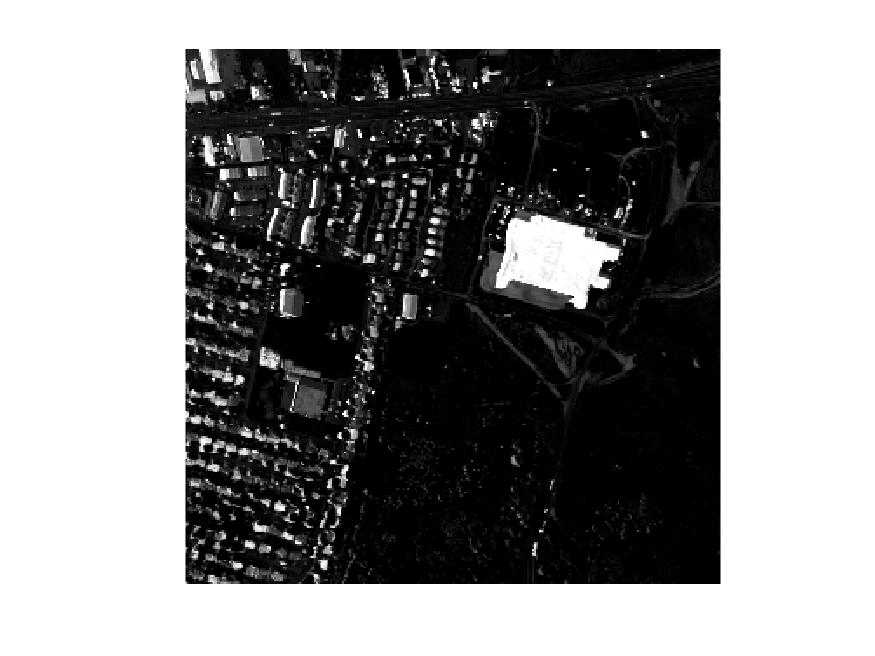}
\end{minipage}
}%\hspace{-0.3cm}
\subfigure[\scriptsize \!our method]{
\begin{minipage}[b]{0.137\linewidth}
\includegraphics[width=\textwidth]{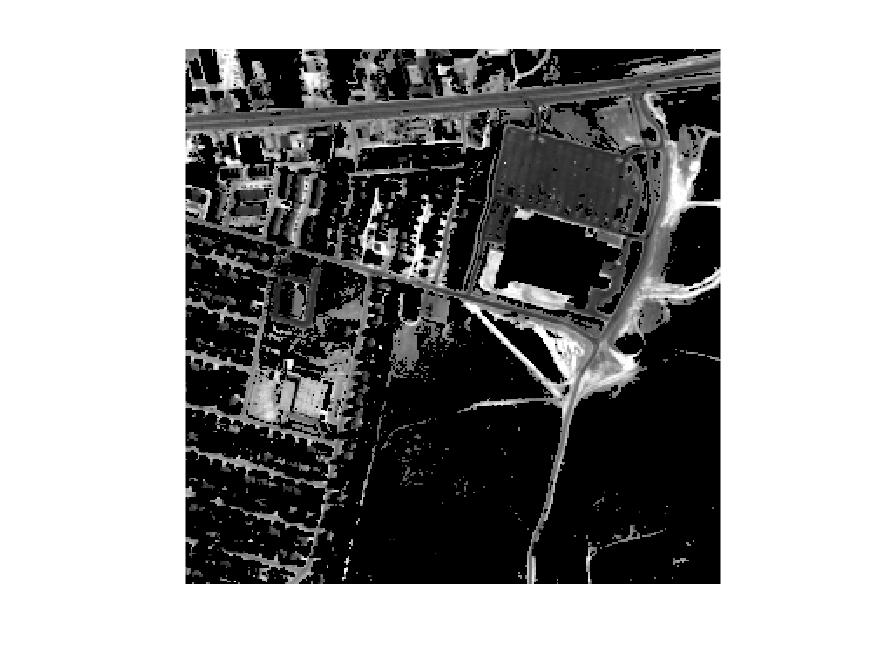}
\includegraphics[width=\textwidth]{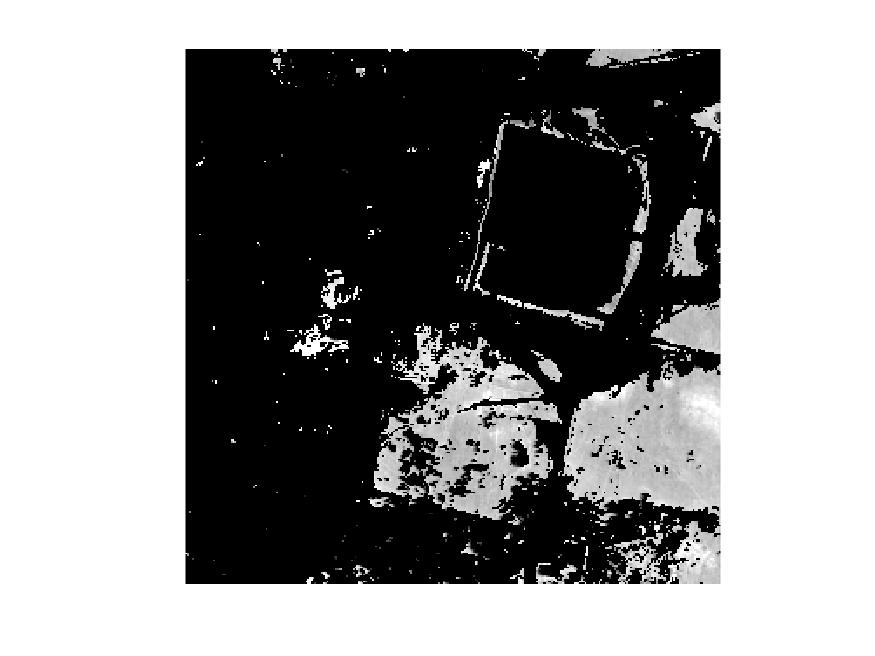}
\includegraphics[width=\textwidth]{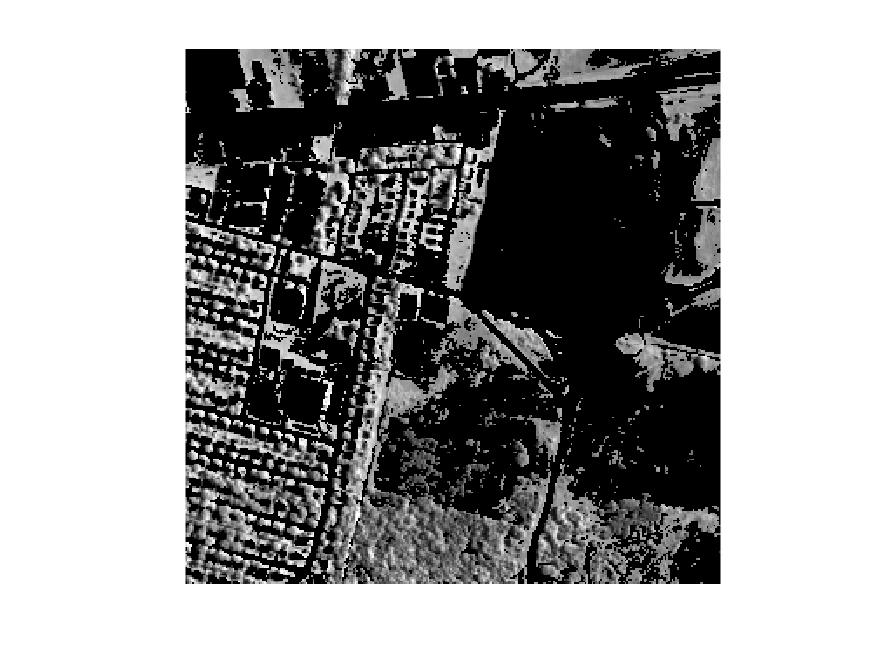}
\includegraphics[width=\textwidth]{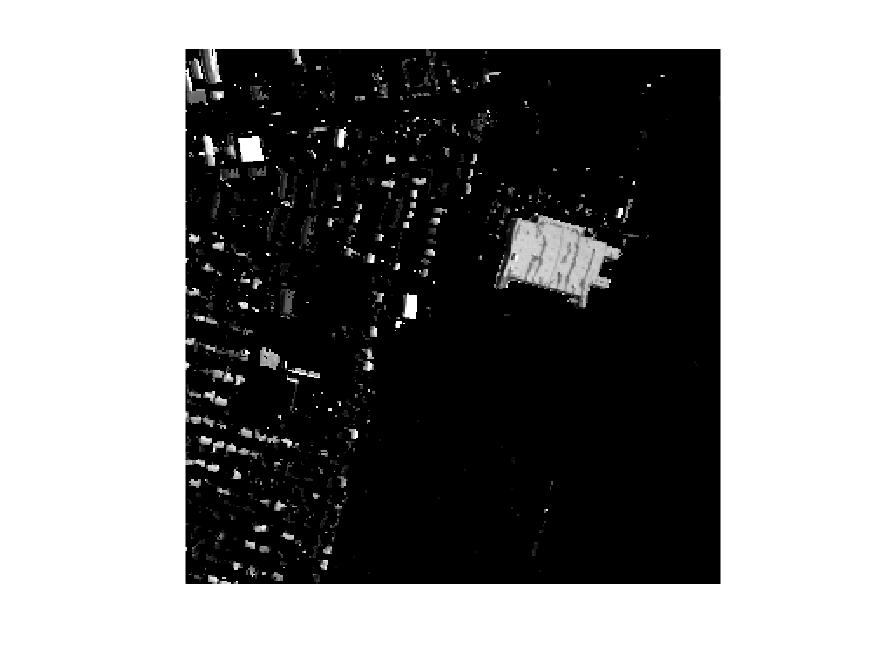}
\end{minipage}
}\hspace{-0.33cm}
\subfigure[U-onmf]{
\begin{minipage}[b]{0.137\linewidth}
\includegraphics[width=\textwidth]{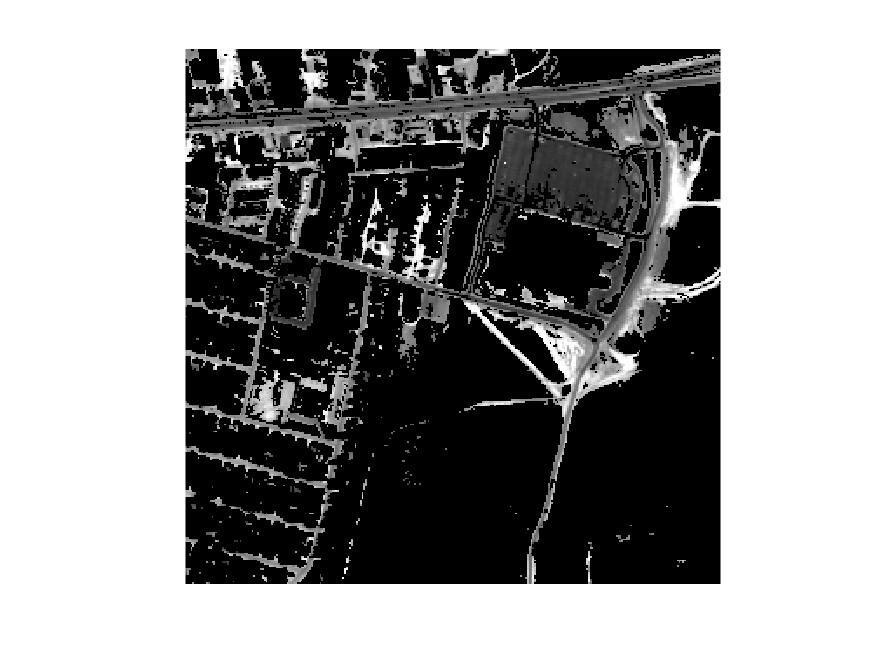}
\includegraphics[width=\textwidth]{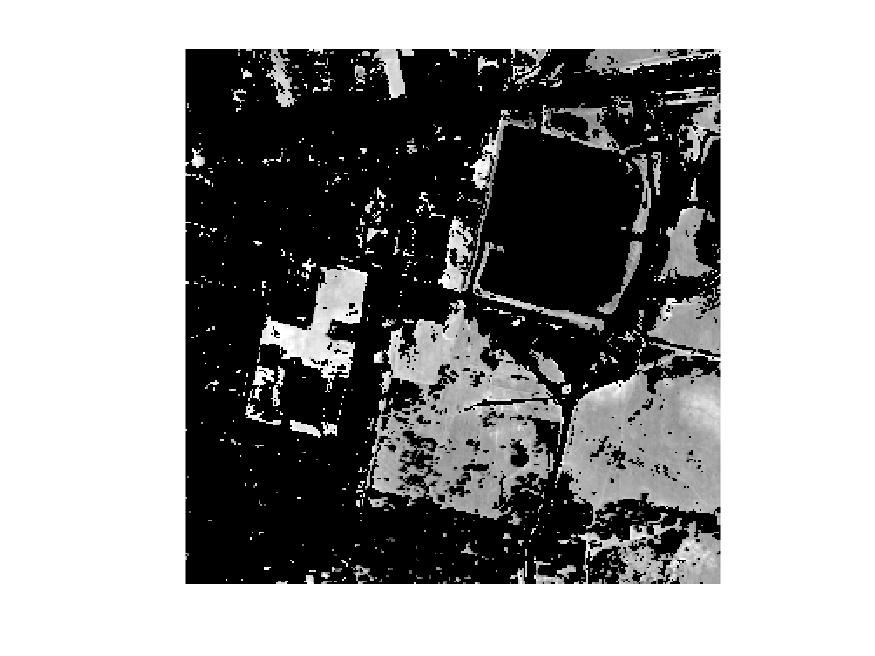}
\includegraphics[width=\textwidth]{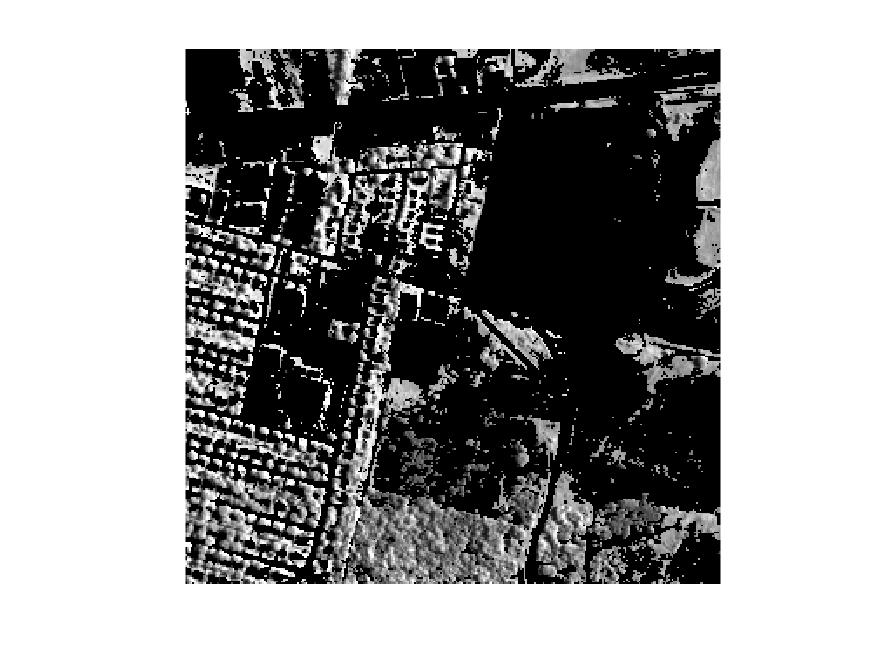}
\includegraphics[width=\textwidth]{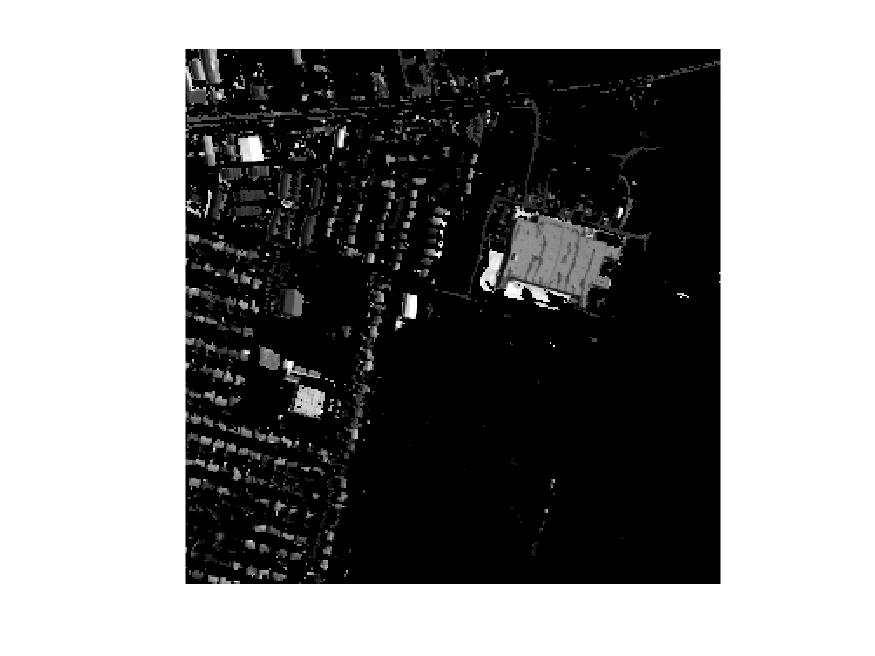}
\end{minipage}
}\hspace{-0.33cm}
\subfigure[OPNMF]{
\begin{minipage}[b]{0.137\linewidth}
\includegraphics[width=\textwidth]{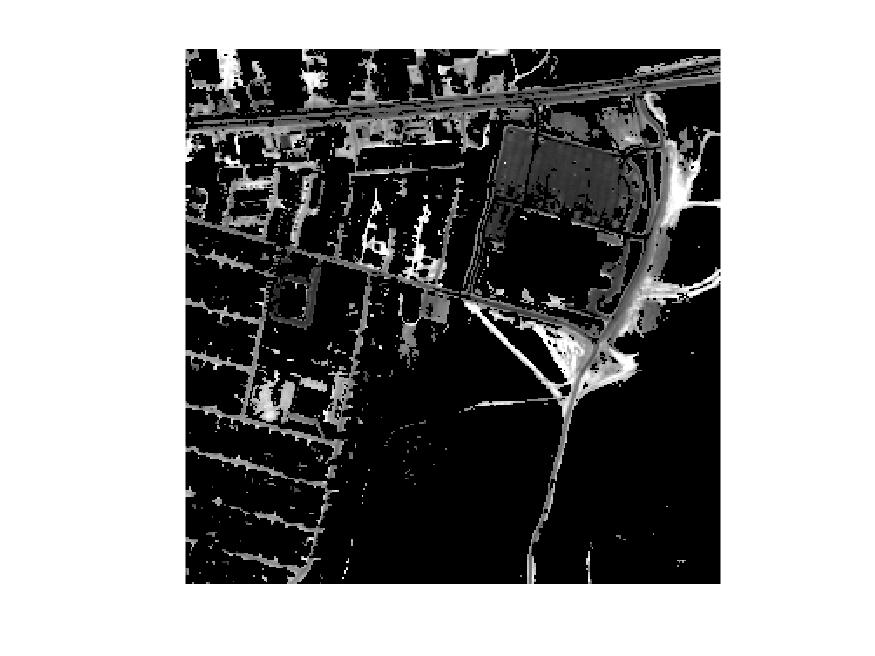}
\includegraphics[width=\textwidth]{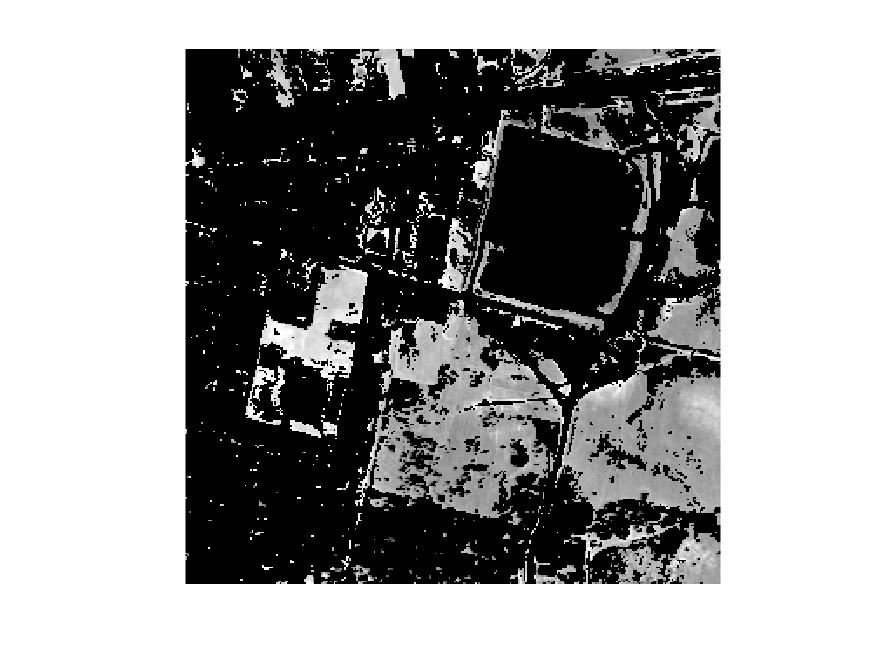}
\includegraphics[width=\textwidth]{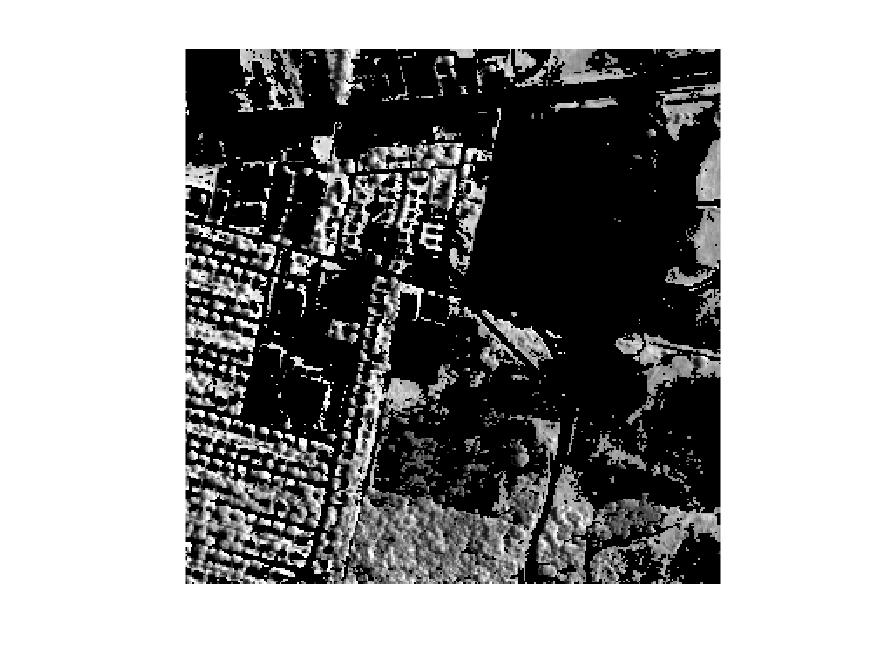}
\includegraphics[width=\textwidth]{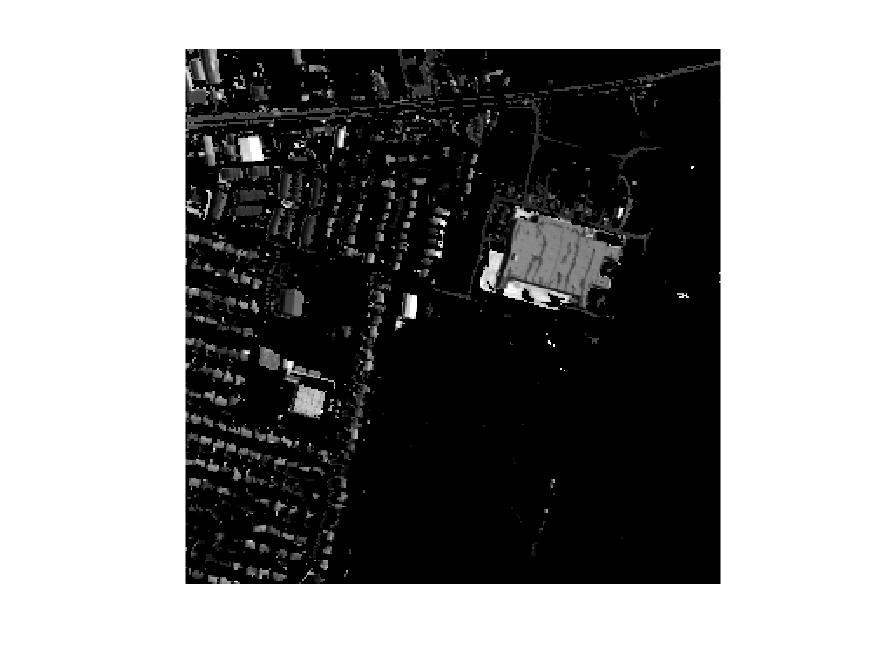}
\end{minipage}
}\hspace{-0.33cm}
\subfigure[K-means]{
\begin{minipage}[b]{0.137\linewidth}
\includegraphics[width=\textwidth]{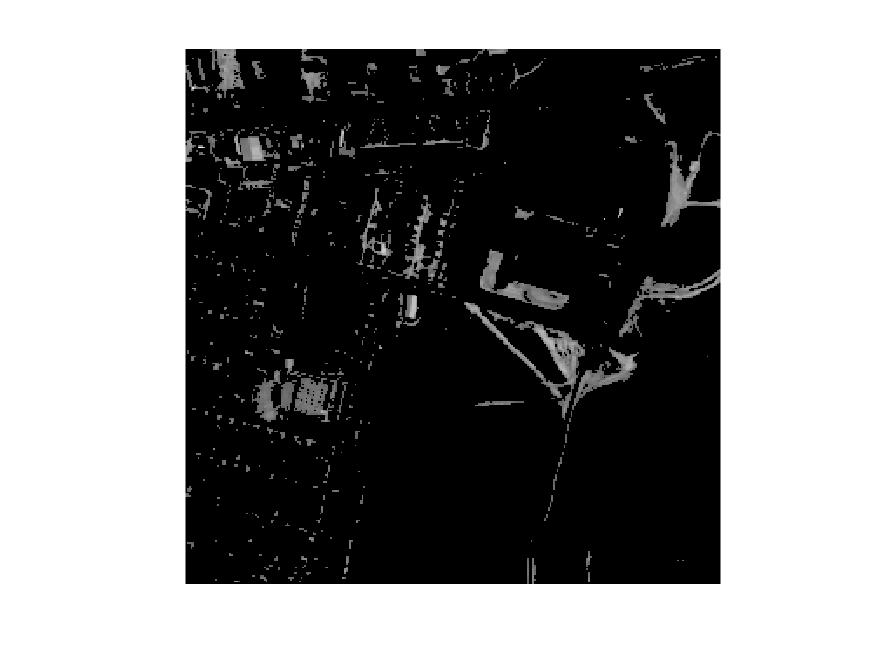}
\includegraphics[width=\textwidth]{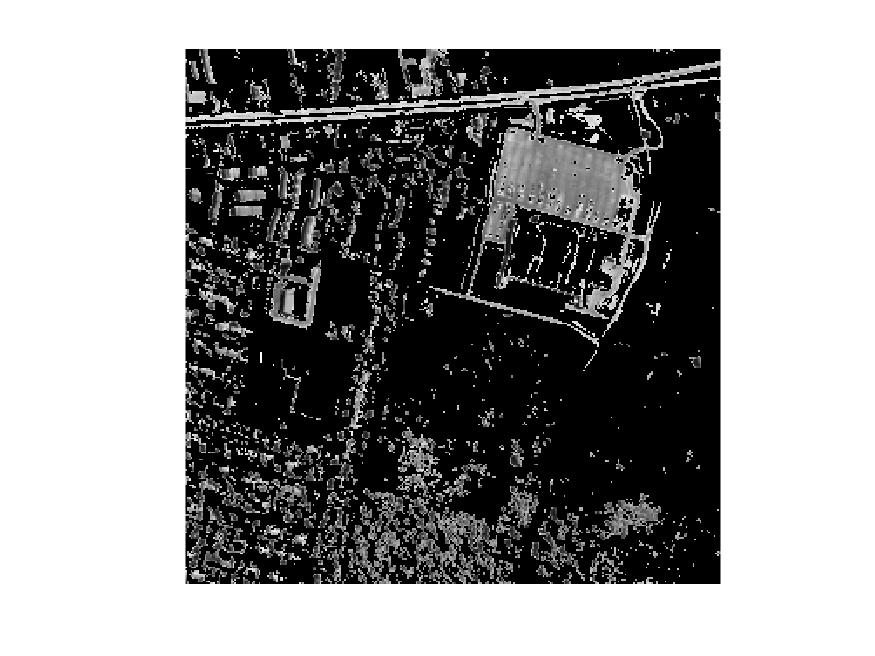}
\includegraphics[width=\textwidth]{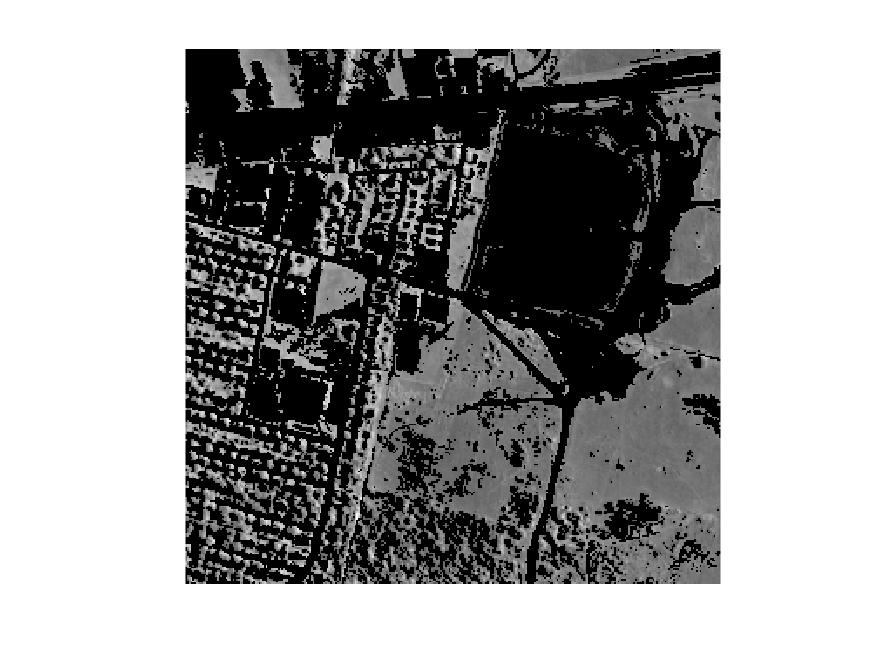}
\includegraphics[width=\textwidth]{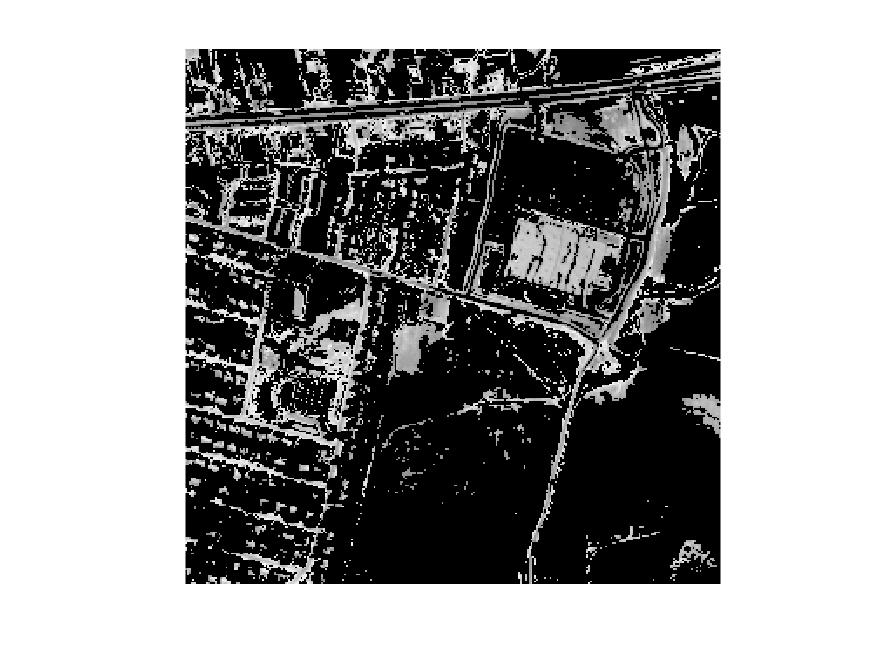}
\end{minipage}
}\hspace{-0.33cm}
\subfigure[ONP-MF]{
\begin{minipage}[b]{0.137\linewidth}
\includegraphics[width=\textwidth]{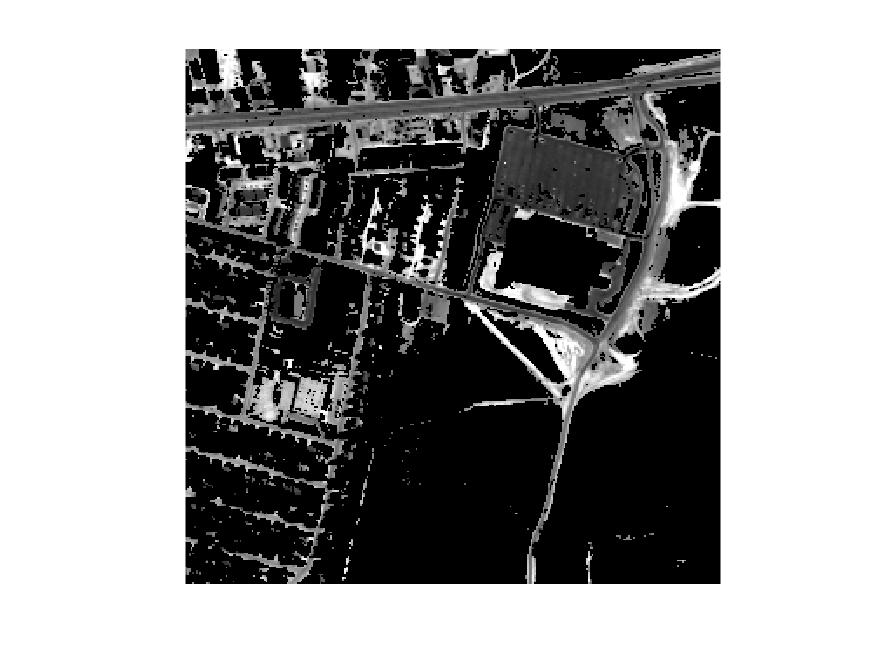}
\includegraphics[width=\textwidth]{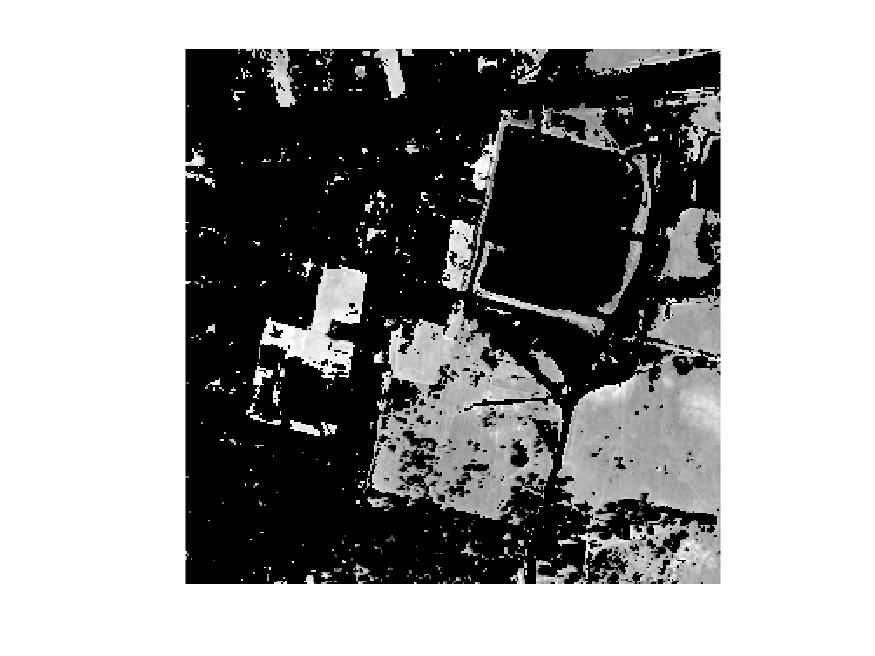}
\includegraphics[width=\textwidth]{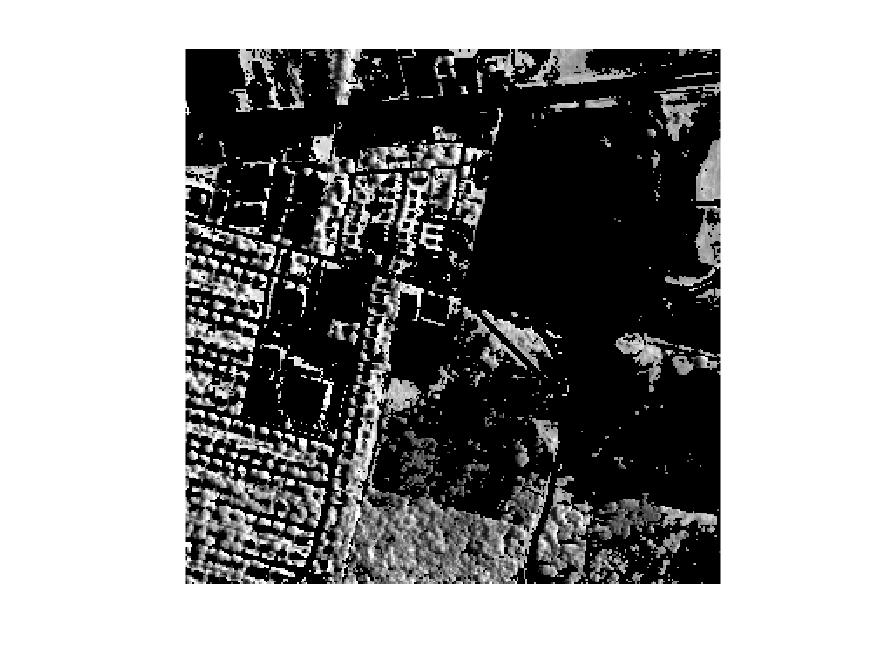}
\includegraphics[width=\textwidth]{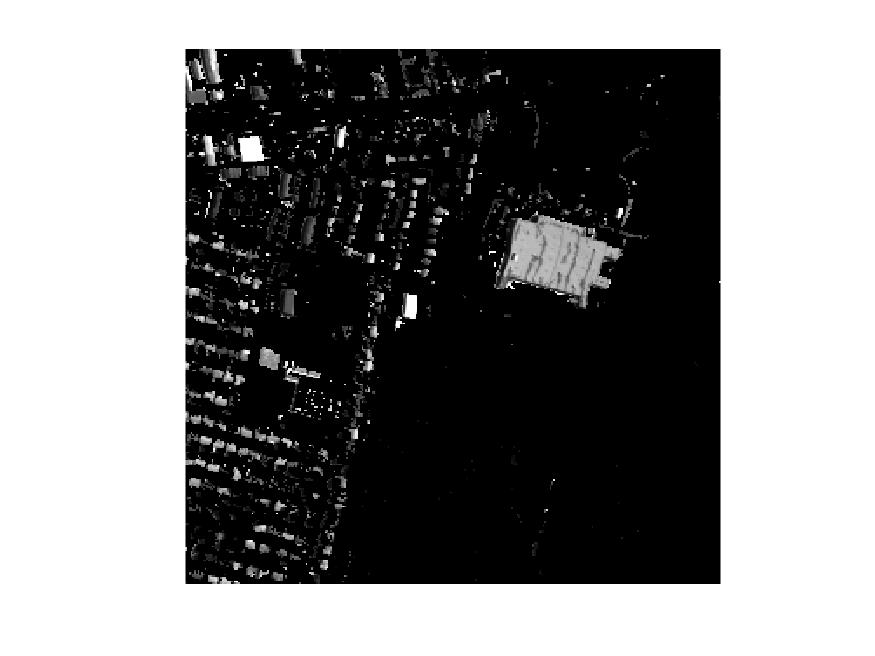}
\end{minipage}
}\hspace{-0.33cm}
\subfigure[EM-onmf]{
\begin{minipage}[b]{0.137\linewidth}
\includegraphics[width=\textwidth]{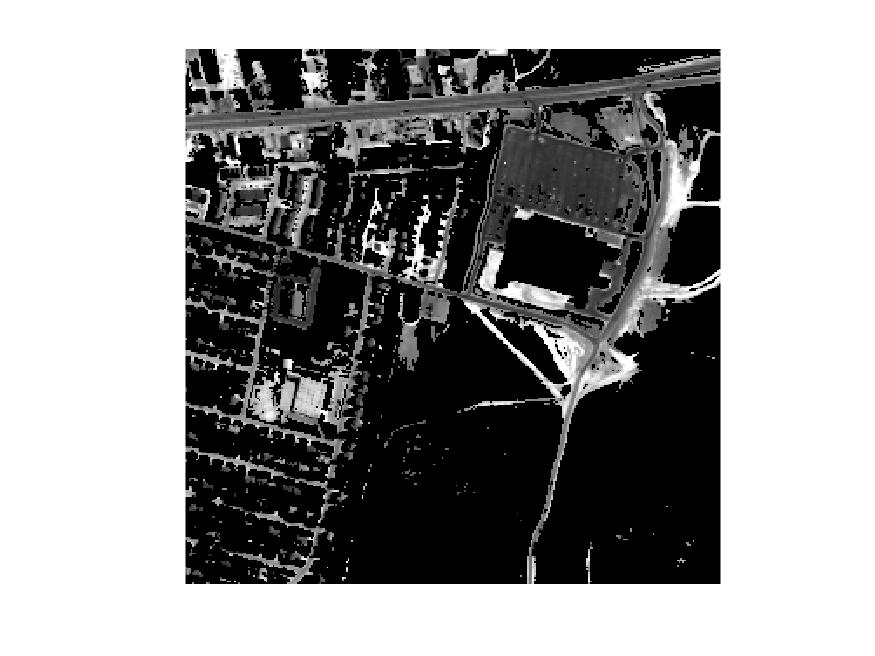}
\includegraphics[width=\textwidth]{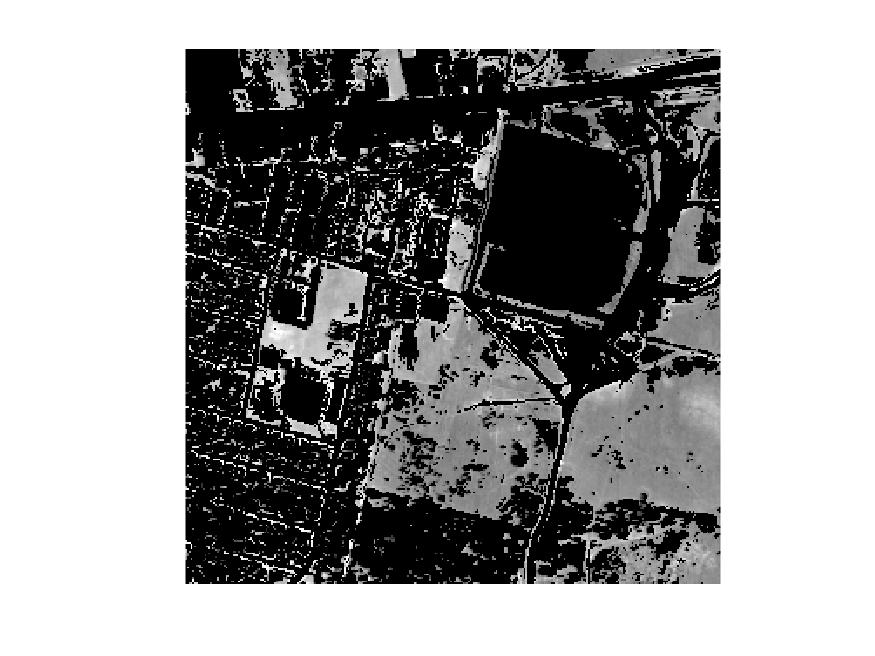}
\includegraphics[width=\textwidth]{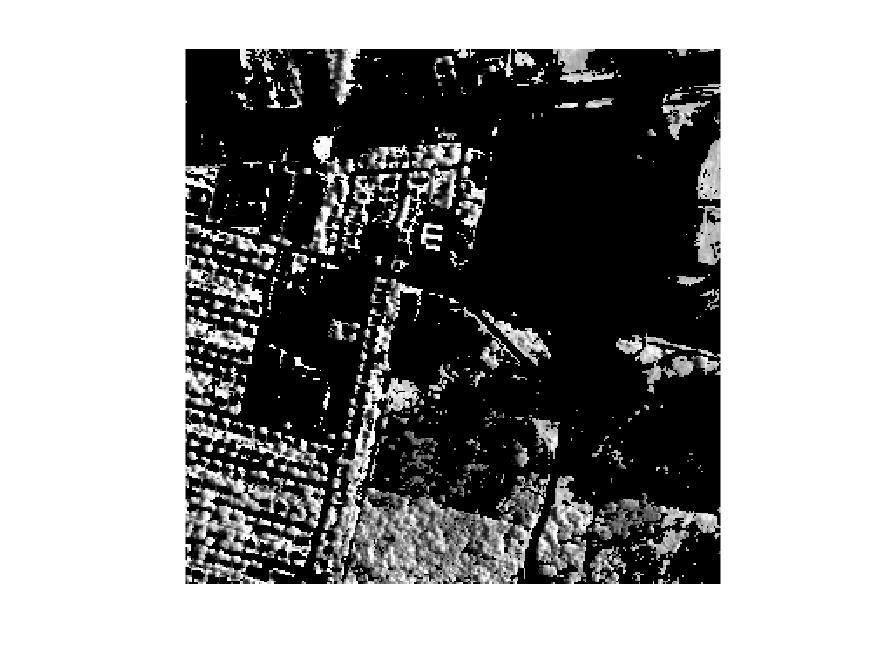}
\includegraphics[width=\textwidth]{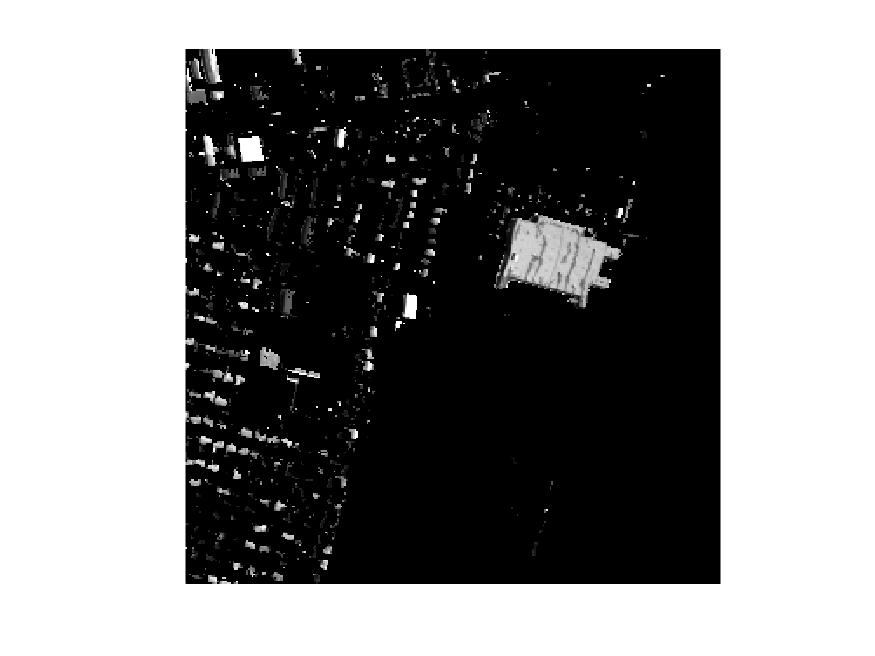}
\end{minipage}
}\hspace{-0.33cm}
\caption{%\small 
Unmixing results of Urban, from top to bottom: asphal, grass, tree, roof}
\label{num:Urban}
\end{figure}

Finally, we report in  \cref{tab:table10} the SAD and time cost for the three hyperspectral image datasets. From this table,  we know that the efficiency of the proposed method is competitive to other algorithms. Particularly, our method achieves satisfying SAD among all algorithms. Besides, although EM-onmf is faster than our method on these datasets, the unmixing quality given by EM-onmf is unstable. 
\begin{table}[!htbp]
\centering
%\small
\caption{%\small
    Results on the hyperspectral image datasets. 
}
	\setlength{\tabcolsep}{4pt}
\begin{tabular}{|c||cc|cc|cc|}
\hline
 & \multicolumn{2}{c|}{Samson}  &  \multicolumn{2}{c|}{Jasper Ridge} &  \multicolumn{2}{c|}{Urban} \\ \cline{2-7}
method    &SAD    & time(s)& SAD   & time(s)& SAD    & time(s) \\ \hline
EP4Orth+ & \textbf{0.081} &   1.0    & \textbf{0.150} &  1.3     &  0.114 & 22     \\ 
U-onmf    & 0.365 &  10    & 0.306 & 19     &  0.128 &  99     \\ 
OPNMF     & 0.348 &  44    & 0.336 & 85     &  0.132 & 545     \\ 
K-means   & 0.296 &  0.2   & 0.174 & 0.4    &  0.266 &   4     \\ 
ONP-MF    & 0.085 &  16    & 0.276 & 34     &  0.112 & 339     \\ 
EM-onmf   & 0.196 &  0.4   & 0.192 & 0.8    &  \textbf{0.091} &  17  \\ \hline
\end{tabular}
\label{tab:table10}
\end{table}

\subsection{K-indicators model}  
We first remove the zero norm constraints from \eqref{equ:prob:k:indicator}. The exact penalty model \eqref{equ:prob:orth+:new:exact:penalty:general} with $p = 1$, $q = 2$, and $\epsilon = 0$ for solving the K-indicator model becomes 
\be\label{equ:penalty:model:KAP:0}
\min_{X \in \obliqueplus,  Y \in \stiefkk}\, \left\{\widehat P_{\sigma}(X,Y): =  \|U Y - X\|_{\Ftt}^2 + \sigma \|XV\|_{\Ftt}^2\right\},
\ee 
which is further equivalent to 
\be\label{equ:penalty:model:KAP}
\min_{X \in \obliqueplus,  Y \in \stiefkk}\, \left\{P_{\sigma}(X,Y): =  - \frac{1}{\sigma}\langle U Y,   X\rangle  + \frac{1}{2} \|XV\|_{\Ftt}^2\right\}.
\ee 
With a fixed $Y$, \eqref{equ:penalty:model:KAP} is exactly \eqref{equ:penalty:model:projection} with $C = UY$. Similar to the discussion therein, 
we obtain the main PALM iterations \cite{bolte2014proximal} for solving  \eqref{equ:penalty:model:KAP} in  \cref{alg:feasiblePenalty}  as 
\begin{subequations}
\begin{align}
&Y^{l+1} = \proj_{\stiefkk} \left(\beta^{-1}Y^l + U^{\tran} X^{l} \right),\quad  \beta > 0,\label{equ:kind:PALM:Y:1}\\
&X^{l + 1}\ \in\ \proj_{\obliqueplus}\left(X^l - \alpha \left(X^l VV^{\tran} -  UY^{l+1}/\sigma \right)\right), \quad 0< \alpha < 1. \label{equ:kind:PALM:X:1}
\end{align}
\end{subequations}
% Example 8.9 in \cite{bauschke2017projecting} 
Theorem 1 in \cite{bolte2014proximal} tells that the sequence $\{(X^{l}, Y^{l})\}$ generated by \eqref{equ:kind:PALM:Y:1} and \eqref{equ:kind:PALM:X:1} converges to a stationary point of \eqref{equ:penalty:model:KAP}. However, we find the convergence is slow if we fix the constant stepsizes $\alpha$ and $\beta$.  Note that the closed form solution of \eqref{equ:penalty:model:KAP} with respect to $Y$ for a fixed $X= X^{l}$ is $\proj_{\stiefkk} \left(U^{\tran} X^{l} \right)$, which corresponds to setting $\beta = +\infty$ in \eqref{equ:kind:PALM:Y:1}. For the tested problem, by some easy calculations, we can see $\alpha_{\LBB}^l \geq 1$.
The practical PALM iterations for solving \eqref{equ:penalty:model:KAP} is thus given as  
 \begin{subequations}
 \begin{align}
 &Y^{l+1} = \proj_{\stiefkk} \left( U^{\tran} X^l \right).  \label{equ:kind:PALM:Y:2}\\
&X^{l + 1}\ \in \ \proj_{\obliqueplus}\left(X^l - \alpha^l \left(X^l VV^{\tran} -  UY^{l+1}/\sigma \right)\right),  \ \alpha^l = \min\{\alpha_{\LBB}^l, 10k\}. \label{equ:kind:PALM:X:2}
\end{align}
\end{subequations}
The flops for \eqref{equ:kind:PALM:Y:2} and \eqref{equ:kind:PALM:X:2} are $2nk^2 + O(k^3) $ and  $2nk^2 + O(nk)$, respectively.
%\rbrown{The first projection can be easily computed. Given $K \in \Rbb^{k \times k}$ with its SVD as $K = M\Sigma N^{\tran}$, it is easy to verify that  $\proj_{\stiefkk}(K) = MN^{\tran}$.} 

 Chen et al. \cite{chen2019big} proposed a semi-convex relaxation model to solve \eqref{equ:prob:k:indicator}. Their intermediate model  corresponds to \eqref{equ:penalty:model:KAP:0} with $\sigma = 0$ and $\obliqueplus$ replaced by $\{X\in \Rbb^{n\times k}: 0 \leq X \leq 1\}$.  A double-layered  alternating projection framework was investigated in \cite{chen2019big}  to solve the relaxation model. The method was named KindAP. To evaluate the efficiency of our method, we compare it with KindAP (downloaded from \url{https://github.com/yangyuchen0340/Kind}) on data clustering problems. 
We adopt eight  image datasets, including catsndogs,  ORL, CIFAR (train and test), COIL100, flower, omniglot, and UKBench.  We set $\gamma_2 = 10$, $\sigma_0=10$, $\eta = 0.5$ and $\tol^{\feas}=0.1$ and $X^0 = \proj_{\obliqueplus}(U)$ in our method.  The initial points of KindAP is set as  $\proj_{\Rbb^{n,k}_+}(U)$.   Similar as in section \ref{subsection:onmf:2}, purity, entropy and NMI are adopted to judge the performance of proposed algorithms.
The results are presented in \cref{table:KindAP:ours}. 
It shows that the clustering results given by our methods are comparable to that provided by KindAP, which means both methods are able to solve \eqref{equ:prob:k:indicator} with a relatively high quality. On the other hand, our algorithm is generally faster than KindAP. Our algorithm is especially efficient on datasets omniglot and UKbench, in which the number of clusters is relatively large.  Besides, it should be mentioned that although we relax the zero norm constraints from problem \eqref{equ:prob:k:indicator}, the matrix $X$ we obtained is always feasible to \eqref{equ:prob:k:indicator}. By contrast,  the matrix $X$ returned by KindAP may not be   an orthogonal nonnegative matrice although it always satisfies  the zero norm constraints.

 \begin{table}[!htbp]
\centering
%\small
\setlength{\tabcolsep}{1.5pt}
\caption{%\small 
Comparison of KindAP and our methods on data clustering problems. In the table, ``a'' and ``b'' stand for KindAP and EP4Orth+, respectively. Results marked in bold mean better performance in the corresponding index. }\label{table:KindAP:ours}
\begin{tabular}{|c|c|c||cc|cc|cc|cc|}
\hline 
&  &   &  \multicolumn{2}{c|}{purity(\%)}  &   \multicolumn{2}{c|}{NMI(\%)} &  \multicolumn{2}{c|}{entropy(\%)} &  \multicolumn{2}{c|}{time(s)}    \\  \cline{4-11}
datasets & $n$ & $k$ &a & b& a & b& a & b& a & b \\ \hline
catsndogs &4000&  2 & 96.20 & \textbf{96.23} & 76.80 & \textbf{76.96} &  23.20 &  \textbf{23.04} & 0.03 & \textbf{0.02}  \\ \hline
ORL &400 &  40 & 87.75 & \textbf{88.00} & \textbf{92.94} & 92.76 & \textbf{7.06} & 7.24 & 0.04 & \textbf{0.01} \\ \hline
CIFAR100-test &10000 &  100 & 69.42& \textbf{69.44} & 71.34 & \textbf{71.36} &  28.66 &  \textbf{28.64} & 0.63 & \textbf{0.41} \\ \hline
CIFAR100-train &50000 &  100 & 99.63 & \textbf{99.63} & 99.57 & \textbf{99.57} & 0.43&  \textbf{0.43} & 3.13 & \textbf{1.66} \\ \hline
COIL100 &7200 &  100 & \textbf{91.93} & \textbf{91.93} & 97.30 & \textbf{97.41} & 2.70  & \textbf{2.59} & 1.47 & \textbf{0.44} \\ \hline
flower &2040 &  102 & \textbf{44.95} & 44.90 & \textbf{63.52} & 63.40 & \textbf{36.48}& 36.60 & 0.58 & \textbf{0.42} \\ \hline
omniglot &17853 &  1623 & 21.95 & \textbf{21.97} & 70.86 &\textbf{70.94} & 29.14 &\textbf{29.06} & 1176 & \textbf{432}\\ \hline
UKBench &10200 & 2550 & 90.64 & \textbf{91.04} & 97.64 & \textbf{97.76} & 2.36 & \textbf{2.24} & 3215 & \textbf{1268} \\ \hline
\end{tabular}
\end{table}

\section{Concluding remarks} \label{section:concluding} In this paper, we
consider optimization with nonnegative and orthogonality  constraints. \rev{We focus
on an equivalent formulation of the concerned problem,  
% via giving acharacterization of the feasible set $\stiefplus$. 
%We investigate some
%theoretical properties of the new formulation, including the constraint
%qualifications, first- and second-order optimality conditions. 
and show that the two formulations share the same minimizers and first- and
second-order optimality conditions. By estimating a local error bound of $\stiefplus$,} we  provide a
general class of exact and possibly smooth penalty models as well as  a
practical penalty algorithm with postprocessing.  \rev{We investigate the
asymptotic convergence of the penalty method and show that any limit point is a
weakly stationary point of the concerned problem and becomes a stationary point
under some more mild conditions.} A second-order method for solving the penalty subproblem, namely, optimization with nonnegative and multiple spherical constraints, is also given. Our numerical results show that the proposed penalty method performs well for the projection problem, 
ONMF and the K-indicators model  and it can always return high quality orthogonal  nonnegative matrices.

\appendix 

\bibliography{orth+}

\begin{thebibliography}{10}

\bibitem{absil2009optimization}
{\sc P.-A. Absil, R.~Mahony, and R.~Sepulchre}, {\em Optimization algorithms on
  matrix manifolds}, Princeton University Press, 2009.

\bibitem{andreani2016cone}
{\sc R.~Andreani, J.~M. Mart{\'I}nez, A.~Ramos, and P.~J. Silva}, {\em A
  cone-continuity constraint qualification and algorithmic consequences}, SIAM
  J. Optim., 26 (2016), pp.~96--110.

\bibitem{attouch2010proximal}
{\sc H.~Attouch, J.~Bolte, P.~Redont, and A.~Soubeyran}, {\em Proximal
  alternating minimization and projection methods for nonconvex problems: an
  approach based on the {{K}urdyka-{\L}}ojasiewicz inequality}, Math. Oper.
  Res., 35 (2010), pp.~438--457.

\bibitem{attouch2013convergence}
{\sc H.~Attouch, J.~Bolte, and B.~F. Svaiter}, {\em Convergence of descent
  methods for semi-algebraic and tame problems: proximal algorithms,
  forward--backward splitting, and regularized gauss--seidel methods}, Math.
  Program., 137 (2013), pp.~91--129.

\bibitem{barzilai1988two}
{\sc J.~Barzilai and J.~M. Borwein}, {\em Two-point step size gradient
  methods}, IMA J. Numer. Anal., 8 (1988), pp.~141--148.

\bibitem{bauschke2017projecting}
{\sc H.~Bauschke, M.~Bui, and X.~Wang}, {\em Projecting onto the intersection
  of a cone and a sphere}, SIAM J. Optim., 28 (2018), pp.~2158--2188.

\bibitem{bergmann2019intrinsic}
{\sc R.~Bergmann and R.~Herzog}, {\em Intrinsic formulation of {KKT} conditions
  and constraint qualifications on smooth manifolds}, SIAM J. Optim., 29
  (2019), pp.~2423--2444.

\bibitem{berk2019certifiably}
{\sc L.~Berk and D.~Bertsimas}, {\em Certifiably optimal sparse principal
  component analysis}, Math. Program. Comput., 11 (2019), pp.~381--420.

\bibitem{bertsekas1999nonlinear}
{\sc D.~Bertsekas}, {\em Nonlinear programming}, Athena Scientific, 1999.

\bibitem{bertsekas1996constrained}
{\sc D.~P. Bertsekas}, {\em Constrained optimization and Lagrange multiplier
  methods}, Academic press, 1996.

\bibitem{Bioucasdias2012Hyperspectral}
{\sc J.~M. Bioucasdias, A.~Plaza, N.~Dobigeon, M.~Parente, Q.~Du, P.~Gader, and
  J.~Chanussot}, {\em Hyperspectral unmixing overview: Geometrical,
  statistical, and sparse regression-based approaches}, IEEE J. Sel. Top. Appl.
  Earth Obs. Remote Sens., 5 (2012), pp.~354--379.

\bibitem{bolte2014proximal}
{\sc J.~Bolte, S.~Sabach, and M.~Teboulle}, {\em Proximal alternating
  linearized minimization for nonconvex and nonsmooth problems}, Math.
  Program., 146 (2014), pp.~459--494.

\bibitem{boumal2019global}
{\sc N.~Boumal, P.-A. Absil, and C.~Cartis}, {\em Global rates of convergence
  for nonconvex optimization on manifolds}, IMA J. Numer. Anal., 39 (2019),
  pp.~1--33.

\bibitem{boutsidis2009unsupervised}
{\sc C.~Boutsidis, P.~Drineas, and M.~W. Mahoney}, {\em Unsupervised feature
  selection for the $k$-means clustering problem}, in NeurIPS, 2009,
  pp.~153--161.

\bibitem{Boutsidis2008SVD}
{\sc C.~Boutsidis and E.~Gallopoulos}, {\em {SVD} based initialization: a head
  start for nonnegative matrix factorization}, Pattern Recogn., 41 (2008),
  pp.~1350--1362.

\bibitem{cai2008modeling}
{\sc D.~Cai, Q.~Mei, J.~Han, and C.~Zhai}, {\em Modeling hidden topics on
  document manifold}, in Proceedings of the 17th ACM CIKM, ACM, 2008,
  pp.~911--920.

\bibitem{carson2017manifold}
{\sc T.~Carson, D.~G. Mixon, and S.~Villar}, {\em Manifold optimization for
  k-means clustering}, in SampTA, IEEE, 2017, pp.~73--77.

\bibitem{chang2008perron}
{\sc K.-C. Chang, K.~Pearson, and T.~Zhang}, {\em Perron-{F}robenius theorem
  for nonnegative tensors}, Commun. Math. Sci., 6 (2008), pp.~507--520.

\bibitem{chen2019big}
{\sc F.~Chen, Y.~Yang, L.~Xu, T.~Zhang, and Y.~Zhang}, {\em Big-data
  clustering: K-means or k-indicators?}, arXiv:1906.00938,  (2019).

\bibitem{chen2016penalty}
{\sc X.~Chen, Z.~Lu, and T.~K. Pong}, {\em Penalty methods for a class of
  non-{L}ipschitz optimization problems}, SIAM J. Optim., 26 (2016),
  pp.~1465--1492.

\bibitem{ding2006orthogonal}
{\sc C.~Ding, T.~Li, W.~Peng, and H.~Park}, {\em Orthogonal nonnegative matrix
  t-factorizations for clustering}, in Proceedings of the 12th ACM SIGKDD, ACM,
  2006, pp.~126--135.

\bibitem{friedlander2008exact}
{\sc M.~P. Friedlander and P.~Tseng}, {\em Exact regularization of convex
  programs}, SIAM J. Optim., 18 (2008), pp.~1326--1350.

\bibitem{gao2018parallelizable}
{\sc B.~Gao, X.~Liu, and Y.~Yuan}, {\em Parallelizable algorithms for
  optimization problems with orthogonality constraints}, SIAM J. Sci. Comput.,
  41 (2019), pp.~A1949--A1983.

\bibitem{Urruty2010variational}
{\sc J.-B. Hiriart-Urruty and A.~Seeger}, {\em A variational approach to
  copositive matrices}, SIAM Review, 52 (2010), pp.~593--629.

\bibitem{hu2019structured}
{\sc J.~Hu, B.~Jiang, L.~Lin, Z.~Wen, and Y.~Yuan}, {\em Structured
  quasi-newton methods for optimization with orthogonality constraints}, SIAM
  J. Sci. Comput., 41 (2019), pp.~A2239--A2269.

\bibitem{hu2018adaptive}
{\sc J.~Hu, A.~Milzarek, Z.~Wen, and Y.~Yuan}, {\em Adaptive quadratically
  regularized {N}ewton method for {R}iemannian optimization}, SIAM J. Matrix
  Anal. Appl., 39 (2018), pp.~1181--1207.

\bibitem{jiang2016lp}
{\sc B.~Jiang, Y.-F. Liu, and Z.~Wen}, {\em $l_p$-norm regularization
  algorithms for optimization over permutation matrices}, SIAM J. Optim., 26
  (2016), pp.~2284--2313.

\bibitem{Keshava2002Spectral}
{\sc N.~Keshava and J.~F. Mustard}, {\em Spectral unmixing}, IEEE Signal
  Process. Mag., 19 (2002), pp.~44--57.

\bibitem{kuang2012symmetric}
{\sc D.~Kuang, C.~Ding, and H.~Park}, {\em Symmetric nonnegative matrix
  factorization for graph clustering}, in Proceedings of the 2012 SDM, SIAM,
  2012, pp.~106--117.

\bibitem{li2015two}
{\sc B.~Li, G.~Zhou, and A.~Cichocki}, {\em Two efficient algorithms for
  approximately orthogonal nonnegative matrix factorization}, IEEE Signal
  Process. Lett., 22 (2015), pp.~843--846.

\bibitem{li2018on}
{\sc X.~Li, D.~Sun, and K.-C. Toh}, {\em On the efficient computation of a
  generalized {J}acobian of the projector over the {B}irkhoff polytope}, Math.
  Program.,  (2018), pp.~1--28.

\bibitem{liu2019simple}
{\sc C.~Liu and N.~Boumal}, {\em Simple algorithms for optimization on
  {Riemannian} manifolds with constraints}, Appl. Math. Opt.,  (2019),
  pp.~1--33.

\bibitem{luo2009non}
{\sc D.~Luo, C.~Ding, H.~Huang, and T.~Li}, {\em Non-negative {L}aplacian
  embedding}, in 2009 Ninth ICDM, IEEE, 2009, pp.~337--346.

\bibitem{luo1996mathematical}
{\sc Z.-Q. Luo, J.-S. Pang, and D.~Ralph}, {\em Mathematical programs with
  equilibrium constraints}, Cambridge University Press, 1996.

\bibitem{luo1996exact}
{\sc Z.-Q. Luo, J.-S. Pang, D.~Ralph, and S.-Q. Wu}, {\em Exact penalization
  and stationarity conditions of mathematical programs with equilibrium
  constraints}, Math. Program., 75 (1996), pp.~19--76.

\bibitem{luo2000error}
{\sc Z.-Q. Luo and J.~F. Sturm}, {\em Error bounds for quadratic systems}, in
  High performance optimization, Springer, 2000, pp.~383--404.

\bibitem{milzarek2019stochastic}
{\sc A.~Milzarek, X.~Xiao, S.~Cen, Z.~Wen, and M.~Ulbrich}, {\em A stochastic
  semismooth {N}ewton method for nonsmooth nonconvex optimization}, SIAM J.
  Optim., 29 (2019), pp.~2916--2948.

\bibitem{montanari2016non}
{\sc A.~Montanari and E.~Richard}, {\em {Non-negative principal component
  analysis: Message passing algorithms and sharp asymptotics}}, IEEE Trans.
  Inf. Theory, 62 (2016), pp.~1458--1484.

\bibitem{pan2018orthogonal}
{\sc J.~Pan and M.~K. Ng}, {\em Orthogonal nonnegative matrix factorization by
  sparsity and nuclear norm optimization}, SIAM J. Matrix Anal. Appl., 39
  (2018), pp.~856--875.

\bibitem{paul2016orthogonal}
{\sc S.~Paul and Y.~Chen}, {\em Orthogonal symmetric non-negative matrix
  factorization under the stochastic block model}, arXiv:1605.05349,  (2016).

\bibitem{pompili2014two}
{\sc F.~Pompili, N.~Gillis, P.-A. Absil, and F.~Glineur}, {\em Two algorithms
  for orthogonal nonnegative matrix factorization with application to
  clustering}, Neurocomputing, 141 (2014), pp.~15--25.

\bibitem{povh2007copositive}
{\sc J.~Povh and F.~Rendl}, {\em A copositive programming approach to graph
  partitioning}, SIAM J. Optim., 18 (2007), pp.~223--241.

\bibitem{scheel2000mathematical}
{\sc H.~Scheel and S.~Scholtes}, {\em Mathematical programs with
  complementarity constraints: Stationarity, optimality, and sensitivity},
  Math. Oper. Res., 25 (2000), pp.~1--22.

\bibitem{sun2006optimization}
{\sc W.~Sun and Y.~Yuan}, {\em Optimization theory and methods: nonlinear
  programming}, vol.~1, Springer Science \& Business Media, 2006.

\bibitem{wang2019clustering2}
{\sc S.~{Wang}, T.~{Chang}, Y.~{Cui}, and J.~{Pang}}, {\em Clustering by
  orthogonal non-negative matrix factorization: a sequential non-convex penalty
  approach}, in ICASSP, 2019, pp.~5576--5580.

\bibitem{wang2019clustering}
{\sc S.~Wang, T.-H. Chang, Y.~Cui, and J.-S. Pang}, {\em Clustering by
  orthogonal {NMF} model and non-convex penalty optimization},
  arXiv:1906.00570,  (2019).

\bibitem{wen2013feasible}
{\sc Z.~Wen and W.~Yin}, {\em A feasible method for optimization with
  orthogonality constraints}, Math. Program., 142 (2013), pp.~397--434.

\bibitem{xiao2018regularized}
{\sc X.~Xiao, Y.~Li, Z.~Wen, and L.~Zhang}, {\em A regularized semi-smooth
  {N}ewton method with projection steps for composite convex programs}, J. Sci.
  Comput., 76 (2016), pp.~364--389.

\bibitem{xu2003document}
{\sc W.~Xu, X.~Liu, and Y.~Gong}, {\em Document clustering based on
  non-negative matrix factorization}, in Proceedings of the 26th ACM SIGIR,
  ACM, 2003, pp.~267--273.

\bibitem{xu2017globally}
{\sc Y.~Xu and W.~Yin}, {\em A globally convergent algorithm for nonconvex
  optimization based on block coordinate update}, J. Sci. Comput., 72 (2017),
  pp.~700--734.

\bibitem{yang2017proximal}
{\sc L.~Yang}, {\em Proximal gradient method with extrapolation and line search
  for a class of nonconvex and nonsmooth problems}, arXiv preprint
  arXiv:1711.06831,  (2017).

\bibitem{yang2014optimality}
{\sc W.~H. Yang, L.-H. Zhang, and R.~Song}, {\em Optimality conditions for the
  nonlinear programming problems on riemannian manifolds}, Pac. J. Optim., 10
  (2014), pp.~415--434.

\bibitem{yang2012discriminative}
{\sc Y.~Yang, Y.~Yang, H.~T. Shen, Y.~Zhang, X.~Du, and X.~Zhou}, {\em
  Discriminative nonnegative spectral clustering with out-of-sample extension},
  IEEE Trans. Knowl. Data Eng., 25 (2012), pp.~1760--1771.

\bibitem{yang2010linear}
{\sc Z.~Yang and E.~Oja}, {\em Linear and nonlinear projective nonnegative
  matrix factorization}, IEEE Trans. Neural Netw., 21 (2010), pp.~734--749.

\bibitem{yoo2008orthogonal}
{\sc J.~Yoo and S.~Choi}, {\em {Orthogonal nonnegative matrix factorization:
  multiplicative updates on {S}tiefel manifolds}}, in IDEAL, Springer, 2008,
  pp.~140--147.

\bibitem{zass2007nonnegative}
{\sc R.~Zass and A.~Shashua}, {\em {Nonnegative sparse PCA}}, in NeurIPS, 2007,
  pp.~1561--1568.

\bibitem{zhang2004nonmonotone}
{\sc H.~Zhang and W.~W. Hager}, {\em A nonmonotone line search technique and
  its application to unconstrained optimization}, SIAM J. Optim., 14 (2004),
  pp.~1043--1056.

\bibitem{zhang2018sparse}
{\sc J.~Zhang, H.~Liu, Z.~Wen, and S.~Zhang}, {\em A sparse completely positive
  relaxation of the modularity maximization for community detection}, SIAM J.
  Sci. Comput., 40 (2018), pp.~A3091--A3120.

\bibitem{zhang19greedy}
{\sc K.~Zhang, S.~Zhang, J.~Liu, J.~Wang, and J.~Zhang}, {\em Greedy orthogonal
  pivoting algorithm for non-negative matrix factorization}, in ICML, PMLR,
  2019, pp.~7493--7501.

\bibitem{zhao2004empirical}
{\sc Y.~Zhao and G.~Karypis}, {\em Empirical and theoretical comparisons of
  selected criterion functions for document clustering}, Mach. Learn., 55
  (2004), pp.~311--331.

\bibitem{zhu2014spectral}
{\sc F.~{Zhu}, Y.~{Wang}, B.~{Fan}, S.~{Xiang}, G.~{Meng}, and C.~{Pan}}, {\em
  Spectral unmixing via data-guided sparsity}, IEEE Trans. Image Process., 23
  (2014), pp.~5412--5427.

\end{thebibliography}
\bibliographystyle{siam}

\section{Construction of problem \eqref{equ:prob:stiefelplus:proj} with unique  solution}\label{section:proj}
\begin{proposition}\label{prop:proj}
Choose $X^* \in \stiefplus$ and  $L \in \Rbb^{k \times k}$ with positive diagonal elements satisfying  
%\be \label{equ:condition:L}
$L_{ii}L_{jj} > \max\{L_{ij}, L_{ji}, 0\}^2\ \forall i, j \in [k], i \neq j.$
%\ee
Then the optimal solution of \eqref{equ:prob:stiefelplus:proj} with $C = X^*L^{\tran}$ is unique and exactly $X^*$.
\end{proposition}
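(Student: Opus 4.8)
The plan is to recast \eqref{equ:prob:stiefelplus:proj} with $C=X^*L^{\top}$ as the maximization of a linear functional and to prove that $X^*$ attains it, uniquely. Since $\|X\|_{\Ftt}^2=k$ for every $X\in\stiefplus\subseteq\stief$, problem \eqref{equ:prob:stiefelplus:proj} is equivalent to $\max_{X\in\stiefplus}\langle X,C\rangle=\max_{X\in\stiefplus}\sum_{j}\xbf_j^{\top}\cbf_j$, where $\cbf_j=\sum_{m}L_{jm}\xbf_m^{*}$ is the $j$-th column of $C$. The combinatorial structure of $\stiefplus$ (each row has at most one positive entry) forces the column supports $S_j:=\supp(\xbf_j)$ to be pairwise disjoint, and likewise $S_m^{*}:=\supp(\xbf_m^{*})$ pairwise disjoint; consequently $\cbf_j$ restricted to $S_m^{*}$ equals $L_{jm}\xbf_m^{*}$, so its positive part satisfies $\|(\cbf_j)_{+}|_{S_j}\|^2=\sum_{m}a_{jm}R_{jm}$, where $a_{jm}:=\max\{L_{jm},0\}^2$ and $R_{jm}:=\|\xbf_m^{*}|_{S_j}\|^2\ge 0$. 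Because $\xbf_j\ge 0$, $\|\xbf_j\|=1$ and $\supp(\xbf_j)\subseteq S_j$, two applications of Cauchy--Schwarz give $\xbf_j^{\top}\cbf_j\le \xbf_j^{\top}(\cbf_j)_{+}\le \|(\cbf_j)_{+}|_{S_j}\|$, hence $\langle X,C\rangle\le \sum_{j}\sqrt{\sum_{m}a_{jm}R_{jm}}$, with the nonnegative matrix $R=(R_{jm})$ obeying $\sum_{j}R_{jm}=\|\xbf_m^{*}|_{\cup_j S_j}\|^2\le 1$ for each $m$.

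Next I would bound the right-hand side by $\mtr(L)$. The crucial consequence of the hypothesis $L_{ii}L_{jj}>\max\{L_{ij},L_{ji},0\}^2$ is $a_{jm}\le \sqrt{a_{jj}a_{mm}}$ for all $j,m$, with \emph{strict} inequality when $j\neq m$; note $\sqrt{a_{jj}}=L_{jj}$. Therefore $\sum_{m}a_{jm}R_{jm}\le \sqrt{a_{jj}}\sum_{m}\sqrt{a_{mm}}R_{jm}$, so $\sqrt{\sum_{m}a_{jm}R_{jm}}\le a_{jj}^{1/4}\big(\sum_{m}\sqrt{a_{mm}}R_{jm}\big)^{1/2}$, and a Cauchy--Schwarz in the index $j$ together with $\sum_{j}\sum_{m}\sqrt{a_{mm}}R_{jm}=\sum_{m}\sqrt{a_{mm}}\sum_{j}R_{jm}\le\sum_{m}\sqrt{a_{mm}}$ yields
$\sum_{j}\sqrt{\sum_{m}a_{jm}R_{jm}}\le\big(\sum_{j}\sqrt{a_{jj}}\big)^{1/2}\big(\sum_{m}\sqrt{a_{mm}}\big)^{1/2}=\sum_{j}L_{jj}=\mtr(L)$.
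Since $\langle X^{*},C\rangle=\langle X^{*},X^{*}L^{\top}\rangle=\mtr((X^{*})^{\top}X^{*}L^{\top})=\mtr(L)$, this proves that $X^{*}$ is a minimizer of \eqref{equ:prob:stiefelplus:proj}.

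For uniqueness I would trace the equality cases backwards. Equality in the chain above forces: (i) $\xbf_j^{\top}\cbf_j=\|(\cbf_j)_{+}|_{S_j}\|$ for every $j$, i.e. $\xbf_j=(\cbf_j)_{+}|_{S_j}/\|(\cbf_j)_{+}|_{S_j}\|$; (ii) $R_{jm}\big(\sqrt{a_{jj}a_{mm}}-a_{jm}\big)=0$, which by the strict inequality for $j\neq m$ makes $R$ diagonal; (iii) the $j$-index Cauchy--Schwarz equality makes $R_{jj}$ independent of $j$; and (iv) $\sum_{m}\sqrt{a_{mm}}R_{mm}=\sum_{m}\sqrt{a_{mm}}$ with $a_{mm}=L_{mm}^2>0$ forces $R_{jj}=1$, so $R=I_k$. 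From $R=I_k$ we get $\|\xbf_j^{*}|_{S_j}\|^2=1$, hence $S_j^{*}\subseteq S_j$, and $R_{jm}=0$ for $m\neq j$, hence $S_j\cap S_m^{*}=\emptyset$; thus $S_j\cap\bigcup_{m}S_m^{*}=S_j^{*}$, which combined with (i) and $\cbf_j|_{S_j^{*}}=L_{jj}\xbf_j^{*}$, $L_{jj}>0$, gives $\xbf_j=\xbf_j^{*}$ for every $j$, i.e. $X=X^{*}$.

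The main obstacle is the estimate in the second paragraph: the matrix $(a_{jm})$ is not symmetric and is not diagonally dominant in the naive sense, so the bound $\mtr(L)$ is not reachable by a one-line argument; the two-step ``symmetrizing'' Cauchy--Schwarz that converts $a_{jm}\le\sqrt{a_{jj}a_{mm}}$ into $\sum_j\sqrt{\sum_m a_{jm}R_{jm}}\le\mtr(L)$, and the careful tracking of where the inequality $L_{ii}L_{jj}>\max\{L_{ij},L_{ji},0\}^2$ is used \emph{strictly}, is exactly what both proves optimality and powers the uniqueness argument. The support-disjointness bookkeeping of the first paragraph and the equality back-substitution of the third are routine once that inequality is available.
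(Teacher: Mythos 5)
Your proposal is correct and follows essentially the same route as the paper's own proof: reduce to maximizing $\langle C,X\rangle$, replace $L$ by its entrywise positive part, exploit the disjoint column supports of $X^*$ (your $R_{jm}$ is the paper's $w_{ji}=\|\xbf_j^*\circ\zbf_i\|^2$), use $L_{ii}L_{jj}>\max\{L_{ij},L_{ji},0\}^2$ to dominate $a_{jm}$ by $\sqrt{a_{jj}a_{mm}}$, and finish with a Cauchy--Schwarz over the column index together with $\sum_j R_{jm}\le 1$. Your equality analysis is in fact spelled out more carefully than the paper's one-line claim, but it is the same argument.
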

\begin{proof}
For simplicity of notation, we use $\sum_i$ to denote $\sum_{i \in [k]}$ in the proof.  Since problem \eqref{equ:prob:stiefelplus:proj} is equivalent to
 $\max_{X \in \stiefplus} \, \langle C, X \rangle$, we only need to show that 
 $\langle C,Y\rangle<\langle C,X^*\rangle=\sum_{i}L_{ii}$,   $\forall\ \stiefplus \owns  Y \neq X^*$. Let $Z = \sgn(Y)$ and $P = \proj_{\Rbb_+^n}(L)$. We have 
\be \label{equ:CY}
%\small
\langle C,Y \rangle  = \mtr (L(X^*)^{\tran}Y) = \sum\nolimits_{i} \sum\nolimits_j L_{ji}  \ybf_i^{\tran} \xbf^*_j
\le\sum\nolimits_{i} \sum\nolimits_j  P_{ji}\ybf_i^T(\xbf^*_j\circ \zbf_i).
\ee
 Define $
w_{ji} = \|\xbf_j^* \circ \zbf_i\|^2$.  With $X^* \in \stiefplus$, we have  $\|\sum_{j} P_{ji} (\xbf_j^* \circ \zbf_i)\|$ = $(\sum_{i} P_{ji}^2 w_{ji})^{1/2}$. 
 Using the Cauchy-Schwarz inequality, $\|\ybf_i\|=1$ and  the requirements on $L$, we have  
\be\label{equ:CY2}
%\small
\sum\nolimits_{j} P_{ji}\ybf_i^T(\xbf^*_j\circ \zbf_i) \leq \Big(\sum\nolimits_{j} P^2_{ji}w_{ji}\Big)^{\frac12} \leq P_{ii} \Big(\sum\nolimits_{j} \frac{P_{jj}}{P_{ii}}w_{ji}\Big)^{\frac{1}{2}}.
\ee
With \eqref{equ:CY} and $\langle C, X^*\rangle = \sum_{i}L_{ii} = \sum_{i} P_{ii}$, we further have 
\be \label{equ:CY3}
%\small 
\langle C, Y \rangle \leq \sum\nolimits_{i} P_{ii} \Big(\sum\nolimits_{j} \frac{P_{jj}}{P_{ii}}w_{ji}\Big)^{\frac{1}{2}}   \leq   \Big(\sum\nolimits_{i} P_{ii} \Big)^{\frac12} \Big( \sum\nolimits_{i} \sum\nolimits_{j} P_{jj} w_{ji}\Big)^{\frac12}  \leq      \langle C, X^* \rangle, 
\ee
where the second inequality uses the fact that $\sum_{i} a_i x_i^{\frac12} \leq (\sum_{i} a_i)^{\frac12}(\sum_{i} a_i x_i)^{\frac12}$ for  $a_i>0$ and  $x_i\geq 0$,  and the third inequality uses $\sum_{i} w_{ji} \leq 1$.  Obviously, the equalities in  \eqref{equ:CY2} and \eqref{equ:CY3} hold if and only if $Y=X^*$. The proof is completed. 
\end{proof}

\end{document}